\documentclass[a4paper,12pt]{amsart}
\usepackage{amssymb,eucal}
\usepackage[cmtip,all]{xy}
\usepackage{tikz-cd}
\usepackage{hyperref}

\emergencystretch=2em

\pagestyle{plain}
\raggedbottom

\textwidth=36pc
\calclayout

\newcommand{\+}{\nobreakdash-}
\renewcommand{\:}{\colon}

\newcommand{\rarrow}{\longrightarrow}
\newcommand{\larrow}{\longleftarrow}
\newcommand{\ot}{\otimes}
\newcommand{\st}{\star}

\newcommand{\bu}{{\text{\smaller\smaller$\scriptstyle\bullet$}}}

\newcommand{\lrarrow}{\mskip.5\thinmuskip\relbar\joinrel\relbar\joinrel
 \rightarrow\mskip.5\thinmuskip\relax}

\DeclareMathOperator{\Hom}{Hom}
\DeclareMathOperator{\Ext}{Ext}
\DeclareMathOperator{\Tor}{Tor}
\DeclareMathOperator{\Spec}{Spec}
\DeclareMathOperator{\soc}{soc}
\DeclareMathOperator{\ppd}{pd}
\DeclareMathOperator{\iid}{id}
\DeclareMathOperator{\cfd}{cfd}
\DeclareMathOperator{\fd}{fd}

\newcommand{\sop}{\mathsf{op}}
\newcommand{\id}{\mathrm{id}}

\newcommand{\fR}{\mathfrak R}
\newcommand{\fS}{\mathfrak S}

\newcommand{\bs}{\mathbf s}
\newcommand{\br}{\mathbf r}
\newcommand{\bt}{\mathbf t}

\newcommand{\qs}{\mathsf{qs}}

\newcommand{\sA}{\mathsf A}
\newcommand{\sB}{\mathsf B}
\newcommand{\sC}{\mathsf C}
\newcommand{\sD}{\mathsf D}
\newcommand{\sE}{\mathsf E}
\newcommand{\sF}{\mathsf F}
\newcommand{\sG}{\mathsf G}
\newcommand{\sH}{\mathsf H}
\newcommand{\sJ}{\mathsf J}
\newcommand{\sK}{\mathsf K}
\newcommand{\sP}{\mathsf P}
\newcommand{\sS}{\mathsf S}
\newcommand{\sT}{\mathsf T}
\newcommand{\sX}{\mathsf X}
\newcommand{\sY}{\mathsf Y}

\newcommand{\bb}{\mathsf b}
\newcommand{\abs}{\mathsf{abs}}
\newcommand{\co}{\mathsf{co}}
\newcommand{\ctr}{\mathsf{ctr}}
\newcommand{\bco}{\mathsf{bco}}
\newcommand{\bctr}{\mathsf{bctr}}
\newcommand{\inj}{\mathsf{inj}}
\newcommand{\proj}{\mathsf{proj}}
\newcommand{\ctrfl}{\mathsf{ctrfl}}
\newcommand{\fpinj}{\mathsf{fpinj}}
\newcommand{\sico}{\mathsf{sico}}
\newcommand{\sictr}{\mathsf{sictr}}
\newcommand{\bsico}{\mathsf{bsico}}
\newcommand{\bsictr}{\mathsf{bsictr}}

\newcommand{\boL}{\mathbb L}
\newcommand{\boR}{\mathbb R}
\newcommand{\boZ}{\mathbb Z}
\newcommand{\boQ}{\mathbb Q}

\newcommand{\Ac}{\mathsf{Ac}}

\newcommand{\Modl}{{\operatorname{\mathsf{--Mod}}}}

\newcommand{\tors}{{\operatorname{\mathsf{-tors}}}}
\newcommand{\ctra}{{\operatorname{\mathsf{-ctra}}}}
\newcommand{\psco}{{\operatorname{\mathsf{-psco}}}}
\newcommand{\psctr}{{\operatorname{\mathsf{-psctr}}}}
\newcommand{\dctrfl}{{\operatorname{\mathsf{-ctrfl}}}}
\newcommand{\dfpinj}{{\operatorname{\mathsf{-fpinj}}}}

\newcommand{\Section}[1]{\bigskip\section{#1}\medskip}
\setcounter{tocdepth}{1}

\theoremstyle{plain}
\newtheorem{thm}{Theorem}[section]
\newtheorem{prop}[thm]{Proposition}
\newtheorem{lem}[thm]{Lemma}
\newtheorem{cor}[thm]{Corollary}
\theoremstyle{definition}
\newtheorem{rem}[thm]{Remark}
\newtheorem{rems}[thm]{Remarks}

\newtheorem{ex}[thm]{Example}
\newtheorem{exs}[thm]{Examples}

\begin{document}

\title{Pseudo-dualizing complexes of torsion modules \\
and semi-infinite MGM duality}

\author{Leonid Positselski}

\address{Institute of Mathematics, Czech Academy of Sciences \\
\v Zitn\'a~25, 115~67 Prague~1 \\ Czech Republic} 

\email{positselski@math.cas.cz}

\begin{abstract}
 This paper is an MGM version of~\cite{Pps,Ppc} and a follow-up
to~\cite[Section~5]{Pmgm}.
 In the setting of a commutative ring $S$ with a weakly proregular
finitely generated ideal $J\subset S$, we consider the maximal, abstract,
and minimal corresponding classes of $J$\+torsion $S$\+modules and
$J$\+contramodule $S$\+modules with respect to a given pseudo-dualizing
complex of $J$\+torsion $S$\+modules $L^\bu$, and construct the related
triangulated equivalences.
 As a special case, we obtain an equivalence of the semiderived
categories for an $I$\+adically coherent commutative ring $R$ with
a weakly proregular ideal $I\subset R$, a dualizing complex of
$I$\+torsion $R$\+modules $D^\bu$, and a ring homomorphism
$f\:R\rarrow S$ such that $f(I)\subset J$ and $S$ is a flat $R$\+module.
 (If the ring $S$ is not Noetherian, then a certain further assumption,
which we call quotflatness of the morphism of pairs
$f\:(R,I)\rarrow(S,J)$, needs to be imposed.)
 In that case, the pseudo-dualizing complex $L^\bu$ is constructed as
a complex of $J$\+torsion $S$\+modules quasi-isomorphic to
the tensor product of $D^\bu$ with the infinite dual Koszul complex
for some set of generators of the ideal $J\subset S$.
\end{abstract}

\maketitle

\tableofcontents

\section*{Introduction}
\medskip

\setcounter{subsection}{-1}
\subsection{{}}
 The classical topic of \emph{MGM} (\emph{Matlis--Greenlees--May})
\emph{duality} (known also as \emph{MGM equivalence}) in commutative
algebra goes back to the papers~\cite{Mat,GM,DG}.
 The importance of the \emph{weak proregularity} condition
(generalizing the more restrictive Noetherianity assumption)
was established in~\cite{PSY}.
 The contemporary formulation involving the derived categories of
the abelian categories of torsion modules and contramodules was
given in the paper~\cite[Corollary~3.5 and Theorem~5.10]{Pmgm}.

 The construction of the \emph{semi-infinite homology} of certain
infinite-dimensional Lie algebras was introduced in~\cite{Feig}
(the contemporary formulation was given in~\cite[Section~3.8]{BD}).
 The present paper presumes the philosophy of semi-infinite
homological algebra as elaborated in the books~\cite{Psemi,Psemten},
emphasizing the constructions of \emph{semiderived categories}.
 The classical anti-equivalence of the categories of Verma modules
over the Virasoro Lie algebra on complementary levels~$c$
and $26-c$ \,\cite{FF0}, \cite[Remark~2.4]{FF}, \cite{RCW} was
interpreted as a triangulated equivalence of the semiderived
categories in~\cite[Corollary and Remark~D.3.1]{Psemi}.

 The latter result became the thematic example of what was called
the \emph{semimodule-semicontramodule correspondence} in~\cite{Psemi}.
 A perhaps more accessible exposition in the more familiar context
of modules over rings (rather than semimodules over semialgebras
over coalgebras or corings) can be found in
the paper~\cite[Sections~5\+-6]{Pfp}.
 See~\cite[Section~3.5]{Prev} for a discussion of the results
of~\cite{Psemi} in a survey paper.
 A further instance of the semico-semicontra correspondence in
the context of nonaffine schemes was worked out in
the preprint~\cite[Section~8]{Pcosh}.

\subsection{{}}
 The aim of the present paper is to construct a triangulated
equivalence of the semico-semicontra correspondence in the MGM context,
i.~e., for torsion modules and contramodules over a commutative ring
with a finitely generated ideal.
 In fact, following the relative nature of semi-infinite settings, we
consider a \emph{morphism of ring-ideal pairs} $f\:(R,I)\rarrow(S,J)$.
 So $I\subset R$ and $J\subset S$ are finitely generated ideals in
commutative rings, and $f\:R\rarrow S$ is a ring homomorphism such that
$f(I)\subset J$.

 Most of our results require the ideals $I$ and/or $J$ to be weakly
proregular.
 We also assume that $S$ is a flat $R$\+module, and utilize a further
\emph{quotflatness} assumption on the morphism~$f$, meaning that
suitable quotient rings of $S$ by ideals related to $J$ are flat over
respective quotient rings of $R$ by ideals related to~$I$.

 We also assume the ring $R$ to be \emph{$I$\+adically coherent},
which means that the quotient rings $R/I^n$ are coherent for all
integers $n\ge1$.
 A more restrictive condition is the \emph{$I$\+adic Noetherianity},
meaning that the ring $R/I$ is Noetherian (if this is the case, then
all the rings $R/I^n$ are Noetherian as well).
 Notice that all ideals in a Noetherian commutative ring are weakly
proregular, but the $I$\+adic Noetherianity of $R$ does \emph{not}
imply the weak proregularity of~$I$.

\subsection{{}}
 This paper is the third one in a series of the present author's
papers on \emph{pseudo-dualizing complexes} and \emph{pseudo-derived
equivalences}.
 In fact, the topic of pseudo-derived equivalences was originated
in the paper~\cite{PS2}.
 The first two papers in the series were~\cite{Pps} (on pseudo-dualizing
complexes of bimodules over associative rings) and~\cite{Ppc} (on
pseudo-dualizing complexes of bicomodules over coalgebras).
 Now we treat the pseudo-dualizing complexes of torsion modules over
a commutative rings with a weakly proregular finitely generated ideal.

 What we call ``pseudo-dualizing complexes'' may be known to a larger
circle of people as ``semi-dualizing complexes'', which is a previously
existing term~\cite{Chr,HW,EH}.
 As the prefix ``semi'' is used systematically in our
context~\cite{Psemi,Prev,Pfp,Psemten,Pcosh} with a closely related,
but quite different meaning (alluding to ``semi-infinite''), we chose
to rename semi-dualizing complexes into pseudo-dualizing complexes in
the papers~\cite{Pps,Ppc}, and we continue to use
the ``pseudo-dualizing complexes'' terminology in the present paper.
 In fact, if the term ``semidualizing complexes'' were not taken,
we would be eager to use it for what are called
``relative dualizing complexes'' in~\cite[Section~9]{Pps} and
in Section~\ref{semico-semicontra-secn} of the present paper.

 The definition of a pseudo-dualizing complex is obtained from that
of a dualizing complex by dropping the finite injective dimension
condition while retaining the finite generatedness/finite presentability
and the homothety isomorphism conditions.
 Another closely related concept is that of a \emph{dedualizing
complex}, introduced in~\cite{Pmgm} and discussed further
in~\cite[Section~6]{Pps} and~\cite[Section~8]{Ppc}.
 The definition of a dedualizing complex is obtained from that of
a dualizing complex by replacing the finite injective dimension
condition with a finite projective dimension condition.
 So both the dualizing complexes and the dedualizing complexes are
particular cases (and in some sense, two polar special cases) of
the pseudo-dualizing complexes.

\subsection{{}} \label{introd-morita-subsecn}
 In the context of (pseudo-dualizing) complexes of bimodules over
a pair of associative rings, as in the paper~\cite{Pps}, the dedualizing
complexes describe derived Morita equivalences~\cite[Section~6]{Pps}.
 Such or very similar complexes are called ``tilting complexes''
in~\cite[Theorem~1.1 and Definition~3.4]{Ric2}
and~\cite[Definition~14.4.1 and Theorem~14.4.16]{Yek-book} (these are
the ``small tilting complexes'', to be distinguished from the ``large
tilting complexes'' of~\cite{PV,NSZ,BHM}).
 What we call the homothety isomorphism condition is called ``derived
Morita property'' in~\cite[Definition~13.1.5]{Yek-book}.
 The one-term complex of $A$\+$A$\+bimodules $A$, for an associative
ring $A$, is the archetypal example of a dedualizing complex of
bimodules.

 In the context of (pseudo-dualizing) complexes of bicomodules over
a pair of coassociative coalgebras over a field, as in~\cite{Ppc},
the derived Morita--Takeuchi equivalences are described by
\emph{dualizing} complexes of bicomodules~\cite[Theorem~7.4]{Ppc},
which also induce the derived comodule-contramodule
correspondences~\cite[Corollary~7.2]{Ppc} (see~\cite[Section~1.2]{Prev}
and~\cite[Sections~8.7 and~9.4]{Pksurv} for an introductory discussion
of the latter).
 The one-term complex of $\mathcal C$\+$\mathcal C$\+bicomodules
$\mathcal C$, for a coassociative coalgebra $\mathcal C$ over
a field~$k$, is the archetypal example of a dualizing complex of
bicomodules.

 In the context of (pseudo-dualizing) complexes of $J$\+torsion
$S$\+modules, for a weakly proregular finitely generated ideal $J$
in a commutative ring $S$, one can speak of the infinite dual Koszul
complex $K^\bu_\infty(S,\bs)=\varinjlim_{n\ge1}K^\bu(S,\bs^n)$ as
the dedualizing complex.
 Here $\bs=(s_1,\dotsc,s_m)$ is any finite sequence of generators of
the ideal $J\subset S$, while $\bs^n$~is a shorthand notation for
the sequence of elements $(s_1^n,\dotsc,s_m^n)$ in~$S$.
 More precisely, any finite complex of $J$\+torsion $S$\+modules
isomorphic to $K^\bu_\infty(S,\bs)$ in the derived category
$\sD(S\Modl)$ is an example of a dedualizing complex of $J$\+torsion
$S$\+modules in the sense of~\cite[Sections~4\+-5]{Pmgm};
see~\cite[Examples~4.8 and~5.8]{Pmgm}.
 We will continue this discussion in
Section~\ref{introd-dedualizing-subsecn} below.

 In Sections~\ref{artinian-finiteness-secn}\+-\ref{dedualizing-secn}
of the present paper, we tie some loose ends left in~\cite{Pmgm} by
establishing a comparison between two definitions of a dedualizing
complex of torsion modules over a commutative ring given
in~\cite{Pmgm}, the one suitable for a more restrictive setting
in~\cite[Section~4]{Pmgm} and the more generally applicable one
in~\cite[Section~5]{Pmgm}.

\subsection{{}}
 The reader can find an elaborate discussion of the philosophy of
co-contra correspondence in the Introduction to the paper~\cite{Pmgm}.
 One highlight: the equivalences of the conventional derived categories
of comodule-like objects (such as torsion modules) with the conventional
derived categories of contramodule-like objects are induced by
dedualizing complexes, while dualizing complexes induce equivalences
between the \emph{coderived categories} of comodule-like objects and
the \emph{contraderived categories} of contramodule-like objects.

 The coderived and contraderived (as well as absolute derived)
categories are collectively known as the \emph{derived categories of
the second kind}.
 We refrain from going into a detailed discussion of derived categories
of the second kind (including the distinction between the Positselski
and the Becker versions of the co/contraderived categories) in this
introduction, as elaborate expositions of their history and philosophy
are now available.
 See, in particular, \cite[Remark~9.2]{PS4}
and~\cite[Section~7]{Pksurv}.

 Simply put, the difference between the derived, the coderived, and
the contraderived categories manifests itself in the context of
unbounded complexes (or DG\+modules over DG\+rings that are \emph{not}
nonpositively cohomologically graded).
 The coderived category can be simply defined as the homotopy category
of unbounded complexes of injective objects, while the contraderived
category is the homotopy category of unbounded complexes of projective
objects.

\subsection{{}} \label{introd-dedualizing-subsecn}
 Let $J$ be a finitely generated ideal in a commutative ring~$S$.
 In this context, the \emph{$J$\+torsion $S$\+modules} form
a full subcategory $S\Modl_{J\tors}$ closed under submodules,
quotients, extensions, and infinite direct sums in the module
category $S\Modl$.
 So $S\Modl_{J\tors}$ is an abelian category, just as $S\Modl$.

 There is a dual-analogous full subcategory $S\Modl_{J\ctra}\subset
S\Modl$ formed by what we call \emph{$J$\+contramodule $S$\+modules}
in the terminology of~\cite{Pmgm,Pcta}.
 In the terminology of~\cite[Section~3.4]{BS}
and~\cite[Section Tag~091N]{SP}, these are called ``derived
$J$\+complete modules''.
 All $J$\+contramodule $S$\+modules are $J$\+adically
complete~\cite[Theorem~5.6]{Pcta}, but they are not $J$\+adically
separated in general~\cite[Example~2.5]{Sim}, \cite[Example~3.20]{Yek0},
\cite[Example~4.33]{PSY}, \cite[Example~2.7(1)]{Pcta}.
 The full subcategory of $J$\+contramodule $S$\+modules
$S\Modl_{J\ctra}$ is closed under kernels, cokernels, extensions, and
infinite products in $S\Modl$.
 So the category $S\Modl_{J\ctra}$ is also abelian.

 In the full generality of arbitrary finitely generated ideals $J$
in commutative rings $S$, the \emph{MGM duality/equivalence}, as
interpreted in~\cite{Pmgm}, is a triangulated equivalence between
two full triangulated subcategories in the derived category
$\sD(S\Modl)$.
 The full subcategory $\sD_{J\tors}(S\Modl)$ formed by complexes with
$J$\+torsion cohomology modules in $\sD(S\Modl)$ is equivalent to
the full subcategory $\sD_{J\ctra}(S\Modl)$ formed by complexes with
$J$\+contramodule cohomology modules~\cite[Theorem~3.4]{Pmgm}
\begin{equation} \label{non-wpr-mgm-duality}
 \sD_{J\tors}(S\Modl)\simeq\sD_{J\ctra}(S\Modl).
\end{equation}
 Under the simplifying assumption of weak proregularity of the ideal
$J$, the equivalence~\eqref{non-wpr-mgm-duality} takes the form
of an equivalence between the derived categories of the two abelian
categories of $J$\+torsion $S$\+modules and $J$\+contramodule
$S$\+modules~\cite[Corollary~3.5 or Theorem~5.10]{Pmgm}
\begin{equation} \label{wpr-mgm-duality}
 \sD(S\Modl_{J\tors})\simeq\sD(S\Modl_{J\ctra}).
\end{equation}

 Both the triangulated equivalences~\eqref{non-wpr-mgm-duality}
and~\eqref{wpr-mgm-duality} are provided by the functors of tensor
product with and $\boR\Hom$ from the so-called \emph{infinite dual
Koszul complex} $K^\bu_\infty(S,\bs)$, where $\bs=(s_1,\dotsc,s_m)$
is a finite sequence of generators of the ideal $J\subset S$.
 The complex $K^\bu_\infty(S,\bs)$ is an augmented version of
the \v Cech complex computing the cohomology of the structure sheaf
on the quasi-compact open subscheme $U=\Spec S\setminus\Spec S/J$
in the affine scheme $X=\Spec S$.

 The complex $K^\bu_\infty(S,\bs)$ is a finite complex of countably
presented flat $S$\+modules with $J$\+torsion cohomology modules.
 In the approach of~\cite[Sections~4 and~5]{Pmgm}, one is supposed to
choose a finite complex of $J$\+torsion $S$\+modules $B^\bu$
quasi-isomorphic to $K^\bu_\infty(S,\bs)$.
 In the terminology of~\cite{Pmgm}, the complex $B^\bu$ is called
a \emph{dedualizing complex} of $J$\+torsion $S$\+modules.
 Let us emphasize that the equivalence~\eqref{wpr-mgm-duality},
induced by a dedualizing complex $B^\bu$, is \emph{an equivalence
of the conventional derived categories}.

\subsection{{}} \label{introd-dualizing-subsecn}
 The topic of \emph{covariant Serre--Grothendieck duality} was
initiated in the paper~\cite{IK} and taken up in the introduction
to~\cite{Neem} and in the dissertation~\cite{Mur}.
 The present author's take on it can be found in the papers~\cite{Pfp},
\cite[Section~7]{Pps} and the preprint~\cite[Section~6]{Pcosh}.
 The most straightforward formulation is that \emph{the datum of
a dualizing complex induces a covariant triangulated equivalence
between the homotopy categories of unbounded complexes of injective
and projective modules}.
 In our terminology, these are called the \emph{coderived} and
the \emph{contraderived} category.

 In the context closer to the present paper, for a commutative ring $R$
with an ideal $I$, one considers a \emph{dualizing complex of
$I$\+torsion $R$\+modules}~$D^\bu$ \,\cite[Remark~4.10]{Pmgm}.
 The simplest definition~\cite[Section~D.1]{Pcosh}, for an ideal
$I$ in a Noetherian ring $R$, says that a finite complex of injective
$I$\+torsion $R$\+modules $D^\bu$ is a \emph{dualizing complex} if,
for every integer $n\ge1$, the finite complex of injective
$R/I^n$\+modules $\Hom_R(R/I^n,D^\bu)$ is a dualizing complex for
the Noetherian commutative ring~$R/I^n$.
 (Cf.~\cite[Lemma~D.1.3]{Pcosh} or
Theorem~\ref{definitions-of-dualizing-complex-comparison}\,%
(2)\,$\Leftrightarrow$\,(3) below in this paper, claiming that
it suffices to impose this condition for $n=1$.)
 If a dualizing complex $D_R^\bu$ for the Noetherian commutative ring
$R$ is given, then a dualizing complex of $I$\+torsion $R$\+modules
$D^\bu$ can be constructed as the subcomplex of all $I$\+torsion
elements in~$D_R^\bu$.

 Then the result of~\cite[Theorem~D.1.4]{Pcosh} claims that the datum
of a dualizing complex of $I$\+torsion $R$\+modules $D^\bu$ induces
a triangulated equivalence between the coderived category of
the abelian category of $I$\+torsion $R$\+modules and the contraderived
category of the abelian category of $I$\+contramodule $R$\+modules,
\begin{equation} \label{covariant-serre-grothendieck-duality}
 \sD^\co(R\Modl_{I\tors})\simeq\sD^\ctr(R\Modl_{I\ctra}).
\end{equation}
 A more general version of~\eqref{covariant-serre-grothendieck-duality}
applicable to $I$\+adically coherent (rather than only Noetherian)
rings $R$ with a weakly proregular finitely generated ideal $I\subset R$
is proved in the present paper as
Corollary~\ref{dualizing-complex-becker-co-contra-derived-equiv}
or Corollary~\ref{dualizing-complex-all-derived-equivs}.

 Other generalizations of~\eqref{covariant-serre-grothendieck-duality},
which can be found in the preprint~\cite{Pcosh}, include a version
for torsion modules and contramodules with respect to centrally
generated ideals in noncommutative Noetherian
rings~\cite[Theorem~D.5.10]{Pcosh} and a version for discrete modules
and contramodules over topological rings with a countable base of
neighborhoods of zero formed by open two-sided
ideals~\cite[Theorem~E.2.9]{Pcosh}.

\subsection{{}}
 The main result of this paper is a joint generalization of
the triangulated equivalences~\eqref{wpr-mgm-duality}
and~\eqref{covariant-serre-grothendieck-duality} arising in the relative
context with a morphism of ring-ideal pairs $f\:(R,I)\rarrow(S,J)$.
 The MGM duality~\eqref{wpr-mgm-duality} along the fibers (i.~e.,
``in the direction of $(S,J)$ relative to~$(R,I)$'') is being built
on top of the covariant Serre--Grothendieck
duality~\eqref{covariant-serre-grothendieck-duality} along the base
of the fibration (i.~e., ``in the direction of~$(R,I)$'').

 Building~\eqref{wpr-mgm-duality} on top
of~\eqref{covariant-serre-grothendieck-duality}, rather than the other
way around, is dictated by very general considerations of the nature
of the main available construction of a mixture of the conventional
derived category with the coderived or contraderived category, called
the \emph{semiderived category}.
 The construction of the semiderived category builds the conventional
derived category on top of the co/contraderived category, and not
the other way around.

\subsection{{}}
 The terminology ``coderived category'', introduced originally in
the note~\cite{Kel}, refers to the basic observation that, in certain
contexts, one is supposed to consider the derived categories of modules
and the coderived categories of comodules (as well as the contraderived
categories of contramodules).
 This point of view was used, in particular, in the book~\cite{Psemi}.
 
 In the terminological system of the book~\cite{Psemi} and our
subsequent publications, the prefix ``semi'' means very roughly
``a half of this and a half of that'', or more specifically a mixture
of ring-like and coalgebra-like features.
 So a \emph{semialgebra} is ``an algebra in a half of the variables
and a coalgebra in the other half of the variables'', etc.
 Likewise, a \emph{semiderived category} is a mixture of
the conventional unbounded derived category with either the coderived
or the contraderived category.

 The constructions of semiderived categories presume a relative
situation with a semialgebra (``an algebra over a coalgebra'')
as in~\cite{Psemi}, or a homomorphism of rings as in~\cite{Pfp},
or a morphism of schemes as in~\cite[Section~8]{Pcosh}, or a morphism
of ind-schemes as in~\cite{Psemten}, etc.
 The construction refers to the respective forgetful functor in
algebraic contexts, or the direct image functor in geometric contexts.
 In the context of the present paper with a morphism of ring-ideal
pairs $f\:(R,I)\rarrow(S,J)$, the constructions of the semiderived
categories refer to the functors of restriction of scalars, which 
assign to a $J$\+torsion $S$\+module its underlying $I$\+torsion
$R$\+module, or assign to a $J$\+contramodule $S$\+module its
underlying $I$\+contramodule $R$\+module.

 The semiderived category (or more specifically, the semicoderived
category) of $J$\+torsion $S$\+modules
$\sD^\sico_{(R,I)}(S\Modl_{J\tors})$ is defined as the triangulated
Verdier quotient category of the homotopy category of unbounded
complexes of $J$\+torsion $S$\+modules by the thick subcategory of
complexes that are \emph{coacyclic as complexes of $I$\+torsion
$R$\+modules}.
 Similarly, the semiderived category (or more specifically,
the semicontraderived category) of $J$\+contramodule $S$\+modules
$\sD^\sictr_{(R,I)}(S\Modl_{J\ctra})$ is defined as the quotient
category of the homotopy category of unbounded complexes of
$J$\+contramodule $S$\+modules by the thick subcategory of complexes
that are \emph{contraacyclic as complexes of $I$\+contramodule
$S$\+modules}.

\subsection{{}} \label{introd-relative-dualizing-subsecn}
 The main result of this paper is the following triangulated
equivalence of \emph{semico-semicontra correspondence}.
 We consider a commutative ring $R$ with a weakly proregular finitely
generated ideal $I\subset R$, and a commutative ring $S$ with
a weakly proregular finitely generated ideal $J\subset S$.

 The ring $R$ assumed to be $I$\+adically coherent.
 There are further additional assumptions on homological dimension,
most notably that all fp\+injective $I$\+torsion $R$\+modules have
finite injective dimensions (this trivially holds if the ring $R$
is $I$\+adically Noetherian, as all fp\+injective $I$\+torsion
$R$\+modules are injective as $I$\+torsion $R$\+modules in this case).
 Most importantly, we assume that a dualizing complex of $I$\+torsion
$R$\+modules $D^\bu$ is given.

 Then we consider a ring homomorphism $f\:R\rarrow S$ such that
$f(I)\subset J$, and assume the ring $S$ to be a flat $R$\+module.
 A further ``quotflatness'' assumption on the morphism of ring-ideal
pairs $f\:(R,I)\rarrow(S,J)$ needs to be imposed if the ring $S$ is
not Noetherian.

 Under the listed assumptions, we construct a triangulated equivalence
\begin{equation} \label{semico-semicontra-correspondence}
 \sD^\sico_{(R,I)}(S\Modl_{J\tors})\simeq
 \sD^\sictr_{(R,I)}(S\Modl_{J\ctra}).
\end{equation}
 See our Theorem~\ref{becker-semico-semicontra-correspondence}
or~\ref{positselski-semico-semicontra-correspondence}.

 The triangulated
equivalence~\eqref{semico-semicontra-correspondence} is provided
by the left derived functor of tensor product with and the functor
$\boR\Hom$ from what we call a \emph{relative dualizing complex}
of $J$\+torsion $S$\+modules~$U^\bu$.
 The complex $U^\bu$ is constructed as a finite complex of
$J$\+torsion $S$\+modules quasi-isomorphic to the tensor product
$K^\bu_\infty(S,\bs)\ot_R D^\bu$, where $K^\bu_\infty(S,\bs)$ is
the infinite dual Koszul complex of $S$\+modules (for a finite
sequence of generators~$\bs$ of the ideal $J\subset S$) and
$D^\bu$ is our dualizing complex of $I$\+torsion $R$\+modules.
 So $U^\bu$ is ``a mixture of the dualizing complex along $R$
and the dedualizing complex in the direction of $S$ relative to~$R$''.

\subsection{{}}
 We refer to the introduction to~\cite{Pps} for a further discussion
of the philosophy and examples of intermediate and mixed versions
of the co-contra correspondence, including versions with one of
the dualities built on top of another one.
 The most general results of this paper apply to
a \emph{pseudo-dualizing complex} of $J$\+torsion $S$\+modules,
in the context of a weakly proregular finitely generated ideal $J$
in a commutative ring~$S$.

 The pseudo-dualizing complexes are a common generalization of
the dedualizing complexes from
Section~\ref{introd-dedualizing-subsecn}, the dualizing complexes
from Section~\ref{introd-dualizing-subsecn}, and the relative dualizing
complexes from Section~\ref{introd-relative-dualizing-subsecn}.
 The definition of a pseudo-dualizing complex of $J$\+torsion
$S$\+modules is obtained from the definition of a dedualizing complex
given in~\cite[Section~5]{Pmgm} by dropping the finite
projective/contraflat dimension condition and suitably relaxing
the finite generatedness condition.
 
 The exposition in Sections~\ref{auslander-and-bass-secn}\+-%
\ref{minimal-classes-secn} of the present paper, dedicated
to pseudo-dualizing complexes and pseudo-derived equivalences, is
parallel (and largely similar) to the exposition in the respective
sections of the papers~\cite{Pps,Ppc}.
 Detailed discussions are available in the introductions to~\cite{Pps}
and~\cite{Ppc} (see~\cite[Sections~0.5 and 0.7]{Pps}
or~\cite[Sections~1.6\+-1.7]{Ppc}), so we restrict ourselves here
to a brief sketch.

\subsection{{}}
 A pseudo-dualizing complex of $J$\+torsion $S$\+modules $L^\bu$
is supposed to be, first of all, a finite complex (of $J$\+torsion
$S$\+modules).
 Let $d_1$ and~$d_2$ be two integers such that the complex $L^\bu$
is concentrated in the cohomological degrees from~$-d_1$ to~$d_2$.
 The key concept of \emph{corresponding classes} (of $J$\+torsion
$S$\+modules and $J$\+contramodule $S$\+modules) and the related
constructions of the \emph{maximal} and \emph{minimal} corresponding
classes depend on numerical (integer) parameters $l_1\ge d_1$ and
$l_2\ge d_2$.

 We construct an increasing sequence of pairs of maximal corresponding
classes $\sE_{l_1}(L^\bu)\subset S\Modl_{J\tors}$ and
$\sF_{l_1}(L^\bu)\subset S\Modl_{J\ctra}$.
 So we have $\sE_{d_1}(L^\bu)\subset\sE_{d_1+1}(L^\bu)\subset
\sE_{d_1+2}(L^\bu)\subset\dotsb$ and $\sF_{d_1}(L^\bu)\subset
\sF_{d_1+1}(L^\bu)\subset\sF_{d_1+2}(L^\bu)\subset\dotsb$.
 The class $\sF_{l_1}(L^\bu)$ is also known as
the \emph{Auslander class}, while the class $\sE_{l_1}(L^\bu)$
is called the \emph{Bass class}.

 As the integer $l_1\ge d_1$ varies, the classes $\sE_{l_1}$ and
$\sF_{l_1}$ only differ from each other ``by finite (co)resolution
dimension'', so their derived categories stay the same.
 We put $\sD_{L^\bu}'(S\Modl_{J\tors})=\sD(\sE_{l_1})$
and $\sD_{L^\bu}''(S\Modl_{J\ctra})=\sD(\sF_{l_1})$.
 The triangulated category $\sD_{L^\bu}'(S\Modl_{J\tors})$ can be called
the \emph{lower pseudo-coderived category of $J$\+torsion $S$\+modules},
and the triangulated category $\sD_{L^\bu}''(S\Modl_{J\ctra})$ is
the \emph{lower pseudo-contraderived category of $J$\+contramodule
$S$\+modules}.

 Our Theorem~\ref{bass-auslander-derived-equivalence} claims, as one
of its cases, a triangulated equivalence
\begin{equation} \label{lower-pseudo-derived-equivalence}
 \sD_{L^\bu}'(S\Modl_{J\tors}) \simeq
 \sD_{L^\bu}''(S\Modl_{J\ctra})
\end{equation}
provided by the derived functors of the tensor product with and
$\Hom$ from the pseudo-dualizing complex~$L^\bu$.

 We also construct a decreasing sequence of pairs of minimal
corresponding classes $\sE^{l_2}(L^\bu)\subset S\Modl_{J\tors}$ and
$\sF^{l_2}(L^\bu)\subset S\Modl_{J\ctra}$.
 So we have $\sE^{d_2}(L^\bu)\supset\sE^{d_2+1}(L^\bu)\supset
\sE^{d_2+2}(L^\bu)\supset\dotsb$ and $\sF^{d_2}(L^\bu)\supset
\sF^{d_2+1}(L^\bu)\supset\sF^{d_2+2}(L^\bu)\supset\dotsb$.

 Once again, as the integer $l_2\ge d_2$ varies, the classes $\sE^{l_2}$
and $\sF^{l_2}$ only differ from each other ``by finite (co)resolution
dimension'', so their derived categories stay the same.
 We put $\sD^{L^\bu}_\prime(S\Modl_{J\tors})=\sD(\sE^{l_2})$
and $\sD^{L^\bu}_{\prime\prime}(S\Modl_{J\ctra})=\sD(\sF^{l_2})$.
 The triangulated category $\sD^{L^\bu}_\prime(S\Modl_{J\tors})$
can be called the \emph{upper pseudo-coderived category of $J$\+torsion
$S$\+modules}, and the triangulated category
$\sD^{L^\bu}_{\prime\prime}(S\Modl_{J\ctra})$ is the \emph{upper
pseudo-contraderived category of $J$\+contramodule $S$\+modules}.

 Our Theorem~\ref{minimal-classes-derived-equivalence} claims, as
one of its cases, a triangulated equivalence
\begin{equation} \label{upper-pseudo-derived-equivalence}
 \sD^{L^\bu}_\prime(S\Modl_{J\tors}) \simeq
 \sD^{L^\bu}_{\prime\prime}(S\Modl_{J\ctra}),
\end{equation}
which is also provided by the derived functors of the tensor product
with and $\Hom$ from the pseudo-dualizing complex~$L^\bu$.

\subsection{{}}
 Summarizing the results of Sections~\ref{auslander-and-bass-secn}\+-%
\ref{minimal-classes-secn} of the present paper and using
the discussion of adjoint functors in~\cite[Section~2]{Ppc}, we
obtain a diagram of triangulated functors, triangulated equivalences, 
commutativities, and adjunctions
\begin{equation} \label{big-diagram}
\!\!\!\!\!\!\!\!\!\!\begin{tikzcd}
\sK(S\Modl_{J\tors}) \arrow[d, two heads]
\arrow[dddd, two heads, bend right=90] &&&&&
\sK(S\Modl_{J\ctra}) \arrow[d, two heads]
\arrow[dddd, two heads, bend left=90] \\
\sK(S\Modl_{J\tors}^\inj) \arrow[d] 
\arrow[u, tail, bend right=70]
\arrow[ddd, two heads, bend right=80] &&&&&
\sK(S\Modl_{J\ctra}^\proj) \arrow[d] 
\arrow[u, tail, bend left=70]
\arrow[ddd, two heads, bend left=80] \\
\sD^{L^\bu}_{\prime}(S\Modl_{J\tors}) \arrow[d]
\arrow[rrrrr, Leftrightarrow, no head, no tail]
\arrow[dd, two heads, bend right=72] &&&&&
\sD^{L^\bu}_{\prime\prime}(S\Modl_{J\ctra}) \arrow[d]
\arrow[dd, two heads, bend left=72] \\
\sD'_{L^\bu}(S\Modl_{J\tors}) \arrow[d, two heads]
\arrow[rrrrr, Leftrightarrow, no head, no tail] &&&&&
\sD''_{L^\bu}(S\Modl_{J\ctra}) \arrow[d, two heads] \\
\sD(S\Modl_{J\tors}) \arrow[uuuu, tail, bend right=102]
\arrow[uuu, tail, bend right=90]
\arrow[uu, tail, bend right=78]
\arrow[u, tail, bend right=70] &&&&&
\sD(S\Modl_{J\ctra}) \arrow[uuuu, tail, bend left=102]
\arrow[uuu, tail, bend left=90]
\arrow[uu, tail, bend left=78]
\arrow[u, tail, bend left=70]
\end{tikzcd}\!\!\!\!\!\!\!\!
\end{equation}
 Here the notation $\sK(\sT)$ stands for the homotopy category of
(unbounded) complexes in an additive category~$\sT$.
 The full subcategory of injective objects in an abelian category
$\sA$ is denoted by $\sA^\inj\subset\sA$, while the full subcategory
of projective objects in an abelian category $\sB$ is denoted by
$\sB^\proj\subset\sB$.
 The homotopy category of injective objects $\sK(S\Modl_{J\tors}^\inj)$
is otherwise known as the \emph{Becker coderived category}
$\sK(S\Modl_{J\tors}^\inj)\simeq\sD^\bco(S\Modl_{J\tors})$.
 The homotopy category of projective objects
$\sK(S\Modl_{J\ctra}^\proj)$ is otherwise known as the \emph{Becker
contraderived category}
$\sK(S\Modl_{J\ctra}^\proj)\simeq\sD^\bctr(S\Modl_{J\ctra})$.
 See Theorem~\ref{becker-co-contra-derived-of-loc-pres-abelian}.

 The two horizontal double lines are the triangulated
equivalences~\eqref{lower-pseudo-derived-equivalence}
and~\eqref{upper-pseudo-derived-equivalence}.
 The middle square including these two horizontal double lines
is commutative.

 The arrows with double heads denote triangulated Verdier quotient
functors, while the arrows with tails denote fully faithful
triangulated functors.
 The downwards-directed outer curvilinear arrows with double heads
are the compositions of the downwards directed straight arrows.
 The upwards-directed inner curvilinear arrows with tails are
adjoint on the respective sides to the respective
downwards-directed arrows.
 Specifically, in the left-hand part of the diagram,
the upwards-directed inner curvilinear arrows with tails are right
adjoint to the respective arrows going down.
 In the right-hand part of the diagram, the upwards-directed inner
curvilinear arrows with tails are left adjoint to the respective arrows
going down (just as the relative positions of these arrows to
the left or to the right of one another may suggest).

 The upper curvilinear fully faithful functors between the homotopy
categories $\sK(S\Modl_{J\tors}^\inj)\rarrow\sK(S\Modl_{J\tors})$ and
$\sK(S\Modl_{J\ctra}^\proj)\rarrow\sK(S\Modl_{J\ctra})$ are
induced by the inclusions of additive/abelian categories
$S\Modl_{J\tors}^\inj\rarrow S\Modl_{J\tors}$ and
$S\Modl_{J\ctra}^\proj\rarrow S\Modl_{J\ctra}$.
 The straight downwards-directed arrows in the leftmost column, with
the exception of the uppermost one, are the triangulated functors
between the homotopy and derived categories induced by the exact
inclusions of additive/exact/abelian categories $S\Modl_{J\tors}^\inj
\rarrow\sE^{l_2}\rarrow\sE_{l_1}\rarrow S\Modl_{J\tors}$.
 The straight downwards-directed arrows in the rightmost column, with
the exception of the uppermost one, are the triangulated functors
between the homotopy and derived categories induced by the exact
inclusions of additive/exact/abelian categories $S\Modl_{J\ctra}^\proj
\rarrow\sF^{l_2}\rarrow\sF_{l_1}\rarrow S\Modl_{J\ctra}$.

 See also diagrams~\eqref{abstract-big-diagram}
and~\eqref{pseudo-derived-equivs-commutativity-diagram}
in Section~\ref{abstract-classes-secn}.

\subsection{{}}
 In the case of a dedualizing complex $L^\bu=B^\bu$, one has
$\sE_{l_1}=S\Modl_{J\tors}$ and $\sF_{l_1}=S\Modl_{J\ctra}$ for
large enough values of the integer parameter~$l_1$.
 So the triangulated functors $\sD'_{L^\bu}(S\Modl_{J\tors})
\rarrow\sD(S\Modl_{J\tors})$ and $\sD''_{L^\bu}(S\Modl_{J\ctra})
\rarrow\sD(S\Modl_{J\ctra})$ are triangulated equivalences,
the lower pseudo-derived categories coincide with the conventional
derived categories, and the lower level of
the diagram~\eqref{big-diagram} collapses.

 In the case of a dualizing complex $L^\bu=D^\bu$, depending on
the specifics of injective/projective dimension assumptions, the upper
pseudo-coderived category $\sD^{L^\bu}_{\prime}(S\Modl_{J\tors})$
coincides with the Becker coderived category
$\sD^\bco(S\Modl_{J\tors})$ as well as with the Positselski coderived
category $\sD^\co(S\Modl_{J\tors})$.
 The upper pseudo-contraderived category
$\sD^{L^\bu}_{\prime\prime}(S\Modl_{J\ctra})$ coincides with
the Becker contraderived category $\sD^\bctr(S\Modl_{J\ctra})$,
and often also with the Positselski contraderived category
$\sD^\ctr(S\Modl_{J\ctra})$.
 So the triangulated functors $\sK(S\Modl_{J\tors}^\inj)\rarrow
\sD^{L^\bu}_{\prime}(S\Modl_{J\tors})$ and
$\sK(S\Modl_{J\ctra}^\proj)\rarrow
\sD^{L^\bu}_{\prime\prime}(S\Modl_{J\ctra})$ are triangulated
equivalences, and the next-to-upper level of
the diagram~\eqref{big-diagram} collapses.

\subsection*{Acknowledgement}
 I~am grateful to Jan \v St\!'ov\'\i\v cek for helpful discussions.
 I~wish to thank Amnon Yekutieli for his comments on the Introduction
to this paper, which motivated me to write the explanatory
Section~\ref{introd-morita-subsecn}.
 The author is supported by the GA\v CR project 23-05148S
and the Institute of Mathematics, Czech Academy of Sciences
(research plan RVO:~67985840).

\Section{Preliminaries on the Weak Proregularity Condition}
\label{prelims-on-wpr-secn}

 We refer to the papers~\cite{PSY,Pmgm,Yek3,Pdc} for a discussion of
weakly proregular finitely generated ideals in commutative rings.
 This section offers a brief sketch.

 Let $S$ be a commutative ring and $s\in S$ be an element.
 The notation $K_\bu(S,s)$ stands for the two-term Koszul complex
of free $S$\+modules $S\overset s\rarrow S$ concentrated in
the homological degrees $0$ and~$1$ (i.~e., the cohomological degrees
$-1$ and~$0$).
 The notation $K^\bu(S,s)=\Hom_S(K_\bu(S,s),S)$ stands for the same
complex placed in the cohomological degrees $0$ and~$1$; so we have
$K^\bu(S,s)=K_\bu(S,s)[-1]$.

 Given an integer $n\ge1$, consider also the complexes $K_\bu(S,s^n)$
and $K^\bu(S,s^n)$ (where $s^n$ is the $n$\+th power of~$s$).
 The complexes $K_\bu(S,s^n)$ form a projective system with respect to
the natural maps
$$
 \xymatrixcolsep{3em}
 \xymatrix{
  S \ar[r]^{s^{n+1}} \ar[d]_s & S \ar[d]^{\id} \\
  S \ar[r]^{s^n} & S
 }
$$
while the complexes $K^\bu(S,s^n)$ form an inductive system with respect
to the dual maps
$$
 \xymatrixcolsep{3em}
 \xymatrix{
  S \ar[r]^{s^n} \ar[d]_{\id} & S \ar[d]^s \\
  S \ar[r]^{s^{n+1}} & S
 }
$$
 Put $K_\infty^\bu(S,s)=\varinjlim_{n\ge1}K^\bu(S,s^n)$; so
$K_\infty^\bu(S,s)$ is the two-term complex $S\rarrow S[s^{-1}]$ 
concentrated in the cohomological degrees~$0$ and~$1$.
 Here $S[s^{-1}]$ is the notation for the ring $S$ with
the element~$s$ formally inverted, i.~e., in other words, $S[s^{-1}]$
is the localization of $S$ at the multiplicative subset
$\{1,s,s^2,s^3,\dotsc\}$.

 Let $J$ be a finitely generated ideal in a commutative ring~$S$.
 Choose a finite sequence of generators $s_1$,~\dots,~$s_m$ of the ideal
$J\subset S$, and denote it for brevity by $\bs=(s_1,\dotsc,s_m)$.
 Put
$$
 K_\bu(S,\bs)=
 K_\bu(S,s_1)\ot_S K_\bu(S,s_2)\ot_S\dotsb\ot_S K_\bu(S,s_m)
$$
and
$$
 K^\bu(S,\bs)=
 K^\bu(S,s_1)\ot_S K^\bu(S,s_2)\ot_S\dotsb\ot_S K^\bu(S,s_m).
$$
 So $K_\bu(S,\bs)$ is a finite complex of finitely generated free
$S$\+modules concentrated in the homological degrees from $0$ to~$m$
(which means the cohomological degrees from $-m$ to~$0$), while
$K^\bu(S,\bs)\simeq\Hom_S(K_\bu(S,\bs),S)\simeq K_\bu(S,\bs)[-m]$ is
a finite complex of finitely generated free $S$\+modules concentrated
in the cohomological degrees from $0$ to~$m$.

 Put $\bs^n=(s_1^n,\dotsc,s_m^n)$.
 Taking the tensor products of the natural maps of complexes above,
one obtains a projective system of complexes $K_\bu(S,\bs^n)$ and
an inductive system of complexes $K^\bu(S,\bs^n)$.
 Finally, we set
$$
 K^\bu_\infty(S,\bs) = K^\bu_\infty(S,s_1)\ot_S K^\bu_\infty(S,s_2)
 \ot_S\dotsb\ot_S K^\bu_\infty(S,s_m).
$$
 So $K^\bu_\infty(S,\bs)=\varinjlim_{n\ge1}K^\bu(S,\bs^n)$ is a finite
complex of countably presented flat $S$\+modules concentrated in
the cohomological degrees from $0$ to~$m$.
 (In fact, $K^\bu_\infty(S,\bs)$ is a complex of \emph{very flat}
$S$\+modules in the sense of~\cite[Section~1.1]{Pcosh}.)

 The complex $K_\bu(S,\bs)$ is called the \emph{Koszul complex}, while
the complex $K^\bu(S,\bs)$ is called the \emph{dual Koszul complex}.
 The complex $K^\bu_\infty(S,\bs)$ is called the \emph{infinite dual
Koszul complex}.

 A construction of a finite complex of countably generated free
$S$\+modules
$$
 T^\bu(S,\bs)=T^\bu(S,s_1)\ot_S T^\bu(S,s_2)\ot_S\dotsb\ot_S
 T^\bu(S,s_m)
$$
quasi-isomorphic to the complex $K^\bu(S,\bs)$ can be found
in~\cite[formula~(6.7) and Lemma~6.9]{DG},
\cite[Section~5]{PSY}, or~\cite[Section~2]{Pmgm}.
 Just as the complex $K^\bu(S,\bs)$, the complex $T^\bu(S,\bs)$ is
concentrated in the cohomological degrees from $0$ to~$m$.

 The complex $T^\bu(S,\bs)$ is the direct limit of a direct system
of finite complexes of finitely generated free $S$\+modules
$T_n^\bu(S,\bs)$ with termwise split monomorphisms
$T_n^\bu(S,\bs)\rarrow T_{n+1}^\bu(S,\bs)$ as the transition maps.
 The complex $T_n^\bu(S,\bs)$ is naturally homotopy equivalent to
the complex $K^\bu(S,\bs^n)$ \,\cite[Section~5]{PSY},
\cite[Sections~2 and~5]{Pmgm}.

 The complex $T^\bu(S,\bs)$ does \emph{not} depend on the choice of
a finite sequence of generators of a given finitely generated ideal
$J\subset S$, up to a natural homotopy
equivalence~\cite[Theorem~6.1]{PSY}.
 In other words, the complex $K^\bu_\infty(S,\bs)$ does \emph{not}
depend on the sequence~$\bs$, but only on the ideal $J\subset S$, up to
a natural chain of quasi-isomorphisms~\cite[Proposition~2.20]{Yek3},
\cite[Lemma~2.1]{Pdc}.

 A projective system of $S$\+modules (or abelian groups) $H_1\larrow
H_2\rarrow H_3\larrow\dotsb$ indexed by the poset of positive integers
is said to be \emph{pro-zero} if for every integer $j\ge1$ there exists
an integer $k>j$ such that the transition map $H_k\rarrow H_j$ vanishes.
 A finite sequence of elements~$\bs$ in a commutative ring $S$ is said
to be \emph{weakly proregular} if the projective system of the homology
groups of the Koszul complexes $(H_iK_\bu(S,\bs^n))_{n\ge1}$ is pro-zero
for every $i>0$.

 The weak proregularity property of a finite sequence of elements~$\bs$
in a commutative ring $S$ depends only on the ideal $J$ generated
by~$\bs$ in $S$ (and even only on the radical $\sqrt{J}$ of
the ideal~$J$), rather than on the sequence~$\bs$
itself~\cite[Corollary~6.2 or~6.3]{PSY}.
 Thus one can speak about \emph{weakly proregular finitely generated
ideals} $J$ in commutative rings~$S$.
 In a Noetherian commutative ring $S$, all ideals are weakly
proregular~\cite[Theorem~4.34]{PSY}, \cite[Section~1]{Pmgm},
\cite[Theorem~3.3]{Yek3}.

 Let $S$ be a commutative ring, $s\in S$ be an element, and $J\subset S$
be an ideal.
 An $S$\+module $M$ is said to be \emph{$s$\+torsion} if for every
$m\in M$ there exists an integer $n\ge1$ such that $s^nm=0$ in~$M$.
 Equivalently, this means that $S[s^{-1}]\ot_SM=0$.
 An $S$\+module $M$ is said to be \emph{$J$\+torsion} if $M$ is
$s$\+torsion for every $s\in S$.
 It suffices to check this condition for the element~$s$ ranging over
any chosen set of generators~$\{s_j\}$ of the ideal~$J$.

 The full subcategory $S\Modl_{J\tors}$ of $J$\+torsion $S$\+modules
is closed under extensions, submodules, quotients, and infinite
direct sums in the abelian category of $S$\+modules $S\Modl$.
 In other words, one says that $S\Modl_{J\tors}$ is a \emph{Serre
subcategory closed under infinite direct sums}, or a \emph{localizing
subcategory}, or in a different terminology, a \emph{hereditary
torsion class} in $S\Modl$.
 It follows that $S\Modl_{J\tors}$ is a Grothendieck abelian category,
and the fully faithful inclusion functor $S\Modl_{J\tors}\rarrow S\Modl$
is exact and preserves infinite direct sums.

 An $S$\+module $P$ is said to be an \emph{$s$\+contramodule} if
$\Hom_S(S[s^{-1}],P)=0=\Ext^1_S(S[s^{-1}],P)$.
 One does not need to impose higher Ext vanishing conditions, as
the projective dimension of the $S$\+module $S[s^{-1}]$ never
exceeds~$1$ \,\cite[proof of Lemma~2.1]{Pcta}.
 An $S$\+module $P$ is said to be a \emph{$J$\+contramodule} (or
a \emph{$J$\+contramodule $S$\+module}) if $P$ is an $s$\+contramodule
for every $s\in S$.
 It suffices to check this condition for the element~$s$ ranging over
any chosen set of generators~$\{s_j\}$ of the ideal~$J$
\,\cite[Theorem~5.1]{Pcta}.

 The full subcategory $S\Modl_{J\ctra}$ of $J$\+contramodule
$S$\+modules is closed under extensions, kernels, cokernels, and
infinite products in the abelian category of $S$\+modules $S\Modl$
\cite[Proposition~1.1]{GL}, \cite[Theorem~1.2(a)]{Pcta}.
 It follows that $S\Modl_{J\ctra}$ is an abelian category with
infinite products, and the fully faithful inclusion functor
$S\Modl_{J\ctra}\rarrow S\Modl$ is exact and preserves
infinite products.

 Let $J$ be a finitely generated ideal in a commutative ring~$S$.
 To any $S$\+module $P$, one can assign its $J$\+adic completion
$\Lambda_J(P)=\varprojlim_{n\ge1}P/J^nP$ \,\cite[Section~1]{GM},
\cite[Section~1]{Yek0}.
 One says that $P$ is \emph{$J$\+adically separated} if the natural
completion map $\lambda_{J,P}\:P\rarrow\Lambda_J(P)$ is injective, and
that $P$ is \emph{$J$\+adically complete} if the map~$\lambda_{J,P}$
is surjective.
 The assumption of finite generatedness of the ideal $J$ implies that
the $S$\+module $\Lambda_J(P)$ is $J$\+adically (separated and)
complete~\cite[Corollaries~1.7 and~3.6]{Yek0}.

 Any $J$\+adically separated and complete $S$\+module is
a $J$\+contramodule (because any $S/J^n$\+module is a $J$\+contramodule
$S$\+module and the class of $J$\+contramodules is closed under
projective limits in $S\Modl$).
 Any $J$\+contramodule $S$\+module is $J$\+adically
complete~\cite[Theorem~5.6]{Pcta}, but it need not be $J$\+adically
separated~\cite[Example~2.5]{Sim}, \cite[Example~3.20]{Yek0},
\cite[Example~4.33]{PSY}, \cite[Example~2.7(1)]{Pcta}.

 A $J$\+contramodule $S$\+module is said to be \emph{quotseparated}
if it is a quotient $S$\+module of a $J$\+adically separated and
complete $S$\+module.
 The full subcategory $S\Modl_{J\ctra}^\qs\subset S\Modl_{J\ctra}$ of
quotseparated $J$\+contramodule $S$\+modules is closed under kernels,
cokernels, and infinite products in $S\Modl_{J\ctra}$ and $S\Modl$
\,\cite[Lemma~1.2]{Pdc}.
 It follows that the category $S\Modl_{J\ctra}^\qs$ is abelian, and
its fully faithful inclusion functors $S\Modl_{J\ctra}^\qs\rarrow
S\Modl_{J\ctra}$ and $S\Modl_{J\ctra}^\qs\rarrow S\Modl$ are exact
and preserve infinite products.
 Every $J$\+contramodule $S$\+module is an extension of two
quotseparated $J$\+contramodule
$S$\+modules~\cite[Proposition~1.6]{Pdc}.

 If the ideal $J\subset S$ is weakly proregular, then every
$J$\+contramodule $S$\+module is
quotseparated~\cite[Corollary~3.7]{Pdc}.
 In fact, a certain (small) part of the weak proregularity condition
on a finitely generated ideal $J\subset S$ is equivalent to all
$J$\+contramodule $S$\+modules being
quotseparated~\cite[Remark~3.8]{Pdc}.

 The following lemma is very basic.
 For a generalization to complexes, see
Lemma~\ref{tors-contra-tensor-hom-for-complexes} below.

\begin{lem} \label{tors-contra-tensor-hom-lemma}
 Let $S$ be a commutative ring and $J\subset S$ be an ideal.
 In this context: \par
\textup{(a)} if $M$ and $N$ are $S$\+modules and either $M$ or $N$ is
$J$\+torsion, then the $S$\+module $M\ot_SN$ is $J$\+torsion; \par
\textup{(b)} if $M$ and $P$ are $S$\+modules and $M$ is $J$\+torsion,
then the $S$\+module\/ $\Hom_S(M,P)$ is a $J$\+contramodule (in
fact, a $J$\+adically separated and complete $S$\+module, if the ideal
$J$ is finitely generated); \par
\textup{(c)} if $M$ and $P$ are $S$\+modules and $P$ is
a $J$\+contramodule, then the $S$\+module\/ $\Hom_S(M,P)$ is
a $J$\+contramodule.
\end{lem}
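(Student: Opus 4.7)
\medskip

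The plan is to treat the three parts in sequence, reducing to generators of $J$ and applying Hom-tensor adjunction together with the facts recorded above in this section (in particular, $\ppd_SS[s^{-1}]\le1$ and the flatness of $S[s^{-1}]$).

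For part~(a), I use the equivalent characterization that an $S$-module is $s$-torsion if and only if $S[s^{-1}]\ot_S-$ vanishes on it. The associativity isomorphism
$$
S[s^{-1}]\ot_S(M\ot_SN)\simeq(S[s^{-1}]\ot_SM)\ot_SN
$$
yields the conclusion whenever either factor is $s$-torsion, and $J$-torsion of $M\ot_SN$ then follows by applying this to each element $s$ in a chosen set of generators of~$J$.

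For part~(b), I verify the $s$-contramodule condition on $Q:=\Hom_S(M,P)$ for each such $s$. Since $\ppd_SS[s^{-1}]\le1$, choose a length-one free resolution $0\rarrow F^{-1}\rarrow F^0\rarrow S[s^{-1}]\rarrow0$. Flatness of $S[s^{-1}]$ together with the vanishing $S[s^{-1}]\ot_SM=0$ from part~(a) forces $F^{-1}\ot_SM\rarrow F^0\ot_SM$ to be an isomorphism. Hom-tensor adjunction identifies the complex $\Hom_S(F^\bu,Q)$ that computes $\Ext^*_S(S[s^{-1}],Q)$ with $\Hom_S(F^\bu\ot_SM,P)$, whose differential is therefore an isomorphism; both $\Hom_S(S[s^{-1}],Q)$ and $\Ext^1_S(S[s^{-1}],Q)$ vanish at once.

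For the parenthetical strengthening under finite generation of~$J$, I use $M=\bigcup_{n\ge1}M_n$ with $M_n=\Hom_S(S/J^n,M)=\{m\in M:J^nm=0\}$; the finite generation of $J$ ensures that every $m\in M$ is annihilated by some power of~$J$, so this union exhausts~$M$. Hence
$$
Q=\Hom_S(M,P)\simeq\varprojlim_n\Hom_S(M_n,P),
$$
an inverse limit of $S/J^n$-modules, each of which is trivially $J$-adically separated and complete. Using that $J^n$ is finitely generated, one checks $J^n\prod_\alpha N_\alpha=\prod_\alpha J^nN_\alpha$ and concludes that products of $J$-adically separated and complete $S$-modules remain $J$-adically separated and complete; a further verification (via the left exactness of the completion functor on this subcategory) that kernels of maps between such modules stay in the class exhibits the inverse limit as $J$-adically separated and complete.

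For part~(c), a free presentation $F_1\rarrow F_0\rarrow M\rarrow0$ gives $Q\simeq\ker(\Hom_S(F_0,P)\rarrow\Hom_S(F_1,P))$, the kernel of a map between products of copies of~$P$. The closure of $J$-contramodule $S$-modules under infinite products and kernels, recorded above in this section, yields the conclusion at once.

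The most delicate point is the kernel closure invoked in~(b); checking that a kernel of a map between $J$-adically separated and complete $S$-modules inherits the same property requires careful handling of the Artin-Rees-type discrepancy between $J^nK$ and $K\cap J^nM$ for a submodule $K\subseteq M$, and is the one step not reducible to a direct Hom-tensor manipulation.
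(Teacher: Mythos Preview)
Your arguments for (a), (c), and the non-parenthetical assertion of (b) are correct and are essentially a spelled-out version of what the paper defers to \cite[Lemma~6.1]{Pcta}. The Hom--tensor adjunction trick you use in (b), reducing $\Ext^*_S(S[s^{-1}],\Hom_S(M,P))$ to $\Hom_S(F^\bu\ot_SM,P)$ and then invoking flatness of $S[s^{-1}]$ together with $S[s^{-1}]\ot_SM=0$, is exactly the right move.

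The genuine issue is in your treatment of the parenthetical strengthening of (b). You correctly write $Q=\varprojlim_n\Hom_S(M_n,P)$ as an inverse limit of $S/J^n$-modules, and separatedness then follows immediately (projective limits of $J$-adically separated modules are $J$-adically separated; this is all the paper uses from the limit description). But for \emph{completeness} you attempt a direct argument via closure of the class of $J$-adically separated and complete modules under kernels, and you flag this as requiring an ``Artin--Rees-type'' analysis of $J^nK$ versus $K\cap J^nA$. That route does not go through in the present non-Noetherian generality: there is no Artin--Rees lemma available, and your parenthetical appeal to ``left exactness of the completion functor on this subcategory'' is circular (on that subcategory the completion functor is the identity, which tells you nothing about whether the kernel lies in the subcategory).

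The fix is much simpler and is what the paper does implicitly: you have already proved in the non-parenthetical part of (b) that $Q$ is a $J$-contramodule, and the preceding discussion in Section~\ref{prelims-on-wpr-secn} records that every $J$-contramodule $S$-module is $J$-adically complete \cite[Theorem~5.6]{Pcta}. So completeness comes for free from what you have already shown; only separatedness needs the inverse-limit argument. (Incidentally, kernel closure for separated-and-complete modules \emph{is} true when $J$ is finitely generated, but its proof goes through the contramodule theory---separated+complete $\Rightarrow$ contramodule $\Rightarrow$ closed under kernels $\Rightarrow$ complete---rather than through any Artin--Rees mechanism.)
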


\begin{proof}
 All the assertions with exception of the one in parentheses in
part~(b) are covered by~\cite[Lemma~6.1]{Pcta}.
 The remaining parenthetical assertion is provable by representing
$M$ as the direct union of its submodules annihilated by $J^n$,
\,$n\ge1$, and noticing that projective limits of $J$\+adically
separated $S$\+modules are $J$\+adically separated.
\end{proof}

 The exact, fully faithful inclusion functor $S\Modl_{J\tors}\rarrow
S\Modl$ has a right adjoint functor, denoted by $\Gamma_J\:
S\Modl\rarrow S\Modl_{J\tors}$.
 The functor $\Gamma_J$ assigns to an $S$\+module $M$ its (obviously
unique) maximal $J$\+torsion submodule~\cite[Section~1]{GM},
\cite[Section~3]{PSY}, \cite[Section~1]{Pmgm}.
 As any Grothendieck category, the abelian category $S\Modl_{J\tors}$
has enough injective objects.
 The injective objects of $S\Modl_{J\tors}$ are precisely all
the direct summands of the $S$\+modules $\Gamma_J(K)$, where $K$
ranges over the class of injective $S$\+modules~\cite[Section~5]{Pmgm}.
 A $J$\+torsion $S$\+module $K$ is injective as an object of
$S\Modl_{J\tors}$ if and only if the submodule of all elements
annihilated by $J^n$ in $K$ is an injective $S/J^n$\+module for
every $n\ge1$.

 The exact, fully faithful inclusion functor $S\Modl_{J\ctra}\rarrow
S\Modl$ has a left adjoint functor, denoted by $\Delta_J\:
S\Modl\rarrow S\Modl_{J\ctra}$.
 In the case of a finitely generated ideal $J\subset S$, the functor
$\Delta_J$ was constructed explicitly in~\cite[Proposition~2.1]{Pmgm};
a more detailed discussion can be found in~\cite[Sections~6--7]{Pcta}.
 In the general case of an arbitrary ideal $J\subset S$, one can
apply~\cite[Example~1.3(4)]{Pper} to a two-term projective resolution
$U^{-1}\rarrow U^0$ of the $S$\+module $\bigoplus_{s\in J}S[s^{-1}]$.
 The abelian category $S\Modl_{J\ctra}$ is locally presentable in
the sense of~\cite[Definition~1.17 and Theorem~1.20]{AR} (locally
$\aleph_1$\+presentable in the case of a finitely generated ideal~$J$)
and has enough projective objects.
 The projective objects of $S\Modl_{J\ctra}$ are precisely all
the direct summands of the $S$\+modules $\Delta_J(P)$, where $P$
ranges of the class of projective (or free)
$S$\+modules~\cite[Section~1]{Pdc}.

 Assume that the ideal $J\subset S$ is finitely generated.
 Then the exact, fully faithful inclusion functor $S\Modl_{J\ctra}^\qs
\rarrow S\Modl$ has a left adjoint functor, denoted by
$\boL_0\Lambda_J\:S\Modl\rarrow S\Modl_{J\ctra}^\qs$.
 It is the $0$\+th left derived functor of the $J$\+adic completion
functor $\Lambda_J$, which is neither left nor right exact
(cf.~\cite[Section~3]{PSY}); see~\cite[Proposition~1.3]{Pdc}.
 The abelian category $S\Modl_{J\ctra}^\qs$ is locally
$\aleph_1$\+presentable and has enough projective objects.
 The projective objects of $S\Modl_{J\ctra}^\qs$ are precisely all
the direct summands of the $S$\+modules $\Lambda_J(P)=
\boL_0\Lambda_J(P)$, where $P$ ranges of the class of projective
(or free) $S$\+modules~\cite[Section~1]{Pdc}.
 A quotseparated $J$\+contramodule $S$\+module $F$ is projective
as an object of $S\Modl_{J\ctra}^\qs$ if and only if
the $S/J^n$\+module $F/J^nF$ is projective for every $n\ge1$
(this is a particular case of~\cite[Corollary~E.1.10(a)]{Pcosh}
in view of~\cite[Proposition~1.5]{Pdc}).

\Section{Preliminaries on Exotic Derived Categories}
\label{prelim-exotic-derived-secn}

 We suggest the survey paper~\cite{Bueh} as the background reference
source on \emph{exact categories in the sense of Quillen}.
 In particular, any abelian category can be viewed as an exact
category with the \emph{abelian exact category structure}.
 Given an exact category $\sT$ and a full additive subcategory
$\sE\subset\sT$ such that $\sE$ is closed under extensions in $\sT$,
we will always endow $\sE$ with the \emph{inherited exact category
structure} in which the admissible short exact sequences in $\sE$
are the admissible short exact sequences in $\sT$ with the terms
belonging to~$\sE$.

 Let $\sE$ be an exact category.
 The definitions of the (bounded or unbounded) conventional derived
categories $\sD^\st(\sE)$ with the symbols $\st=\bb$, $+$, $-$,
or~$\varnothing$ are discussed in~\cite{Neem0}
and~\cite[Section~10]{Bueh}.

 We refer to~\cite[Appendix~A]{Pmgm} and~\cite[Sections~A.1
and~B.7]{Pcosh} for more detailed discussions of the exotic derived
categories $\sD^\st(\sE)$ with the derived category symbols
$\st=\abs+$, $\abs-$, $\abs$, $\co$, $\ctr$, $\bco$, and~$\bctr$.
 Their names are the (bounded or unbounded) \emph{absolute derived
categories}, the \emph{Positselski coderived and contraderived}
categories, and the \emph{Becker coderived and contraderived}
categories.
 A discussion of the Becker coderived and contraderived categories
in the context of abelian categories $\sE$ can be also found in
the paper~\cite{PS4}; see in particular~\cite[Remark~9.2]{PS4} for
the history and terminology.
 The following section is a brief sketch.

 For any symbol $\st=\bb$, $+$, $-$, or~$\varnothing$, we denote by
$\sC^\st(\sE)$ the category of (respectively bounded or unbounded)
complexes in~$\sE$ (and closed morphisms of degree~$0$ between them).
 The notation $\sK^\st(\sE)$ stands for the homotopy category of  
complexes in~$\sE$, i.~e., the additive quotient category of
$\sC^\st(\sE)$ by the ideal of morphisms cochain homotopic to zero.
 So $\sK^\st(\sE)$ is a triangulated category.
 
 A short sequence $0\rarrow K^\bu\rarrow L^\bu\rarrow M^\bu\rarrow0$
of complexes in $\sE$ is said to be (\emph{admissible}) \emph{exact}
if it is exact in $\sE$ at every degree, i.~e., the short sequence
$0\rarrow K^n\rarrow L^n\rarrow M^n\rarrow0$ is admissible exact in
$\sE$ for every integer $n\in\boZ$.
 The class of all such short exact sequences of complexes in $\sE$
defines the \emph{degreewise exact structure} on $\sC(\sE)$.
 A short exact sequence of complexes in $\sE$ can be viewed as
a bicomplex with three rows; as such, it has the total complex.

 A complex in $\sE$ is said to be \emph{absolutely
acyclic}~\cite[Section~A.1]{Pcosh}, \cite[Appendix~A]{Pmgm} if it
belongs to the minimal thick subcategory of $\sK(\sE)$ containing
all the totalizations of short exact sequences of complexes in~$\sE$.
 By~\cite[Proposition~8.12]{PS5}, the full subcategory of absolutely
acyclic complexes in $\sC(\sE)$ is precisely the closure of the class
of all contractible complexes under extensions (in the degreewise exact
structure) and direct summands.
 We denote the full subcategory of absolutely acyclic complexes by
$\Ac^\abs(\sE)\subset\sK(\sE)$ or $\Ac^\abs(\sE)\subset\sC(\sE)$.

 The definitions of the full subcategories $\Ac^{\abs+}(\sE)\subset
\sK^+(\sE)$ and $\Ac^{\abs-}(\sE)\subset\sK^-(\sE)$ are similar
(the same construction is performed within the realm of bounded
below or bounded above complexes, respectively).
 In fact, a bounded below (respectively, above) complex is absolutely acyclic as a bounded below (resp., above) complex if and only if it is
absolutely acyclic in the world of unbounded complexes.
 A bounded complex is absolutely acyclic if and only if it is acyclic
in the conventional sense~\cite[Lemma~A.1.2]{Pcosh}.

 The (one-sided bounded or unbounded) \emph{absolute derived categories}
of an exact category $\sE$ are defined as the triangulated Verdier
quotient categories
$$
 \sD^\abs(\sE)=\sK(\sE)/\Ac^\abs(\sE)
 \quad\text{and}\quad
 \sD^{\abs\pm}(\sE)=\sK^\pm(\sE)/\Ac^{\abs\pm}(\sE).
$$

 An exact category $\sE$ is said to have \emph{exact functors of
infinite direct sum} if all the infinite direct sums (coproducts)
exist in $\sE$ and the infinite direct sums of admissible short
exact sequences are admissible short exact sequences.
 The notion of an exact category with \emph{exact functors of infinite
product} is defined dually.

 Let $\sE$ be an exact category with exact functors of infinite direct
sum.
 A complex in $\sE$ is said to be \emph{Positselski-coacyclic} if it
belongs to the minimal triangulated subcategory of $\sK(\sE)$
containing the totalizations of short exact sequences of complexes in
$\sE$ and closed under infinite direct sums.
 The full subcategory of Positselski-coacyclic complexes in $\sE$ is
denoted by $\Ac^\co(\sE)\subset\sK(\sE)$.
 The \emph{Positselski coderived category} of $\sE$ is defined as
the triangulated Verdier quotient category
$$
 \sD^\co(\sE)=\sK(\sE)/\Ac^\co(\sE)
$$
\cite[Section~2.1]{Psemi}, \cite[Section~A.1]{Pcosh},
\cite[Appendix~A]{Pmgm}.

 Dually, let $\sE$ be an exact category with exact functors of infinite
product.
 A complex in $\sE$ is said to be \emph{Positselski-contraacyclic} if
it belongs to the minimal triangulated subcategory of $\sK(\sE)$
containing the totalizations of short exact sequences of complexes in
$\sE$ and closed under infinite products.
 The full subcategory of Positselski-contraacyclic complexes in $\sE$
is denoted by $\Ac^\ctr(\sE)\subset\sK(\sE)$.
 The \emph{Positselski contraderived category} of $\sE$ is defined as
the triangulated Verdier quotient category
$$
 \sD^\ctr(\sE)=\sK(\sE)/\Ac^\ctr(\sE)
$$
\cite[Section~4.1]{Psemi}, \cite[Section~A.1]{Pcosh},
\cite[Appendix~A]{Pmgm}.

 We refer to~\cite[Section~11]{Bueh} for the definitions of injective
and projective objects in exact categories.
 Given an exact category $\sE$, we denote by $\sE^\inj\subset\sE$
the full subcategory of injective objects in $\sE$ and by
$\sE^\proj\subset\sE$ the full subcategory of projective objects
in~$\sE$.

 A complex $A^\bu$ in an exact category $\sE$ is said to be
\emph{Becker-coacyclic}~\cite[Proposition~1.3.8(2)]{Bec},
\cite[Section~B.7]{Pcosh} if, for every complex of injective objects
$J^\bu$ in $\sE$, every morphism of complexes $A^\bu\rarrow J^\bu$ is
homotopic to zero.
 All absolutely acyclic complexes are Becker-coacyclic.
 If the exact category $\sE$ has exact functors of infinite direct
sum, then all Positselski-coacyclic complexes are
Becker-coacyclic~\cite[Lemma~B.7.1(a\+-b)]{Pcosh}.
 The \emph{Becker coderived category} of $\sE$ is defined as
the triangulated Verdier quotient category
$$
 \sD^\bco(\sE)=\sK(\sE)/\Ac^\bco(\sE).
$$

 Dually, a complex $B^\bu$ in an exact category $\sE$ is said to be
\emph{Becker-contraacyclic}~\cite[Proposition~1.3.8(1)]{Bec},
\cite[Section~B.7]{Pcosh} if, for every complex of projective objects
$P^\bu$ in $\sE$, every morphism of complexes $P^\bu\rarrow B^\bu$ is
homotopic to zero.
 All absolutely acyclic complexes are Becker-contraacyclic.
 If the exact category $\sE$ has exact functors of infinite product,
then all Positselski-contraacyclic complexes are
Becker-contraacyclic~\cite[Lemma~B.7.1(a,c)]{Pcosh}.
 The \emph{Becker contraderived category} of $\sE$ is defined as
the triangulated Verdier quotient category
$$
 \sD^\bctr(\sE)=\sK(\sE)/\Ac^\bctr(\sE).
$$

\begin{lem} \label{becker-co-contra-acyclic-are-acyclic}
\textup{(a)} Let\/ $\sE$ be an exact category with enough injective
objects.
 Assume that the cokernels of all morphisms exist in the additive
category\/~$\sE$.
 Then every Becker-coacyclic complex in\/ $\sE$ is acyclic. \par
\textup{(b)} Let\/ $\sE$ be an exact category with enough projective
objects.
 Assume that the kernels of all morphisms exist in the additive
category\/~$\sE$.
 Then every Becker-contraacyclic complex in\/ $\sE$ is acyclic.
\end{lem}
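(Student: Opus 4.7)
My plan is to probe the Becker-coacyclicity (respectively Becker-contraacyclicity) condition against the simplest possible test complexes---single injective (resp.\ projective) objects of $\sE$ concentrated in one cohomological degree---and to combine the resulting Hom-surjectivity with Quillen's obscure axiom for exact categories (see~\cite[Proposition~7.6]{Bueh}). The two parts are formally dual, so I describe~(a) in some detail and only sketch the dualization for~(b).

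For~(a), given $A^\bu$ Becker-coacyclic, I would first use the hypothesis to form, for every $n\in\boZ$, the additive cokernel $C^n=\mathrm{coker}(A^{n-1}\rarrow A^n)$ in $\sE$, through which $d^n$ factors as $A^n\twoheadrightarrow C^n\xrightarrow{\bar d^n}A^{n+1}$. For any injective object $I\in\sE^\inj$ and any integer~$n$, the stalk complex $I[-n]$ (with $I$ placed in degree~$n$) is a complex of injectives, a closed morphism $A^\bu\rarrow I[-n]$ amounts to a map $f\:A^n\rarrow I$ with $f\circ d^{n-1}=0$, and such a morphism is null-homotopic precisely when $f=g\circ d^n$ for some $g\:A^{n+1}\rarrow I$. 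The Becker-coacyclicity hypothesis, re-encoded through $C^n$, therefore yields the surjectivity of
\[
 \Hom_\sE(A^{n+1},I)\rarrow\Hom_\sE(C^n,I),\qquad g\longmapsto g\circ\bar d^n,
\]
for every $I\in\sE^\inj$ and every $n\in\boZ$.

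Next I would upgrade this Hom-surjectivity to the assertion that $\bar d^n$ is an admissible monomorphism in $\sE$. Using ``enough injectives'', pick an admissible monomorphism $\iota\:C^n\hookrightarrow J$ with $J\in\sE^\inj$; the above surjectivity delivers $g\:A^{n+1}\rarrow J$ with $g\circ\bar d^n=\iota$. The composition $\iota=g\circ\bar d^n$ is thus an admissible monomorphism, while the first factor $\bar d^n$ possesses a cokernel in the additive category $\sE$ (namely $C^{n+1}$); Quillen's obscure axiom then forces $\bar d^n$ itself to be an admissible monomorphism. Its additive cokernel coincides with its cokernel in the exact structure of $\sE$, producing admissible short exact sequences $C^n\hookrightarrow A^{n+1}\twoheadrightarrow C^{n+1}$ for every $n\in\boZ$, and splicing these exhibits $A^\bu$ as acyclic in the exact-category sense.

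Part~(b) is the mechanical dualization: form additive kernels $Z^n=\ker(d^n\:P^n\rarrow P^{n+1})$ in $\sE$ together with the factorization $\tilde d^{n-1}\:P^{n-1}\rarrow Z^n$; test Becker-contraacyclicity against stalks $Q[-n]$ with $Q\in\sE^\proj$ to get surjectivity of $\Hom_\sE(Q,P^{n-1})\rarrow\Hom_\sE(Q,Z^n)$ along $\tilde d^{n-1}$; and apply the dual obscure axiom (whose kernel hypothesis is met by the assumption on kernels in $\sE$) to conclude that $\tilde d^{n-1}$ is an admissible epimorphism, whence $P^\bu$ is acyclic. The only non-formal ingredient in either part is the obscure axiom; identifying it as the correct tool---and noticing that the hypothesis on additive (co)kernels in $\sE$ is precisely what makes it applicable---is the main obstacle I would expect to encounter.
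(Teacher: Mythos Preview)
Your argument is correct. The paper itself gives no proof here, deferring entirely to \cite[Lemma~B.7.3]{Pcosh}; so there is nothing to compare against beyond confirming that your approach is the standard one and that it works. The test against stalk complexes $I[-n]$, the resulting surjectivity of $\Hom_\sE(A^{n+1},I)\rarrow\Hom_\sE(C^n,I)$, and the application of Quillen's obscure axiom are exactly the right ingredients. One small point: once you have shown that each $\bar d^n\:C^n\rarrow A^{n+1}$ is an admissible monomorphism with cokernel $C^{n+1}$, the map $A^n\rarrow C^n$ is automatically an admissible epimorphism (being the cokernel of the admissible monomorphism $\bar d^{n-1}$), so the splicing really does yield acyclicity in the exact-category sense; you implicitly rely on this and it deserves a sentence.

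A minor bibliographic correction: the version of the obscure axiom you invoke---a morphism possessing a cokernel that becomes an admissible monomorphism after post-composition is itself an admissible monomorphism---is \cite[Proposition~2.16]{Bueh} (valid in any exact category), not Proposition~7.6. The latter concerns weakly idempotent complete exact categories and a strengthened form without the cokernel hypothesis; since you explicitly verify that $\bar d^n$ has cokernel $C^{n+1}$, you need only the weaker statement from Section~2.
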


\begin{proof}
 This is~\cite[Lemma~B.7.3]{Pcosh}.
 See~\cite[Remark~B.7.4]{Pcosh} for a discussion.
\end{proof}

\begin{thm} \label{becker-co-contra-derived-of-loc-pres-abelian}
\textup{(a)} Let\/ $\sA$ be a Grothendieck category (viewed as
an exact category with the abelian exact structure).
 Then the inclusion of additive/abelian categories\/
$\sA^\inj\rarrow\sA$ induces an equivalence between the homotopy
category and the Becker coderived category,
$$
 \sK(\sA^\inj)\simeq\sD^\bco(\sA).
$$ \par
\textup{(b)} Let\/ $\sB$ be a locally presentable abelian category
with enough projective objects (viewed as an exact category with
the abelian exact structure).
 Then the inclusion of additive/abelian categories\/
$\sB^\proj\rarrow\sB$ induces an equivalence between the homotopy
category and the Becker contraderived category,
$$
 \sK(\sB^\proj)\simeq\sD^\bctr(\sB).
$$
\end{thm}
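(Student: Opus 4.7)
The plan is, in each part, to verify the two conditions for the natural composition $\sK(\sA^\inj)\hookrightarrow\sK(\sA)\twoheadrightarrow\sD^\bco(\sA)$ (respectively, $\sK(\sB^\proj)\hookrightarrow\sK(\sB)\twoheadrightarrow\sD^\bctr(\sB)$) to be a triangulated equivalence: fully faithfulness and essential surjectivity. The fully faithful half in part~(a) is nearly tautological: if $Z^\bu$ is Becker-coacyclic and $J^\bu$ is a complex of injectives, then by the very definition of Becker-coacyclicity, $\Hom_{\sK(\sA)}(Z^\bu,J^\bu)=0$. A standard roof/zigzag calculation for morphisms in the Verdier quotient $\sK(\sA)/\Ac^\bco(\sA)$ then upgrades this vanishing to the statement that $\sK(\sA^\inj)\rarrow\sD^\bco(\sA)$ is fully faithful. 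Part~(b) is handled by the dual argument, using morphisms $P^\bu\rarrow Z^\bu$ with $P^\bu$ a complex of projectives and $Z^\bu$ Becker-contraacyclic.

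For essential surjectivity in part~(a), the task is to produce, for every complex $A^\bu$ in $\sA$, a distinguished triangle $A^\bu\rarrow J^\bu\rarrow Z^\bu\rarrow A^\bu[1]$ in $\sK(\sA)$ with $J^\bu\in\sK(\sA^\inj)$ and $Z^\bu$ Becker-coacyclic; in short, a Becker-injective resolution. My approach is to lift the cotorsion pair $(\sA,\sA^\inj)$ in the Grothendieck category $\sA$ to a complete cotorsion pair on the category of unbounded complexes $\sC(\sA)$ in the style of Gillespie. The right class of the lifted pair consists of \emph{dg\+injective} complexes, which one then identifies with the complexes of injective objects. A set of generating (trivial) cofibrations is produced from a generator of $\sA$ together with its disks and spheres, and Quillen's transfinite small-object argument furnishes the required factorization $A^\bu\hookrightarrow J^\bu$ with $J^\bu$ a complex of injectives and cokernel Becker-coacyclic. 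For part~(b) the construction is dual: local presentability of $\sB$ supplies the cardinal smallness needed for the small-object argument, and enough projectives ensure the existence of the projective analogue of injective envelopes; one then factors $0\rarrow B^\bu$ as a cofibration (with Becker-contraacyclic cokernel) followed by a fibration (with target $B^\bu$ and source a complex of projectives).

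The principal obstacle is the identification of the left class of the lifted cotorsion pair on $\sC(\sA)$ with the monomorphisms having Becker-coacyclic cokernel (and dually for part~(b)). This requires, on the one hand, showing that every such monomorphism has cokernel built from totalizations of short exact sequences by transfinite extensions and direct summands (hence Becker-coacyclic), and on the other hand, checking that the Becker-coacyclic property is preserved under the relevant closure operations, so that in the Verdier quotient one indeed factors out no more than what is killed. These identifications are due to Becker, Hovey, and Gillespie; self-contained expositions in the abelian setting are collected in the references cited in the excerpt, notably \cite[Section~B.7]{Pcosh} and \cite[Remark~9.2]{PS4}. Alternatively, one may bypass model-categorical language in part~(a) and proceed by direct transfinite induction: start from an injective envelope $A^\bu\hookrightarrow J_0^\bu$, iterate on the cokernel, and use the generator of $\sA$ to control cardinality at each step; the dual unwinding in part~(b) is more delicate and is precisely where the local presentability hypothesis becomes indispensable.
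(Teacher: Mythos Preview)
The paper does not prove this theorem at all: its entire proof consists of citations to the literature (Neeman, Krause, Gillespie for part~(a); Positselski--\v St\!'ov\'\i\v cek for part~(b)). Your sketch is essentially the argument that appears in those references, particularly~\cite{PS4} and the Becker--Gillespie approach via cotorsion pairs and the small-object argument. In that sense you have supplied more content than the paper itself.

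That said, there is a genuine confusion in your sketch that would derail the argument if left uncorrected. You write that the right class of the lifted cotorsion pair ``consists of dg-injective complexes, which one then identifies with the complexes of injective objects.'' This is wrong: the dg-injective (equivalently, homotopy-injective or K-injective) complexes form a strictly smaller class than the complexes of injectives, and the cotorsion pair whose right class is the dg-injectives has the \emph{acyclic} complexes as its left class --- that gives the ordinary derived category $\sD(\sA)$, not the Becker coderived category. For $\sD^\bco(\sA)$ you want the cotorsion pair whose right class is all complexes of injectives; its left class is then, essentially by definition of Becker-coacyclicity, the Becker-coacyclic complexes. Relatedly, your description of the ``principal obstacle'' is backwards: Becker-coacyclic complexes are \emph{defined} via orthogonality to complexes of injectives, not via being built from totalizations of short exact sequences (that would be Positselski-coacyclicity). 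The identification you need is that the left class produced by the small-object argument coincides with $\Ac^\bco(\sA)$, and this follows because both are characterized as the $\Hom_{\sK(\sA)}$-orthogonal of $\sK(\sA^\inj)$.
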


\begin{proof}
 Part~(a) is~\cite[Theorem~2.13]{Neem2}, \cite[Corollary~5.13]{Kra3},
\cite[Theorem~4.2]{Gil4}, or~\cite[Corollary~9.5]{PS4}.
 Part~(b) is~\cite[Corollary~7.4]{PS4}.
\end{proof}

\Section{Corollaries of the Derived Full-and-Faithfulness Theorems}
\label{cors-of-derived-fullyf-thms-secn}

 In this section we discuss some of the more advanced results from
the paper~\cite{Pmgm} and their corollaries.
 Firstly, let $J$ be an arbitrary finitely generated ideal in
a commutative ring~$S$.

\begin{lem} \label{compact-torsion-dualization-lemma}
 Let $K^\bu$ be a finite complex of finitely generated projective
$S$\+modules with $J$\+torsion cohomology modules.
 Then the finite complex of finitely generated projective $S$\+modules\/
$\Hom_S(K^\bu,S)$ also has $J$\+torsion cohomology modules.
\end{lem}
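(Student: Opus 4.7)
The plan is to reduce the $J$\+torsion property of the cohomology of $\Hom_S(K^\bu,S)$ to a localization calculation using a chosen finite set of generators $s_1,\dotsc,s_m$ of the ideal $J\subset S$. Recall from the preliminary section that an $S$\+module $M$ is $J$\+torsion if and only if $M\ot_SS[s_j^{-1}]=0$ for each generator~$s_j$. Since this condition is preserved under subquotients, it suffices to show that $H^n(\Hom_S(K^\bu,S))\ot_SS[s^{-1}]=0$ for every $n\in\boZ$ and every $s=s_j$.

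First I would fix $s\in\{s_1,\dotsc,s_m\}$ and invoke the fact that, for a finite complex $K^\bu$ of finitely generated projective $S$\+modules, the natural map
\[
\Hom_S(K^\bu,S)\ot_SS[s^{-1}]\lrarrow\Hom_{S[s^{-1}]}(K^\bu\ot_SS[s^{-1}],\,S[s^{-1}])
\]
is an isomorphism of complexes (this reduces termwise to the standard isomorphism $\Hom_S(P,S)\ot_SS[s^{-1}]\simeq\Hom_{S[s^{-1}]}(P\ot_SS[s^{-1}],S[s^{-1}])$ for $P$ finitely generated projective). Because $S[s^{-1}]$ is a flat $S$\+module, taking cohomology commutes with tensoring by $S[s^{-1}]$, so it suffices to prove that the right-hand side is acyclic.

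Next I would analyze the complex $K^\bu\ot_SS[s^{-1}]$. Again by flatness of $S[s^{-1}]$,
\[
H^n(K^\bu\ot_SS[s^{-1}])\simeq H^n(K^\bu)\ot_SS[s^{-1}],
\]
and the latter vanishes for every $n$ because $H^n(K^\bu)$ is $J$\+torsion, hence in particular $s$\+torsion. Thus $K^\bu\ot_SS[s^{-1}]$ is a bounded acyclic complex of finitely generated projective $S[s^{-1}]$\+modules. Any bounded acyclic complex of projective modules is contractible (one splits off projective summands from the top end of the complex inductively), so $K^\bu\ot_SS[s^{-1}]$ is contractible as a complex of $S[s^{-1}]$\+modules.

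Finally, since the functor $\Hom_{S[s^{-1}]}(-,S[s^{-1}])$ preserves chain homotopies, it sends a contractible complex to a contractible complex. Therefore $\Hom_{S[s^{-1}]}(K^\bu\ot_SS[s^{-1}],S[s^{-1}])$ is contractible, hence acyclic, completing the argument. There is no real obstacle in this proof; the only point requiring attention is the termwise commutation of $\Hom_S(-,S)$ with localization at~$s$, which genuinely uses both finite generatedness and projectivity of the terms of $K^\bu$, together with the fact that $K^\bu$ is bounded so that total $\Hom$ is simply a finite direct sum.
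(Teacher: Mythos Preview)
Your argument is correct. The paper does not give its own proof here but simply cites \cite[Lemma~5.4(a)]{Pmgm}, so there is nothing to compare at the level of strategy; your localization-and-contractibility argument is the standard direct proof and almost certainly coincides with what the cited lemma does.
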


\begin{proof}
 This is~\cite[Lemma~5.4(a)]{Pmgm}.
\end{proof}

\begin{lem} \label{quis-tested-on-tensor-hom-with-compact-torsion}
\textup{(a)} Let $M^\bu$ be a complex of $S$\+modules with $J$\+torsion
cohomology modules.
 Assume that, for every finite complex of finitely generated projective
$S$\+modules $K^\bu$ with $J$\+torsion cohomology modules, the complex
$K^\bu\ot_SM^\bu$ is acyclic.
 Then the complex $M^\bu$ is acyclic.  \par
\textup{(b)} Let $P^\bu$ be a complex of $S$\+modules with
$J$\+contramodule cohomology modules.
 Assume that, for every finite complex of finitely generated projective
$S$\+modules $K^\bu$ with $J$\+torsion cohomology modules, the complex\/
$\Hom_S(K^\bu,P^\bu)$ is acyclic.
 Then the complex $P^\bu$ is acyclic. 
\end{lem}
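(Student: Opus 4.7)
The plan is to reduce both parts to arguments involving the infinite dual Koszul complex $K^\bu_\infty(S,\bs)$, using the finite approximations $K^\bu(S,\bs^n)$ as test objects. Each $K^\bu(S,\bs^n)$ is a finite complex of finitely generated free $S$\+modules; its cohomology is annihilated by each $s_j^n$ (the standard Koszul null\+homotopy), so it is $J$\+torsion. Thus the hypothesis of either part applies to $K^\bu = K^\bu(S,\bs^n)$ for every $n\ge 1$.

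For part~(a), the hypothesis yields that $K^\bu(S,\bs^n)\ot_S M^\bu$ is acyclic for all $n\ge 1$. Since filtered colimits of $S$\+modules are exact and $K^\bu_\infty(S,\bs) = \varinjlim_n K^\bu(S,\bs^n)$, the complex $K^\bu_\infty(S,\bs)\ot_S M^\bu$ is also acyclic. On the other hand, I would prove by induction on the length of the sequence $\bs$ that there is a natural isomorphism $H^*(K^\bu_\infty(S,\bs)\ot_S M^\bu)\cong H^*(M^\bu)$. The inductive step rests on the short exact sequence of complexes
\begin{equation*}
0\rarrow S[s_i^{-1}][-1]\rarrow K^\bu_\infty(S,s_i)\rarrow S\rarrow 0,
\end{equation*}
i.e., the evident projection of $K^\bu_\infty(S,s_i) = [S\to S[s_i^{-1}]]$ onto its degree-zero term, with kernel $S[s_i^{-1}]$ placed in degree~$1$. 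Tensoring with the output of the previous inductive step (whose cohomology equals $H^*(M^\bu)$ and is therefore $s_i$\+torsion) and using that $S[s_i^{-1}]$ is flat over $S$ and kills any $s_i$\+torsion module, the kernel term becomes acyclic and the long exact sequence of cohomology collapses to the required isomorphism. Combining the two conclusions forces $H^*(M^\bu)=0$.

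For part~(b), the hypothesis gives acyclicity of $\Hom_S(K^\bu(S,\bs^n),P^\bu)$ for every $n$, but dualizing the colimit step now requires an inverse limit, which is the main obstacle since inverse limits are not exact in general. The standard remedy is to replace $K^\bu(S,\bs^n)$ by the homotopy equivalent complex $T_n^\bu(S,\bs)$ recalled in Section~\ref{prelims-on-wpr-secn}, for which the transition maps $T_n^\bu(S,\bs)\rarrow T_{n+1}^\bu(S,\bs)$ are termwise split monomorphisms. Each $\Hom_S(T_n^\bu(S,\bs),P^\bu)$ is homotopy equivalent to the acyclic complex $\Hom_S(K^\bu(S,\bs^n),P^\bu)$, and applying $\Hom_S(-,P^\bu)$ converts the transition maps into termwise split surjections. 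Consequently the inverse limit $\Hom_S(T^\bu(S,\bs),P^\bu)=\varprojlim_n\Hom_S(T_n^\bu(S,\bs),P^\bu)$ is acyclic, and since $T^\bu(S,\bs)$ is a complex of countably generated free $S$\+modules quasi-isomorphic to $K^\bu_\infty(S,\bs)$, this complex computes $\boR\Hom_S(K^\bu_\infty(S,\bs),P^\bu)$.

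The proof then concludes with the dual of the inductive argument above: applying $\boR\Hom_S(-,P^\bu)$ to the same short exact sequence $0\rarrow S[s_i^{-1}][-1]\rarrow K^\bu_\infty(S,s_i)\rarrow S\rarrow 0$ (tensored at each stage with the output of the previous step) and using that each $H^q(P^\bu)$ is a $J$\+contramodule, hence an $s_i$\+contramodule, the contribution $\boR\Hom_S(S[s_i^{-1}],-)$ is acyclic: the associated spectral sequence has only the two rows $p=0$ and $p=1$ (because $S[s_i^{-1}]$ has projective dimension at most one), and both vanish by the very definition of an $s_i$\+contramodule. Iterating over $s_1,\dotsc,s_m$ yields $H^*(P^\bu)\cong H^*(\boR\Hom_S(K^\bu_\infty(S,\bs),P^\bu))=0$, so $P^\bu$ is acyclic.
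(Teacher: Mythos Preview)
Your proof is correct and follows essentially the same approach as the paper, which simply cites~\cite[Lemma~1.1(c)]{Pmgm} and~\cite[Lemma~2.2(c)]{Pmgm} (with a pointer to~\cite[proof of Proposition~5.1]{Pmgm}); you have unpacked exactly what those references contain, namely that the finite Koszul complexes $K^\bu(S,\bs^n)$ serve as test objects, the passage to $K^\bu_\infty(S,\bs)$ (resp.\ $T^\bu(S,\bs)$) goes through the colimit (resp.\ the split-surjective inverse limit), and the induction on the length of~$\bs$ using the short exact sequence for $K^\bu_\infty(S,s_i)$ yields $H^*(K^\bu_\infty(S,\bs)\ot_S M^\bu)\cong H^*(M^\bu)$ and the dual isomorphism for $\boR\Hom_S$.
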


\begin{proof}
 Part~(a) follows from~\cite[Lemma~1.1(c)]{Pmgm}; cf.~\cite[proof of
Proposition~5.1]{Pmgm}.
 Part~(b) similarly follows from~\cite[Lemma~2.2(c)]{Pmgm}.
\end{proof}

 Given two complexes of $S$\+modules $M^\bu$ and $N^\bu$, we use
the simplified notation
$$
 \Ext_S^n(M^\bu,N^\bu)=H^n\boR\Hom_S(M^\bu,N^\bu)=
 \Hom_{\sD(S\Modl)}(M^\bu,N^\bu[n])
$$
and
$$
 \Tor^S_n(M^\bu,N^\bu)=H^{-n}(N^\bu\ot_S^\boL M^\bu),
 \qquad n\in\boZ,
$$
where $\boR\Hom_S({-},{-})$ and ${-}\ot_S^\boL{-}$ are the usual
derived functors of $\Hom$ and tensor product of unbounded complexes
of $S$\+modules, acting on the conventional derived category
$\sD(S\Modl)$ and constructed in terms of homotopy injective,
homotopy projective, and/or homotopy flat resolutions.

 The next lemma is a generalization of
Lemma~\ref{tors-contra-tensor-hom-lemma}.

\begin{lem} \label{tors-contra-tensor-hom-for-complexes}
 Let $S$ be a commutative ring and $J\subset S$ be an ideal.
 In this context: \par
\textup{(a)} if $M^\bu$ and $N^\bu$ are complexes of $S$\+modules,
and all the cohomology $S$\+modules of the complex $M^\bu$ are
$J$\+torsion, then all the cohomology $S$\+modules of the complex
$M^\bu\ot_S^\boL N^\bu$ are also $J$\+torsion; \par
\textup{(b)} if $M^\bu$ and $P^\bu$ are complexes of $S$\+modules,
and all the cohomology $S$\+modules of the complex $M^\bu$ are
$J$\+torsion, then all the cohomology $S$\+modules of the complex\/
$\boR\Hom_S(M^\bu,P^\bu)$ are $J$\+contramodules; \par
\textup{(c)} if $M^\bu$ and $P^\bu$ are complexes of $S$\+modules,
and all the cohomology $S$\+modules of the complex $P^\bu$ are
$J$\+contramodules, then all the cohomology $S$\+modules of
the complex\/ $\boR\Hom_S(M^\bu,P^\bu)$ are also $J$\+contramodules.
\end{lem}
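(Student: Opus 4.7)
\emph{Proof proposal.}
The plan is to reduce everything to the single-element case. Both the $J$-torsion and $J$-contramodule properties can be tested one generator at a time (by definition in the torsion case, and by~\cite[Theorem~5.1]{Pcta} in the contramodule case), so it suffices, for each fixed $s$ in a chosen finite generating set of $J$, to verify the analogous statement with $s$-torsion and $s$-contramodule in place of $J$-torsion and $J$-contramodule. The entire argument will then revolve around the $S$-module $S[s^{-1}]$.

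First I would establish two reformulations at the level of $\sD(S\Modl)$. Since $S[s^{-1}]$ is flat over $S$, the cohomology modules of a complex $N^\bu$ are all $s$-torsion if and only if $S[s^{-1}]\ot_S^\boL N^\bu=0$ in $\sD(S\Modl)$: flatness lets one compute the cohomology termwise as $S[s^{-1}]\ot_S H^i(N^\bu)$. Dually, since the projective dimension of $S[s^{-1}]$ is at most~$1$ (used in the definition of $s$-contramodule in the paragraph preceding Lemma~\ref{tors-contra-tensor-hom-lemma}), the hypercohomology spectral sequence $E_2^{p,q}=\Ext_S^p(S[s^{-1}],H^q(Q^\bu))\Rightarrow H^{p+q}(\boR\Hom_S(S[s^{-1}],Q^\bu))$ has only the rows $p=0,1$ nonzero and so degenerates at $E_2$ into short exact sequences. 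From these one concludes that the cohomology modules of $Q^\bu$ are all $s$-contramodules if and only if $\boR\Hom_S(S[s^{-1}],Q^\bu)=0$ in $\sD(S\Modl)$.

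Once these two reformulations are in hand, each of (a)--(c) becomes a one-line consequence of associativity of the derived tensor product and the derived tensor--$\Hom$ adjunction. Part~(a) follows from $S[s^{-1}]\ot_S^\boL(M^\bu\ot_S^\boL N^\bu)\simeq(S[s^{-1}]\ot_S^\boL M^\bu)\ot_S^\boL N^\bu\simeq0$. Part~(b) follows from $\boR\Hom_S(S[s^{-1}],\boR\Hom_S(M^\bu,P^\bu))\simeq\boR\Hom_S(S[s^{-1}]\ot_S^\boL M^\bu,P^\bu)\simeq0$, using the torsion hypothesis on $M^\bu$ on the right. Part~(c) follows from reassociating the adjunction in the other direction, $\boR\Hom_S(S[s^{-1}],\boR\Hom_S(M^\bu,P^\bu))\simeq\boR\Hom_S(M^\bu,\boR\Hom_S(S[s^{-1}],P^\bu))\simeq0$, where the innermost $\boR\Hom$ vanishes by the contramodule reformulation applied to the hypothesis on $P^\bu$.

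The only step requiring real care is the contramodule reformulation: one must observe that it is the full complex $\boR\Hom_S(S[s^{-1}],Q^\bu)$ (not merely $\Hom_S$ and $\Ext_S^1$ of individual cohomology modules) that encodes the $s$-contramodule property on all cohomology simultaneously, and this equivalence is exactly what the collapsing two-row spectral sequence provides. Apart from that, nothing in the argument depends on weak proregularity or any of the other standing hypotheses of the paper; the statement is a general fact about unbounded derived categories of modules over a commutative ring.
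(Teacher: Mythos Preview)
Your argument is correct. The paper itself does not prove this lemma; it simply cites \cite[Lemma~6.2]{Pcta}, and the argument there is essentially the one you have written (reduce to a single element~$s$, reformulate ``$s$-torsion cohomology'' as vanishing of $S[s^{-1}]\ot_S^\boL{-}$ and ``$s$-contramodule cohomology'' as vanishing of $\boR\Hom_S(S[s^{-1}],{-})$ using the two-row spectral sequence coming from $\ppd_S S[s^{-1}]\le1$, then invoke associativity and derived tensor--$\Hom$ adjunction).

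One small slip: you speak of ``a chosen finite generating set of~$J$'', but the lemma as stated does not assume $J$ is finitely generated. This is harmless, since your argument works one element~$s$ at a time and both the $J$-torsion and the $J$-contramodule conditions are, by definition, conjunctions of the corresponding single-element conditions over all $s\in J$; just drop the word ``finite'' (or verify the condition for every $s\in J$ directly, without passing to generators).
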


\begin{proof}
 This is~\cite[Lemma~6.2]{Pcta}.
\end{proof}

 The utility of the conventional module-theoretic derived functors
of $\Hom$ and tensor product as above in the context involving
$J$\+torsion and $J$\+contramodule $S$\+modules is largely based on
the following results of~\cite[Theorems~1.3 and~2.9]{Pmgm}.

\begin{thm} \label{torsion-modules-inclusion-derived-fully-faithful}
 Let $S$ be a commutative ring and $J\subset S$ be a weakly proregular
finitely generated ideal.
 Then, for any derived category symbol\/ $\st=\bb$, $+$, $-$,
$\varnothing$, $\abs+$, $\abs-$, $\co$, or\/~$\abs$, the exact
inclusion of abelian categories $S\Modl_{J\tors}\rarrow S\Modl$ induces
a fully faithful triangulated functor
\begin{equation} \label{torsion-modules-triangulated-inclusion-formula}
 \sD^\st(S\Modl_{J\tors})\lrarrow\sD^\st(S\Modl).
\end{equation}
 For any conventional derived category symbol\/ $\st=\bb$, $+$, $-$,
or\/~$\varnothing$, the essential image of
the functor~\eqref{torsion-modules-triangulated-inclusion-formula}
consists precisely of all the (respectively bounded or unbounded)
complexes of $S$\+modules with $J$\+torsion cohomology modules.
\end{thm}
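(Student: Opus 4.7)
The plan is to use the right adjoint $\Gamma_J\colon S\Modl \rarrow S\Modl_{J\tors}$ of the exact inclusion $i\colon S\Modl_{J\tors} \rarrow S\Modl$, and to reduce both fully faithfulness and the essential image description to the single statement that, under weak proregularity, the derived functor $\boR\Gamma_J$ acts as the identity (up to natural isomorphism) on the full subcategory of $\sD(S\Modl)$ consisting of complexes with $J$\+torsion cohomology. In the conventional case, fully faithfulness of $\sD^\st(S\Modl_{J\tors}) \rarrow \sD^\st(S\Modl)$ amounts to the derived adjunction unit $\id \rarrow \boR\Gamma_J \circ i$ being a natural isomorphism, while the essential image description is equivalent to the counit $i \circ \boR\Gamma_J \rarrow \id$ being a quasi-isomorphism on torsion-cohomology complexes. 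Both reduce to the key assertion that $R^i\Gamma_J(N) = 0$ for all $i > 0$ and every $J$\+torsion $S$\+module $N$.

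The key vanishing is where weak proregularity enters in an essential way. Under weak proregularity, the derived torsion functor $\boR\Gamma_J(M^\bu)$ is computed concretely as $K^\bu_\infty(S,\bs) \ot_S M^\bu$ (the tensor product agreeing with the derived one because $K^\bu_\infty(S,\bs)$ is a complex of flat $S$\+modules), with the natural comparison map $\boR\Gamma_J(M^\bu) \rarrow M^\bu$ induced by the augmentation $K^\bu_\infty(S,\bs) \rarrow S$ projecting onto the degree-zero term. If $N$ is $J$\+torsion, then $N \ot_S S[s_j^{-1}] = 0$ for each generator $s_j$ of $J$; since all the terms of $K^\bu_\infty(S,\bs)$ in positive cohomological degrees are tensor products involving such localizations, the complex $K^\bu_\infty(S,\bs) \ot_S N$ collapses to $N$ in degree zero, giving $\boR\Gamma_J(N) \simeq N$ as required. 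Extension from a single module to a bounded-below complex with $J$\+torsion cohomology is handled by a convergent hypercohomology spectral sequence, while the unbounded case requires K-injective resolutions.

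With the key vanishing in hand, I would handle $\st = \bb, +$ by injective resolutions in the Grothendieck category $S\Modl_{J\tors}$, whose terms are direct summands of $\Gamma_J(K)$ for $K$ injective in $S\Modl$ and hence $\Gamma_J$\+acyclic; applying $\Gamma_J$ to an injective resolution $N^\bu \rarrow K^\bu$ in $S\Modl$ produces an injective resolution of $N^\bu$ in $S\Modl_{J\tors}$, and the torsion-Hom identity $\Hom_S(M, K) = \Hom_S(M, \Gamma_J K)$ for $M$ a $J$\+torsion module matches derived Hom groups on both sides. For $\st = \varnothing$ the same scheme is carried out with K-injective resolutions in $S\Modl_{J\tors}$, which exist since it is Grothendieck. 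For the exotic symbols $\st = \abs, \abs+, \abs-, \co$, the pertinent notions of acyclicity coincide in the two categories: the admissible short exact sequences in $S\Modl_{J\tors}$ agree with those in $S\Modl$ whose terms happen to be torsion, and infinite direct sums are preserved by the inclusion, so absolute acyclicity and Positselski coacyclicity (defined in terms of only these data) transfer verbatim; fully faithfulness of the induced quotient functors then follows. The essential image description for conventional $\st$ is immediate from the counit $i \circ \boR\Gamma_J \rarrow \id$ being a quasi-isomorphism on torsion-cohomology complexes. \emph{The main obstacle} is the key vanishing of $R^i\Gamma_J$ on torsion modules: this can fail without weak proregularity, and it is precisely what makes the derived inclusion of torsion modules well-behaved. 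A secondary subtlety is the unbounded conventional case, where the convergent hypercohomology spectral sequence is unavailable and one must instead work directly with K-injective resolutions in $S\Modl_{J\tors}$ and verify their good behavior as resolutions in $S\Modl$.
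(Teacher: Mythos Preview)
The paper's own proof is simply a citation to \cite[Theorem~1.3 and Corollary~1.4]{Pmgm}, so your proposal is really an attempt to reconstruct that argument. For the conventional symbols $\st=\bb,+,-,\varnothing$, your approach is correct and matches the standard one: weak proregularity gives $\boR\Gamma_J\simeq K^\bu_\infty(S,\bs)\ot_S{-}$, hence $R^i\Gamma_J$ vanishes on $J$\+torsion modules, and the derived adjunction $(i,\boR\Gamma_J)$ then yields both full faithfulness and the essential image description.

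For the exotic symbols $\st=\abs,\abs\pm,\co$, however, your argument has a genuine gap. You claim that absolute acyclicity and Positselski-coacyclicity ``transfer verbatim'' between $S\Modl_{J\tors}$ and $S\Modl$, and that full faithfulness ``then follows''. Neither step is justified. First, only the forward implication is clear: a complex coacyclic in $S\Modl_{J\tors}$ is coacyclic in $S\Modl$, but the converse (that a complex of torsion modules coacyclic in $S\Modl$ is already coacyclic in $S\Modl_{J\tors}$) is not obvious, since the generating process in $\sK(S\Modl)$ may pass through non-torsion complexes. Second, even granting that the coacyclic classes match on $\sK(S\Modl_{J\tors})$, this yields only that the induced functor between Verdier quotients is well-defined and conservative, not that it is fully faithful: morphisms in $\sD^\co(S\Modl)$ are represented by roofs whose apex need not lie in $\sK(S\Modl_{J\tors})$.

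The correct argument for the exotic cases, as in \cite{Pmgm}, uses the additional fact that $\Gamma_J$ has \emph{finite cohomological dimension} (bounded by the number of generators of $J$, since $\boR\Gamma_J$ is computed by the finite complex $K^\bu_\infty(S,\bs)$). This allows one to construct the total right derived functor $\boR\Gamma_J$ at the level of the absolute and coderived categories via finite resolutions by $\Gamma_J$\+adjusted objects, and then the adjunction argument goes through as in the conventional case. Your proposal already contains the key vanishing; what is missing is this finite-dimension observation and its use in building the derived adjoint on the exotic derived categories.
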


\begin{proof}
 The first assertion is~\cite[Theorem~1.3]{Pmgm}, and the second one
is~\cite[Corollary~1.4]{Pmgm}.
 For the converse result, claiming that the ideal $J$ is weakly
proregular whenever the functor $\sD^\st(S\Modl_{J\tors})
\rarrow\sD^\st(S\Modl)$ is fully faithful, see~\cite[Theorem~4.1]{Pdc}.
\end{proof}

\begin{thm} \label{contramodules-inclusion-derived-fully-faithful}
 Let $S$ be a commutative ring and $J\subset S$ be a weakly proregular
finitely generated ideal.
 Then, for any derived category symbol\/ $\st=\bb$, $+$, $-$,
$\varnothing$, $\abs+$, $\abs-$, $\ctr$, or\/~$\abs$, the exact
inclusion of abelian categories $S\Modl_{J\ctra}\rarrow S\Modl$ induces
a fully faithful triangulated functor
\begin{equation} \label{contramodules-triangulated-inclusion-formula}
 \sD^\st(S\Modl_{J\ctra})\lrarrow\sD^\st(S\Modl).
\end{equation}
 For any conventional derived category symbol\/ $\st=\bb$, $+$, $-$,
or\/~$\varnothing$, the essential image of
the functor~\eqref{contramodules-triangulated-inclusion-formula}
consists precisely of all the (respectively bounded or unbounded)
complexes of $S$\+modules with $J$\+contramodule cohomology modules.
\end{thm}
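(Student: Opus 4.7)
The plan is to argue in exact parallel with Theorem \ref{torsion-modules-inclusion-derived-fully-faithful}, dualizing throughout: replace right adjoints by left adjoints, injective resolutions by projective resolutions, the functor $\Gamma_J$ by the functor $\Delta_J$, and arguments with coproducts by arguments with products where relevant. Both assertions should be obtainable as immediate consequences of \cite[Theorem~2.9]{Pmgm} and its corollary, which are the contramodule analogues of the results cited in the proof of Theorem \ref{torsion-modules-inclusion-derived-fully-faithful}.

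For the full-faithfulness assertion, the reduction is to showing that for complexes $P^\bu, Q^\bu$ in $S\Modl_{J\ctra}$, the $\Ext$-groups computed inside $S\Modl_{J\ctra}$ agree with those computed in $S\Modl$. Since the inclusion is exact and $S\Modl_{J\ctra}$ is closed under kernels, cokernels, extensions, and products in $S\Modl$, this amounts to a comparison of projective resolutions. The left adjoint $\Delta_J\colon S\Modl\rarrow S\Modl_{J\ctra}$, together with its derived functor $\boL\Delta_J$ (which can be computed in a bounded range using that $S[s^{-1}]$ has projective dimension at most $1$, and more concretely via the telescope complex $T^\bu(S,\bs)$ from Section \ref{prelims-on-wpr-secn}), is the tool: weak proregularity of $J$ forces $\boL\Delta_J$ applied to any complex with $J$-contramodule cohomology to be quasi-isomorphic to the complex itself, which yields the desired matching of $\Ext$'s. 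Combined with Lemma \ref{tors-contra-tensor-hom-for-complexes}(c) and an acyclicity-preservation check in each of the exotic flavors, this handles all the listed symbols $\st$.

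For the essential image description in the conventional derived categories, the ``$\subseteq$'' direction is immediate from the exactness of the inclusion together with Lemma \ref{tors-contra-tensor-hom-for-complexes}(c), which identifies the cohomology modules as $J$-contramodules. The ``$\supseteq$'' direction is the substantive one: given a complex $M^\bu$ of $S$-modules with $J$-contramodule cohomology, one must produce a quasi-isomorphic complex in $\sC^\st(S\Modl_{J\ctra})$. The plan is to apply $\boL\Delta_J$ to $M^\bu$: the adjunction morphism $M^\bu\rarrow\boL\Delta_J(M^\bu)$ in $\sD(S\Modl)$ is a quasi-isomorphism precisely because the cohomology already lies in $S\Modl_{J\ctra}$, and $\boL\Delta_J(M^\bu)$ is by construction represented by a complex of $J$-contramodule $S$-modules. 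The main obstacle is the unbounded case $\st=\varnothing$: one must use the finite projective dimension of $S[s^{-1}]$ to ensure that $\boL\Delta_J$ has finite cohomological amplitude and therefore preserves unboundedness correctly, and one must check that no acyclicity is lost when transferring between $S\Modl_{J\ctra}$ and $S\Modl$. Both points are precisely what is encoded in \cite[Theorem~2.9]{Pmgm}, whose detailed proof uses the Koszul and telescope machinery from Section \ref{prelims-on-wpr-secn}.
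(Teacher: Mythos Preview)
Your proposal is correct and takes essentially the same approach as the paper: both assertions are obtained by citing \cite[Theorem~2.9]{Pmgm} and \cite[Corollary~2.10]{Pmgm}, which is precisely what the paper does (the paper's proof is a bare citation, while you additionally sketch the $\boL\Delta_J$/telescope-complex argument underlying those references). The paper adds only the side remark that a weaker hypothesis than weak proregularity suffices, referring to \cite[Remark~3.8 and Theorem~4.3]{Pdc}.
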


\begin{proof}
 The first assertion is~\cite[Theorem~2.9]{Pmgm}, and the second one
is~\cite[Corollary~2.10]{Pmgm}.
 In fact, a weaker assumption than the weak proregularity of the ideal
$J$ is sufficient for the validity of these assertions;
see~\cite[Remark~3.8 and Theorem~4.3]{Pdc}.
 Notice that the weak proregularity of $J$ also implies that all
the $J$\+contramodule $S$\+modules are quotseparated, as per
the discussion in Section~\ref{prelims-on-wpr-secn}.
 According to~\cite[Theorem~4.2]{Pdc}, any one of the functors
$\sD^\st(S\Modl_{J\ctra}^\qs)\rarrow\sD^\st(S\Modl)$ is fully faithful
if and only if the ideal $J$ is weakly proregular.
\end{proof}

\begin{lem} \label{Hom-tensor-underived=derived-lemma}
 Let $J$ be a weakly proregular finitely generated ideal in
a commutative ring~$S$.
 In this context: \par
\textup{(a)} Let $N^\bu$ be a complex of $J$\+torsion $S$\+modules
and $H^\bu$ be a bounded below complex of injective objects in
the abelian category of $J$\+torsion $S$\+modules $S\Modl_{J\tors}$.
 Then the complex of $S$\+modules\/ $\Hom_S(N^\bu,H^\bu)$ represents
the derived category object\/ $\boR\Hom_S(N^\bu,H^\bu)$.
 In other words, the natural morphism
$$
 \Hom_S(N^\bu,H^\bu)\lrarrow\boR\Hom_S(N^\bu,H^\bu)
$$
is an isomorphism in\/ $\sD(S\Modl)$. \par
\textup{(b)} Let $Q^\bu$ be a complex of $J$\+contramodule $S$\+modules
and $P^\bu$ be a bounded above complex of projective objects in
the abelian category of $J$\+contramodule $S$\+modules
$S\Modl_{J\ctra}$.
 Then the complex of $S$\+modules\/ $\Hom_S(P^\bu,Q^\bu)$ represents
the derived category object\/ $\boR\Hom_S(P^\bu,Q^\bu)$.
 In other words, the natural morphism
$$
 \Hom_S(P^\bu,Q^\bu)\lrarrow\boR\Hom_S(P^\bu,Q^\bu)
$$
is an isomorphism in\/ $\sD(S\Modl)$. \par
\textup{(c)} Let $N^\bu$ be a complex of $J$\+torsion $S$\+modules
and $P^\bu$ be a bounded above complex of projective objects in
the abelian category of $J$\+contramodule $S$\+modules.
 Then the complex of $S$\+modules\/ $N^\bu\ot_S P^\bu$ represents
the derived category object $N^\bu\ot_S^\boL P^\bu$.
 In other words, the natural morphism
$$
 N^\bu\ot_S^\boL P^\bu \lrarrow N^\bu\ot_S P^\bu.
$$
is an isomorphism in\/ $\sD(S\Modl)$.
\end{lem}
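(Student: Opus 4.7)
The plan is to treat parts (a) and (b) in parallel using the full-faithfulness Theorems~\ref{torsion-modules-inclusion-derived-fully-faithful} and~\ref{contramodules-inclusion-derived-fully-faithful} together with K-injectivity and K-projectivity of bounded one-sided complexes of injective/projective objects; part (c) uses K-flatness of the given $P^\bu$ as a complex of $S$-modules.

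For part~(a), since $S\Modl_{J\tors}$ is a Grothendieck category, any bounded below complex of injectives $H^\bu$ in $S\Modl_{J\tors}$ is K-injective in $\sK(S\Modl_{J\tors})$ by the standard inductive construction of nullhomotopies. Hence the complex $\Hom^\bu_S(N^\bu,H^\bu) = \Hom^\bu_{S\Modl_{J\tors}}(N^\bu,H^\bu)$ represents $\boR\Hom_{S\Modl_{J\tors}}(N^\bu,H^\bu)$, and its degree-$n$ cohomology equals $\Hom_{\sD(S\Modl_{J\tors})}(N^\bu,H^\bu[n])$. By Theorem~\ref{torsion-modules-inclusion-derived-fully-faithful} this Hom group coincides with $\Hom_{\sD(S\Modl)}(N^\bu,H^\bu[n]) = H^n\boR\Hom_S(N^\bu,H^\bu)$. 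The natural comparison morphism $\Hom_S(N^\bu,H^\bu) \rarrow \boR\Hom_S(N^\bu,H^\bu)$ realizes these identifications on cohomology in every degree, hence is an isomorphism in $\sD(S\Modl)$. Part~(b) follows dually, invoking Theorem~\ref{contramodules-inclusion-derived-fully-faithful} in place of~\ref{torsion-modules-inclusion-derived-fully-faithful} and the fact that a bounded above complex of projective objects in the abelian category $S\Modl_{J\ctra}$ (which has enough projectives) is K-projective in $\sK(S\Modl_{J\ctra})$.

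For part~(c), the strategy is to show $P^\bu$ is K-flat as a complex of $S$-modules, from which the conclusion is immediate by the definition of $\ot_S^\boL$. A bounded above complex of flat $S$-modules is K-flat: one expresses $P^\bu$ as the filtered colimit $\varinjlim_k\sigma_{\ge -k}P^\bu$ of its stupid truncations, each of which is a bounded complex of flats and hence K-flat, and uses that filtered colimits preserve acyclicity of tensor products. It therefore suffices to establish that every projective object of $S\Modl_{J\ctra}$ is a flat $S$-module. By Section~\ref{prelims-on-wpr-secn}, weak proregularity of $J$ collapses $S\Modl_{J\ctra}$ onto $S\Modl_{J\ctra}^\qs$ and identifies its projectives as direct summands of $\Lambda_J(F)$ for free $S$-modules $F$. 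The main obstacle is thus the $S$-flatness of $\Lambda_J(F)$ under weak proregularity, which I would derive from the wpr-based vanishing $\Tor_i^S(S/J^n,\Lambda_J(F))=0$ for $i>0$ together with the identification $S/J^n\ot_S\Lambda_J(F)\cong F/J^nF$, passing from $S/J^n$-modules to arbitrary $J$-torsion $M$ by the filtered colimit decomposition $M=\varinjlim_n\Gamma_{J^n}(M)$ and from $J$-torsion coefficients to arbitrary ones by the usual devissage along the exact sequence $0\rarrow\Gamma_J(M)\rarrow M\rarrow M/\Gamma_J(M)\rarrow0$.
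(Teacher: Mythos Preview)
Your treatment of parts~(a) and~(b) matches the paper's: both reduce to the full-faithfulness Theorems~\ref{torsion-modules-inclusion-derived-fully-faithful} and~\ref{contramodules-inclusion-derived-fully-faithful} via K-injectivity and K-projectivity of bounded one-sided complexes of injectives/projectives in the respective abelian categories.

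Part~(c), however, has a genuine gap. Your strategy is to show that projective objects of $S\Modl_{J\ctra}$ are flat $S$-modules, and your final d\'evissage step---passing from $J$-torsion coefficients to arbitrary $M$ via the short exact sequence $0\rarrow\Gamma_J(M)\rarrow M\rarrow M/\Gamma_J(M)\rarrow0$---does not reduce anything: the quotient $M/\Gamma_J(M)$ carries no structure that lets you control $\Tor_i^S(M/\Gamma_J(M),\Lambda_J(F))$. In fact, what the paper establishes (Lemma~\ref{contraflat-contramodules-lemma}(b)) is precisely that contraflat $J$-contramodules, and in particular the projectives $\Lambda_J(F)$, have vanishing $\Tor_i^S$ against $J$-torsion modules; flatness against all $S$-modules is not claimed and is not expected to hold outside the Noetherian case (cf.\ Remark~\ref{Bass-Auslander-direct-sum-product-closedness-remark}, where flatness of contraflat contramodules is invoked only for Noetherian~$S$). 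So K-flatness of $P^\bu$ over $S$ is not available in general.

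The paper bypasses this entirely by reducing~(c) to~(b) via character duality. Applying the conservative functor $\Hom_\boZ({-},\boQ/\boZ)\:\sD(S\Modl)^\sop\rarrow\sD(S\Modl)$ transforms the morphism $N^\bu\ot_S^\boL P^\bu\rarrow N^\bu\ot_S P^\bu$ into the morphism $\Hom_S(P^\bu,\Hom_\boZ(N^\bu,\boQ/\boZ))\rarrow\boR\Hom_S(P^\bu,\Hom_\boZ(N^\bu,\boQ/\boZ))$. Since $\Hom_\boZ(N^\bu,\boQ/\boZ)$ is a complex of $J$-contramodule $S$-modules (Lemma~\ref{tors-contra-tensor-hom-lemma}(b)), part~(b) applies directly and gives an isomorphism; conservativity then yields~(c). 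This argument uses only the contraflatness-type input already established and avoids any claim about flatness over~$S$.
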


\begin{proof}
 Part~(a), which is a generalization of~\cite[Lemma~5.5(b)]{Pmgm},
follows from the first assertion of
Theorem~\ref{torsion-modules-inclusion-derived-fully-faithful}
(for $\st=\varnothing$).
 Part~(b) similarly follows from the first assertion of
Theorem~\ref{contramodules-inclusion-derived-fully-faithful}
(for $\st=\varnothing$).
 Part~(c), which is a generalization of~\cite[Lemma~5.4(c)]{Pmgm},
is deduced from part~(b) in the following way.
 The conservative contravariant triangulated functor
$\Hom_\boZ({-},\boQ/\boZ)\:\sD(S\Modl)^\sop\rarrow\sD(S\Modl)$
transforms the derived category morphism in question into the morphism
$$
 \Hom_S(P^\bu,\Hom_\boZ(N^\bu,\boQ/\boZ))\lrarrow
 \boR\Hom_S(P^\bu,\Hom_\boZ(N^\bu,\boQ/\boZ)),
$$
which is an isomorphism by part~(b).
\end{proof}

 Let $J$ be a finitely generated ideal in a commutative ring~$S$.
 A $J$\+contramodule $S$\+module $F$ is said to be \emph{contraflat}
if the functor ${-}\ot_S\nobreak F\:\allowbreak S\Modl_{J\tors}\rarrow
S\Modl_{J\tors}$ is exact.
 One can easily see that a $J$\+contramodule $S$\+module $F$ is
contraflat if and only if the $S/J^n$\+module $F/J^nF$ is flat for
every $n\ge1$.
 Since the functor $\Hom_\boZ({-},\boQ/\boZ)\:S\Modl^\sop\rarrow S\Modl$
is exact and faithful, and takes $S\Modl_{J\tors}$ to
$S\Modl_{J\ctra}^\qs\subset S\Modl_{J\ctra}$ (see
Lemma~\ref{tors-contra-tensor-hom-lemma}(b)), the natural isomorphism
$\Hom_\boZ(M\ot_SP,\>\boQ/\boZ)\simeq\Hom_S(P,\Hom_\boZ(M,\boQ/\boZ))$
implies that all projective objects of the abelian category
$S\Modl_{J\ctra}^\qs$, as well as all projective objects of the abelian
category $S\Modl_{J\ctra}$, are contraflat.
 Denote the class of contraflat $J$\+contramodule $S$\+modules
by $S\Modl_{J\ctra}^\ctrfl\subset S\Modl_{J\ctra}$.

\begin{lem} \label{contraflat-contramodules-lemma}
 Let $J$ be a weakly proregular finitely generated ideal in
a commutative ring~$S$.
 In this context: \par
\textup{(a)} The class of contraflat $J$\+contramodule $S$\+modules is
closed under extensions and kernels of surjective morphisms in
$S\Modl_{J\ctra}$.
 For any $J$\+torsion $S$\+module $M$, the functor $M\ot_S{-}$
preserves exactness of short exact sequences of contraflat
$J$\+contramodule $S$\+modules. \par
\textup{(b)} One has\/ $\Tor^S_n(M,F)=0$ for any $J$\+torsion
$S$\+module $M$, any contraflat $J$\+contramodule $S$\+module $F$,
and all $n\ge1$.
\end{lem}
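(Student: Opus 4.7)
The two claims are interrelated; I plan to prove part~(b) first and then derive part~(a) as a formal consequence.

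For part~(b), the goal is to show $\Tor^S_n(M,F)=0$ for every $J$\+torsion $M$, contraflat $F$, and $n\ge 1$. I first reduce to the case where $M$ is annihilated by some power $J^k$ of $J$: every $J$\+torsion $S$\+module is the filtered union of its submodules $(0:_MJ^k)$, each an $S/J^k$\+module, and Tor commutes with filtered colimits. For such $M$ the Cartan--Eilenberg change-of-rings spectral sequence
\begin{equation*}
E^2_{p,q}=\Tor^{S/J^k}_p\bigl(M,\Tor^S_q(S/J^k,F)\bigr)\Longrightarrow\Tor^S_{p+q}(M,F)
\end{equation*}
reduces the problem to the single vanishing $\Tor^S_q(S/J^k,F)=0$ for $q\ge 1$, since the contraflatness hypothesis makes $F/J^kF=S/J^k\ot_SF$ flat over $S/J^k$ and kills the terms with $q=0$, $p\ge 1$ on the $E^2$\+page.

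This core vanishing is the technical heart of the proof, and it is where the weak proregularity of $J$ enters essentially. The strategy is to use the Koszul complexes $K_\bu(S,\mathbf{s}^j)$ for a finite generating sequence $\mathbf{s}=(s_1,\dots,s_m)$ of $J$, whose higher homologies form pro-zero projective systems in $j$ by weak proregularity, together with the $J$\+contramodule structure of $F$ (identifying $F$ with a suitable limit along $J$) and the contraflatness of $F$ (giving flatness of each $F/J^jF$ over $S/J^j$). Combining these inputs allows one to compute $S/J^k\ot_S^\boL F$ and obtain the quasi-isomorphism $S/J^k\ot_S^\boL F\simeq F/J^kF$ concentrated in degree zero, which is the desired Tor vanishing. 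This limit/Koszul/contramodule juggling is the main obstacle.

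For part~(a), both closure assertions and the preservation-of-exactness claim follow formally from~(b) via the long exact sequence in Tor. Given a short exact sequence $0\to F'\to F\to F''\to 0$ in $S\Modl_{J\ctra}$ and a $J$\+torsion $S$\+module $M$, the LES includes the segment
\begin{equation*}
\Tor^S_2(M,F'')\to\Tor^S_1(M,F')\to\Tor^S_1(M,F)\to\Tor^S_1(M,F'')\to M\ot_SF'\to M\ot_SF\to M\ot_SF''\to 0.
\end{equation*}
If $F'$ and $F''$ are contraflat, (b) yields $\Tor^S_1(M,F'')=0$, producing the short exact sequence $0\to M\ot_SF'\to M\ot_SF\to M\ot_SF''\to 0$. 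Applying this with $M$ replaced by each of $M'$, $M$, $M''$ in a short exact sequence $0\to M'\to M\to M''\to 0$ in $S\Modl_{J\tors}$ gives a $3\times 3$ commutative diagram whose three columns and whose top and bottom rows (the latter by contraflatness of $F'$ and $F''$) are short exact; the nine lemma then forces the middle row to be short exact, proving $F$ is contraflat. If instead $F$ and $F''$ are contraflat, (b) yields $\Tor^S_1(M,F)=\Tor^S_2(M,F'')=0$, so $\Tor^S_1(M,F')=0$ and $F'$ is contraflat. Finally, when all three of $F',F,F''$ are contraflat, the short exact sequence displayed above is precisely the claimed preservation of exactness for $M\ot_S-$.
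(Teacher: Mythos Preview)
Your approach is correct but structurally inverted relative to the paper's. The paper establishes part~(a) first, as a specialization of a general result for flat objects in contramodule categories (\cite[Lemma~8.4]{Pflcc}, via \cite[Proposition~1.5 and Corollary~3.7]{Pdc}), and then deduces part~(b) from~(a) together with Lemma~\ref{Hom-tensor-underived=derived-lemma}(c): taking a projective resolution $P_\bu\to F$ in $S\Modl_{J\ctra}$, part~(a) makes $M\ot_SP_\bu\to M\ot_SF$ a resolution, while Lemma~\ref{Hom-tensor-underived=derived-lemma}(c) ensures that $M\ot_SP_\bu$ computes $\Tor^S_*(M,F)$. You instead prove~(b) directly and read~(a) off the long exact Tor sequence. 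Your route to~(b) via filtered colimits, the change-of-rings spectral sequence, and the reduction to $\Tor^S_q(S/J^k,F)=0$ is essentially Yekutieli's argument \cite[Theorem~6.9]{Yek2}, which the paper also cites; you correctly identify the weak-proregularity/Koszul/limit computation as the crux, though you leave the actual ``juggling'' unperformed. One thing your ordering loses: the paper's route to~(a) shows that~(a) holds under a hypothesis strictly weaker than weak proregularity (namely that all $J$\+contramodule $S$\+modules are quotseparated; cf.\ \cite[Remark~3.8]{Pdc}), whereas~(b) is actually equivalent to weak proregularity \cite[Theorem~7.2]{Pdc}. Deriving~(a) from~(b) therefore obscures this finer point.
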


\begin{proof}
 Part~(a) can be obtained as a special case of~\cite[Lemma~8.4]{Pflcc},
which is applicable in view of~\cite[Proposition~1.5 and
Corollary~3.7]{Pdc}.
 This argument shows that a weaker assumption than the weak
proregularity of the ideal $J$ is sufficient for the validity
of part~(a); see~\cite[Remark~3.8]{Pdc}.
 Without the weak proregularity assumption, part~(a) holds for
quotseparated $J$\+contramodule $S$\+modules.

 Part~(b) is essentially a result of Yekutieli;
see~\cite[Theorem~1.6(1) or~6.9]{Yek2}.
 The validity of part~(b) is equivalent to the weak proregularity of
the ideal~$J$; see~\cite[Theorem~7.2]{Pdc}.

 It is easy to deduce part~(b) from the combination of part~(a)
and Lemma~\ref{Hom-tensor-underived=derived-lemma}(c).
 Indeed, let $P_\bu$ be a projective resolution of a contraflat
$J$\+contramodule $F$ in the abelian category $S\Modl_{J\ctra}$.
 Then it is clear from part~(a) that the complex $M\ot_SP_\bu\rarrow
M\ot_SF\rarrow0$ is acyclic.
 On the other hand, by Lemma~\ref{Hom-tensor-underived=derived-lemma}(c)
we have $\Tor^S_n(M,P)=0$ for all projective objects
$P\in S\Modl_{J\ctra}$ and all $n\ge1$; so the complex
$M\ot_SP_\bu$ computes the derived functor $\Tor^S_*(M,F)$.
\end{proof}

 The following corollary is a further generalization of
Lemma~\ref{Hom-tensor-underived=derived-lemma}(c).

\begin{cor} \label{contraflat-contramods-tensor-underived=derived-cor}
 Let $J$ be a weakly proregular finitely generated ideal in
a commutative ring~$S$.
 Let $N^\bu$ be a complex of $J$\+torsion $S$\+modules
and $F^\bu$ be a bounded above complex of contraflat $J$\+contramodule
$S$\+modules.
 Then the complex of $S$\+modules\/ $N^\bu\ot_S F^\bu$ represents
the derived category object $N^\bu\ot_S^\boL F^\bu$.
 In other words, the natural morphism
$$
 N^\bu\ot_S^\boL F^\bu \lrarrow N^\bu\ot_S F^\bu.
$$
is an isomorphism in\/ $\sD(S\Modl)$.
\end{cor}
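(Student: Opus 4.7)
The plan is to reduce the claim to Lemma~\ref{Hom-tensor-underived=derived-lemma}(c) by a projective resolution argument, and then to a direct spectral-sequence computation. Using that the abelian category $S\Modl_{J\ctra}$ has enough projectives, choose a bounded above complex of projective $J$\+contramodule $S$\+modules $P^\bu$ together with a quasi-isomorphism $P^\bu\rarrow F^\bu$. By Lemma~\ref{Hom-tensor-underived=derived-lemma}(c), the underived tensor product $N^\bu\ot_S P^\bu$ represents $N^\bu\ot_S^\boL P^\bu\simeq N^\bu\ot_S^\boL F^\bu$ in $\sD(S\Modl)$. Hence it suffices to prove that $N^\bu\ot_S P^\bu\rarrow N^\bu\ot_S F^\bu$ is a quasi-isomorphism of complexes of $S$\+modules, or equivalently, that $N^\bu\ot_S C^\bu$ is acyclic, where $C^\bu$ denotes the cone of $P^\bu\rarrow F^\bu$. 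Observe that $C^\bu$ is a bounded above acyclic complex in $S\Modl_{J\ctra}$, and each term $C^n=P^{n+1}\oplus F^n$ is contraflat, since projective $J$\+contramodules are contraflat (as noted before Lemma~\ref{contraflat-contramodules-lemma}) and finite direct sums of contraflats are contraflat.

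We first show that $M\ot_S C^\bu$ is acyclic for any single $J$\+torsion $S$\+module $M$. Denote the cocycles by $Z^n=\ker(C^n\rarrow C^{n+1})$. Since $C^\bu$ is acyclic, we have short exact sequences $0\rarrow Z^n\rarrow C^n\rarrow Z^{n+1}\rarrow0$ in $S\Modl_{J\ctra}$, and $Z^n=0$ for $n$ sufficiently large, as $C^\bu$ is bounded above. A downward induction based on Lemma~\ref{contraflat-contramodules-lemma}(a), which asserts that contraflats are closed under kernels of surjective morphisms in $S\Modl_{J\ctra}$, shows that every $Z^n$ is contraflat. Again by Lemma~\ref{contraflat-contramodules-lemma}(a), the functor $M\ot_S{-}$ preserves exactness of each of these short exact sequences, so that a direct computation of kernels and images in $M\ot_S C^\bu$, using the factorization of each differential through the cocycles, yields the acyclicity.

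To pass to a general complex $N^\bu$ of $J$\+torsion $S$\+modules, we reduce to the case that $N^\bu$ is bounded above. Write $N^\bu$ as the filtered colimit of its stupid truncations above (subcomplexes of $N^\bu$ obtained by setting all $N^i$ with $i>m$ equal to zero); since the tensor product of complexes commutes with filtered colimits, and cohomology of complexes of abelian groups commutes with filtered colimits, it suffices to prove that $N^\bu\ot_S C^\bu$ is acyclic when $N^\bu$ is bounded above. In that case, the bicomplex with terms $N^q\ot_S C^p$ is supported in a quadrant of the form $q\le M$, $p\le N$ and has finite diagonals, so the associated spectral sequences of its total complex converge strongly. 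Lemma~\ref{contraflat-contramodules-lemma}(b) yields $\Tor^S_{\ge1}(T,C^p)=0$ for every $J$\+torsion $S$\+module $T$, so that the cocycles, coboundaries, and cohomology of $N^\bu$, all being $J$\+torsion, satisfy this vanishing; it follows that $H^q(N^\bu\ot_S C^p)=H^q(N^\bu)\ot_S C^p$. Consequently, the column-filtered spectral sequence has $E_2^{p,q}=H^p(H^q(N^\bu)\ot_S C^\bu)$, which vanishes by the previous paragraph applied to $M=H^q(N^\bu)$; thus $N^\bu\ot_S C^\bu$ is acyclic, as required. The main delicacy is managing the unboundedness of $N^\bu$ against the only-one-sidedly bounded $C^\bu$; the reduction to bounded above $N^\bu$ by filtered colimits ensures that the bicomplex lives in a quadrant so the spectral sequence converges by standard arguments, avoiding any need for more subtle convergence theorems for half-plane bicomplexes.
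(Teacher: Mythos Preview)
Your proof is correct and follows essentially the same strategy as the paper: resolve $F^\bu$ by projectives in $S\Modl_{J\ctra}$, invoke Lemma~\ref{Hom-tensor-underived=derived-lemma}(c), and show that the tensor product of $N^\bu$ with the bounded above acyclic contraflat cone is acyclic by first treating a single $J$\+torsion module via Lemma~\ref{contraflat-contramodules-lemma}(a). The only difference lies in the passage from a single module to an arbitrary complex $N^\bu$: the paper writes $N^\bu$ as a direct limit of \emph{finite} complexes (using canonical truncations on one side and silly truncations on the other) and appeals directly to the single-module case term by term, whereas you reduce only to bounded above $N^\bu$ via silly truncations and then run a third-quadrant spectral sequence, using the additional input of Lemma~\ref{contraflat-contramodules-lemma}(b) to identify the $E_1$ page. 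Both routes are standard; the paper's is marginally more economical in that it does not require part~(b) of Lemma~\ref{contraflat-contramodules-lemma}.
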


\begin{proof}
 Let $P^\bu$ be a bounded above complex of projective objects in
$S\Modl_{J\ctra}$ endowed with a quasi-isomorphism of complexes
$P^\bu\rarrow F^\bu$.
 In view of Lemma~\ref{Hom-tensor-underived=derived-lemma}(c), we
only need to prove that the induced map of complexes
$N^\bu\ot_SP^\bu\rarrow N^\bu\ot_S F^\bu$ is a quasi-isomorphism.
 Indeed, denote by $G^\bu$ the cone of the morphism of complexes
$P^\bu\rarrow F^\bu$.
 So $G^\bu$ is a bounded above acyclic complex of contraflat
$J$\+contramodule $S$\+modules.
 By Lemma~\ref{contraflat-contramodules-lemma}(a), the complex
$M\ot_R G^\bu$ is acyclic for any $J$\+torsion $S$\+module~$M$.
 It follows that the complex $M^\bu\ot_R G^\bu$ is acyclic for any
finite complex of $J$\+torsion $S$\+modules~$M^\bu$.
 It remains to represent the given complex of $J$\+torsion $S$\+modules
$N^\bu$ as a direct limit of finite complexes of $J$\+torsion
$S$\+modules, which can be done using the canonical truncations on
one side and the silly truncations on the other side, in order to
prove that the complex $N^\bu\ot_R G^\bu$ is acyclic.
\end{proof}

\Section{Auslander and Bass classes}  \label{auslander-and-bass-secn}

 Let $S$ be a commutative ring and $J\subset S$ be a weakly proregular
finitely generated ideal.
 Denote by $\fS=\varprojlim_{n\ge0}S/J^n$ the $J$\+adic completion of
the ring~$S$.

 A \emph{pseudo-dualizing complex of $J$\+torsion $S$\+modules} $L^\bu$
is a finite complex of $J$\+torsion $S$\+modules satisfying
the following two conditions:
\begin{enumerate}
\renewcommand{\theenumi}{\roman{enumi}}
\setcounter{enumi}{1}
\item for every finite complex of finitely generated projective
$S$\+modules $K^\bu$ with $J$\+torsion cohomology modules, the complex
of $S$\+modules $\Hom_S(K^\bu,L^\bu)$ is quasi-isomorphic to a bounded
above complex of finitely generated projective $S$\+modules;
\item the homothety map
$\fS\rarrow\Hom_{\sD^\bb(S\Modl)}(L^\bu,L^\bu[*])$ is an isomorphism
of graded rings.
\end{enumerate}

 Assume that the finite complex $L^\bu$ is concentrated in
the cohomological degrees $-d_1\le m\le d_2$.
 Choose an integer $l_1\ge d_1$, and consider the following full
subcategories in the abelian categories of $J$\+torsion and
$J$\+contramodule $S$\+modules:
\begin{itemize}
\item $\sE_{l_1}=\sE_{l_1}(L^\bu)\subset S\Modl_{J\tors}$ is
the full subcategory consisting of all the $J$\+torsion $S$\+modules
$E$ such that $\Ext_S^n(L^\bu,E)=0$ for all $n>l_1$ and the adjunction
morphism $L^\bu\ot_S^\boL\boR\Hom_S(L^\bu,E)\rarrow E$ is
an isomorphism in $\sD^-(S\Modl)$;
\item $\sF_{l_1}=\sF_{l_1}(L^\bu)\subset S\Modl_{J\ctra}$ is
the full subcategory consisting of all the $J$\+contramodule
$S$\+modules $F$ such that $\Tor^S_n(L^\bu,F)=0$ for all $n>l_1$ and
the adjunction morphism $F\rarrow\boR\Hom_S(L^\bu,\>L^\bu\ot_S^\boL F)$
is an isomorphism in $\sD^+(S\Modl)$.
\end{itemize}
 Clearly, for any $l_1''\ge l_1'\ge d_1$, one has $\sE_{l_1'}\subset
\sE_{l_1''}\subset S\Modl_{J\tors}$ and $\sF_{l_1'}\subset\sF_{l_1''}
\subset S\Modl_{J\ctra}$.
 The category $\sF_{l_1}$ can be called the \emph{Auslander class of
$J$\+contramodule $S$\+modules} corresponding to a pseudo-dualizing
complex $L^\bu$, while the category $\sE_{l_1}$ is the \emph{Bass class
of $J$\+torsion $S$\+modules} (cf.~\cite[Section~3]{Pps}
and~\cite[Section~4]{Ppc}).

 Given an exact category $\sT$, a full subcategory $\sE\subset\sT$ is
said to be \emph{coresolving} if $\sE$ is closed under extensions and
cokernels of admissible monomorphisms in $\sT$, and for every object
$T\in\sT$ there exists an admissible monomorphism $T\rarrow E$ in $\sT$
with $E\in\sE$.
 Dually, a full subcategory $\sF\subset\sT$ is said to be
\emph{resolving} if $\sF$ is closed under extensions and kernels of
admissible epimorphisms in $\sT$, and for every object $T\in\sT$
there exists an admissible epimorphism $F\rarrow T$ in $\sT$ with
$F\in\sF$.
 The following two lemmas imply that the full subcategory $\sE_{l_1}$
is coresolving in the abelian category $S\Modl_{J\tors}$, while
the full subcategory $\sF_{l_1}$ is resolving in the abelian category
$S\Modl_{J\ctra}$.

\begin{lem} \label{auslander-bass-classes-closure-properties-lemma}
\textup{(a)} The full subcategory\/ $\sE_{l_1}\subset S\Modl_{J\tors}$
is closed under the cokernels of injective morphisms, extensions, and
direct summands. \par
\textup{(b)} The full subcategory\/ $\sF_{l_1}\subset S\Modl_{J\ctra}$
is closed under the kernels of surjective morphisms, extensions, and
direct summands.  \qed
\end{lem}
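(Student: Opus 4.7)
The plan is to verify each of the two defining conditions of $\sE_{l_1}$ (respectively, $\sF_{l_1}$) separately: the vanishing of $\Ext_S^n(L^\bu,-)$ or $\Tor^S_n(L^\bu,-)$ for $n>l_1$, and the adjunction isomorphism. Both conditions are additive, so closure under direct summands will be immediate from the fact that $\boR\Hom_S(L^\bu,-)$ and $L^\bu\ot_S^\boL-$ commute with finite direct sums. The remaining two closure properties reduce to three-term arguments based on the long exact sequences of derived functors and the triangulated five-lemma.

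For part~(a), let $0\to E'\to E\to E''\to 0$ be a short exact sequence in $S\Modl_{J\tors}$ with two of the three terms lying in $\sE_{l_1}$. The long exact sequence of $\Ext_S^*(L^\bu,-)$ shows that whenever two of the modules satisfy $\Ext_S^n(L^\bu,-)=0$ for all $n>l_1$, the third does as well, since the neighboring terms in the long exact sequence also vanish (using $n>l_1\Rightarrow n+1>l_1$). For the adjunction condition, I~view the short exact sequence as an exact triangle $E'\to E\to E''\to E'[1]$ and apply the triangulated endofunctor $\Phi(-):=L^\bu\ot_S^\boL\boR\Hom_S(L^\bu,-)$; the counit $\Phi\to\id$ of the derived adjunction between $L^\bu\ot_S^\boL-$ and $\boR\Hom_S(L^\bu,-)$ is a natural transformation of triangulated functors, hence yields a morphism of exact triangles. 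The triangulated five-lemma then upgrades the adjunction isomorphism at any two vertices to an isomorphism at the third.

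Part~(b) is entirely dual: I~use the long exact sequence of $\Tor^S_*(L^\bu,-)$ in place of $\Ext$, together with the triangulated endofunctor $\Psi(-):=\boR\Hom_S(L^\bu,\>L^\bu\ot_S^\boL-)$ and its unit $\id\to\Psi$ in place of the counit used above. The same long-exact-sequence and five-lemma mechanism closes $\sF_{l_1}$ under kernels of surjections and extensions.

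The one genuinely technical point is bookkeeping of the half-plane in which the adjunction isomorphism is required to hold: one must check that the objects $\Phi(E^{(i)})$ in part~(a) actually sit in $\sD^-(S\Modl)$, and $\Psi(F^{(i)})$ in part~(b) sit in $\sD^+(S\Modl)$, so that the triangles to which the five-lemma is applied lie in the correct bounded derived categories. Since $L^\bu$ is a bounded complex and the $\Ext^n$ (resp.\ $\Tor_n$) vanishing for $n>l_1$ established in the first step provides cohomological boundedness of $\boR\Hom_S(L^\bu,-)$ (resp.\ $L^\bu\ot_S^\boL-$) on the objects at hand, this is routine, but it is the place where the two defining conditions interact and so deserves to be dispatched before invoking the five-lemma.
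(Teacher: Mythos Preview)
Your argument is correct and is precisely the standard one; the paper in fact offers no proof at all (the lemma carries a bare \qed), so there is nothing to compare against beyond noting that your long-exact-sequence plus triangulated five-lemma approach is exactly what the author had in mind as routine.

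One small expository quibble: your sentence ``whenever two of the modules satisfy $\Ext_S^n(L^\bu,-)=0$ for all $n>l_1$, the third does as well'' is literally false for the kernel configuration ($E,E''\in\sE_{l_1}$ does \emph{not} force $E'\in\sE_{l_1}$, since $\Ext^{l_1}(L^\bu,E'')$ need not vanish). Your parenthetical ``$n>l_1\Rightarrow n+1>l_1$'' shows you understand this asymmetry, and the lemma only asks for extensions and cokernels of monomorphisms, so the argument you actually need goes through; but it would be cleaner to state only the two cases you require rather than the symmetric ``two out of three'' formulation.
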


 The next lemma, which is our version of~\cite[Lemma~3.2]{Pps}
and~\cite[Lemma~4.2]{Ppc}, plays a key role.

\begin{lem} \label{injectives-projectives-belong-to-auslander-bass}
\textup{(a)} The full subcategory\/ $\sE_{l_1}\subset S\Modl_{J\tors}$
contains all the injective objects of the abelian category
$S\Modl_{J\tors}$. \par
\textup{(b)} The full subcategory\/ $\sF_{l_1}\subset S\Modl_{J\ctra}$
contains all the contraflat $J$\+con\-tra\-mod\-ule $S$\+modules.
 In particular, all the projective objects of the abelian category
$S\Modl_{J\ctra}$ belong to\/~$\sF_{l_1}$.
\end{lem}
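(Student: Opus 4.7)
I address the four claims in turn; the ``In particular'' clause in~(b) is immediate from the observation preceding the statement that all projective $J$\+contramodule $S$\+modules are contraflat.

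The Ext and Tor vanishings are a matter of boundedness. For part~(a), viewing $E$ as a bounded-below complex of injective objects of $S\Modl_{J\tors}$ concentrated in degree~$0$, Lemma~\ref{Hom-tensor-underived=derived-lemma}(a) gives $\boR\Hom_S(L^\bu,E)\cong\Hom_S(L^\bu,E)$ in $\sD(S\Modl)$; since $L^\bu$ is concentrated in cohomological degrees $[-d_1,d_2]$, this complex is concentrated in degrees $[-d_2,d_1]$, so $\Ext^n_S(L^\bu,E)=0$ for $n>d_1\le l_1$. For part~(b), Corollary~\ref{contraflat-contramods-tensor-underived=derived-cor} identifies $L^\bu\ot^\boL_S F$ with $L^\bu\ot_S F$, which is concentrated in degrees $[-d_1,d_2]$; hence $\Tor^S_n(L^\bu,F)=H^{-n}(L^\bu\ot_S F)=0$ for $n>d_1\le l_1$.

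The adjunction isomorphism in~(a) is the substantive content. The cone $C^\bu$ of $\phi\:L^\bu\ot^\boL_S\Hom_S(L^\bu,E)\rarrow E$ has $J$\+torsion cohomology by Lemma~\ref{tors-contra-tensor-hom-for-complexes}(a), so by Lemma~\ref{quis-tested-on-tensor-hom-with-compact-torsion}(a) it suffices to show $K^\bu\ot_S C^\bu$ is acyclic for every finite complex $K^\bu$ of finitely generated projective $S$\+modules with $J$\+torsion cohomology, i.\,e.\ that $K^\bu\ot_S\phi$ is a quasi-isomorphism. Setting $J^\bu=\Hom_S(K^\bu,S)$ (still with $J$\+torsion cohomology by Lemma~\ref{compact-torsion-dualization-lemma}), reflexivity of f.g.\ projectives identifies
\begin{equation*}
K^\bu\ot_S L^\bu\cong\boR\Hom_S(J^\bu,L^\bu),\qquad K^\bu\ot_S E\cong\boR\Hom_S(J^\bu,E),
\end{equation*}
and rewrites $K^\bu\ot_S\phi$ as the natural composition morphism
\begin{equation*}
\boR\Hom_S(J^\bu,L^\bu)\ot^\boL_S\boR\Hom_S(L^\bu,E)\lrarrow\boR\Hom_S(J^\bu,E).
\end{equation*}
By condition~(ii) of the definition of a pseudo-dualizing complex, $\boR\Hom_S(J^\bu,L^\bu)$ is represented by a bounded-above complex $Q^\bu$ of finitely generated projective $S$\+modules (necessarily with $J$\+torsion cohomology). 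Substituting and using $Q^\bu\ot^\boL_S{-}=Q^\bu\ot_S{-}$, the question reduces to whether the induced map $Q^\bu\ot_S\Hom_S(L^\bu,E)\rarrow\Hom_S(J^\bu,E)$ is a quasi-isomorphism. A d\'evissage along stupid truncations of $Q^\bu$ combined with the homothety condition~(iii) -- which identifies $\boR\Hom_S(L^\bu,L^\bu)$ with $\fS$ concentrated in degree zero and controls the evaluation pairing on each perfect piece -- concludes the verification, parallelling the arguments in~\cite[Lemma~3.2]{Pps} and~\cite[Lemma~4.2]{Ppc}.

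The adjunction isomorphism in~(b) is dual: the cone of $F\rarrow\boR\Hom_S(L^\bu,L^\bu\ot_S F)$ has $J$\+contramodule cohomology (Lemma~\ref{tors-contra-tensor-hom-for-complexes}(c)), so by Lemma~\ref{quis-tested-on-tensor-hom-with-compact-torsion}(b) we test acyclicity via $\Hom_S(K^\bu,{-})$ for compact torsion $K^\bu$; adjunction together with the perfect-complex identifications and condition~(ii) reduce the verification to the same type of evaluation/composition morphism handled in~(a). The principal obstacle is precisely this d\'evissage-and-homothety verification: it is the only point where the pseudo-dualizing hypothesis is genuinely exploited, the remaining steps being routine consequences of the preceding machinery.
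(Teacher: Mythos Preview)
Your overall strategy matches the paper's: the Ext/Tor vanishing is handled identically, and for the adjunction isomorphisms you correctly reduce via Lemma~\ref{quis-tested-on-tensor-hom-with-compact-torsion} to testing against compact $J$\+torsion objects $K^\bu$, then invoke condition~(ii) to replace $K^\bu\ot_SL^\bu$ (equivalently your $\boR\Hom_S(J^\bu,L^\bu)$) by a bounded-above complex of finitely generated projectives.

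Where you diverge is the final step. Your ``d\'evissage along stupid truncations of $Q^\bu$'' is vague, and it is unclear how stupid truncations of $Q^\bu$ interact with the fixed target $\Hom_S(J^\bu,E)$, since the truncated pieces do not themselves represent $\boR\Hom_S(J^\bu,L^\bu)$. The paper avoids any d\'evissage: it uses the termwise natural isomorphism
\[
 M^\bu\ot_S\Hom_S(L^\bu,H)\;\simeq\;\Hom_S(\Hom_S(M^\bu,L^\bu),H)
\]
(valid for any complex $M^\bu$ of finitely generated projectives), observes that $\Hom_S(M^\bu,L^\bu)$ is a complex of $J$\+torsion modules so that $\Hom_S({-},H)$ computes $\boR\Hom_S({-},H)$, and then applies condition~(iii) to identify $\boR\Hom_S(K^\bu\ot_SL^\bu,\>L^\bu)$ with $\Hom_S(K^\bu,\fS)$. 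There is one further point you omit entirely: condition~(iii) yields $\fS$, not $S$, so one still needs that the completion map induces a quasi-isomorphism $\Hom_S(K^\bu,S)\rarrow\Hom_S(K^\bu,\fS)$; the paper obtains this from~\cite[Lemma~5.4(b)]{Pmgm}. Without this step the identification with $K^\bu\ot_SH$ is incomplete. The same remark applies to part~(b).
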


\begin{proof}
 Part~(a): let $H$ be an injective object of $S\Modl_{J\tors}$.
 Then, first of all, one has
$$
 \Ext^n_S(L^\bu,H)=H^n\Hom_S(L^\bu,H)=0
$$
for all $n\ge d_1$ by
Lemma~\ref{Hom-tensor-underived=derived-lemma}(a).
 It remains to check that the adjunction morphism
$L^\bu\ot_S^\boL\boR\Hom_S(L^\bu,H)\rarrow H$ is an isomorphism
in $\sD(S\Modl)$.

 Indeed, both $L^\bu\ot_S^\boL\boR\Hom_S(L^\bu,H)$ and $H$ are
complexes of $S$\+modules with $J$\+torsion cohomology modules
(see Lemma~\ref{tors-contra-tensor-hom-for-complexes}(a)).
 Let $K^\bu$ be a finite complex of finitely generated projective
$S$\+modules with $J$\+torsion cohomology modules.
 By Lemma~\ref{quis-tested-on-tensor-hom-with-compact-torsion}(a),
it suffices to check that the morphism of complexes
$$
 (K^\bu\ot_SL^\bu)\ot_S^\boL\boR\Hom_S(L^\bu,H)
 \lrarrow K^\bu\ot_SH
$$
is a quasi-isomorphism.

 By Lemma~\ref{compact-torsion-dualization-lemma} and condition~(ii),
there exists a bounded above complex of finitely generated projective
$S$\+modules $M^\bu$ together with a quasi-isomorphism of complexes
of $S$\+modules $M^\bu\rarrow K^\bu\ot_SL^\bu$.
 For every complex of $J$\+torsion $S$\+modules $N^\bu$, the complex of
$S$\+modules $\Hom_S(N^\bu,H)$ represents the derived category object
$\boR\Hom_S(N^\bu,H)$ by
Lemma~\ref{Hom-tensor-underived=derived-lemma}(a).
 So we have a natural isomorphism
\begin{multline*}
 (K^\bu\ot_SL^\bu)\ot_S^\boL\boR\Hom_S(L^\bu,H)
 = M^\bu\ot_S\Hom_S(L^\bu,H) \\
 \simeq \Hom_S(\Hom_S(M^\bu,L^\bu),H)
 = \boR\Hom_S(\boR\Hom_S(K^\bu\ot_S L^\bu,\>L^\bu),\>H)
\end{multline*}
in the derived category $\sD(S\Modl)$.
 Here we are using the fact that $\Hom_S(M^\bu,L^\bu)\simeq
\Hom_S(M^\bu,S)\ot_SL^\bu$ is a complex of $J$\+torsion $S$\+modules
by Lemma~\ref{tors-contra-tensor-hom-lemma}(a).

 By condition~(iii), the homothety map
$$
 \Hom_S(K^\bu,\fS)\lrarrow\boR\Hom_S(K^\bu\ot_S L^\bu,\>L^\bu)
$$
is an isomorphism in $\sD(S\Modl)$.
 It remains to point out that the map
$$
 \Hom_S(K^\bu,S)\lrarrow\Hom_S(K^\bu,\fS)
$$
induced by the completion map $S\rarrow\fS$ is a quasi-isomorphism
of complexes of $S$\+modules by
Lemma~\ref{compact-torsion-dualization-lemma}
and~\cite[Lemma~5.4(b)]{Pmgm}.

 Part~(b): let $P$ be a contraflat $J$\+contramodule $S$\+module.
 Then, first of all, one has
$$
 \Tor_n^S(L^\bu,P)=H^{-n}(L^\bu\ot_SP)=0
$$
for all $n\ge d_1$ by
Corollary~\ref{contraflat-contramods-tensor-underived=derived-cor}.
 It remains to check that the adjunction morphism
$P\rarrow\boR\Hom_S(L^\bu,\>L^\bu\ot_S^\boL P)$ is an isomorphism
in $\sD(S\Modl)$.

 Indeed, both $P$ and $\boR\Hom_S(L^\bu,\>L^\bu\ot_S^\boL P)$ are
complexes of $S$\+modules with $J$\+contramodule cohomology modules
(see Lemma~\ref{tors-contra-tensor-hom-for-complexes}(b)).
 Let $K^\bu$ be a finite complex of finitely generated projective
$S$\+modules with $J$\+torsion cohomology modules.
 By Lemma~\ref{quis-tested-on-tensor-hom-with-compact-torsion}(b),
it suffices to check that the morphism of complexes
$$
 \Hom_S(K^\bu,P)\lrarrow\boR\Hom_S(K^\bu\ot_SL^\bu,\>L^\bu\ot_S^\boL P)
$$
is a quasi-isomorphism.

 As in part~(a), we use Lemma~\ref{compact-torsion-dualization-lemma}
and condition~(ii), and pick a bounded above complex of finitely
generated projective $S$\+modules $M^\bu$ together with
a quasi-isomorphism of complexes of $S$\+modules
$M^\bu\rarrow K^\bu\ot_SL^\bu$.
 For every complex of $J$\+torsion $S$\+modules $N^\bu$, the complex of
$S$\+modules $N^\bu\ot_S P$ represents the derived category object
$N^\bu\ot_S^\boL P$ by
Corollary~\ref{contraflat-contramods-tensor-underived=derived-cor}.
 So we have a natural isomorphism
\begin{multline*}
 \boR\Hom_S(K^\bu\ot_SL^\bu,\>L^\bu\ot_S^\boL P)=
 \Hom_S(M^\bu,\>L^\bu\ot_S P) \\
 \simeq\Hom_S(M^\bu,L^\bu)\ot_S P =
 \boR\Hom_S(K^\bu\ot_SL^\bu,\>L^\bu)\ot_S^\boL P
\end{multline*}
in the derived category $\sD(S\Modl)$.
 Once again, we are using the fact that $\Hom_S(M^\bu,L^\bu)$ is
a complex of $J$\+torsion $S$\+modules.
 As in part~(a), the argument finishes with the observations that
the maps
$$
 \Hom_S(K^\bu,S)\lrarrow\Hom_S(K^\bu,\fS)
 \lrarrow\boR\Hom_S(K^\bu\ot_S L^\bu,\>L^\bu)
$$
are isomorphisms in $\sD(S\Modl)$ by~\cite[Lemma~5.4(b)]{Pmgm}
and condition~(iii).
\end{proof}

\begin{rem} \label{Bass-Auslander-direct-sum-product-closedness-remark}
 Similarly to~\cite[Remark~4.3]{Ppc}, we \emph{do not know} whether
the analogue of~\cite[Lemma~3.3]{Pps} holds in the context of
pseudo-dualizing complexes of $J$\+torsion $S$\+modules, i.~e.,
whether the Bass class of $J$\+torsion $S$\+modules $\sE_{l_1}$ is
always closed under infinite direct sums in $S\Modl_{J\tors}$, and
whether the Auslander class of $J$\+contramodule $S$\+modules
$\sF_{l_1}$ is always closed under infinite direct products
in $S\Modl_{J\ctra}$.
 These questions are open even in the case of a Noetherian ring $S$,
when the class of injective $J$\+torsion $S$\+modules is closed under
infinite direct sums (since an object of $S\Modl_{J\tors}$ is injective
in $S\Modl_{J\tors}$ if and only if it is injective in $S\Modl$, as
one can see from the Artin--Rees lemma) and the class of contraflat
$J$\+contramodule $S$\+modules is closed under infinite products (since
a $J$\+contramodule $S$\+module is contraflat if and only if it is flat
as an $S$\+module; see~\cite[Corollary~10.3(a)]{Pcta}).
\end{rem}

\begin{lem} \label{finite-complexes-in-Bass-Auslander-classes}
\textup{(a)} Let $M^\bu$ be a complex of $J$\+torsion $S$\+modules
concentrated in the cohomological degrees $-n_1\le m\le n_2$.
 Then $M^\bu$ is quasi-isomorphic to a complex of $J$\+torsion
$S$\+modules concentrated in the cohomological degrees
$-n_1\le m\le n_2$ with the terms belonging to the full subcategory\/
$\sE_{l_1}\subset S\Modl_{J\tors}$ if and only if\/
$\Ext_S^n(L^\bu,M^\bu)=0$ for all $n>n_2+l_1$ and the adjunction
morphism $L^\bu\ot_S^\boL\boR\Hom_S(L^\bu,M^\bu)\rarrow M^\bu$
is an isomorphism in\/ $\sD^-(S\Modl)$. \par
\textup{(b)} Let $Q^\bu$ be a complex of $J$\+contramodule $S$\+modules
concentrated in the cohomological degrees $-n_1\le m\le n_2$.
 Then $Q^\bu$ is quasi-isomorphic to a complex of $J$\+contramodule
$S$\+modules concentrated in the cohomological degrees
$-n_1\le m\le n_2$ with the terms belonging to the full subcategory\/
$\sF_{l_1}\subset S\Modl_{J\ctra}$ if and only if\/
$\Tor^S_n(L^\bu,Q^\bu)=0$ for $n>n_1+l_1$ and the adjunction morphism
$Q^\bu\rarrow\boR\Hom_S(L^\bu,\>L^\bu\ot_S^\boL Q^\bu)$ is
an isomorphism in\/ $\sD^+(S\Modl)$.
\end{lem}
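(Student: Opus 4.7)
Part~(a) has two directions to verify. For the ``only if'' direction, suppose $M^\bu$ is quasi-isomorphic to a complex $E^\bu$ concentrated in degrees $[-n_1,n_2]$ with $E^i\in\sE_{l_1}$. The hypercohomology spectral sequence $E_1^{p,q}=\Ext^q_S(L^\bu,E^p)\Rightarrow \Ext^{p+q}_S(L^\bu,E^\bu)$ coming from the stupid column filtration on $E^\bu$ is supported in the region $-n_1\le p\le n_2$ (since $E^p=0$ outside this range) and $q\le l_1$ (since $E^p\in\sE_{l_1}$), which immediately yields $\Ext^n_S(L^\bu,M^\bu)=0$ for $n>n_2+l_1$. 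The adjunction isomorphism in $\sD^-(S\Modl)$ is proved by induction on the number of nonzero terms of $E^\bu$: the short exact sequences of complexes obtained from the stupid truncations produce distinguished triangles, and the five lemma applied to the corresponding morphism of triangles propagates the adjunction isomorphism from single-term complexes (where it holds by the definition of $\sE_{l_1}$) to the whole of~$E^\bu$.

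For the ``if'' direction, my plan is to take an injective coresolution $M^\bu\rarrow H^\bu$ in the Grothendieck category $S\Modl_{J\tors}$, arranged so that $H^i=0$ for $i<-n_1$ and $H^i$ is injective in $S\Modl_{J\tors}$ for $i\ge -n_1$, and to set $E^\bu :=\tau^{\le n_2}H^\bu$: that is, $E^i=H^i$ for $-n_1\le i<n_2$ and $E^{n_2}:=\ker(H^{n_2}\rarrow H^{n_2+1})$. Then $E^\bu$ is concentrated in the required degree range, quasi-isomorphic to $M^\bu$, and has all its terms below $n_2$ in $\sE_{l_1}$ by Lemma~\ref{injectives-projectives-belong-to-auslander-bass}(a); the entire task thereby reduces to verifying $E^{n_2}\in\sE_{l_1}$.

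The key device for the latter is the short exact sequence of complexes $0\rarrow\sigma^{\ge n_2}H^\bu\rarrow H^\bu\rarrow\sigma^{<n_2}H^\bu\rarrow 0$ obtained from the stupid truncations. The subcomplex $\sigma^{\ge n_2}H^\bu$ has its unique nonzero cohomology equal to $E^{n_2}$ in degree~$n_2$, hence is quasi-isomorphic to $E^{n_2}[-n_2]$; the quotient $\sigma^{<n_2}H^\bu$ is a bounded complex of injective objects of $S\Modl_{J\tors}$ sitting in degrees $[-n_1,n_2-1]$. The associated distinguished triangle
\[
E^{n_2}[-n_2]\lrarrow M^\bu\lrarrow\sigma^{<n_2}H^\bu\lrarrow E^{n_2}[-n_2+1]
\]
in $\sD(S\Modl)$ (permissible in view of Theorem~\ref{torsion-modules-inclusion-derived-fully-faithful}) is the arena of the remaining argument. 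By Lemma~\ref{Hom-tensor-underived=derived-lemma}(a), the Ext groups $\Ext^n_S(L^\bu,\sigma^{<n_2}H^\bu)$ vanish for $n>n_2-1+d_1$; since $l_1\ge d_1$, this gives vanishing in particular for $n>n_2+l_1-1$. Combined with the already-proven ``only if'' direction applied with upper bound $n_2-1$ (supplying the adjunction isomorphism for $\sigma^{<n_2}H^\bu$) and with the assumed conditions on $M^\bu$, the long exact sequences obtained from the triangle then yield both $\Ext^n_S(L^\bu,E^{n_2})=0$ for $n>l_1$ and the adjunction isomorphism for $E^{n_2}$.

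Part~(b) is proved dually, replacing the injective coresolution by a projective resolution $P^\bu\rarrow Q^\bu$ in $S\Modl_{J\ctra}$ with $P^i=0$ for $i>n_2$, setting $F^\bu :=\tau^{\ge -n_1}P^\bu$ with $F^{-n_1}=\mathrm{coker}(P^{-n_1-1}\rarrow P^{-n_1})$, and invoking Lemma~\ref{injectives-projectives-belong-to-auslander-bass}(b) and Corollary~\ref{contraflat-contramods-tensor-underived=derived-cor} in place of their part~(a) counterparts. I expect the main technical point requiring care will be the consistent bookkeeping of ambient derived categories: the distinguished triangles, Ext, and Tor all live naturally in $\sD(S\Modl)$, and the identifications with the corresponding computations in $\sD(S\Modl_{J\tors})$ and $\sD(S\Modl_{J\ctra})$ are justified by Theorems~\ref{torsion-modules-inclusion-derived-fully-faithful} and~\ref{contramodules-inclusion-derived-fully-faithful}.
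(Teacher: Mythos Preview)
Your proof is correct and follows essentially the same route as the paper's: construct a quasi-isomorphic complex with injective (respectively, projective) terms in all but the extreme degree via a truncated injective coresolution (respectively, projective resolution), invoke Lemma~\ref{injectives-projectives-belong-to-auslander-bass} for those terms, and then verify membership of the remaining term in $\sE_{l_1}$ (respectively, $\sF_{l_1}$) using the distinguished triangle coming from the stupid truncation. The paper's proof is much terser---it calls the ``only if'' implication obvious and for the ``if'' simply instructs the reader to construct ${}'\!M^\bu$ and ``use Lemma~\ref{injectives-projectives-belong-to-auslander-bass}(a) in order to check that ${}'\!M^m\in\sE_{l_1}$ for all $-n_1\le m\le n_2$''---but your spectral-sequence and triangle arguments are exactly what is implicit in that instruction.
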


\begin{proof}
 Part~(a): The ``only if'' implication is obvious.
 To prove the ``if'', replace $M^\bu$ by a quasi-isomorphic complex
${}'\!M^\bu$ in $S\Modl_{J\tors}$ concentrated in the same cohomological
degrees $-n_1\le m\le n_2$ such that ${}'\!M^m$ is an injective object 
of $S\Modl_{J\tors}$ for all $-n_1\le m<n_2$.
 Then use Lemma~\ref{injectives-projectives-belong-to-auslander-bass}(a)
in order to check that ${}'\!M^m\in\sE_{l_1}$ for all
$-n_1\le m\le n_2$.
 Part~(b): to prove the ``if'', replace $Q^\bu$ by a quasi-isomorphic
complex ${}'Q^\bu$ in $S\Modl_{J\ctra}$ concentrated in the same 
cohomological degrees $-n_1\le m\le n_2$ such that ${}'Q^m$ is
a projective object of $S\Modl_{J\ctra}$ for all $-n_1<m\le n_2$.
 Then use Lemma~\ref{injectives-projectives-belong-to-auslander-bass}(b)
in order to check that ${}'Q^m\in\sF_{l_1}$ for all $-n_1\le m\le n_2$.
\end{proof}

 It follows from
Lemma~\ref{finite-complexes-in-Bass-Auslander-classes}(a) that
the full subcategory $\sD^\bb(\sE_{l_1})\subset\sD(S\Modl_{J\tors})$
consists of all the complexes of $J$\+torsion $S$\+modules $M^\bu$
with bounded cohomology such that the complex $\boR\Hom_S(L^\bu,M^\bu)$
also has bounded cohomology and the adjunction morphism
$L^\bu\ot_S^\boL\boR\Hom_S(L^\bu,M^\bu)\rarrow M^\bu$ is an isomorphism.
 Similarly, by
Lemma~\ref{finite-complexes-in-Bass-Auslander-classes}(b), the full
subcategory $\sD^\bb(\sF_{l_1})\subset\sD(S\Modl_{J\ctra})$
consists of all the complexes of $J$\+contramodule $S$\+modules
$Q^\bu$ with bounded cohomology such that the complex
$L^\bu\ot_S^\boL Q^\bu$ also has bounded cohomology and the adjunction
morphism $Q^\bu\rarrow\boR\Hom_S(L^\bu,\>L^\bu\ot_S^\boL Q^\bu)$ is
an isomorphism.

 These two full subcategories can be called the \emph{derived Bass
class of $J$\+torsion $S$\+modules} and the \emph{derived Auslander
class of $J$\+contramodule $S$\+modules}.
 Any pair of adjoint functors between two categories restricts to
an equivalence between the full subcategories of objects whose
adjunction morphisms are isomorphisms~\cite[Theorem~1.1]{FJ}
(see also~\cite[Proposition~2.1]{GLT}); so the functors
$\boR\Hom_S(L^\bu,{-})$ and $L^\bu\ot_S^\boL{-}$ restrict to
a triangulated equivalence between the derived Bass and Auslander
classes
\begin{equation} \label{bounded-derived-bass-auslander-equivalence}
 \sD^\bb(\sE_{l_1})\simeq\sD^\bb(\sF_{l_1}).
\end{equation}

\begin{lem} \label{Hom-tensor-take-bass-auslander-to-complexes-of}
\textup{(a)} For any $J$\+torsion $S$\+module $E\in\sE_{l_1}$,
the object\/ $\boR\Hom_S(L^\bu,E)\in\sD^\bb(S\Modl)$ can be represented
by a complex of $J$\+contramodule $S$\+modules concentrated in
the cohomological degrees $-d_2\le m\le l_1$ with the terms belonging
to\/~$\sF_{l_1}$. \par
\textup{(b)} For any $J$\+contramodule $S$\+module $F\in\sF_{l_1}$,
the object $L^\bu\ot_S^\boL F\in\sD^\bb(S\Modl)$ can be represented by
a complex of $J$\+torsion $S$\+modules concentrated in
the cohomological degrees $-l_1\le m\le d_2$ with the terms belonging
to\/~$\sE_{l_1}$.
\end{lem}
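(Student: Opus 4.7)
The plan is to reduce both parts to Lemma~\ref{finite-complexes-in-Bass-Auslander-classes} by first constructing explicit bounded representatives of $\boR\Hom_S(L^\bu,E)$ and $L^\bu\ot_S^\boL F$ that live in the abelian subcategories $S\Modl_{J\ctra}$ and $S\Modl_{J\tors}$ and are concentrated in the prescribed range of degrees. The derived-functor identifications in Lemma~\ref{Hom-tensor-underived=derived-lemma} reduce this to a choice of an injective (resp.\ projective) resolution followed by canonical truncation.

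For part~(a), I pick an injective resolution $E\to I^\bu$ of $E$ in the Grothendieck abelian category $S\Modl_{J\tors}$, concentrated in cohomological degrees $\ge0$. By Lemma~\ref{Hom-tensor-underived=derived-lemma}(a), the complex $\Hom_S(L^\bu,I^\bu)$ represents $\boR\Hom_S(L^\bu,E)$ in $\sD(S\Modl)$. Since $L^\bu$ lives in degrees $[-d_1,d_2]$ and $I^\bu$ in degrees $\ge0$, this complex vanishes below degree $-d_2$; and because $E\in\sE_{l_1}$, its cohomology $\Ext_S^n(L^\bu,E)$ vanishes for $n>l_1$. Thus the canonical truncation $Q^\bu:=\tau_{\le l_1}\Hom_S(L^\bu,I^\bu)$ is a bounded complex concentrated in degrees $[-d_2,l_1]$ and quasi-isomorphic to $\boR\Hom_S(L^\bu,E)$. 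Each term $\Hom_S(L^p,I^{p+n})$ is a $J$\+contramodule by Lemma~\ref{tors-contra-tensor-hom-lemma}(b), and since products and kernels of $J$\+contramodules in $S\Modl$ are again $J$\+contramodules, $Q^\bu$ is a complex of objects of $S\Modl_{J\ctra}$.

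Now I apply Lemma~\ref{finite-complexes-in-Bass-Auslander-classes}(b) to $Q^\bu$ with $n_1=d_2$ and $n_2=l_1$. Both hypotheses follow from the isomorphism $L^\bu\ot_S^\boL Q^\bu\simeq E$, which is the adjunction condition holding for $E\in\sE_{l_1}$: we get $\Tor^S_n(L^\bu,Q^\bu)=H^{-n}(E)=0$ for all $n\ne0$ (so in particular for $n>d_2+l_1$), and the adjunction morphism $Q^\bu\to\boR\Hom_S(L^\bu,\>L^\bu\ot_S^\boL Q^\bu)$ becomes the identity endomorphism of $\boR\Hom_S(L^\bu,E)\simeq Q^\bu$. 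The lemma then produces a quasi-isomorphic complex in $\sF_{l_1}$ in the range $[-d_2,l_1]$, as required.

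Part~(b) is strictly dual. I pick a projective resolution $P^\bu\to F$ in $S\Modl_{J\ctra}$ concentrated in degrees $\le0$, and form $L^\bu\ot_S P^\bu$, which represents $L^\bu\ot_S^\boL F$ by Lemma~\ref{Hom-tensor-underived=derived-lemma}(c). This complex lives in degrees $\le d_2$, and its cohomology $\Tor^S_{-n}(L^\bu,F)$ vanishes below degree $-l_1$ by $F\in\sF_{l_1}$. Each term is $J$\+torsion by Lemma~\ref{tors-contra-tensor-hom-lemma}(a), and $S\Modl_{J\tors}$ is closed under cokernels in $S\Modl$, so the canonical truncation $\tau_{\ge-l_1}(L^\bu\ot_S P^\bu)$ is a complex in $S\Modl_{J\tors}$ in degrees $[-l_1,d_2]$ representing $L^\bu\ot_S^\boL F$. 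Lemma~\ref{finite-complexes-in-Bass-Auslander-classes}(a) then applies with an analogous check of the two hypotheses. The only mildly delicate point in the whole argument is that the canonical truncations at the boundary degrees $l_1$ and $-l_1$ land back in the respective abelian subcategories, but this is immediate from the closure of $S\Modl_{J\ctra}$ under kernels and of $S\Modl_{J\tors}$ under cokernels reviewed in Section~\ref{prelims-on-wpr-secn}; everything else is bookkeeping with degree ranges.
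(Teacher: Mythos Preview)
Your proof is correct and follows essentially the same approach as the paper: both reduce to Lemma~\ref{finite-complexes-in-Bass-Auslander-classes} by checking the Tor/Ext vanishing condition and the adjunction isomorphism via $L^\bu\ot_S^\boL\boR\Hom_S(L^\bu,E)\simeq E$ (resp.\ $\boR\Hom_S(L^\bu,L^\bu\ot_S^\boL F)\simeq F$). The paper's proof is terser---it checks only that $d_2+l_1\ge d_2+d_1\ge0$ so that the vanishing range excludes degree~$0$, and leaves both the existence of a bounded representative $Q^\bu$ in $S\Modl_{J\ctra}$ and the verification of the adjunction condition implicit---whereas you make the construction of $Q^\bu$ explicit via a resolution and canonical truncation, and spell out the triangle-identity argument; but the logical route is the same.
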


\begin{proof}
 Part~(a) follows from
Lemma~\ref{finite-complexes-in-Bass-Auslander-classes}(b), as
the derived category object $L^\bu\ot_S^\boL\boR\Hom_S(L^\bu,E)\simeq E
\in\sD(S\Modl)$ has no cohomology in the cohomological degrees
$-n<-d_2-l_1$ (since $-d_2-l_1\le-d_2-d_1\le0$).
 Part~(b) follows from
Lemma~\ref{finite-complexes-in-Bass-Auslander-classes}(a), as
the derived category object $\boR\Hom_S(L^\bu,\>L^\bu\ot_S^\boL F)
\simeq F\in\sD(S\Modl)$ has no cohomology in the cohomological
degrees $n>d_2+l_1$ (since $d_2+l_1\ge d_2+d_1\ge0$).
\end{proof}

 Let $\sT$ be a weakly idempotent-complete exact category (in
the sense of~\cite[Section~7]{Bueh}), $\sE\subset\sT$ be a coresolving
subcategory, and $\sF\subset\sT$ be a resolving subcategory.
 We refer to~\cite[Section~2]{Sto} or~\cite[Section~A.5]{Pcosh} for
a discussion of the \emph{$\sE$\+coresolution dimensions} and
the \emph{$\sF$\+resolution dimensions} of the objects of~$\sT$.
 The key point is that the (co)resolution dimension does not depend
on the choice of a (co)resolution~\cite[Lemma~2.1]{Zhu},
\cite[Proposition~2.3(1)]{Sto}, \cite[Corollary~A.5.2]{Pcosh}.

\begin{lem} \label{bass-auslander-co-resolution-dimension}
\textup{(a)} For any integers $l_1''\ge l_1'\ge d_1$, the full
subcategory\/ $\sE_{l_1''}\subset S\Modl_{J\tors}$ consists precisely
of all the $J$\+torsion $S$\+modules whose\/ $\sE_{l_1'}$\+coresolution
dimension does not exceed $l_1''-l_1'$. \par
\textup{(b)} For any integers $l_1''\ge l_1'\ge d_1$, the full
subcategory\/ $\sF_{l_1''}\subset S\Modl_{J\ctra}$ consists precisely of
all the $J$\+contramodule $S$\+modules whose\/ $\sF_{l_1'}$\+resolution
dimension does not exceed $l_1''-l_1'$.
\end{lem}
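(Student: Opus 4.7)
My plan is that the entire lemma follows from Lemma~\ref{finite-complexes-in-Bass-Auslander-classes} by applying it with suitably chosen parameters; the two implications of our statement correspond to the two directions of the ``if and only if'' in Lemma~\ref{finite-complexes-in-Bass-Auslander-classes}. I will treat part~(a) in detail; part~(b) is entirely dual, using Lemma~\ref{finite-complexes-in-Bass-Auslander-classes}(b) in place of Lemma~\ref{finite-complexes-in-Bass-Auslander-classes}(a).

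For the ``if'' direction of~(a), suppose $M$ is a $J$\+torsion $S$\+module admitting an $\sE_{l_1'}$\+coresolution of length~$k=l_1''-l_1'$, i.~e., an exact sequence $0\to M\to E^0\to E^1\to\dotsb\to E^k\to0$ in $S\Modl_{J\tors}$ with $E^i\in\sE_{l_1'}$.  The bounded complex $E^\bu=(E^0\to\dotsb\to E^k)$, concentrated in cohomological degrees $0\le m\le k$, is then quasi-isomorphic to~$M$ (viewed in degree~$0$).  Applying the ``only if'' implication of Lemma~\ref{finite-complexes-in-Bass-Auslander-classes}(a) to $E^\bu$ with $n_1=0$, $n_2=k$, and $l_1=l_1'$, one obtains that $\Ext_S^n(L^\bu,E^\bu)=0$ for all $n>k+l_1'=l_1''$ and that the adjunction morphism $L^\bu\ot_S^\boL\boR\Hom_S(L^\bu,E^\bu)\rarrow E^\bu$ is an isomorphism in $\sD^-(S\Modl)$.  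Transferring these properties along the quasi-isomorphism $E^\bu\simeq M$ yields precisely the conditions $M\in\sE_{l_1''}$.

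For the ``only if'' direction of~(a), suppose $M\in\sE_{l_1''}$; by the very definition, $\Ext_S^n(L^\bu,M)=0$ for $n>l_1''$ and the adjunction morphism for $M$ is an isomorphism.  Regard $M$ as a complex concentrated in cohomological degree~$0$ and apply the ``if'' implication of Lemma~\ref{finite-complexes-in-Bass-Auslander-classes}(a) with $n_1=0$, $n_2=k=l_1''-l_1'$, and $l_1=l_1'$.  The numerical condition to check is $\Ext_S^n(L^\bu,M)=0$ for $n>n_2+l_1=k+l_1'=l_1''$, which holds by hypothesis.  The conclusion is that $M$ is quasi-isomorphic to a complex $E^\bu$ concentrated in degrees $0\le m\le k$ with $E^i\in\sE_{l_1'}$.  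Since $H^0(E^\bu)\cong M$ and $H^i(E^\bu)=0$ for $i>0$, the complex $E^\bu$ unfolds into an exact sequence $0\to M\to E^0\to E^1\to\dotsb\to E^k\to0$, which is the desired $\sE_{l_1'}$\+coresolution of $M$ of length~$k$.

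There is no essential obstacle; the only point requiring a moment of care is the matching of numerical parameters $l_1''=l_1'+k$ and the observation that the existence of a complex quasi-isomorphic to $M$ of the sort produced by Lemma~\ref{finite-complexes-in-Bass-Auslander-classes}(a) is equivalent to the existence of an $\sE_{l_1'}$\+coresolution of $M$ of length~$k$, since the complex must be acyclic outside degree~$0$ with $M$ as its only cohomology.
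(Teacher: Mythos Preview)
Your proof is correct and follows essentially the same approach as the paper: both reduce the lemma to Lemma~\ref{finite-complexes-in-Bass-Auslander-classes}(a) (resp.~(b)) applied with the parameters $n_1=0$, $n_2=l_1''-l_1'$, and $l_1=l_1'$. The paper is slightly more economical in that it applies Lemma~\ref{finite-complexes-in-Bass-Auslander-classes}(a) once to the one-term complex $M^\bu=E$ and reads off both directions simultaneously, whereas you handle the ``if'' direction by applying it to the coresolution complex $E^\bu$ instead; but this is a cosmetic difference.
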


\begin{proof}
 Part~(a) follows from
Lemma~\ref{finite-complexes-in-Bass-Auslander-classes}(a) applied
to a one-term complex of $J$\+torsion $S$\+modules $M^\bu=E$,
concentrated in the cohomological degree~$0$, with the numerical
parameters $n_1=0$, \ $n_2=l_1''-l_1'$, and $l_1=l_1'$.
 Part~(b) similarly follows from
Lemma~\ref{finite-complexes-in-Bass-Auslander-classes}(b) applied
to a one-term complex of $J$\+contramodule $S$\+modules $Q^\bu=F$,
concentrated in the cohomological degree~$0$, with the numerical
parameters $n_2=0$, \ $n_1=l_1''-l_1'$, and $l_1=l_1'$.
\end{proof}

\begin{rem}
 It is clear from
Lemmas~\ref{injectives-projectives-belong-to-auslander-bass}
and~\ref{bass-auslander-co-resolution-dimension} that, for any integer
$n\ge0$, all the objects of injective dimension not exceeding~$n$ in
the abelian category $S\Modl_{J\tors}$ belong to $\sE_{d_1+n}$ and all
the $J$\+contramodule $S$\+modules of contraflat dimension not
exceeding~$n$ belong to~$\sF_{d_1+n}$.
 Here the \emph{contraflat dimension} of a $J$\+contramodule
$S$\+module is simply defined as the resolution dimension with
respect to the resolving subcategory of contraflat $J$\+contramodule
$S$\+modules in $S\Modl_{J\ctra}$.
 Clearly, the contraflat dimension of a $J$\+contramodule $S$\+module
never exceeds its projective dimension as a object of $S\Modl_{J\ctra}$.
\end{rem}

\begin{prop} \label{bass-auslander-derived-independence-of-l1}
\textup{(a)} For any integers $l_1''\ge l_1'\ge d_1$ and any
conventional or exotic derived category symbol\/ $\st=\bb$, $+$, $-$,
$\varnothing$, $\abs+$, $\abs-$, $\bco$, or\/~$\abs$, the exact
inclusion functor\/ $\sE_{l_1'}\rarrow\sE_{l_1''}$ induces
a triangulated equivalence
$$
 \sD^\st(\sE_{l_1'})\simeq\sD^\st(\sE_{l_1''}).
$$ \par
\textup{(b)} For any integers $l_1''\ge l_1'\ge d_1$ and any
conventional or exotic derived category symbol\/ $\st=\bb$, $+$, $-$,
$\varnothing$, $\abs+$, $\abs-$, $\bctr$, or\/~$\abs$, the exact
inclusion functor\/ $\sF_{l_1'}\rarrow\sF_{l_1''}$ induces
a triangulated equivalence
$$
 \sD^\st(\sF_{l_1'})\simeq\sD^\st(\sF_{l_1''}).
$$
\end{prop}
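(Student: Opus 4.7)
The plan is to show that both inclusions $\sE_{l_1'} \hookrightarrow \sE_{l_1''}$ and, dually, $\sF_{l_1'} \hookrightarrow \sF_{l_1''}$ fall under the standard principle that a finitely coresolving (respectively, finitely resolving) inclusion of exact categories induces equivalences on a wide variety of derived categories. The key combinatorial input is already packaged in the preceding lemmas, so no further structural work inside $\sE_{l_1}$ or $\sF_{l_1}$ is needed beyond bookkeeping.

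Concretely, by Lemma~\ref{auslander-bass-classes-closure-properties-lemma}(a) the subcategory $\sE_{l_1'}$ is closed under cokernels of admissible monomorphisms, extensions, and direct summands in $\sE_{l_1''}$, and by Lemma~\ref{bass-auslander-co-resolution-dimension}(a) every object of $\sE_{l_1''}$ admits a finite $\sE_{l_1'}$-coresolution of length at most $l_1''-l_1'$. Hence $\sE_{l_1'}\subset\sE_{l_1''}$ is a coresolving subcategory of uniformly bounded coresolution dimension; Lemma~\ref{auslander-bass-classes-closure-properties-lemma}(b) together with Lemma~\ref{bass-auslander-co-resolution-dimension}(b) provides the dual statement for $\sF_{l_1'}\subset\sF_{l_1''}$.

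For the conventional and absolute symbols $\st\in\{\bb,+,-,\varnothing,\abs+,\abs-,\abs\}$, the proposition then follows from the standard coresolution-dimension machinery for exact categories (along the lines of~\cite[Section~2]{Sto} and~\cite[Section~A.5]{Pcosh}). Essential surjectivity is obtained by replacing a complex $E^\bu$ in $\sE_{l_1''}$ by the total complex of its termwise $\sE_{l_1'}$-coresolution of length $\le l_1''-l_1'$: the uniform length bound makes the total complex finite in each cohomological degree, and the mapping cone of the replacement is built from totalizations of short exact sequences with terms in $\sE_{l_1'}$ and is therefore absolutely acyclic in $\sE_{l_1''}$ (which, in particular, takes care of all the absolute variants uniformly). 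Full faithfulness reduces to the vanishing of higher $\Ext^n_{\sE_{l_1''}}$ between objects of $\sE_{l_1'}$ beyond the coresolution length, which is handled by the same finite coresolution.

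For the Becker coderived case $\st=\bco$ (dually $\st=\bctr$), I would use Lemma~\ref{injectives-projectives-belong-to-auslander-bass}(a): both $\sE_{l_1'}$ and $\sE_{l_1''}$ contain the full subcategory $S\Modl_{J\tors}^\inj$ of injective $J$-torsion $S$-modules, and a short verification shows that these are precisely the injective objects of each exact category. An adaptation of Theorem~\ref{becker-co-contra-derived-of-loc-pres-abelian}(a) to coresolving subcategories of a Grothendieck category containing enough injectives would then identify both $\sD^\bco(\sE_{l_1'})$ and $\sD^\bco(\sE_{l_1''})$ with the common homotopy category $\sK(S\Modl_{J\tors}^\inj)$, compatibly with the inclusion. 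The main obstacle I anticipate lies precisely here: Theorem~\ref{becker-co-contra-derived-of-loc-pres-abelian}(a) is formulated for the full Grothendieck category, and one must either extend it to exact subcategories of finite injective coresolution dimension or else argue directly that every complex in $\sE_{l_1}$ admits a Becker-coacyclic replacement by a complex of injectives. I would carry out the latter by combining the finite-length coresolution constructed above with the standard injective-resolution techniques available in the ambient Grothendieck category $S\Modl_{J\tors}$.
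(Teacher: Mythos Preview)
Your approach is essentially the same as the paper's: reduce to Lemma~\ref{bass-auslander-co-resolution-dimension} and then invoke the finite-(co)resolution-dimension machinery from~\cite[Section~A.5]{Pcosh}. The paper cites~\cite[Propositions~A.5.8 and~B.7.9]{Pcosh} as a black box, whereas you sketch the totalization argument by hand; both are fine.

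Your one point of hesitation, the Becker case $\st=\bco$ (resp.\ $\bctr$), is in fact the easiest case, not an obstacle. The paper handles it in two ways. First, \cite[Proposition~B.7.9]{Pcosh} already covers the Becker symbols under the same finite-coresolution-dimension hypothesis, so no separate argument is needed. Second, the paper remarks immediately after the proof that this case is ``trivial'': since the injective objects of $\sE_{l_1'}$, $\sE_{l_1''}$, and $S\Modl_{J\tors}$ all coincide (being a coresolving subcategory closed under direct summands forces this), the results of~\cite[Corollary~9.5]{PS4} identify each of $\sD^\bco(\sE_{l_1'})$, $\sD^\bco(\sE_{l_1''})$, and $\sD^\bco(S\Modl_{J\tors})$ with the common homotopy category $\sK(S\Modl_{J\tors}^\inj)$. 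So the extension of Theorem~\ref{becker-co-contra-derived-of-loc-pres-abelian}(a) you were worried about is already available in the cited literature, and your instinct to reduce to the homotopy category of injectives was exactly right.
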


\begin{proof}
 Part~(b) follows from
Lemma~\ref{bass-auslander-co-resolution-dimension}(b) in view
of~\cite[Propositions~A.5.8 and~B.7.9]{Pcosh}.
 Part~(a) follows from
Lemma~\ref{bass-auslander-co-resolution-dimension}(a) in view
the dual versions of~\cite[Propositions~A.5.8 and~B.7.9]{Pcosh}.
\end{proof}

 The cases $\st=\bco$ and $\st=\bctr$ in the context of
Proposition~\ref{bass-auslander-derived-independence-of-l1} are
actually trivial, and are only included in the formulation for the sake
of completeness and for comparison with~\cite[Proposition~3.8]{Pps}.
 Using the results of~\cite[Corollary~9.5]{PS4} for
$\sA=S\Modl_{J\tors}$ and~\cite[Corollary~7.4]{PS4} for
$\sB=S\Modl_{J\ctra}$, one can easily show that
$\sD^\bco(\sE_{l_1'})\simeq\sD^\bco(\sE_{l_1''})\simeq
\sD^\bco(S\Modl_{J\tors})$ and
$\sD^\bctr(\sF_{l_1'})\simeq\sD^\bctr(\sF_{l_1''})\simeq
\sD^\bctr(S\Modl_{J\ctra})$.

 As a particular case of
Proposition~\ref{bass-auslander-derived-independence-of-l1},
the conventional unbounded derived category of the Bass class of
$J$\+torsion $S$\+modules $\sD(\sE_{l_1})$ is the same for all
$l_1\ge d_1$, and the conventional unbounded derived category of
the Auslander class of $J$\+contramodule $S$\+modules $\sD(\sF_{l_1})$
is the same for all $l_1\ge d_1$.
 Following the notation in~\cite[Section~3]{Pps}
and~\cite[Section~4]{Ppc}, we put
$$
 \sD'_{L^\bu}(S\Modl_{J\tors})=\sD(\sE_{l_1})
 \quad\text{and}\quad
 \sD''_{L^\bu}(S\Modl_{J\ctra})=\sD(\sF_{l_1}).
$$
 The next theorem, generalizing the triangulated
equivalence~\eqref{bounded-derived-bass-auslander-equivalence},
provides, in particular, a triangulated equivalence
$$
 \sD'_{L^\bu}(S\Modl_{J\tors})=\sD(\sE_{l_1})\simeq
 \sD(\sF_{l_1})=\sD''_{L^\bu}(S\Modl_{J\ctra}).
$$

\begin{thm} \label{bass-auslander-derived-equivalence}
 For any conventional or absolute derived category symbol\/
$\st=\bb$, $+$, $-$, $\varnothing$, $\abs+$, $\abs-$, or\/~$\abs$,
there is a triangulated equivalence
$$
 \sD^\st(\sE_{l_1})\simeq\sD^\st(\sF_{l_1})
$$
provided by (appropriately defined) mutually inverse derived functors\/
$\boR\Hom_S(L^\bu,{-})$ and $L^\bu\ot_S^\boL{-}$.
\end{thm}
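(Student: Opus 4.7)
My plan is to extend the bounded derived equivalence~\eqref{bounded-derived-bass-auslander-equivalence}, already established by the abstract adjoint-functor principle, to the remaining derived category symbols. The crucial input is the uniform width bound on $\boR\Hom_S(L^\bu,{-})$ and $L^\bu\ot_S^\boL{-}$ when applied to objects of the Bass and Auslander classes, provided by Lemma~\ref{Hom-tensor-take-bass-auslander-to-complexes-of}.

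For the construction of the functors: given a complex $E^\bu\in\sC^\st(\sE_{l_1})$, I define $\boR\Hom_S(L^\bu,E^\bu)$ in $\sK^\st(\sF_{l_1})$ via a Cartan--Eilenberg-type bicomplex. Concretely, take a termwise right resolution of $E^\bu$ by injective objects of $S\Modl_{J\tors}$ (which lie in $\sE_{l_1}$ by Lemma~\ref{injectives-projectives-belong-to-auslander-bass}(a)), apply $\Hom_S(L^\bu,{-})$ termwise---legitimate by Lemma~\ref{Hom-tensor-underived=derived-lemma}(a)---and then functorially replace each column by the bounded complex in $\sF_{l_1}$ promised by Lemma~\ref{Hom-tensor-take-bass-auslander-to-complexes-of}(a). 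Since the replacement strip has uniformly bounded width $[-d_2,l_1]$, totalizing the bicomplex yields a complex in $\sC^\st(\sF_{l_1})$ of the correct boundedness, including for $\st=\varnothing$ and the absolute variants. The functor $L^\bu\ot_S^\boL{-}\:\sK^\st(\sF_{l_1})\rarrow\sK^\st(\sE_{l_1})$ is built dually, using termwise contraflat (or projective) resolutions in $S\Modl_{J\ctra}$ and invoking Corollary~\ref{contraflat-contramods-tensor-underived=derived-cor} and Lemma~\ref{Hom-tensor-take-bass-auslander-to-complexes-of}(b).

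Next I would verify that these functors descend to $\sD^\st$. For the absolute symbols $\st=\abs,\abs\pm$, the constructions visibly send termwise-exact short sequences of complexes in $\sE_{l_1}$ (respectively, $\sF_{l_1}$) to termwise-exact short sequences of complexes in the other class, so they preserve absolute acyclicity. For $\st\in\{\bb,+,-,\varnothing\}$, preservation of quasi-isomorphisms and mutual inverseness both reduce to the termwise adjunction isomorphisms $L^\bu\ot_S^\boL\boR\Hom_S(L^\bu,E)\simeq E$ and $F\simeq\boR\Hom_S(L^\bu,\>L^\bu\ot_S^\boL F)$, which hold by definition of $\sE_{l_1}$ and $\sF_{l_1}$; these propagate to bicomplex totalizations via spectral sequences that converge precisely because of the uniform width bound of Lemma~\ref{Hom-tensor-take-bass-auslander-to-complexes-of}. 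Closure of $\sE_{l_1}$ and $\sF_{l_1}$ under cokernels of admissible monomorphisms and kernels of admissible epimorphisms, respectively (Lemma~\ref{auslander-bass-classes-closure-properties-lemma}), guarantees that the totalized output still has terms in the expected class.

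The main obstacle is the fully unbounded case $\st=\varnothing$, where termwise resolutions produce bicomplexes unbounded in both directions, and the totalization must be shown to preserve quasi-isomorphisms as well as to realize the adjunction morphism as a quasi-isomorphism. The uniform width bound on the derived-functor output ensures that the relevant bicomplex spectral sequences degenerate at a bounded page, sidestepping convergence issues; this is the reason the argument works at all in the unbounded setting. The whole argument parallels~\cite[Section~3]{Pps} and~\cite[Section~4]{Ppc}, with the bimodule or bicomodule ingredients replaced by their $J$\+torsion and $J$\+contramodule counterparts developed in Sections~\ref{prelims-on-wpr-secn}\+-\ref{cors-of-derived-fullyf-thms-secn}.
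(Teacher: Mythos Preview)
Your overall strategy---build the functors from termwise resolutions, exploit the uniform width bound of Lemma~\ref{Hom-tensor-take-bass-auslander-to-complexes-of}, and bootstrap from the bounded equivalence~\eqref{bounded-derived-bass-auslander-equivalence}---is the right one, and it is essentially what the machinery of~\cite[Appendix~A]{Pps} carries out. The paper, however, organizes the argument differently: rather than proving the theorem directly for $\sE_{l_1}$ and $\sF_{l_1}$, it first isolates the abstract conditions (I\+-IV) of Section~\ref{abstract-classes-secn}, verifies that the Bass and Auslander classes satisfy them (Lemmas~\ref{auslander-bass-classes-closure-properties-lemma}, \ref{injectives-projectives-belong-to-auslander-bass}, \ref{Hom-tensor-take-bass-auslander-to-complexes-of}), and then deduces the present theorem as a special case of the general Theorem~\ref{abstract-classes-derived-equivalence}. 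That general theorem, in turn, invokes~\cite[Sections~A.2\+-A.4 and Theorem~A.5]{Pps} directly, with $\sJ=S\Modl_{J\tors}^\inj$ and $\sP=S\Modl_{J\ctra}^\proj$ playing the role of the resolving classes.

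There is one genuine soft spot in your sketch worth flagging. The step ``functorially replace each column by the bounded complex in $\sF_{l_1}$ promised by Lemma~\ref{Hom-tensor-take-bass-auslander-to-complexes-of}(a)'' is not innocuous: that lemma asserts only the existence of such a representative in $\sD^\bb(S\Modl)$, not a functorial choice of complex. Without a functorial construction, you cannot assemble the columns into a bicomplex, and the subsequent claims about preservation of termwise-exact short sequences (needed for the absolute symbols) become unclear. The constructions of~\cite[Appendix~A]{Pps} resolve exactly this point by working with the DG\+functors $\Hom_S(L^\bu,{-})$ on bounded-below complexes of injectives and $L^\bu\ot_S{-}$ on bounded-above complexes of projectives, then using finite (co)resolution dimension arguments to land in the target class; the width bound enters not as an ad~hoc column replacement but as the input to~\cite[Theorem~A.5]{Pps}. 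So your plan is right in outline, but to make it rigorous you would end up reconstructing that appendix---which is why the paper simply cites it.
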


\begin{proof}
 This is a particular case of
Theorem~\ref{abstract-classes-derived-equivalence} below.
\end{proof}

 Let us make some final comments before this section is finished.
 According to~\cite[Proposition~5.5]{PS2}, there is a natural
degenerate t\+structure of the derived type on the triangulated
category $\sD'_{L^\bu}(S\Modl_{J\tors})=\sD(\sE_{l_1})$ with
the heart equivalent to $S\Modl_{J\tors}$.
 Dual-analogously, by~\cite[Proposition~5.7]{PS2}, there is a natural
degenerate t\+structure of the derived type on the triangulated
category $\sD''_{L^\bu}(S\Modl_{J\ctra})=\sD(\sF_{l_1})$ with
the heart equivalent to $S\Modl_{J\ctra}$.
 See also the discussion in~\cite[Section~1.2 and Remark~5.3]{Ppc}.

 Following the discussion in~\cite[Section~2]{Ppc}, the functor
$\sD'_{L^\bu}(S\Modl_{J\tors})\rarrow\sD(S\Modl_{J\tors})$ induced
by the exact inclusion of exact/abelian categoires
$\sE_{l_1}\rarrow S\Modl_{J\tors}$ is a triangulated Verdier quotient
functor having a (fully faithful) right adjoint.
 Dual-analogously, the functor $\sD''_{L^\bu}(S\Modl_{J\ctra})\rarrow
\sD(S\Modl_{J\ctra})$ induced by the exact inclusion of exact/abelian
categories $\sF_{l_1}\rarrow S\Modl_{J\ctra}$ is a triangulated
Verdier quotient functor having a (fully faithful) left adjoint.
 See also the discussion of diagram~\eqref{abstract-big-diagram}
in the next Section~\ref{abstract-classes-secn}.

\Section{Abstract Corresponding Classes}  \label{abstract-classes-secn}

 More generally, suppose that we are given two full subcategories
$\sE\subset S\Modl_{J\tors}$ and $\sF\subset S\Modl_{J\ctra}$ satisfying
the following conditions (for some fixed integers $l_1$ and~$l_2$):
\begin{enumerate}
\renewcommand{\theenumi}{\Roman{enumi}}
\item the class of objects $\sE$ is closed under extensions and
cokernels of injective morphisms in $S\Modl_{J\tors}$, and contains
all the injective objects of $S\Modl_{J\tors}$;
\item the class of objects $\sF$ is closed under extensions and
kernels of surjective morphisms in $S\Modl_{J\ctra}$, and contains
all the projective objects of $S\Modl_{J\ctra}$;
\item for any $J$\+torsion $S$\+module $E\in\sE$, the derived category
object $\boR\Hom_S(L^\bu,E)\allowbreak\in\sD^+(S\Modl)$ can be
represented by a complex of $J$\+contramodule $S$\+modules concentrated
in the cohomological degrees $-l_2\le m\le l_1$ with the terms belonging
to~$\sF$;
\item for any $J$\+contramodule $S$\+module $F\in\sF$, the derived
category object $L^\bu\ot_S^\boL F\in\sD^-(S\Modl)$ can be represented
by a complex of $J$\+torsion $S$\+modules concentrated in
the cohomological degrees $-l_1\le m\le l_2$ with the terms belonging
to~$\sE$.
\end{enumerate}

 Similarly to~\cite[Section~4]{Pps} and~\cite[Section~5]{Ppc},
one can see from conditions~(I) and~(III), or~(II) and~(IV), that
$l_1\ge d_1$ and $l_2\ge d_2$ whenever
$H^{-d_1}(L^\bu)\ne0\ne H^{d_2}(L^\bu)$.
 One also needs to use
Lemma~\ref{Hom-tensor-underived=derived-lemma}(a,c).

 According to
Lemmas~\ref{auslander-bass-classes-closure-properties-lemma},
\ref{injectives-projectives-belong-to-auslander-bass},
and~\ref{Hom-tensor-take-bass-auslander-to-complexes-of},
the Bass and Auslander classes $\sE=\sE_{l_1}$ and $\sF=\sF_{l_2}$
satisfy conditions~(I\+-IV) with $l_2=d_2$.
 The following lemma can be viewed as providing a converse implication.

\begin{lem} \label{abstract-corresponding-classes-adjunction-isoms}
\textup{(a)} For any $J$\+torsion $S$\+module $E\in\sE$, the adjunction
morphism $L^\bu\ot_S^\boL\boR\Hom_S(L^\bu,E)\rarrow E$ is an isomorphism
in\/ $\sD^\bb(S\Modl)$. \par
\textup{(b)} For any $J$\+contramodule $S$\+module $F\in\sF$,
the adjunction morphism $F\rarrow\boR\Hom_S(L^\bu,\>L^\bu\ot_S^\boL F)$
is an isomorphism in\/ $\sD^\bb(S\Modl)$.
\end{lem}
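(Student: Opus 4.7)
The plan is to reduce the adjunction isomorphism by a dévissage along an injective coresolution (resp.\ a projective resolution) to the case of an injective object of $S\Modl_{J\tors}$ (resp.\ a projective object of $S\Modl_{J\ctra}$), where the isomorphism is already known. The two parts are formally dual, so I describe (a) in detail and only indicate the modifications for (b) at the end. Write $\eta_X\:L^\bu\ot_S^\boL\boR\Hom_S(L^\bu,X)\rarrow X$ for the adjunction morphism, regarded as a natural transformation between triangulated endofunctors of $\sD(S\Modl)$.

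The base case I would invoke is that $\eta_H$ is an isomorphism whenever $H$ is an injective object of $S\Modl_{J\tors}$. This is exactly what the proof of Lemma~\ref{injectives-projectives-belong-to-auslander-bass}(a) establishes, and its argument depends only on conditions~(ii) and~(iii) of the pseudo-dualizing complex together with Lemmas~\ref{compact-torsion-dualization-lemma} and~\ref{Hom-tensor-underived=derived-lemma}(a), independently of any choice of corresponding classes. Given $E\in\sE$, I would choose an injective coresolution $0\rarrow E\rarrow H^0\rarrow H^1\rarrow\dotsb$ in $S\Modl_{J\tors}$ and set $Z^0=E$, \ $Z^{n+1}=\operatorname{coker}(H^{n-1}\rarrow H^n)$. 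Condition~(I) places every syzygy $Z^n$ in~$\sE$, and every $H^n$ belongs to $\sE$ for the same reason as the base case. Applying $\eta$ to the short exact sequence $0\rarrow Z^n\rarrow H^n\rarrow Z^{n+1}\rarrow 0$ gives a morphism of triangles in $\sD^\bb(S\Modl)$ whose middle vertical is an isomorphism, so the mapping cones of the outer verticals satisfy $\operatorname{cone}(\eta_{Z^{n+1}})\simeq\operatorname{cone}(\eta_{Z^n})[1]$; iterating,
$$
 \operatorname{cone}(\eta_E)\simeq\operatorname{cone}(\eta_{Z^n})[-n]
 \qquad\text{for every }n\ge0.
$$

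The main obstacle is to pin down a cohomological range $[a,b]$ that contains the support of $\operatorname{cone}(\eta_{Z^n})$ \emph{uniformly in~$n$}; once this is in hand the dévissage closes, because $H^i(\operatorname{cone}(\eta_E))=H^{i-n}(\operatorname{cone}(\eta_{Z^n}))$ vanishes as soon as $i-n<a$, while its left-hand side is independent of~$n$. By condition~(III) applied to $Z^n\in\sE$, the object $\boR\Hom_S(L^\bu,Z^n)$ is represented by a complex $F^\bu_n$ of objects of~$\sF$ concentrated in degrees $[-l_2,l_1]$. Feeding this termwise into condition~(IV) represents each $L^\bu\ot_S^\boL F^i_n$ by a complex in $\sE$ concentrated in degrees $[-l_1,l_2]$; the stupid filtration of $F^\bu_n$ then yields a spectral sequence with $E_1^{i,j}=H^j(L^\bu\ot_S^\boL F^i_n)$, supported only for $i\in[-l_2,l_1]$ and $j\in[-l_1,l_2]$, converging to $H^{i+j}(L^\bu\ot_S^\boL\boR\Hom_S(L^\bu,Z^n))$. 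Hence the latter has cohomology in the fixed interval $[-l_1-l_2,\>l_1+l_2]$, independently of~$n$. Combined with $Z^n$ being concentrated in degree~$0$, this produces the desired uniform bound on $\operatorname{cone}(\eta_{Z^n})$ and finishes~(a).

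For part~(b) I would dualize: use a projective resolution $\dotsb\rarrow P^{-1}\rarrow P^0\rarrow F\rarrow 0$ in $S\Modl_{J\ctra}$, whose syzygies lie in $\sF$ by condition~(II) and whose projective terms lie in $\sF$ (in fact they are contraflat). The adjunction isomorphism for contraflat $J$\+contramodule $S$\+modules is furnished by Lemma~\ref{injectives-projectives-belong-to-auslander-bass}(b). Conditions~(IV) and then~(III) give, in exactly the same spectral-sequence fashion, a uniform cohomological bound on the round-trip $\boR\Hom_S(L^\bu,\>L^\bu\ot_S^\boL(-))$ evaluated on the syzygies, and the analogous shift-by-$n$ dévissage—with the cones now shifted in the opposite direction—closes the argument.
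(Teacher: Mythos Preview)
Your proposal is correct and follows essentially the same direct d\'evissage route the paper indicates (``along the lines of~\cite[proof of Lemma~4.1]{Pps}''), reducing via an injective coresolution (resp.\ projective resolution) to Lemma~\ref{injectives-projectives-belong-to-auslander-bass} and using conditions~(III)--(IV) to obtain the uniform cohomological bound that makes the shift argument terminate. The paper also notes a second, more indirect route via (the proof of) Theorem~\ref{abstract-classes-derived-equivalence}, but your approach matches the first option.
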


\begin{proof}
 This is similar to~\cite[Lemma~4.1]{Pps} and~\cite[Lemma~5.1]{Ppc}.
 A direct argument along the lines of~\cite[proof of Lemma~4.1]{Pps}
is applicable, or alternatively, the assertions can be obtained from
(the proof of) Theorem~\ref{abstract-classes-derived-equivalence} below.
 In any case, the proof is based on
Lemmas~\ref{tors-contra-tensor-hom-lemma}(a\+-b),
\ref{Hom-tensor-underived=derived-lemma}(a,c),
and~\ref{injectives-projectives-belong-to-auslander-bass}.
\end{proof}

 Assuming that $l_1\ge d_1$ and $l_2\ge d_2$, it is clear from
conditions (III\+-IV) and
Lemma~\ref{abstract-corresponding-classes-adjunction-isoms}
that the inclusions $\sE\subset\sE_{l_1}$ and $\sF\subset\sF_{l_1}$
hold for any two classes of objects $\sE\subset S\Modl_{J\tors}$
and $\sF\subset S\Modl_{J\ctra}$ satisfying~(I\+-IV).
 Furthermore, it follows from conditions (I\+-II) that the triangulated
functors $\sD^\bb(\sE)\rarrow\sD^\bb(S\Modl_{J\tors})$ and
$\sD^\bb(\sF)\rarrow\sD^\bb(S\Modl_{J\ctra})$ are fully faithful.
 Hence the triangulated functors $\sD^\bb(\sE)\rarrow\sD^\bb(\sE_{l_1})$
and $\sD^\bb(\sF)\rarrow\sD^\bb(\sF_{l_1})$ are fully faithful, too.
 Using again conditions~(III\+-IV), we conclude that
the equivalence~\eqref{bounded-derived-bass-auslander-equivalence}
restricts to a triangulated equivalence
\begin{equation} \label{bounded-derived-abstract-classes-equivalence}
 \sD^\bb(\sE)\simeq\sD^\bb(\sF).
\end{equation}

 Let us introduce simplified notation
$S\Modl_{J\tors}^\inj=(S\Modl_{J\tors})^\inj$ and
$S\Modl_{J\ctra}^\proj\allowbreak=(S\Modl_{J\ctra})^\proj$ for
the full subcategories of injective objects in $S\Modl_{J\tors}$ and
projective objects in $S\Modl_{J\ctra}$.

 The following theorem is the first main result of this paper.

\begin{thm} \label{abstract-classes-derived-equivalence}
 Let\/ $\sE\subset S\Modl_{J\tors}$ and\/ $\sF\subset S\Modl_{J\ctra}$
be a pair of full subcategories of $J$\+torsion and $J$\+contramodule
$S$\+modules satisfying conditions (I\+-IV) for a pseudo-dualizing
complex of $J$\+torsion $S$\+modules~$L^\bu$.
 Then, for any conventional or absolute derived category symbol\/
$\st=\bb$, $+$, $-$, $\varnothing$, $\abs+$, $\abs-$, or\/~$\abs$,
there is a triangulated equivalence
$$
 \sD^\st(\sE)\simeq\sD^\st(\sF)
$$
provided by (appropriately defined) mutually inverse derived functors\/
$\boR\Hom_S(L^\bu,{-})$ and $L^\bu\ot_S^\boL{-}$.
\end{thm}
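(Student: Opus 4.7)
The plan is to extend the bounded derived equivalence \eqref{bounded-derived-abstract-classes-equivalence}, which already handles $\st=\bb$, to the remaining derived category symbols via a totalization-of-bicomplexes construction. The argument is made possible by the \emph{uniformly bounded width} $l_1+l_2+1$ of the representations provided by conditions~(III) and~(IV): every object of $\sE$ (resp.\ of $\sF$) admits a representative of $\boR\Hom_S(L^\bu,{-})$ (resp.\ of $L^\bu\ot_S^\boL{-}$) in the other class that is a complex concentrated in at most $l_1+l_2+1$ consecutive cohomological degrees.

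First I would construct candidate quasi-inverse functors $\Phi\:\sC^\st(\sE)\rarrow\sC^\st(\sF)$ and $\Psi\:\sC^\st(\sF)\rarrow\sC^\st(\sE)$ at the chain level. For $E^\bu\in\sC^\st(\sE)$, a functorial injective coresolution in $S\Modl_{J\tors}$ (whose terms lie in $\sE$ by condition~(I) and Lemma~\ref{injectives-projectives-belong-to-auslander-bass}(a)) composed with $\Hom_S(L^\bu,{-})$ yields, for each~$n$, a bounded-below complex of $J$\+contramodule $S$\+modules representing $\boR\Hom_S(L^\bu,E^n)$, by Lemma~\ref{Hom-tensor-underived=derived-lemma}(a). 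By condition~(III) together with the resolving property~(II), this complex is quasi-isomorphic to a complex $\Phi(E)^{n,\bu}$ in $\sF$ concentrated in degrees $-l_2\le m\le l_1$; fixing such a functorial truncation, the resulting bicomplex has at most $l_1+l_2+1$ nonzero entries on each anti-diagonal, so its totalization $\Phi(E^\bu)$ is well defined degreewise regardless of the unboundedness type of $E^\bu$, and inherits the boundedness class~$\st$. A dual construction using~(IV) and Lemma~\ref{injectives-projectives-belong-to-auslander-bass}(b) defines~$\Psi$.

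The main obstacle is to show that $\Phi$ and $\Psi$ descend to mutually inverse triangulated functors on derived categories. For the absolute symbols $\st\in\{\abs,\abs+,\abs-\}$, an admissible short exact sequence of complexes in $\sE$ is sent columnwise by $\Phi$ to a short exact sequence of bicomplexes in $\sF$, whose totalization is absolutely acyclic; closure under direct summands then yields preservation of absolute acyclicity. For the conventional symbols $\st\in\{\varnothing,+,-\}$, the hypercohomology spectral sequence of $\Phi(E^\bu)^{\bu,\bu}$ collapses to a finite filtration of length at most $l_1+l_2+1$ in each total degree, so any quasi-isomorphism at the level of $E^\bu$---which translates to a columnwise quasi-isomorphism of bicomplexes by Lemma~\ref{Hom-tensor-underived=derived-lemma}(a) and~(III)---produces a quasi-isomorphism of totalizations. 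The analogous preservation results for $\Psi$ are checked dually via Corollary~\ref{contraflat-contramods-tensor-underived=derived-cor}.

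Finally, the unit and counit morphisms $\Psi\Phi(E^\bu)\rarrow E^\bu$ and $F^\bu\rarrow\Phi\Psi(F^\bu)$ are analyzed by the same dévissage. The bounded-width property ensures that the cohomology in any fixed degree depends only on a bounded portion of the complex, so after hard truncation the claim reduces to the already-established bounded case \eqref{bounded-derived-abstract-classes-equivalence} combined with Lemma~\ref{abstract-corresponding-classes-adjunction-isoms}.
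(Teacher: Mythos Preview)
Your overall strategy---exploit the uniformly bounded amplitude $l_1+l_2+1$ furnished by conditions~(III)--(IV) to make bicomplex totalizations well-defined for any boundedness type, then reduce the equivalence to the already-established case $\st=\bb$ via~\eqref{bounded-derived-abstract-classes-equivalence}---is precisely the mechanism packaged in \cite[Appendix~A]{Pps}, which the paper invokes as a black box. The paper's proof simply records the data $(\sA,\sE,\sJ)=(S\Modl_{J\tors},\sE,S\Modl_{J\tors}^\inj)$, $(\sB,\sF,\sP)=(S\Modl_{J\ctra},\sF,S\Modl_{J\ctra}^\proj)$ and the adjoint DG-functors $\Psi=\Hom_S(L^\bu,{-})\:\sC^+(\sJ)\to\sC^+(\sB)$ and $\Phi=L^\bu\ot_S{-}\:\sC^-(\sP)\to\sC^-(\sA)$, and then cites \cite[Sections~A.2--A.4 and Theorem~A.5]{Pps} for the construction of the derived functors, the adjunction, and the reduction to $\st=\bb$.

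There is, however, a genuine gap in your sketch at the phrase ``fixing such a functorial truncation.'' Condition~(III) guarantees only the \emph{existence} of an $\sF$-representative of $\boR\Hom_S(L^\bu,E^n)$ in degrees $[-l_2,l_1]$; it does not hand you one functorially. After applying $\Hom_S(L^\bu,{-})$ to an injective coresolution, the resulting terms $\Hom_S(L^m,H)$ are merely $J$-contramodule $S$-modules (Lemma~\ref{tors-contra-tensor-hom-lemma}(b)), with no reason to lie in~$\sF$; a canonical truncation keeps you inside $\sB=S\Modl_{J\ctra}$, not~$\sF$. Landing functorially in $\sF$ requires a \emph{second} resolution step: truncate above at~$l_1$, resolve each term by projectives of~$\sB$ (which are in $\sF$ by~(II)), totalize, and truncate below at~$-l_2$, where the remaining cocycle lies in~$\sF$ by a resolution-dimension argument (cf.\ \cite[Corollary~A.5.2]{Pcosh}) using the bound from~(III). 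This two-sided resolution is exactly what \cite[Sections~A.2--A.3]{Pps} carries out. Without it, your descent argument is also incomplete: the spectral sequence at best shows the totalization is acyclic in~$\sB$, whereas what is needed is acyclicity in the exact subcategory~$\sF$ (i.e., cocycles in~$\sF$), and that again hinges on the careful double-resolution construction.
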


\begin{proof}
 The proof is completely similar to those of~\cite[Theorem~4.2]{Pps}
and~\cite[Theorem~5.2]{Ppc}.
 The words ``appropriately defined'' here mean ``produced by
the constructions of~\cite[Appendix~A]{Pps}''.
 In the context of the latter, we set
\begin{alignat*}{5}
 &\sA&&=S\Modl_{J\tors}&&\supset\sE
 &&\supset\sJ&&=S\Modl_{J\tors}^\inj, \\
 &\sB&&=S\Modl_{J\ctra}&&\supset\sF
 &&\supset\sP&&=S\Modl_{J\ctra}^\proj.
\end{alignat*}

 Consider the adjoint pair of DG\+functors
\begin{alignat*}{2}
 \Psi=\Hom_S(L^\bu,{-})&\:\sC^+(\sJ)&&\lrarrow\sC^+(\sB), \\
 \Phi=L^\bu\ot_S{-}\,&\:\sC^-(\sP)&&\lrarrow\sC^-(\sA)
\end{alignat*}
(see Lemma~\ref{tors-contra-tensor-hom-lemma}(a\+-b)).
 Then the constructions of~\cite[Sections~A.2--A.3]{Pps} provide
the desired derived functors $\boR\Psi\:\sD^\st(\sE)\rarrow
\sD^\st(\sF)$ and $\boL\Phi\:\sD^\st(\sF)\rarrow\sD^\st(\sE)$.
 According to~\cite[Section~A.4]{Pps}, the functor $\boL\Phi$ is
left adjoint to the functor~$\boR\Psi$.

 Finally, the result of~\cite[first assertion of Theorem~A.5]{Pps}
allows to deduce the claim that $\boR\Psi$ and $\boL\Phi$ are
mutually inverse equivalences from the particular case of $\st=\bb$,
which is the triangulated
equivalence~\eqref{bounded-derived-abstract-classes-equivalence}.
 Alternatively, applying~\cite[second assertion of Theorem~A.5]{Pps}
together with
Lemma~\ref{injectives-projectives-belong-to-auslander-bass} (and keeping
Lemma~\ref{Hom-tensor-underived=derived-lemma}(a,c) in mind) allows one
to reprove the triangulated
equivalence~\eqref{bounded-derived-abstract-classes-equivalence}
instead of using it, thus obtaining a proof
of Lemma~\ref{abstract-corresponding-classes-adjunction-isoms}.
\end{proof}

 Let us make some comments generalizing the discussion at the end of
Section~\ref{auslander-and-bass-secn}.
 According to~\cite[Proposition~5.5]{PS2}, there is a natural
degenerate t\+structure of the derived type on the triangulated
category $\sD(\sE)$ with the heart equivalent to $S\Modl_{J\tors}$.
 Dual-analogously, by~\cite[Proposition~5.7]{PS2}, there is a natural
degenerate t\+structure of the derived type on the triangulated
category $\sD(\sF)$ with the heart equivalent to $S\Modl_{J\ctra}$.
 See also the discussion in~\cite[Section~1.2 and Remark~5.3]{Ppc}.

 The category of $J$\+torsion $S$\+modules $S\Modl_{J\tors}$ is
a Grothendieck abelian categoyry.
 Hence, by~\cite[Theorem~3.13 and Lemma~3.7(ii)]{Ser},
\cite[Corollary~7.1]{Gil}, or~\cite[Corollary~8.5]{PS4}, there are
enough homotopy injective complexes of injective objects in
$S\Modl_{J\tors}$.
 So the result of~\cite[Theorem~2.1(a)]{Ppc} is applicable, telling us
that the triangulated functor $\sD(\sE)\rarrow\sD(S\Modl_{J\tors})$
induced by the exact inclusion of exact/abelian categories
$\sE\rarrow S\Modl_{J\tors}$ is a triangulated Verdier quotient
functor having a (fully faithful) right adjoint.

 Dual-analogously, the category of $J$\+contramodule $S$\+modules
$S\Modl_{J\ctra}$ is a locally presentable (in fact, locally
$\aleph_1$\+presentable) abelian category with enough projective
objects.
 Hence, by~\cite[Lemma~6.1 and Corollary~6.7]{PS4}, there are enough
homotopy projective complexes of projective objects in
$S\Modl_{J\ctra}$.
 So the result of~\cite[Theorem~2.1(b)]{Ppc} is applicable, telling us
that the triangulated functor $\sD(\sF)\rarrow\sD(S\Modl_{J\ctra})$
induced by the exact inclusion of exact/abelian categories
$\sF\rarrow S\Modl_{J\ctra}$ is a triangulated Verdier quotient
functor having a (fully faithful) left adjoint.

 In other words, we have a diagram of triangulated functors,
triangulated equivalences, commutativities, and adjunctions
\begin{equation} \label{abstract-big-diagram}
\begin{tikzcd}
\sK(S\Modl_{J\tors}) \arrow[d, two heads]
\arrow[ddd, two heads, bend right=71] &&&&
\sK(S\Modl_{J\ctra}) \arrow[d, two heads]
\arrow[ddd, two heads, bend left=71] \\
\sK(S\Modl_{J\tors}^\inj) \arrow[d] 
\arrow[u, tail, bend right=70]
\arrow[dd, two heads, bend right=62] &&&&
\sK(S\Modl_{J\ctra}^\proj) \arrow[d] 
\arrow[u, tail, bend left=70]
\arrow[dd, two heads, bend left=62] \\
\sD(\sE) \arrow[d, two heads]
\arrow[rrrr, Leftrightarrow, no head, no tail] &&&&
\sD(\sF) \arrow[d, two heads] \\
\sD(S\Modl_{J\tors}) \arrow[uuu, tail, bend right=82]
\arrow[uu, tail, bend right=70]
\arrow[u, tail, bend right=60] &&&&
\sD(S\Modl_{J\ctra}) \arrow[uuu, tail, bend left=82]
\arrow[uu, tail, bend left=70]
\arrow[u, tail, bend left=60]
\end{tikzcd}
\end{equation}
with the notation and description very similar to the discussion
of the diagram~\eqref{big-diagram} in the Introduction.
 (Cf.\ the discussion in~\cite[Section~9]{Ppc}.)

 The arrows that are present on both the diagrams~\eqref{big-diagram}
and~\eqref{abstract-big-diagram} denote the same functors.
 The horizontal double line in~\eqref{abstract-big-diagram} is
the triangulated equivalence from
Theorem~\ref{abstract-classes-derived-equivalence}.
 The downwards-directed straight arrows in the leftmost column
denote the triangulated functors between the homotopy/derived
categories induced by the exact inclusions of additive/exact/abelian
categories $S\Modl_{J\tors}^\inj\rarrow\sE\rarrow S\Modl_{J\tors}$.
 The downwards-directed straight arrows in the rightmost column
denote the triangulated functors between the homotopy/derived
categories induced by the exact inclusions of additive/exact/abelian
categories $S\Modl_{J\ctra}^\proj\rarrow\sF\rarrow S\Modl_{J\ctra}$.

 The upper levels of both the leftmost and the rightmost columns
in~\eqref{abstract-big-diagram} are provided by
Theorem~\ref{becker-co-contra-derived-of-loc-pres-abelian}.
 The triangulated functors $\sK(S\Modl_{J\tors}^\inj)\rarrow
\sD(S\Modl_{J\tors})$ and $\sK(S\Modl_{J\ctra}^\proj)\rarrow
\sD(S\Modl_{J\ctra})$ are Verdier quotient functors in view of
Lemma~\ref{becker-co-contra-acyclic-are-acyclic}
and Theorem~\ref{becker-co-contra-derived-of-loc-pres-abelian}.

 Now suppose that we have two pairs of full subcategories
$\sE_\prime\subset\sE'\subset S\Modl_{J\tors}$ and
$\sF_{\prime\prime}\subset\sF''\subset S\Modl_{J\ctra}$ such that
both the pairs $(\sE_\prime,\sF_{\prime\prime})$ and
$(\sE',\sF'')$ satisfy conditions~(I\+-IV).
 Then for every symbol $\st=\bb$, $+$, $-$, $\varnothing$, $\abs+$,
$\abs-$, or $\abs$ there is a commutative diagram of triangulated
functors and triangulated equivalences
\begin{equation} \label{pseudo-derived-equivs-commutativity-diagram}
\begin{gathered}
 \xymatrix{
  \sD^\st(\sE_\prime) \ar@{=}[r] \ar[d]
  & \sD^\st(\sF_{\prime\prime}) \ar[d] \\
  \sD^\st(\sE') \ar@{=}[r] & \sD^\st(\sF'')
 }
\end{gathered}
\end{equation}
 The vertical functors are induced by the exact inclusions of exact
categories $\sE_\prime\rarrow\sE'$ and $\sF_{\prime\prime}\rarrow
\sF''$, while the horizontal equivalences are provided by
Theorem~\ref{abstract-classes-derived-equivalence}.

\Section{Minimal Corresponding Classes}  \label{minimal-classes-secn}

 Let $J$ be a weakly proregular finitely generated ideal in
a commutative ring $S$, and let $L^\bu$ be a pseudo-dualizing complex
of $J$\+torsion $S$\+modules concentrated in the cohomological
degrees $-d_1\le m\le d_2$.

\begin{prop}
 Fix $l_1=d_1$ and $l_2\ge d_2$.
 Then there exists a unique minimal pair of full subcategories\/
$\sE^{l_2}=\sE^{l_2}(L^\bu)\subset S\Modl_{J\tors}$ and\/
$\sF^{l_2}=\sF^{l_2}(L^\bu)\subset S\Modl_{J\ctra}$ satisfying
conditions (I\+-IV) from Section~\ref{abstract-classes-secn}.
 For any pair of full subcategories\/ $\sE\subset S\Modl_{J\tors}$
and\/ $\sF\subset S\Modl_{J\ctra}$ satisfying (I\+-IV), one has\/
$\sE^{l_2}\subset\sE$ and\/ $\sF^{l_2}\subset\sF$.
\end{prop}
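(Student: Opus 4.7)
My plan is to construct the minimal pair by an alternating inductive closure procedure, starting from the classes of objects forced to lie in $\sE$ and $\sF$ by conditions~(I) and~(II), and enlarging them at each stage by the terms of canonical representing complexes provided by~(III) and~(IV). For the base case, (I) and~(II) force $S\Modl_{J\tors}^\inj \subseteq \sE$ and $S\Modl_{J\ctra}^\proj \subseteq \sF$ for every pair satisfying (I)--(IV). For $E \in S\Modl_{J\tors}^\inj$, Lemma~\ref{Hom-tensor-underived=derived-lemma}(a) together with Lemma~\ref{tors-contra-tensor-hom-lemma}(b) identifies $\boR\Hom_S(L^\bu,E)$ with the complex $\Hom_S(L^\bu,E)$ of $J$\+contramodule $S$\+modules concentrated in degrees $[-d_2,l_1] \subseteq [-l_2,l_1]$; by~(III) each of its terms must lie in $\sF$. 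Dually, for $F \in S\Modl_{J\ctra}^\proj$, Lemma~\ref{Hom-tensor-underived=derived-lemma}(c) and Lemma~\ref{tors-contra-tensor-hom-lemma}(a) present $L^\bu \ot_S F$ as a complex of $J$\+torsion $S$\+modules representing $L^\bu \ot_S^\boL F$ in degrees $[-l_1,d_2] \subseteq [-l_1,l_2]$, whose terms are forced into $\sE$ by~(IV).

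Setting $\sE^{(0)} = S\Modl_{J\tors}^\inj$ and $\sF^{(0)} = S\Modl_{J\ctra}^\proj$, I would define inductively $\sF^{(n+1)}$ as the closure in $S\Modl_{J\ctra}$, under extensions and kernels of surjective morphisms, of $\sF^{(n)}$ together with the terms of a specific chosen representing complex in degrees $[-l_2,l_1]$ for $\boR\Hom_S(L^\bu,E)$, ranging over $E \in \sE^{(n)}$; dually $\sE^{(n+1)}$ is built using representing complexes in $[-l_1,l_2]$ for $L^\bu \ot_S^\boL F$, ranging over $F \in \sF^{(n)}$. The existence of representing complexes in the prescribed cohomological range is guaranteed by Lemma~\ref{Hom-tensor-take-bass-auslander-to-complexes-of} once one verifies inductively that $\sE^{(n)} \subseteq \sE_{l_1}$ and $\sF^{(n)} \subseteq \sF_{l_1}$. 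Put $\sE^{l_2} = \bigcup_n \sE^{(n)}$ and $\sF^{l_2} = \bigcup_n \sF^{(n)}$. Conditions (I) and~(II) transfer to the directed unions automatically, while (III) and~(IV) hold because any $E \in \sE^{l_2}$ lies in some $\sE^{(n)}$, and the representing complex used in constructing $\sF^{(n+1)}$ has all its terms in $\sF^{(n+1)} \subseteq \sF^{l_2}$.

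For minimality I would prove by induction on~$n$ that every pair $(\sE,\sF)$ satisfying (I)--(IV) contains $\sE^{(n)} \subseteq \sE$ and $\sF^{(n)} \subseteq \sF$: the base case is (I)--(II), and the step invokes (III)--(IV) for $(\sE,\sF)$ combined with the extension/(co)kernel-closure properties of $\sE$ and $\sF$. The main obstacle is that conditions (III) and~(IV) only assert the existence of \emph{some} representing complex in the required degree range with terms in $\sF$ or $\sE$, whereas the construction of $\sE^{(n+1)}$ and $\sF^{(n+1)}$ uses a specific canonical choice that a~priori need not coincide with the existence complex provided by (III)/(IV) in a given valid pair. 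I plan to address this by arranging that the canonical complex at each step is built from objects already known to lie in $\sE$ or $\sF$: for $E \in \sE^{(n)}$, compute $\boR\Hom_S(L^\bu,E)$ via a coresolution of $E$ by objects of $\sE^{(n)} \subseteq \sE$ supplied by the inductive construction, so that the terms of the resulting $\Hom_S(L^\bu,-)$-complex are iterated extensions and kernels of surjections of objects forced into $\sF$ by the inductive hypothesis and~(III) applied inside $(\sE,\sF)$; a parallel analysis using (IV) handles the construction of $\sE^{(n+1)}$, completing the minimality argument.
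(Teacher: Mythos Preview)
Your overall strategy---an inductive generation process starting from $\sE^{(0)}=S\Modl_{J\tors}^\inj$ and $\sF^{(0)}=S\Modl_{J\ctra}^\proj$, alternately enlarging by terms of chosen representing complexes and closing under the operations in~(I)--(II), then taking the union over~$n\ge0$---is precisely the construction the paper intends; the paper defers to the analogous arguments in \cite[proof of Proposition~5.1]{Pps} and \cite[proof of Proposition~6.1]{Ppc}, remarking only that since closure under infinite direct sums and products is not required here, no transfinite iteration is needed.  You also correctly isolate the one genuine subtlety: conditions~(III) and~(IV) only guarantee the \emph{existence} of some representing complex with terms in~$\sF$ (resp.~$\sE$), so one must argue that the \emph{particular} complex used in the generation has its terms lying in every admissible~$\sF$ (resp.~$\sE$).

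Your proposed resolution of this point, however, does not work as written.  Applying $\Hom_S(L^\bu,{-})$ to an injective coresolution $E\to H^\bu$ produces terms of the form $\Hom_S(L^j,H^i)$, and nothing in (I)--(IV) forces these specific $J$\+contramodule $S$\+modules into an arbitrary valid~$\sF$; your assertion that they arise as iterated extensions and kernels of surjections of already-forced objects is not substantiated.  The correct device (implicit in \cite{Pps,Ppc}) is rather to choose the canonical representative $Q^\bu$ for $\boR\Hom_S(L^\bu,E)$ so that $Q^i$ is \emph{projective} in $S\Modl_{J\ctra}$ for every $i>-l_2$: take a bounded-above projective resolution $P^\bu$ of $\boR\Hom_S(L^\bu,E)$ in $S\Modl_{J\ctra}$ and put $Q^\bu=\tau_{\ge-l_2}P^\bu$.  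Then only $Q^{-l_2}$ needs checking.  Given a valid pair $(\sE,\sF)$ and the existence complex $F^\bu$ supplied by~(III), the cone of a chain map $P^\bu\to F^\bu$ is an acyclic bounded-above complex with all terms in~$\sF$; peeling cocycles from the top, using closure of~$\sF$ under kernels of surjections, shows every cocycle object lies in~$\sF$, and the cocycle in degree~$-l_2$ is precisely~$Q^{-l_2}$.  This is essentially the independence-of-resolution principle of~\cite[Corollary~A.5.2]{Pcosh} (cf.~\cite[Proposition~2.3]{Sto}).  The dual argument, using injective coresolutions in $S\Modl_{J\tors}$ and closure of~$\sE$ under cokernels of injections, handles~(IV).
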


\begin{proof}
 The full subcategories $\sE^{l_2}\subset S\Modl_{J\tors}$ and
$\sF^{l_2}\subset S\Modl_{J\ctra}$ are constructed simultaneously
by a generation process similar to the ones in~\cite[proof of
Proposition~5.1]{Pps} and~\cite[proof of Proposition~6.1]{Ppc}.
 The difference is that, \emph{like} in~\cite{Ppc} and \emph{unlike}
in~\cite{Pps}, we do not require the classes $\sE^{l_2}$ and $\sF^{l_2}$
to be closed under infinite direct sums and products.
 Accordingly, no transfinite iterations of the generation process
are needed.
\end{proof}

\begin{rem} \label{minimal-classes-remark}
 Moreover, for any two integers $l_1\ge d_1$ and $l_2\ge d_2$ and
any two full subcategories $\sE\subset S\Modl_{J\tors}$ and
$\sF\subset S\Modl_{J\ctra}$ satisfying conditions (I\+-IV) with
the parameters $l_1$ and~$l_2$, one has $\sE^{l_2}\subset\sE$ and
$\sF^{l_2}\subset\sF$.
 This can be easily seen from the construction of the classes
$\sE^{l_2}$ and~$\sF^{l_2}$ (cf.~\cite[Remark~5.2]{Pps}
and~\cite[Remark~6.2]{Ppc}).
\end{rem}

 One observes that the conditions (III\+-IV) become weaker as
the parameter~$l_2$ increases.
 It follows that one has $\sE^{l_2}\supset\sE^{l_2+1}$ and
$\sF^{l_2}\supset\sF^{l_2+1}$ for all $l_2\ge d_2$.
 So the inclusions between our classes of $J$\+torsion $S$\+modules
and $J$\+contramodule $S$\+modules have the form
\begin{alignat*}{7}
 &\dotsb\subset\sE^{d_2+2}&&\subset\sE^{d_2+1}&&\subset\sE^{d_2}
 &&\subset\sE_{d_1}&&\subset\sE_{d_1+1}&&\subset\sE_{d_1+2}
 &&\subset\dotsb\subset S\Modl_{J\tors}, \\
 &\dotsb\subset\sF^{d_2+2}&&\subset\sF^{d_2+1}&&\subset\sF^{d_2}
 &&\subset\sF_{d_1}&&\subset\sF_{d_1+1}&&\subset\sF_{d_1+2}
 &&\subset\dotsb\subset S\Modl_{J\ctra}.
\end{alignat*}

\begin{lem} \label{minimal-classes-co-resolution-dimension}
 Let $n\ge0$, \,$l_1\ge d_1$, and $l_2\ge d_2+n$ be some integers,
and let\/ $\sE\subset S\Modl_{J\tors}$ and $\sF\subset S\Modl_{J\ctra}$
be a pair of full subcategories satisfying conditions (I\+-IV) with
the parameters $l_1$ and~$l_2$.
 Denote by\/ $\sE(n)\subset S\Modl_{J\tors}$ the full subcategory of
all $J$\+torsion $S$\+modules of\/ $\sE$\+coresolution
dimension\/~$\le n$ and by\/ $\sF(n)\subset S\Modl_{J\ctra}$ the full
subcategory of all $J$\+contramodule $S$\+modules of\/ $\sF$\+resolution
dimension\/~$\le n$.
 Then the pair of classes of $J$\+torsion and $J$\+contramodule
$S$\+modules\/ $\sE(n)$ and\/ $\sF(n)$ satisfies conditions (I\+-IV)
with the parameters $l_1+n$ and $l_2-n$.
\end{lem}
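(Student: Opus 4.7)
The plan is to verify the four conditions (I)--(IV) from Section~\ref{abstract-classes-secn} for the pair $(\sE(n),\sF(n))$ with the new parameters $l_1+n$ and $l_2-n$. Conditions (I) and (II) are purely structural: the class $\sE(n)$ of $J$\+torsion $S$\+modules of $\sE$\+coresolution dimension $\le n$ inherits from $\sE$ closure under extensions and cokernels of admissible monomorphisms by standard dimension-shifting for coresolving subcategories (see \cite[Section~2]{Sto} or \cite[Corollary~A.5.2]{Pcosh}), and contains all injectives of $S\Modl_{J\tors}$ since $\sE$ does and $\sE\subset\sE(n)$. The argument for $\sF(n)$ is entirely dual.

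For condition (III), fix $E\in\sE(n)$ and an $\sE$\+coresolution $0\to E\to E^0\to\cdots\to E^n\to 0$. By condition (III) for $(\sE,\sF)$, each $\boR\Hom_S(L^\bu,E^i)$ is represented by a complex $G_i^\bu$ in $\sF$ concentrated in degrees $[-l_2,l_1]$. Choosing injective resolutions $E^i\to J_i^\bu$ in $S\Modl_{J\tors}$, lifting the coresolution differentials by injectivity, totalizing, and applying $\Hom_S(L^\bu,-)$ (which computes $\boR\Hom_S$ on injective resolutions by Lemma~\ref{Hom-tensor-underived=derived-lemma}(a)) produces a bicomplex in $S\Modl_{J\ctra}$ that represents $\boR\Hom_S(L^\bu,E)$. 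A column-by-column replacement of its columns by the $G_i^\bu$, via the standard machinery, yields a complex $T^\bu$ with all terms in $\sF$, concentrated in degrees $[-l_2,\,l_1+n]$, that still represents $\boR\Hom_S(L^\bu,E)$.

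Now the cohomology of $\boR\Hom_S(L^\bu,E)$ vanishes in degrees below $-d_2$ (since $L^\bu$ is concentrated in degrees $\le d_2$), and $l_2\ge d_2+n$ gives $-d_2\ge -l_2+n$. Hence the canonical truncation $\tau^{\ge -l_2+n}T^\bu$ is quasi-isomorphic to $T^\bu$ and concentrated in degrees $[-l_2+n,\,l_1+n]$; its terms in degrees strictly above $-l_2+n$ remain in $\sF\subset\sF(n)$. The exactness of $T^\bu$ in degrees below $-l_2+n$ yields an exact sequence $0\to T^{-l_2}\to\cdots\to T^{-l_2+n-1}\to\mathrm{im}(d^{-l_2+n-1})\to 0$ of $\sF$\+objects with $n$ resolving terms, placing $\mathrm{im}(d^{-l_2+n-1})$ in $\sF(n-1)$. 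The short exact sequence $0\to\mathrm{im}(d^{-l_2+n-1})\to T^{-l_2+n}\to T^{-l_2+n}/\mathrm{im}(d^{-l_2+n-1})\to 0$, with left term in $\sF(n-1)$ and middle in $\sF$, then places the new bottom term $T^{-l_2+n}/\mathrm{im}(d^{-l_2+n-1})$ in $\sF(n)$ by splicing the two resolutions.

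Condition (IV) is proved by the dual argument: starting from an $\sF$\+resolution of $F\in\sF(n)$, applying condition (IV) for $(\sE,\sF)$ termwise, and using Corollary~\ref{contraflat-contramods-tensor-underived=derived-cor} together with projective resolutions in $S\Modl_{J\ctra}$, one builds a complex $U^\bu$ of $\sE$\+objects in degrees $[-l_1-n,\,l_2]$ representing $L^\bu\ot_S^\boL F$. Since this tensor product has cohomology vanishing in degrees $>d_2$ (as $L^\bu$ is concentrated in degrees $\le d_2$) and $d_2\le l_2-n$, the canonical truncation $\tau^{\le l_2-n}U^\bu$ is quasi-isomorphic to $U^\bu$ and concentrated in degrees $[-l_1-n,\,l_2-n]$, with the new top term in $\sE(n)$ by the analogous dimension-shifting argument. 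The main obstacle, in both (III) and (IV), is the compatible lifting of derived-category morphisms to honest morphisms of complexes with terms in $\sF$ (resp.\ $\sE$), which is handled by the construction from \cite[Appendix~A]{Pps}.
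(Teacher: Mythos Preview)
Your proof is correct and follows essentially the same approach as the paper, which simply defers to~\cite[Lemma~5.3]{Pps} and~\cite[Lemma~6.3]{Ppc}; you have in fact supplied considerably more detail than the paper itself. The outline---verify (I)--(II) by standard (co)resolution-dimension closure properties, then for (III) build a complex with terms in~$\sF$ concentrated in degrees $[-l_2,l_1+n]$ from the $\sE$-coresolution of~$E$ and truncate from below using the cohomological bound $-d_2\ge -l_2+n$, and dually for (IV)---is exactly the argument of the cited lemmas.

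One minor comment: the step you flag as the ``main obstacle'' (assembling the complexes $G_i^\bu$ into an honest double complex, not merely a sequence of derived-category morphisms) is indeed the delicate point. Your appeal to~\cite[Appendix~A]{Pps} is appropriate, but a slightly cleaner way to phrase it is to work inductively on~$n$ via the short exact sequence $0\to E\to E^0\to C\to 0$ with $C\in\sE(n-1)$, and use that the inclusion $\sD^\bb(\sF)\hookrightarrow\sD^\bb(S\Modl_{J\ctra})$ is fully faithful (since $\sF$ is resolving), so that the relevant morphism lifts to a roof of chain maps between complexes in~$\sF$; taking the cone then gives a complex in~$\sF$ of the required amplitude. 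Either route works.
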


\begin{proof}
 Similar to~\cite[Lemma~5.3]{Pps} and~\cite[Lemma~6.3]{Ppc}.
\end{proof}

\begin{prop} \label{minimal-classes-derived-independence-of-l2}
\textup{(a)} For any integers $l_2''\ge l_2'\ge d_2$ and any
conventional or exotic derived category symbol\/ $\st=\bb$, $+$, $-$,
$\varnothing$, $\abs+$, $\abs-$, $\bco$, or\/~$\abs$, the exact
inclusion functor\/ $\sE^{l_2''}\rarrow\sE^{l_2'}$ induces
a triangulated equivalence
$$
 \sD^\st(\sE^{l_2''})\simeq\sD^\st(\sE^{l_2''}).
$$ \par
\textup{(b)} For any integers $l_2''\ge l_2'\ge d_2$ and any
conventional or exotic derived category symbol\/ $\st=\bb$, $+$, $-$,
$\varnothing$, $\abs+$, $\abs-$, $\bctr$, or\/~$\abs$, the exact
inclusion functor\/ $\sF^{l_2''}\rarrow\sF^{l_2'}$ induces
a triangulated equivalence
$$
 \sD^\st(\sF^{l_2''})\simeq\sD^\st(\sF^{l_2'}).
$$
\end{prop}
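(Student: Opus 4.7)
The argument should mirror the proof of
Proposition~\ref{bass-auslander-derived-independence-of-l1}, with
Lemma~\ref{minimal-classes-co-resolution-dimension} playing the role
that Lemma~\ref{bass-auslander-co-resolution-dimension} played there.
The strategy is to bound the $\sE^{l_2''}$\+coresolution dimension
of every object of $\sE^{l_2'}$ (and dually for\/~$\sF$), and then
invoke \cite[Propositions~A.5.8 and~B.7.9]{Pcosh} together with their
dual versions.

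First, I would apply
Lemma~\ref{minimal-classes-co-resolution-dimension} to the pair of
classes $(\sE,\sF)=(\sE^{l_2''},\sF^{l_2''})$, which satisfies
conditions~(I\+-IV) with parameters $(d_1,l_2'')$.
Taking $n=l_2''-l_2'$ (which is nonnegative and satisfies
$l_2''\ge d_2+n$ because $l_2'\ge d_2$), the lemma produces full
subcategories $\sE(n)\subset S\Modl_{J\tors}$ and
$\sF(n)\subset S\Modl_{J\ctra}$, formed by the $J$\+torsion
$S$\+modules of $\sE^{l_2''}$\+coresolution dimension~$\le n$ and
the $J$\+contramodule $S$\+modules of $\sF^{l_2''}$\+resolution
dimension~$\le n$, and they satisfy conditions~(I\+-IV) with parameters
$(d_1+n,l_2')$.
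By Remark~\ref{minimal-classes-remark} (minimality of the classes
$\sE^{l_2'}$ and $\sF^{l_2'}$ among all pairs satisfying~(I\+-IV) with
second parameter~$l_2'$, regardless of the first), we conclude that
$\sE^{l_2'}\subset\sE(n)$ and $\sF^{l_2'}\subset\sF(n)$.
Unpacking, every object of $\sE^{l_2'}$ has
$\sE^{l_2''}$\+coresolution dimension at most $l_2''-l_2'$, and every
object of $\sF^{l_2'}$ has $\sF^{l_2''}$\+resolution dimension at
most $l_2''-l_2'$.

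Next I would verify that $\sE^{l_2''}\subset\sE^{l_2'}$ is
a coresolving subcategory (with respect to the exact structure
inherited from $S\Modl_{J\tors}$) and, dually, that
$\sF^{l_2''}\subset\sF^{l_2'}$ is resolving.
Closure under extensions and cokernels of admissible monomorphisms
(respectively, kernels of admissible epimorphisms) is immediate from
condition~(I) (respectively,~(II)) for the class $\sE^{l_2''}$
(respectively,~$\sF^{l_2''}$).
That there are enough $\sE^{l_2''}$\+objects admitting admissible
monomorphisms from objects of $\sE^{l_2'}$ follows from the fact
that $\sE^{l_2''}$ contains all injective objects of
$S\Modl_{J\tors}$, and $\sE^{l_2'}$ is closed under cokernels of
injective morphisms in $S\Modl_{J\tors}$; dually for~$\sF^{l_2''}$.

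Given these two ingredients---coresolving/resolving inclusion with
uniformly bounded (co)resolution dimension~$l_2''-l_2'$---part~(a)
follows from the dual versions of \cite[Propositions~A.5.8
and~B.7.9]{Pcosh}, and part~(b) follows from the original versions
of those propositions, applied to each of the derived category
symbols\/ $\st=\bb$, $+$, $-$, $\varnothing$, $\abs+$, $\abs-$,
$\bco$/$\bctr$, and\/~$\abs$.
The $\bco$/$\bctr$ cases are essentially trivial (as noted after
Proposition~\ref{bass-auslander-derived-independence-of-l1}):
using \cite[Corollary~9.5 and~7.4]{PS4}, both
$\sD^\bco(\sE^{l_2''})$ and $\sD^\bco(\sE^{l_2'})$ are equivalent
to $\sD^\bco(S\Modl_{J\tors})$, and similarly for $\sD^\bctr$.
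The only place where any subtlety arises is in the verification of
the coresolving/resolving property, but this is immediate from
condition~(I)/(II) and the presence of enough injectives/projectives
in the ambient abelian categories.
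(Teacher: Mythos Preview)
Your proposal is correct and follows essentially the same approach as the paper's proof: apply Lemma~\ref{minimal-classes-co-resolution-dimension} with $n=l_2''-l_2'$ to the pair $(\sE^{l_2''},\sF^{l_2''})$, use minimality (Remark~\ref{minimal-classes-remark}) to bound the $\sE^{l_2''}$\+coresolution and $\sF^{l_2''}$\+resolution dimensions of objects in $\sE^{l_2'}$ and $\sF^{l_2'}$, and then invoke \cite[Propositions~A.5.8 and~B.7.9]{Pcosh} (and their duals). Your write-up simply makes explicit the invocation of Remark~\ref{minimal-classes-remark} and the coresolving/resolving verification that the paper leaves implicit.
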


\begin{proof}
 Similar to~\cite[Proposition~5.4]{Pps} and~\cite[Proposition~6.4]{Ppc}.
 In part~(b), one uses
Lemma~\ref{minimal-classes-co-resolution-dimension} in order to check
that the $\sF^{l_2''}$\+resolution dimension of any object from
$\sF^{l_2'}$ does not exceed $l_2''-l_2'$.
 Then one applies~\cite[Propositions~A.5.8 and~B.7.9]{Pcosh},
as in the proof of
Proposition~\ref{bass-auslander-derived-independence-of-l1}.
 In part~(a), one similarly uses
Lemma~\ref{minimal-classes-co-resolution-dimension} in order to check
that the $\sE^{l_2''}$\+coresolution dimension of any object from
$\sE^{l_2'}$ does not exceed $l_2''-l_2'$.
 Then one applies the dual versions
of~\cite[Propositions~A.5.8 and~B.7.9]{Pcosh}.
\end{proof}

 As in Proposition~\ref{bass-auslander-derived-independence-of-l1},
the cases $\st=\bco$ and $\st=\bctr$ in the context of
Proposition~\ref{minimal-classes-derived-independence-of-l2} are
actually trivial, and are only included in the formulation for the sake
of completeness and for comparison with~\cite[Proposition~5.4]{Pps}.
 Using the results of~\cite[Corollary~9.5]{PS4} for
$\sA=S\Modl_{J\tors}$ and~\cite[Corollary~7.4]{PS4} for
$\sB=S\Modl_{J\ctra}$, one can easily show that
$\sD^\bco(\sE^{l_2''})\simeq\sD^\bco(\sE^{l_2'})\simeq
\sD^\bco(S\Modl_{J\tors})$ and
$\sD^\bctr(\sF^{l_2''})\simeq\sD^\bctr(\sF^{l_2'})\simeq
\sD^\bctr(S\Modl_{J\ctra})$.

 As a particular case of
Proposition~\ref{minimal-classes-derived-independence-of-l2},
the conventional unbounded derived category $\sD(\sE^{l_2})$ of
the minimal corresponding class of $J$\+torsion $S$\+modules
$\sE^{l_2}$ is the same for all $l_2\ge d_2$, and the conventional
unbounded derived category $\sD(\sF^{l_2})$ of the minimal
corresponding class of $J$\+contramodule $S$\+modules $\sF^{l_2}$
is the same for all $l_2\ge d_2$.
 We put
$$
 \sD_\prime^{L^\bu}(S\Modl_{J\tors})=\sD(\sE^{l_2})
 \quad\text{and}\quad
 \sD_{\prime\prime}^{L^\bu}(S\Modl_{J\ctra})=\sD(\sF^{l_2}).
$$
 The next theorem provides, in particular, a triangulated equivalence
$$
 \sD_\prime^{L^\bu}(S\Modl_{J\tors})=\sD(\sE^{l_2})\simeq
 \sD(\sF^{l_2})=\sD_{\prime\prime}^{L^\bu}(S\Modl_{J\ctra}).
$$

\begin{thm} \label{minimal-classes-derived-equivalence}
 For any conventional or absolute derived category symbol\/
$\st=\bb$, $+$, $-$, $\varnothing$, $\abs+$, $\abs-$, or\/~$\abs$,
there is a triangulated equivalence
$$
 \sD^\st(\sE^{l_2})\simeq\sD^\st(\sF^{l_2})
$$
provided by (appropriately defined) mutually inverse derived functors\/
$\boR\Hom_S(L^\bu,{-})$ and $L^\bu\ot_S^\boL{-}$.
\end{thm}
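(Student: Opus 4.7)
The theorem is an immediate application of Theorem~\ref{abstract-classes-derived-equivalence}. By the Proposition at the beginning of this section, the pair of full subcategories $(\sE^{l_2},\sF^{l_2})$ is precisely the minimal pair satisfying conditions~(I\+-IV) of Section~\ref{abstract-classes-secn}, with numerical parameters $l_1=d_1$ and the given $l_2\ge d_2$. Substituting $\sE=\sE^{l_2}$ and $\sF=\sF^{l_2}$ into Theorem~\ref{abstract-classes-derived-equivalence}, one obtains the desired triangulated equivalence
$$
 \sD^\st(\sE^{l_2})\simeq\sD^\st(\sF^{l_2})
$$
for every $\st\in\{\bb,+,-,\varnothing,\abs+,\abs-,\abs\}$, realized by the derived functors $\boR\Hom_S(L^\bu,{-})$ and $L^\bu\ot_S^\boL{-}$ built via the DG\+adjunction machinery of \cite[Appendix~A]{Pps}.

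So no genuinely new work is required beyond invoking Theorem~\ref{abstract-classes-derived-equivalence}; the only point to confirm is that the minimal classes actually satisfy~(I\+-IV). Conditions~(I\+-II) are manifest from the inductive construction of $\sE^{l_2}$ and $\sF^{l_2}$, which proceeds by closing the initial classes $S\Modl_{J\tors}^\inj$ and $S\Modl_{J\ctra}^\proj$ under extensions and the relevant one-sided cokernels/kernels of admissible mono/epimorphisms. Conditions~(III\+-IV) are enforced simultaneously during the same generation process: whenever an object $E$ is adjoined to $\sE^{l_2}$, one also adjoins to $\sF^{l_2}$ the terms of a chosen finite complex of $J$\+contramodule $S$\+modules representing $\boR\Hom_S(L^\bu,E)$ in the prescribed cohomological degrees $-l_2\le m\le d_1$, and dually for $L^\bu\ot_S^\boL F$ when $F$ is adjoined to $\sF^{l_2}$. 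Because these minimal classes (unlike in~\cite{Pps}) are \emph{not} required to be closed under infinite direct sums or products, the generation terminates without any transfinite iteration, producing a well-defined pair for which (I\+-IV) hold by construction.

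The main (and essentially the only) conceptual obstacle was packaged into Theorem~\ref{abstract-classes-derived-equivalence}, whose proof deduces the equivalence for arbitrary $\st$ from the bounded case~\eqref{bounded-derived-abstract-classes-equivalence} via~\cite[Theorem~A.5]{Pps}; nothing specific to minimality enters at this point, so the present theorem is obtained as a direct corollary.
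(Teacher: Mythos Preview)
Your proof is correct and follows the same approach as the paper, which simply states that this is another particular case of Theorem~\ref{abstract-classes-derived-equivalence}. Your additional explanation of why the minimal classes satisfy conditions~(I\+-IV) is accurate but more detailed than necessary, since this was already established by the Proposition at the beginning of the section.
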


\begin{proof}
 This is another particular case of
Theorem~\ref{abstract-classes-derived-equivalence}.
\end{proof}

 Similarly to the discussion at the end of
Section~\ref{auslander-and-bass-secn}, and as a particular case
of the discussion in Section~\ref{abstract-classes-secn}, we mention
the following observations.
 According to~\cite[Proposition~5.5]{PS2}, there is a natural
degenerate t\+structure of the derived type on the triangulated
category $\sD_\prime^{L^\bu}(S\Modl_{J\tors})=\sD(\sE^{l_2})$ with
the heart equivalent to $S\Modl_{J\tors}$.
 Dual-analogously, by~\cite[Proposition~5.7]{PS2}, there is a natural
degenerate t\+structure of the derived type on the triangulated
category $\sD_{\prime\prime}^{L^\bu}(S\Modl_{J\ctra})=\sD(\sF^{l_2})$
with the heart equivalent to $S\Modl_{J\ctra}$.
 See also the discussion in~\cite[Section~1.2 and Remark~5.3]{Ppc}.

 Following the discussion in~\cite[Section~2]{Ppc}, the functor
$\sD_\prime^{L^\bu}(S\Modl_{J\tors})\rarrow\sD(S\Modl_{J\tors})$
induced by the exact inclusion of exact/abelian categoires
$\sE^{l_2}\rarrow S\Modl_{J\tors}$ is a triangulated Verdier quotient
functor having a (fully faithful) right adjoint.
 Dual-analogously, the functor
$\sD_{\prime\prime}^{L^\bu}(S\Modl_{J\ctra})\rarrow\sD(S\Modl_{J\ctra})$
induced by the exact inclusion of exact/abelian categories
$\sF^{l_2}\rarrow S\Modl_{J\ctra}$ is a triangulated Verdier quotient
functor having a (fully faithful) left adjoint.
 See also the diagrams~\eqref{big-diagram} in the Introduction
and~\eqref{abstract-big-diagram} in Section~\ref{abstract-classes-secn}.

\Section{Finiteness Conditions for an Ideal with Artinian Quotient Ring}
\label{artinian-finiteness-secn}

 Let $S$ be a Noetherian commutative ring and $J\subset S$ be an ideal
such that the quotient ring $S/J$ is Artinian.
 The aim of this section is to compare two finiteness conditions on
a finite complex of $J$\+torsion $S$\+modules: viz., condition~(ii)
from the definition of a pseudo-dualizing complex in
Section~\ref{auslander-and-bass-secn} above and condition~(iii)
from the definition of a dedualizing complex in~\cite[Section~4]{Pmgm}.

 Let $\sqrt{J}\subset S$ denote the radical of the ideal~$J$.
 Notice that the quotient ring $S/\sqrt{J}$ is a semisimple Artinian
commutative ring, i.~e., a finite direct sum of fields.

 Given a $J$\+torsion $S$\+module $M$, denote by $\soc(M)\subset M$
the socle of $M$, i.~e., the maximal semisimple submodule of~$M$.
 Equivalently, $\soc(M)$ is the maximal $S$\+submodule of $M$ whose
$S$\+module structure comes from an $S/\sqrt{J}$\+module structure.
 It follows that one has $\soc(M)\ne0$ whenever $M\ne0$.

 So $M\longmapsto\soc(M)$ is a functor $S\Modl_{J\tors}\rarrow
(S/\sqrt{J})\Modl$.
 The functor $\soc\:S\Modl_{J\tors}\rarrow(S/\sqrt{J})\Modl$ is right
adjoint to the identity inclusion functor $(S/\sqrt{J})\Modl\rarrow
S\Modl_{J\tors}$.

\begin{lem} \label{socle-injectivity-criterion}
 Let $f\:M\rarrow N$ be a morphism of $J$\+torsion $S$\+modules.
 Then the morphism~$f$ is injective if and only if the morphism
$\soc(f)\:\soc(M)\rarrow\soc(N)$ is injective.
\end{lem}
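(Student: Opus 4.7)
The plan is to use a direct argument, with the ``only if'' direction being trivial and the ``if'' direction following from the existence of nonzero socles in nonzero $J$\+torsion $S$\+modules.

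First I would observe that $\soc(f)$ is simply the restriction of~$f$ to the submodule $\soc(M)\subset M$ (this is immediate from the functoriality of $\soc$ and the fact that $\soc$ is right adjoint to the inclusion $(S/\sqrt{J})\Modl\rarrow S\Modl_{J\tors}$). Hence the injectivity of~$f$ trivially implies the injectivity of~$\soc(f)$, taking care of one direction.

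For the converse, suppose that $\soc(f)\:\soc(M)\rarrow\soc(N)$ is injective. Let $K=\ker(f)\subset M$. Then $K$ is a $J$\+torsion $S$\+module (as a submodule of a $J$\+torsion module), and $\soc(K)=K\cap\soc(M)$. The restriction of $\soc(f)$ to $\soc(K)$ is zero, so the injectivity of $\soc(f)$ forces $\soc(K)=0$. It now suffices to prove the \emph{key claim}: any nonzero $J$\+torsion $S$\+module has a nonzero socle. Applied to $K$, this forces $K=0$, i.e., $f$~is injective.

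The main obstacle is the key claim, but it is a short argument using the assumption that $S/J$ is Artinian. Given a nonzero $J$\+torsion $S$\+module~$P$, pick a nonzero $p\in P$ and let $n\ge1$ be the minimal integer with $J^np=0$. Then the submodule $P'=J^{n-1}Sp\subset P$ (interpreting $J^0=S$) is nonzero and is annihilated by~$J$, hence is a finitely generated $S/J$\+module. Since $S/J$ is an Artinian commutative ring, the finitely generated $S/J$\+module $P'$ is Artinian, so it contains a simple submodule~$P''$. The Jacobson radical of $S/J$ is $\sqrt{J}/J$, which annihilates every simple $S/J$\+module, so $P''$ is an $S/\sqrt{J}$\+module. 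Therefore $P''\subset\soc(P)$, proving $\soc(P)\neq0$ and completing the proof.
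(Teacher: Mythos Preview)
Your proof is correct and follows essentially the same approach as the paper's. The paper's argument is slightly more compressed: it invokes the left-exactness of $\soc$ (as a right adjoint) to write $\ker(\soc(f))=\soc(\ker(f))$ in one step, and then uses the fact---already recorded in the text preceding the lemma---that $\soc(M)\ne0$ whenever $M\ne0$, rather than reproving it.
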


\begin{proof}
 The functor $\soc$ is a right adjoint, hence it is left exact, i.~e.,
preserves kernels.
 Thus we have $\ker(\soc(f))=\soc(\ker(f))$.
 As $\ker(f)$ is a $J$\+torsion $S$\+module, we have $\ker(f)\ne0$
if and only if $\soc(\ker(f))\ne0$.
\end{proof}

 Recall that a $J$\+torsion $S$\+module is injective in
$S\Modl_{J\tors}$ if and only if it is injective in $S\Modl$ (since
the ring $S$ is Noetherian).
 A complex of $J$\+torsion $S$\+modules $J^\bu$ is said to be
\emph{minimal} if the differential of the complex $\soc(J^\bu)$
vanishes.

\begin{lem} \label{enough-minimal-complexes-of-injectives}
 Any complex of injective $J$\+torsion $S$\+modules decomposes as
a direct sum of a minimal complex of injective $J$\+torsion $S$\+modules
and a contractible complex of injective $J$\+torsion $S$\+modules.
\end{lem}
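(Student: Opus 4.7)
The plan is to produce the decomposition iteratively by peeling off two-term contractible direct summands until what remains is minimal. The main technical step will be the following claim: if $J^\bu$ is a complex of injective $J$\+torsion $S$\+modules for which $d^n$ does not vanish on $\soc(J^n)$ for some integer~$n$, then $J^\bu$ admits a nonzero contractible direct-summand subcomplex $K^\bu$ concentrated in degrees $n$ and $n+1$.

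To establish the claim, I will use the semisimplicity of $\soc(J^n)$ as an $S/\sqrt J$\+module to pick a simple submodule $L\subseteq\soc(J^n)$ on which $d^n$ does not vanish; since $L$ is simple, $d^n|_L$ is then injective. I will take $E\subseteq J^n$ to be an injective envelope of $L$ inside $J^n$. Because $E$ is injective, it is a direct summand of $J^n$; because $L$ is essential in $E$ and semisimple, one has $\soc(E)=L$. Then Lemma~\ref{socle-injectivity-criterion} will give that $d^n|_E$ is injective, so that $E':=d^n(E)\subseteq J^{n+1}$ is isomorphic to~$E$, hence injective, hence a direct summand of~$J^{n+1}$, and the two-term complex $K^\bu=(\dotsb\to0\to E\to E'\to0\to\dotsb)$ is contractible. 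To upgrade $K^\bu$ to a direct-summand \emph{subcomplex}, I will choose complements $J^n=E\oplus R^n$ and $J^{n+1}=E'\oplus R^{n+1}$, write the matrix of $d^n$ in block form as $\bigl(\begin{smallmatrix}\phi & \alpha\\ 0 & \beta\end{smallmatrix}\bigr)$ with $\phi:=d^n|_E\:E\to E'$ an isomorphism, and replace $R^n$ by the twisted complement $R^n_{\mathrm{new}}=\{\,r-\phi^{-1}\alpha(r):r\in R^n\,\}\subseteq J^n$. A direct calculation then shows that $d^n$ sends $R^n_{\mathrm{new}}$ into $R^{n+1}$; that $d^{n+1}|_{E'}=0$ automatically, from $d^2=0$; and that $d^{n-1}(J^{n-1})\subseteq R^n_{\mathrm{new}}$ in the new decomposition, also automatically, since $d^n d^{n-1}=0$ forces the $E$\+component (in the new basis) of $d^{n-1}(y)$ to vanish for every $y\in J^{n-1}$. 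This exhibits $K^\bu$ as a direct-summand subcomplex of~$J^\bu$.

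With the splitting step in place, I will apply Zorn's lemma to the poset of contractible direct-summand subcomplexes of $J^\bu$ (equipped with their complementary idempotent projections), ordered by inclusion, to obtain a maximal element $C^\bu$ with complement $M^\bu$. Maximality combined with the splitting step applied to $M^\bu$ will force $d^n|_{\soc(M^n)}=0$ for every~$n$, so that $M^\bu$ is minimal and $J^\bu = M^\bu\oplus C^\bu$ is the desired decomposition. The hardest part will be justifying the Zorn step: one must verify that an ascending chain of contractible direct-summand subcomplexes admits an upper bound which is itself a contractible direct summand, a statement that requires the idempotent projections along the chain to amalgamate compatibly. An alternative is to replace Zorn with a transfinite recursion, splitting off one two-term contractible summand at a time, with termination controlled by the cardinalities of the socles $\soc(J^n)$.
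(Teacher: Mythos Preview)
Your single-step splitting claim is correct and uses the same ingredients as the paper (socle, injective envelope, Lemma~\ref{socle-injectivity-criterion}). The difference is that the paper does everything in \emph{one} step rather than iteratively: instead of picking a single simple $L\subset\soc(J^n)$ on which $d^n$ is nonzero, the paper takes, for every~$n$ simultaneously, a complement $T_n\subset\soc(J^n)$ to $\ker\bigl(\soc(J^n)\to\soc(J^{n+1})\bigr)$, lets $K_n$ be its injective envelope, and shows that the total contractible complex $K^\bu=\bigoplus_n(K_n\overset{\id}\to K_n)$ embeds into $J^\bu$. The splitting of $K^\bu$ off $J^\bu$ is then immediate from the general fact that a contractible complex of injectives is an injective object of the abelian category of complexes, so any monomorphism from it splits---no twisted-complement matrix manipulation is needed. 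The quotient $G^\bu=J^\bu/K^\bu$ is minimal because $\soc(K^\bu)\to\soc(J^\bu)$ already surjects onto the image of the differential on socles.

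This completely bypasses the Zorn/transfinite step that you correctly flag as ``the hardest part'' of your argument, and which you do not actually carry out. That step is genuinely delicate: for a chain of contractible direct-summand subcomplexes $C_\alpha^\bu$ with complements $M_\alpha^\bu$, the union $\bigcup_\alpha C_\alpha^\bu$ is termwise a direct summand (direct sums of injectives are injective here since $S$ is Noetherian), but the termwise complements need not assemble into a \emph{subcomplex}, because your twisted-complement construction changes the complement at each stage and there is no reason the twists cohere along the chain. So as written your argument has a real gap. The paper's all-at-once construction shows the gap is avoidable, and I would recommend you adopt it: replace your simple $L$ by the full complement $T_n$, and replace the matrix calculation by the observation that contractible complexes of injectives are injective objects of $\sC(S\Modl_{J\tors})$.
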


\begin{proof}
 Let $H^\bu$ be a complex of injective $J$\+torsion $S$\+modules.
 For every integer $n\in\boZ$, let $T_n\subset\soc(H^n)$ be
a complementary submodule to the kernel of the map $\soc(H^n)\rarrow
\soc(H^{n+1})$; so $T_n$ is a maximal submodule among all submodules
$T\subset\soc(H^n)$ such that the composition $T\rarrow\soc(H^n)\rarrow
\soc(H^{n+1})$ is injective.
 In other words, the map from $T_n$ to the image of the morphism
$\soc(H^n)\rarrow\soc(H^{n+1})$ is an isomorphism.
 Denote by $K_n$ the injective envelope of $T_n$ in $S\Modl_{J\tors}$,
or equivalently, in $S\Modl$.
 Then the inclusion $T_n\rarrow\soc(H^n)\rarrow H^n$ can be extended to
an injective map of $J$\+torsion $S$\+modules $K_n\rarrow H^n$.
 We have $\soc(K_n)=T_n$, so it follows from
Lemma~\ref{socle-injectivity-criterion} that the composition
$K_n\rarrow H^n\rarrow H^{n+1}$ is an injective map.

 We have constructed an injective morphism into the complex of
$S$\+modules $H^\bu$ from a contractible two-term complex of injective
$J$\+torsion $S$\+modules $\dotsb\rarrow0\rarrow K_n\overset{\id}\rarrow
K_n\rarrow0\rarrow\dotsb$ situated in the cohomological degrees~$n$
and~$n+1$.
 Now the composition $K_n\rarrow H^{n+1}\rarrow H^{n+2}$ vanishes,
while the composition $K_{n+1}\rarrow H^{n+1}\rarrow H^{n+2}$ is
injective, too.
 It follows that the images of $K_n$ and $K_{n+1}$ do not intersect in
$H^{n+1}$, so the map $K_n\oplus K_{n+1}\rarrow H^{n+1}$ is injective.
 We have arrived to an injective morphism of complexes
$K^\bu=\bigoplus_{n\in\boZ}(K_n\overset{\id}\to K_n)\rarrow H^\bu$.
 Contractible complexes of injective objects are injective objects of
the abelian category of complexes $\sC(S\Modl_{J\tors})$;
hence the complex $H^\bu$ decomposes into a direct sum of the complex
$K^\bu$ and some complex of injective $J$\+torsion $S$\+modules~$G^\bu$.
 One can easily see that the morphism of complexes $\soc(K^\bu)\rarrow
\soc(H^\bu)$ induces an isomorphism on the images of the differentials;
so the complex $G^\bu$ is minimal.
\end{proof}

 The following theorem is the main result of this section.

\begin{thm} \label{Artinian-quotient-ring-finitness-conditions-theorem}
 Let $S$ be a Noetherian commutative ring and $J\subset S$ be
an ideal such that the quotient ring $S/J$ is Artinian.
 Let $N^\bu$ be a finite complex of $J$\+torsion $S$\+modules.
 Then the following two conditions are equivalent:
\begin{enumerate}
\item the cohomology $S$\+modules of the complex $N^\bu$ are Artinian;
\item for every finite complex of finitely generated projective
$S$\+modules $K^\bu$ with $J$\+torsion cohomology modules,
the complex of $S$\+modules\/ $\Hom_S(K^\bu,N^\bu)$ is quasi-isomorphic
to a bounded above complex of finitely generated projective
$S$\+modules.
\end{enumerate}
\end{thm}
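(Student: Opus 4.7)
The plan is to exploit the standard fact that, over the Noetherian ring $S$, a bounded complex of $S$-modules is quasi-isomorphic to a bounded above complex of finitely generated projective $S$-modules if and only if its cohomology modules are finitely generated. Both implications will be handled through the Koszul complex $K^\bu(S,\bs)$ for a finite sequence $\bs=(s_1,\dotsc,s_m)$ of generators of $J$, using the identification $\Hom_S(K^\bu(S,\bs),N^\bu)\cong K_\bu(S,\bs)\otimes_S N^\bu$ that follows from the self-duality $\Hom_S(K^\bu(S,\bs),S)=K_\bu(S,\bs)$ of finitely generated free $S$-modules.

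For (1)$\Rightarrow$(2), given a perfect complex $K^\bu$ with $J$\+torsion cohomology, set $L^\bu=\Hom_S(K^\bu,S)$, which is again perfect with $J$\+torsion cohomology by Lemma~\ref{compact-torsion-dualization-lemma}. Then $\Hom_S(K^\bu,N^\bu)\cong L^\bu\otimes_S N^\bu$, and the cohomology of $L^\bu$ consists of finitely generated $J$\+torsion $S$\+modules, which are annihilated by some $J^n$ and hence of finite length because $S/J^n$ is Artinian. A convergent bounded hyperhomology spectral sequence $E_2^{p,q}=\Tor_{-p}^S(H^\bullet L^\bu,H^\bullet N^\bu)\Rightarrow H^{p+q}(L^\bu\otimes_S N^\bu)$ reduces the claim to showing that $\Tor_i^S(M,N)$ is of finite length for $M$ finite length $J$\+torsion and $N$ Artinian $J$\+torsion. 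Resolving $M$ by finitely generated free $S$\+modules and tensoring with $N$ produces a complex whose terms are Artinian, so $\Tor_i^S(M,N)$ is Artinian; it is also annihilated by the annihilator of $M$ (some $J^n$), making it an Artinian module over the Artinian ring $S/J^n$, hence of finite length. The abutment $H^\bullet(L^\bu\otimes_S N^\bu)$ is then of finite length, in particular finitely generated.

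For (2)$\Rightarrow$(1), I will induct on the number of nonzero cohomology modules of $N^\bu$. In the base case of a single module $N$ (placed in some fixed degree), condition~(2) applied to $K^\bu=K^\bu(S,\bs)$ tells us that $K_\bu(S,\bs)\otimes_S N$ has finitely generated cohomology; its bottom cohomology module is $N[J]=\{x\in N:Jx=0\}$, which is therefore finitely generated and, being an $S/J$\+module, of finite length. Then $\soc(N)\subseteq N[J]$ is of finite length, so the injective envelope of $N$ in $S\Modl_{J\tors}$ is a finite direct sum of indecomposable injectives $E(S/\mathfrak m_i)$ for maximal ideals $\mathfrak m_i\supset J$. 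Each such $E(S/\mathfrak m_i)$ is Artinian over the Noetherian ring $S$, whence $N$ embeds into an Artinian module and is itself Artinian.

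For the inductive step, let $a=\min\{n:H^n N^\bu\neq0\}$ and examine the spectral sequence $E_2^{p,q}=H^p(K_\bu(S,\bs)\otimes_S H^q N^\bu)\Rightarrow H^{p+q}(K_\bu(S,\bs)\otimes_S N^\bu)$. Because $K_\bu(S,\bs)$ is concentrated in cohomological degrees $[-m,0]$ and $H^q N^\bu=0$ for $q<a$, the only position with $p+q=-m+a$ in the support of the spectral sequence is $(-m,a)$; furthermore, the corner term $E_2^{-m,a}=H^a(N^\bu)[J]$ admits no nonzero incoming or outgoing differentials on any page (incoming ones would originate at horizontal index $-m-r$, outside the range, while outgoing ones would target vertical index $a-r+1<a$, where $H^{a-r+1}N^\bu=0$). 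Consequently $H^{-m+a}(K_\bu(S,\bs)\otimes_S N^\bu)=E_\infty^{-m,a}=E_2^{-m,a}=H^a(N^\bu)[J]$, which is finitely generated by~(2). By the base case argument, $H^a(N^\bu)$ is Artinian. The already established forward direction then shows that $H^a(N^\bu)[-a]$ satisfies~(2); combining this with the canonical truncation triangle $H^a(N^\bu)[-a]\to N^\bu\to\tau^{>a}N^\bu$ and the preservation of distinguished triangles by $\Hom_S(K^\bu,-)$ (valid since $K^\bu$ is a bounded complex of projectives), we see that $\tau^{>a}N^\bu$ also satisfies~(2). As $\tau^{>a}N^\bu$ has strictly fewer nonzero cohomology modules, the inductive hypothesis completes the argument. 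The main obstacle is the bottom-corner identification $E_\infty^{-m,a}=E_2^{-m,a}$; it is essential to peel off the \emph{minimal}-degree cohomology rather than the maximal one, because at the top the analogous corner would only be a subquotient of the finitely generated abutment, which would be insufficient for the dévissage.
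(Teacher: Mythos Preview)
Your proof is correct and takes a genuinely different route from the paper, particularly for the implication (2)$\Rightarrow$(1).

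For (1)$\Rightarrow$(2), the two arguments are close in spirit. The paper replaces $N^\bu$ by a finite complex of Artinian $J$\+torsion modules (via \cite[Lemma~4.3]{Pmgm}) and then observes that $\Hom_S(K^\bu,M^\bu)\cong\Hom_S(K^\bu,S)\otimes_S M^\bu$ is again a finite complex of Artinian modules whose cohomology is annihilated by some $J^n$, hence finitely generated over the Artinian ring $S/J^n$. Your spectral-sequence formulation is slightly imprecise as written (there is no single spectral sequence with $E_2^{p,q}=\Tor_{-p}^S(H^\bullet L^\bu,H^\bullet N^\bu)$; one needs either two nested spectral sequences or, more simply, a thick-subcategory d\'evissage), but the substantive point---that the cohomology is built by extensions from the groups $\Tor_i^S(H^aL^\bu,H^bN^\bu)$, each of finite length---is correct and trivially made rigorous.

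For (2)$\Rightarrow$(1) the approaches diverge. The paper's argument is structural rather than inductive: it replaces $N^\bu$ by a finite \emph{minimal} complex $M^\bu$ (one for which $\soc(M^\bu)$ has zero differential), using a preparatory lemma decomposing any complex of injective $J$\+torsion modules as a minimal complex plus a contractible one. Taking $\bs$ to generate $\sqrt J$, the complex $C^\bu=K_\bu(S,\bs)\otimes_S M^\bu$ is again minimal, its cohomology is semisimple and finitely generated by~(2), and a direct contradiction argument on the least degree with non-Artinian term shows that every term of $C^\bu$ (hence of $M^\bu$) is Artinian. Your approach avoids minimal complexes altogether: you identify the bottom-left corner $E_2^{-m,a}=H^a(N^\bu)[J]$ of the Koszul spectral sequence as surviving to $E_\infty$, deduce that $H^a(N^\bu)$ is Artinian via the socle/injective-hull argument, and then invoke the already-proved forward direction together with the truncation triangle to peel off $H^a(N^\bu)$ and induct. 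Both proofs end at the same place (finite-length socle forces the injective hull, hence the module, to be Artinian), but the paper reaches it in one global step at the cost of the minimal-resolution lemma, while you reach it one cohomological degree at a time using only standard spectral-sequence bookkeeping.
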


\begin{proof}
 (1)~$\Longrightarrow$~(2)
 It follows from~\cite[Lemma~4.3]{Pmgm} that $N^\bu$ is quasi-isomorphic
to a finite complex of Artinian $J$\+torsion $S$\+modules~$M^\bu$.
 Then the complex of $S$\+modules $\Hom_S(K^\bu,N^\bu)$ is
quasi-isomorphic to the complex $\Hom_S(K^\bu,M^\bu)\simeq
\Hom_S(K^\bu,S)\ot_S M^\bu$, which is also a finite complex of
Artinian $J$\+torsion $S$\+modules.

 Clearly, there exists an integer $n\ge1$ such that all the elements
of $J^n\subset S$ act on the complex of $S$\+modules $K^\bu$ by
endomorphisms homotopic to zero.
 Then the cohomology modules of the complex $\Hom_S(K^\bu,M^\bu)$ are
annihilated by~$J^n$.

 Any Artinian $S$\+module $H$ annihilated by $J^n$ is an Artinian module
over the Artinian ring $S/J^n$, and it follows that the $S$\+module $H$
is finitely generated.
 Any finite complex of modules $C^\bu$ over a Noetherian ring $S$ with
finitely generated cohomology modules $H^i(C^\bu)$, \,$i\in\boZ$,
is quasi-isomorphic to a bounded above complex of finitely generated
projective $S$\+modules.

 (2)~$\Longrightarrow$~(1)
 As any bounded below complex of $J$\+torsion $S$\+modules, the complex
$N^\bu$ is quasi-isomorphic to some bounded below complex of injective
$J$\+torsion $S$\+modules~$H^\bu$.
 By Lemma~\ref{enough-minimal-complexes-of-injectives}, we can assume
without loss of generality that the complex $H^\bu$ is minimal.
 Using the canonical truncation, we construct from $H^\bu$ a finite
minimal complex of $J$\+torsion $S$\+modules $M^\bu$ quasi-isomorphic
to~$N^\bu$.

 Let $\bs=(s_1,\dotsc,s_m)$ be a finite sequence of generators of
the ideal $\sqrt{J}\subset S$.
 Then the dual Koszul complex $K^\bu=K^\bu(S,\bs)$ is a finite complex
of finitely generated free $S$\+modules with $J$\+torsion cohomology
modules.
 In fact, every element of $\sqrt{J}\subset S$ acts on $K^\bu$ by
an endomorphism homotopic to zero.

 The finite complex of $J$\+torsion $S$\+modules
$C^\bu=\Hom_S(K^\bu,M^\bu)\simeq K_\bu(S,\bs)\ot_SM^\bu$ is minimal,
since $\soc(C^\bu)\simeq K_\bu(S,\bs)\ot_S\soc(M^\bu)$ is a complex
with zero differential.
 Every element of $\sqrt{J}$ acts on $C^\bu$ by an endomorphism
homotopic to zero, so the cohomology modules of $C^\bu$ are
$S/\sqrt{J}$\+modules (i.~e., semisimple $J$\+torsion $S$\+modules).
 Furthermore, the complex of $S$\+modules $C^\bu$ is quasi-isomorphic
to $\Hom_S(K^\bu,N^\bu)$.
 By~(2), the complex $\Hom_S(K^\bu,N^\bu)$ is quasi-isomorphic to
a bounded above complex of finitely generated projective $S$\+modules.
 Thus the cohomology modules of $C^\bu$ are finitely generated
semisimple $J$\+torsion $S$\+modules.

 Let $n\in\boZ$ be the minimal integer such that the term $C^n$ of
the complex $C^\bu$ is \emph{not} an Artinian $S$\+module.
 Then, by~\cite[Lemma~4.1]{Pmgm}, the $S$\+module $\soc(C^n)$ is
\emph{not} finitely generated.
 The complex $C^\bu$ is minimal, so the composition $\soc(C^n)\rarrow
C^n\rarrow C^{n+1}$ vanishes.
 Hence $\soc(C^n)$ is an infinitely generated semisimple submodule
of the kernel $Z^n$ of the differential $C^n\rarrow C^{n+1}$.
 Thus the $S$\+module $Z^n$ is \emph{not} Artinian.
 On the other hand, by the choice of~$n$, the $S$\+module $C^{n-1}$
is Artinian.
 It follows that the cokernel of the differential $C^{n-1}\rarrow Z^n$
is not Artinian.
 This cokernel is the degree~$n$ cohomology module $H^n(C^\bu)$ of
the complex $C^\bu$, and we have seen in the previous paragraph
that the $S$\+module $H^n(C^\bu)$ is finitely generated and semisimple.
 The contradiction proves that an integer~$n$ does not exist, i.~e.,
all the terms of the complex $C^\bu$ are Artinian $S$\+modules.

 As $C^\bu\simeq K_\bu(S,\bs)\ot_SM^\bu$ and $K^\bu$ is a nonzero
finite complex of finitely generated free $S$\+modules, we arrive
to the conclusion that all the terms of the complex $M^\bu$ are
Artinian $J$\+torsion $S$\+modules, implying~(1).
\end{proof}

\Section{Dedualizing Complexes}  \label{dedualizing-secn}

 In this section we establish a comparsion of the definitions of
dedualizing complexes from~\cite[Section~4]{Pmgm}
and~\cite[Section~5]{Pmgm}, thus answering a question that was left
open in the paper~\cite{Pmgm}.
 We also deduce the triangulated equivalences of~\cite[Theorems~4.9
and~5.10]{Pmgm} as particular cases of
Theorem~\ref{bass-auslander-derived-equivalence} above.

 Let $\sE$ be an exact category.
 A finite complex $E^\bu$ in $\sE$ is said to have \emph{projective
dimension\/~$\le\nobreak d$} if $\Hom_{\sD^\bb(\sE)}(E^\bu,M[n])=0$ for
all objects $M\in\sE$ and all integers $n>d$.
 Dually, the complex $E^\bu$ is said to have \emph{injective
dimension\/~$\le\nobreak d$} if $\Hom_{\sD^\bb(\sE)}(M,E^\bu[n])=0$ for
all objects $M\in\sE$ and all integers $n>d$.
 Let us denote the projective dimension of $E^\bu$ as a complex
in $\sE$ by $\ppd_\sE(E^\bu)$ and the injective dimension of $E^\bu$
as a complex in $\sE$ by $\iid_\sE(E^\bu)$.

 Let $J$ be a weakly proregular finitely generated ideal in
a commutative ring~$S$.
 A finite complex of $J$\+torsion $S$\+modules $N^\bu$ is said to have
\emph{projective dimension\/~$\le\nobreak d$} if $\Ext^n_S(N^\bu,M)=0$
for all $J$\+torsion $S$\+modules $M$ and all integers $n>d$.
 Following~\cite[Section~4]{Pmgm}, we denote the projective dimension of
$N^\bu$ as a complex of $J$\+torsion $S$\+modules by
$\ppd_{(S,J)}N^\bu$.
 In view of
Theorem~\ref{torsion-modules-inclusion-derived-fully-faithful} or
Lemma~\ref{Hom-tensor-underived=derived-lemma}(a), the projective
dimension of $N^\bu$ as per the definition above is equal to its
projective dimension as a complex in the abelian category
$\sE=S\Modl_{J\tors}$ (which was the definition of the projective
dimension of a finite complex of torsion modules
in~\cite[Section~5]{Pmgm}).
 So we have $\ppd_{(S,J)}N^\bu=\ppd_{S\Modl_{J\tors}}N^\bu$.

 We will say that a finite complex of $J$\+contramodule $S$\+modules
$Q^\bu$ has \emph{injective dimension\/~$\le\nobreak d$} if
$\Ext^n_S(P,Q^\bu)=0$ for all $J$\+contramodule $S$\+modules $P$ and
all integers $n>d$.
 We denote the injective dimension of $Q^\bu$ as a complex of
$J$\+contramodule $S$\+modules by $\iid_{(S,J)}Q^\bu$.
 In view of
Theorem~\ref{contramodules-inclusion-derived-fully-faithful} or
Lemma~\ref{Hom-tensor-underived=derived-lemma}(b), the injective
dimension of $Q^\bu$ as per the definition above is equal to its
injective dimension as a complex in the abelian category
$\sE=S\Modl_{J\ctra}$.
 So we have $\iid_{(S,J)}Q^\bu=\iid_{S\Modl_{J\ctra}}Q^\bu$.

 A finite complex of $J$\+torsion $S$\+modules $N^\bu$ is said to have
\emph{contraflat dimension\/~$\le\nobreak d$} if $\Tor_n^S(N^\bu,P)=0$
for all $J$\+contramodule $S$\+modules $P$ and all integers $n>d$.
 In view of Lemma~\ref{Hom-tensor-underived=derived-lemma}(c) (see also
Corollary~\ref{contraflat-contramods-tensor-underived=derived-cor}),
this definition of the contraflat dimension of a finite complex of
torsion modules agrees with the one in~\cite[Section~5]{Pmgm}.
 Following~\cite[Section~4]{Pmgm}, we denote the contraflat dimension of
$N^\bu$ by $\cfd_{(S,J)}N^\bu$.
 It is clear from the formula $\Hom_\boZ(\Tor_n^S(N^\bu,P),\boQ/\boZ)
\simeq\Ext^n_S(P,\Hom_\boZ(N^\bu,\boQ/\boZ))$ that the contraflat
dimension of $N^\bu$ is equal to the injective dimension of the finite
complex of $J$\+contramodule $S$\+modules $\Hom_\boZ(N^\bu,\boQ/\boZ)$.
 So we have $\cfd_{(S,J)}N^\bu=\iid_{(S,J)}\Hom_\boZ(N^\bu,\boQ/\boZ)$.

 A finite complex of $S$\+modules $N^\bu$ is said to have
\emph{flat dimension\/~$\le\nobreak d$} if $\Tor_n^S(N^\bu,M)=0$
for all $S$\+modules $M$ and all integers $n>d$.
 We denote the flat dimension of $N^\bu$ by $\fd_SN^\bu$.
 Using the formula $\Hom_\boZ(\Tor_n^S(N^\bu,M),\boQ/\boZ)\simeq
\Ext^n_S(N^\bu,\Hom_\boZ(M,\boQ/\boZ))$, one can easily show
that the flat dimension of $N^\bu$ does not exceed its projective
dimension as a complex in the abelian category $\sE=S\Modl$.
 We also put $\ppd_SN^\bu=\ppd_{S\Modl}N^\bu$ and $\iid_SN^\bu=
\iid_{S\Modl}N^\bu$.
 So we have $\fd_SN^\bu\le\ppd_SN^\bu$.
 One can also easily see that $\fd_SN^\bu=
\iid_S\Hom_\boZ(N^\bu,\boQ/\boZ)$.

 Let $\fS=\varprojlim_{n\ge1}S/J^n$ be the $J$\+adic completion of
the ring~$S$.
 In the case of a Noetherian ring $S$, the arguments
from~\cite[Proposition~4.7 and its proof]{Pmgm} are applicable,
and one obtains the equations and inequalities
$$
 \fd_\fS N^\bu=\fd_SN^\bu=\cfd_{(S,J)}N^\bu\le\ppd_{(S,J)}N^\bu
 \le\ppd_S N^\bu
$$
for any finite complex of $J$\+torsion $S$\+modules~$N^\bu$
(use an injective cogenerator of the category $S\Modl_{J\tors}$
in place of the module $C$ in the context of~\cite[proof of
Proposition~4.7]{Pmgm}).
 The following lemma provides somewhat weaker inequalities for
a non-Noetherian ring~$S$.

\begin{lem} \label{torsion-and-abstract-homological-dimensions}
 Let $J$ be a weakly proregular finitely generated ideal in
a commutative ring $S$, and let $s_1$,~\dots, $s_m$ be a finite set
of generators of the ideal $J\subset S$.
 Let $N^\bu$ be a finite complex of $J$\+torsion $S$\+modules and
$Q^\bu$ be a finite complex of $J$\+contramodule $S$\+modules.
 Then one has \par
\textup{(a)} $\ppd_{(S,J)}N^\bu\le\ppd_SN^\bu\le\ppd_{(S,J)}N^\bu+m$;
\par
\textup{(b)} $\iid_{(S,J)}Q^\bu\le\iid_SQ^\bu\le\iid_{(S,J)}Q^\bu+m$;
\par
\textup{(c)} $\cfd_{(S,J)}N^\bu\le\fd_SN^\bu\le\cfd_{(S,J)}N^\bu+m$;
\par
\textup{(d)} $\cfd_{(S,J)}N^\bu\le\ppd_{(S,J)}N^\bu+m$.
\end{lem}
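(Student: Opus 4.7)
The first inequality in each of~(a), (b), (c) is immediate: the $(S,J)$-homological dimensions bound $\Ext$ or $\Tor$ against strictly smaller classes of modules than the absolute $S$-dimensions. Part~(d) then follows by chaining these with~(a) and the general inequality $\fd_SN^\bu\le\ppd_SN^\bu$ noted just before the lemma:
$$
 \cfd_{(S,J)}N^\bu\,\le\,\fd_SN^\bu\,\le\,\ppd_SN^\bu\,\le\,\ppd_{(S,J)}N^\bu+m,
$$
so the real task is to prove the second inequalities of (a), (b), (c).

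The uniform strategy exploits weak proregularity of $J$ via the infinite dual Koszul complex. For any $S$-module $M$, the tensor product $K^\bu_\infty(S,\bs)\ot_SM$ is a complex concentrated in cohomological degrees $[0,m]$ with $J$-torsion cohomology, and represents the derived local cohomology $\boR\Gamma_J(M)$; dually, $\boR\Hom_S(K^\bu_\infty(S,\bs),M)$ lies in degrees $[-m,0]$ with $J$-contramodule cohomology (by Theorems~\ref{torsion-modules-inclusion-derived-fully-faithful} and~\ref{contramodules-inclusion-derived-fully-faithful}) and represents the derived completion $\boL\Lambda_J(M)$. Splitting off the degree-$0$ summand $S$ of $K^\bu_\infty(S,\bs)$, I would obtain natural exact triangles $\boR\Gamma_JM\rarrow M\rarrow L_JM$ and $F_JM\rarrow M\rarrow\boL\Lambda_JM$ in $\sD(S\Modl)$, where $L_JM$ and $F_JM$ are bounded complexes whose components are all of the form (resp.\ $\boR\Hom_S$ to) some $S[s_I^{-1}]$-module for various $I\subset\{1,\dotsc,m\}$, hence with some $s_i$ acting invertibly.

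For~(a), because $N^\bu$ is $J$-torsion one has $S[s_i^{-1}]\ot_S^\boL N^\bu=0$, so by the base-change adjunction $\boR\Hom_S(N^\bu,P)=0$ for every $S[s_i^{-1}]$-module $P$; a finite spectral sequence along the bounded complex $L_JM$ gives $\boR\Hom_S(N^\bu,L_JM)=0$ and hence $\boR\Hom_S(N^\bu,M)\simeq\boR\Hom_S(N^\bu,\boR\Gamma_JM)$. The hyperext spectral sequence
$$
 E_2^{p,q}=\Ext_S^p(N^\bu,H^q\boR\Gamma_JM)\Longrightarrow\Ext_S^{p+q}(N^\bu,\boR\Gamma_JM)
$$
is supported in $0\le q\le m$ and $p\le\ppd_{(S,J)}N^\bu$ (each $H^q\boR\Gamma_JM$ being a $J$-torsion $S$-module), yielding $\Ext_S^n(N^\bu,M)=0$ for $n>\ppd_{(S,J)}N^\bu+m$.

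Parts~(b) and~(c) follow dually from the completion triangle. For~(b), since $S[s_i^{-1}]$ has projective dimension $\le 1$ over $S$ and $Q^\bu$ has $J$-contramodule cohomology, $\boR\Hom_S(Y,Q^\bu)=0$ for every $S[s_i^{-1}]$-module $Y$; a bounded spectral sequence on $F_JM$ then gives $\boR\Hom_S(F_JM,Q^\bu)=0$ and $\boR\Hom_S(M,Q^\bu)\simeq\boR\Hom_S(\boL\Lambda_JM,Q^\bu)$, whereupon the contravariant hyperext spectral sequence on $\boL\Lambda_JM$ (with $J$-contramodule cohomology in degrees $[-m,0]$) gives $\Ext_S^n(M,Q^\bu)=0$ for $n>\iid_{(S,J)}Q^\bu+m$. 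For~(c), tensoring the completion triangle with $N^\bu$ and using that any $J$-torsion module tensored with an $S[s_i^{-1}]$-module vanishes yields $N^\bu\ot_S^\boL F_JM=0$, so $N^\bu\ot_S^\boL M\simeq N^\bu\ot_S^\boL\boL\Lambda_JM$, and the Tor spectral sequence on the bounded complex $\boL\Lambda_JM$ concludes $\fd_SN^\bu\le\cfd_{(S,J)}N^\bu+m$. The main technical obstacle is the identification of $\boR\Gamma_J$ and $\boL\Lambda_J$ with the explicit $K^\bu_\infty$-expressions (which rests on weak proregularity) and the careful verification that the complementary pieces $L_JM$ and $F_JM$ are annihilated by the appropriate functor as claimed.
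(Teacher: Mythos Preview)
Your proposal is correct.  For parts~(a), (b), and~(d) your argument is essentially the paper's: both exploit that $K^\bu_\infty(S,\bs)\ot_S{-}$ is right adjoint to the inclusion $\sD_{J\tors}(S\Modl)\hookrightarrow\sD(S\Modl)$ and that $\Hom_S(T^\bu(S,\bs),{-})$ is left adjoint to $\sD_{J\ctra}(S\Modl)\hookrightarrow\sD(S\Modl)$; you unpack these adjunctions as explicit triangles and then run a hypercohomology spectral sequence, while the paper states the adjunction isomorphism and invokes the bounded amplitude of $K^\bu_\infty(S,\bs)\ot_SM$ (resp.\ $\Hom_S(T^\bu(S,\bs),M)$) directly.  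One cosmetic point: the degree-$0$ term $S$ of $K^\bu_\infty(S,\bs)$ is not a direct \emph{summand} of the complex, but there is a genuine chain map $K^\bu_\infty(S,\bs)\to S$ (identity in degree~$0$) whose cone is the \v Cech complex, which is what your triangles actually use.

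Part~(c) is where you genuinely diverge.  The paper reduces~(c) to~(b) via character duality, using the identities $\fd_SN^\bu=\iid_S\Hom_\boZ(N^\bu,\boQ/\boZ)$ and $\cfd_{(S,J)}N^\bu=\iid_{(S,J)}\Hom_\boZ(N^\bu,\boQ/\boZ)$ established just before the lemma.  You instead argue directly: from the completion triangle $F_JM\to M\to\boL\Lambda_JM$, the fiber $F_JM=\boR\Hom_S(\check C^\bu(\bs),M)$ is built from pieces on whose cohomology some $s_i$ acts invertibly, so $N^\bu\ot_S^\boL F_JM=0$ and $N^\bu\ot_S^\boL M\simeq N^\bu\ot_S^\boL\boL\Lambda_JM$; the Tor spectral sequence on the bounded complex $\boL\Lambda_JM$ (with $J$\+contramodule cohomology in degrees $[-m,0]$) then yields the bound.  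Your route keeps~(c) parallel to~(a) and~(b) and avoids the Pontryagin-dual detour; the paper's route is a one-line reduction once~(b) is in hand.  Both are valid.
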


\begin{proof}
 Part~(a): the inequality $\ppd_{(S,J)}N^\bu\le\ppd_SN^\bu$ follows
immediately from the definitions.
 To prove the inequality $\ppd_SN^\bu\le\ppd_{(S,J)}N^\bu+m$, put
$\bs=(s_1,\dotsc,s_m)$, and let $K^\bu_\infty(S,\bs)$ be the infinite
dual Koszul complex from Section~\ref{prelims-on-wpr-secn}.
 Let $\sD_{J\tors}(S\Modl)\subset\sD(S\Modl)$ be the full subcategory of
all complexes with $J$\+torsion cohomology modules in $\sD(S\Modl)$.
 Following, e.~g., the discussion in~\cite[Section~2]{Pdc},
the functor $K^\bu_\infty(S,\bs)\ot_S{-}\,\:\sD(S\Modl)\rarrow
\sD_{J\tors}(S\Modl)$ is right adjoint to the inclusion functor
$\sD_{J\tors}(S\Modl)\rarrow\sD(S\Modl)$.
 So for any $S$\+module $M$ we have $\Hom_{\sD^\bb(S\Modl)}(N^\bu,M)
\simeq\Hom_{\sD^\bb(S\Modl)}(N^\bu,\>K^\bu_\infty(S,\bs)\ot_SM)$.
 It remains to point out that $K^\bu_\infty(S,\bs)\ot_SM$ is
a finite complex of $S$\+modules with $J$\+torsion cohomology modules
concentrated in the cohomological degrees~$\le m$.

 Part~(b): the inequality $\iid_{(S,J)}Q^\bu\le\iid_SQ^\bu$ follows
immediately from the definitions.
 To prove the inequality $\iid_SQ^\bu\le\iid_{(S,J)}Q^\bu+m$, keep
the notation~$\bs$ from the proof of part~(a), and let $T^\bu(S,\bs)$
be the finite complex of countably generated projective $S$\+modules
from Section~\ref{prelims-on-wpr-secn}.
 Let $\sD_{J\ctra}(S\Modl)\subset\sD(S\Modl)$ be the full subcategory
of all complexes with $J$\+contramodule cohomology modules in
$\sD(S\Modl)$.
 Following the discussion in~\cite[Section~2]{Pdc}, the functor
$\Hom_S(T^\bu(S,\bs),{-})\:\sD(S\Modl)\rarrow\sD_{J\ctra}(S\Modl)$
is left adjoint to the inclusion functor $\sD_{J\ctra}(S\Modl)
\rarrow\sD(S\Modl)$.
 So for any $S$\+module $M$ we have $\Hom_{\sD^\bb(S\Modl)}(M,Q^\bu)
\simeq\Hom_{\sD^\bb(S\Modl)}(\Hom_S(T^\bu(S,\bs),M),Q^\bu)$.
 It remains to point out that $\Hom_S(T^\bu(S,\bs),M)$ is
a finite complex of $S$\+modules with $J$\+contramodule cohomology
modules concentrated in the cohomological degrees~$\ge-m$.

 Part~(c): the inequality $\cfd_{(S,J)}N^\bu\le\fd_SN^\bu$ follows
immediately from the definitions.
 To prove the inequality $\fd_SN^\bu\le\cfd_{(S,J)}N^\bu+m$, we use
the equalities $\fd_SN^\bu=\iid_S\Hom_\boZ(N^\bu,\boQ/\boZ)$ and
$\cfd_{(S,J)}N^\bu=\iid_{(S,J)}\Hom_\boZ(N^\bu,\boQ/\boZ)$.
 Then it remains to apply part~(b) to the complex of $J$\+contramodule
$S$\+modules $Q^\bu=\Hom_\boZ(N^\bu,\boQ/\boZ)$.

 Part~(d) is provable by comparing parts~(a) and~(c).
 One has $\cfd_{(S,J)}N^\bu\le\fd_SN^\bu\le\ppd_SN^\bu
\le\ppd_{(S,J)}N^\bu+m$.
\end{proof}

 A \emph{dedualizing complex of $J$\+torsion $S$\+modules}
$L^\bu=B^\bu$ is a pseudo-dualizing complex (according to
the definition in Section~\ref{auslander-and-bass-secn}) satisfying
the following additional condition:
\begin{enumerate}
\renewcommand{\theenumi}{\roman{enumi}}
\item the complex $B^\bu$ has finite projective dimension as
a complex of $J$\+torsion $S$\+modules, that is,
$\ppd_{(S,J)}N^\bu<\infty$.
\end{enumerate}

\begin{lem} \label{comparison-with-finiteness-condition-in-Pmgm-Sect5}
 Let $J$ be a weakly proregular finitely generated ideal in
a commutative ring $S$, and let $N^\bu$ be a finite complex of
$J$\+torsion $S$\+modules.
 Assume that $N^\bu$ has finite projective dimension as a complex
of $J$\+torsion $S$\+modules.
 Let $K^\bu$ be a finite complex of finitely generated projective
$S$\+modules.
 Assume that the complex of $S$\+modules\/ $\Hom_S(K^\bu,N^\bu)$
is quasi-isomorphic to a bounded above complex of finitely generated
projective $S$\+modules.
 Then the complex\/ $\Hom_S(K^\bu,N^\bu)$ is actually quasi-isomorphic
to a finite complex of finitely generated projective $S$\+modules.
\end{lem}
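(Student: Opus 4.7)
My plan is to reduce the lemma to the following classical fact: a bounded above complex of finitely generated projective $S$\+modules with bounded cohomology and finite projective dimension (as a complex of $S$\+modules) is quasi-isomorphic to a finite complex of finitely generated projective $S$\+modules.
I would first use Lemma~\ref{torsion-and-abstract-homological-dimensions}(a) to obtain $\ppd_S N^\bu \le \ppd_{(S,J)} N^\bu + m < \infty$, so $N^\bu$ admits a bounded projective resolution $Q^\bu \rarrow N^\bu$ in $S\Modl$.
Then $\Hom_S(K^\bu, N^\bu)$ is quasi-isomorphic to the finite complex $\Hom_S(K^\bu, Q^\bu)$ of projective $S$\+modules; in particular, $\ppd_S \Hom_S(K^\bu, N^\bu) \le D$ for some finite~$D$.

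Now let $C^\bu$ denote the given bounded above complex of finitely generated projective $S$\+modules quasi-isomorphic to $\Hom_S(K^\bu, N^\bu)$, so $\ppd_S C^\bu \le D$ and the cohomology of $C^\bu$ is bounded.
Pick an integer $b$ with $H^i(C^\bu)=0$ for $i<b$ and any $c\le\min(b-1,\,-D)$.
The smart truncation $\tau^{\ge c} C^\bu$ will serve as the desired finite complex: its terms in degrees $>c$ are already finitely generated projective, the natural map $C^\bu \rarrow \tau^{\ge c} C^\bu$ is a quasi-isomorphism because $c\le b$, and only the leading term $N_c=\operatorname{coker}(C^{c-1}\rarrow C^c)$ must be shown to be projective (it is automatically finitely generated as a quotient of $C^c$).
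For $c<b$, the acyclicity of $C^\bu$ below $b$ produces an injection $N_c\hookrightarrow C^{c+1}$ and a short exact sequence $0\rarrow N_c\rarrow C^{c+1}\rarrow N_{c+1}\rarrow0$; its long exact sequence of $\Ext_S^*({-},M)$, combined with the projectivity of $C^{c+1}$, yields the degree shift $\Ext_S^n(N_c,M)\simeq\Ext_S^{n+1}(N_{c+1},M)$ for $n\ge1$.

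Iterating this shift $b-1-c$ times, and noting that the tail $\dotsb\rarrow C^{b-2}\rarrow C^{b-1}\rarrow N_{b-1}\rarrow0$ is exact by acyclicity (hence a projective resolution of $N_{b-1}$), a comparison with $\Hom_S(C^\bu,M)$ in cohomological degrees $\ge 2-b$ will yield the clean identity $\Ext_S^n(N_c,M)\simeq\Ext_S^{n-c}(C^\bu,M)$ for all $n\ge1$.
This group vanishes whenever $n-c>D$, a condition satisfied for all $n\ge1$ precisely when $c\le -D$; thus $N_c$ is projective, and $\tau^{\ge c} C^\bu$ is the required finite complex of finitely generated projectives quasi-isomorphic to $\Hom_S(K^\bu, N^\bu)$.
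The main obstacle will be the $\Ext$-bookkeeping producing the identity $\Ext_S^n(N_c,M)\simeq\Ext_S^{n-c}(C^\bu,M)$, but once this is in hand, the conclusion follows immediately from the finite projective dimension of $C^\bu$.
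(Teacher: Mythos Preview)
Your proposal is correct and follows the same route as the paper: both use Lemma~\ref{torsion-and-abstract-homological-dimensions}(a) to pass from $\ppd_{(S,J)}N^\bu<\infty$ to $\ppd_S N^\bu<\infty$, observe that $\Hom_S(K^\bu,N^\bu)$ is therefore quasi-isomorphic to a finite complex of (possibly infinitely generated) projective $S$\+modules, and then combine this with the hypothesized bounded above complex of finitely generated projectives. The paper treats the final implication as a standard fact, whereas you supply the explicit truncation-and-$\Ext$-shift argument; your bookkeeping is sound.
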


\begin{proof}
 By Lemma~\ref{torsion-and-abstract-homological-dimensions}(a), it
follows from the first assumption of the present lemma that the complex
$N^\bu$ has finite projective dimension as a complex in the abelian
category $\sE=S\Modl$, that is $\ppd_SN^\bu<\infty$.
 This means that $N^\bu$ is quasi-isomorphic to a finite complex of
(infinitely generated) projective $S$\+modules.
 Therefore, the complex $\Hom_S(K^\bu,N^\bu)$ is also quasi-isomorphic
to a finite complex of projective $S$\+modules.
 As $\Hom_S(K^\bu,N^\bu)$ is also quasi-isomorphic to a bounded above
complex of finitely generated projective $S$\+modules by assumption,
it follows that $\Hom_S(K^\bu,N^\bu)$ is quasi-isomorphic to a finite
complex of finitely generated projective $S$\+modules.
\end{proof}

 Now we can establish the comparisons of our definition of a dedualizing
complex of torsion modules with the definitions in~\cite[Sections~4
and~5]{Pmgm}.

\begin{cor} \label{dedualizing-comparison-with-Pmgm-sect5-cor}
 Let $J$ be a weakly proregular finitely generated ideal in
a commutative ring $S$ and $B^\bu$ be a finite complex of $J$\+torsion
$S$\+modules.
 Then $B^\bu$ is a dedualizing complex of $J$\+torsion $S$\+modules
in the sense of the definition above if and only if $B^\bu$ is
a dedualizing complex for the ideal $J\subset S$ in the sense of
the definition in~\cite[Section~5]{Pmgm}.
\end{cor}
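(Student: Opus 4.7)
The plan is to identify the one point of difference between the two definitions and bridge it via Lemma~\ref{comparison-with-finiteness-condition-in-Pmgm-Sect5}. Both definitions should agree on the homothety isomorphism (condition~(iii) in the definition of pseudo-dualizing complex) and on the finite projective dimension condition~(i) from the definition of dedualizing complex just given, so the entire comparison should reduce to the shape of the finiteness/generatedness condition imposed on $\Hom_S(K^\bu,B^\bu)$.

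Concretely, the present paper's condition~(ii) asks that $\Hom_S(K^\bu,B^\bu)$ be quasi-isomorphic to a \emph{bounded above} complex of finitely generated projective $S$\+modules for every finite complex of finitely generated projective $S$\+modules $K^\bu$ with $J$\+torsion cohomology, whereas the corresponding condition in~\cite[Section~5]{Pmgm} is the strictly stronger requirement that $\Hom_S(K^\bu,B^\bu)$ be quasi-isomorphic to a \emph{finite} complex of finitely generated projective $S$\+modules. With this reading in hand, the backward implication is immediate, since a finite complex of finitely generated projectives is in particular bounded above.

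For the forward implication, I would assume $B^\bu$ is a dedualizing complex in the sense of the present paper. Then $\ppd_{(S,J)}B^\bu<\infty$ by condition~(i), which is precisely the hypothesis needed to apply Lemma~\ref{comparison-with-finiteness-condition-in-Pmgm-Sect5}. For each finite complex of finitely generated projective $S$\+modules $K^\bu$ with $J$\+torsion cohomology, condition~(ii) gives a quasi-isomorphism from $\Hom_S(K^\bu,B^\bu)$ to a bounded above complex of finitely generated projectives, and Lemma~\ref{comparison-with-finiteness-condition-in-Pmgm-Sect5} upgrades this to a quasi-isomorphism with a finite complex of finitely generated projectives. Combining this with the already-shared conditions~(i) and~(iii) recovers the~\cite[Section~5]{Pmgm} definition.

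The main obstacle is not really mathematical but bookkeeping: one must carefully verify that the non-finiteness parts of the two definitions truly coincide, including the precise form of the homothety condition (matching $\fS$ with $\Hom_{\sD^\bb(S\Modl)}(B^\bu,B^\bu[*])$ as graded rings) and the precise notion of homological dimension appearing in~\cite[Section~5]{Pmgm}, since that reference may phrase the dimension condition in terms of $\cfd_{(S,J)}$ rather than $\ppd_{(S,J)}$. In the latter case, the inequalities in Lemma~\ref{torsion-and-abstract-homological-dimensions} (in particular part~(d)) provide the needed translation, and once the dimension hypotheses are aligned, the proof is essentially a direct invocation of Lemma~\ref{comparison-with-finiteness-condition-in-Pmgm-Sect5}.
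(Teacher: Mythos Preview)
Your approach is essentially the same as the paper's: the heart of the argument is indeed Lemma~\ref{comparison-with-finiteness-condition-in-Pmgm-Sect5}, and you correctly anticipate that Lemma~\ref{torsion-and-abstract-homological-dimensions}(d) is needed because the dimension condition in~\cite[Section~5]{Pmgm} involves both $\ppd_{(S,J)}$ and $\cfd_{(S,J)}$. Two of your ``bookkeeping'' items are in fact handled explicitly in the paper: the homothety conditions differ in that~\cite[Section~5]{Pmgm} works in $\sD^\bb(S\Modl_{J\tors})$ rather than $\sD^\bb(S\Modl)$, and Theorem~\ref{torsion-modules-inclusion-derived-fully-faithful} is invoked to identify them; and the finiteness condition in~\cite[Section~5]{Pmgm} is stated not as ``quasi-isomorphic to a finite complex of finitely generated projectives'' but as compactness of $\Hom_S(K^\bu,B^\bu)$ in $\sD_{J\tors}(S\Modl)$, so~\cite[Proposition~5.1]{Pmgm} is needed to translate before Lemma~\ref{comparison-with-finiteness-condition-in-Pmgm-Sect5} can be applied.
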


\begin{proof}
 By Lemma~\ref{torsion-and-abstract-homological-dimensions}(d),
finiteness of the projective dimension $\ppd_{(S,J)}B^\bu$ implies
finiteness of the contraflat dimension $\cfd_{(S,J)}B^\bu$.
 So the homological dimension condition~(i) above is equivalent to
the homological dimension condition~(i) from~\cite[Section~5]{Pmgm}.
 The homothety isomorphism condition~(iii) from
Section~\ref{auslander-and-bass-secn} above is equivalent to
the homothety isomorphism condition~(ii) from~\cite[Section~5]{Pmgm} in
view of Theorem~\ref{torsion-modules-inclusion-derived-fully-faithful}.

 Finally, by~\cite[Proposition~5.1]{Pmgm}, a complex of $S$\+modules
$C^\bu$ with $J$\+torsion cohomology modules is a compact object of
$\sD_{J\tors}(S\Modl)$ if and only if $C^\bu$ is quasi-isomorphic to
a finite complex of finitely generated projective $S$\+modules.
 In view of
Lemma~\ref{comparison-with-finiteness-condition-in-Pmgm-Sect5},
it follows that the finiteness condition~(ii) from
Section~\ref{auslander-and-bass-secn} above is equivalent to
the finiteness condition~(iii) from~\cite[Section~5]{Pmgm} under
the assumption of the homological dimension condition~(i).
\end{proof}

\begin{cor} \label{dedualizing-comparison-with-Pmgm-sect4-cor}
 Let $S$ be a Noetherian commutative ring and $J\subset S$ be an ideal
such that the quotient ring $S/J$ is Artinian.
 Let $B^\bu$ be a finite complex of $J$\+torsion $S$\+modules.
 Then $B^\bu$ is a dedualizing complex of $J$\+torsion $S$\+modules
in the sense of the definition above if and only if $B^\bu$ is
a dedualizing complex for the ideal $J\subset S$ in the sense of
the definition in~\cite[Section~4]{Pmgm}.
\end{cor}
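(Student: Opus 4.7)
The plan is to deduce this corollary by combining the preceding Corollary~\ref{dedualizing-comparison-with-Pmgm-sect5-cor} with Theorem~\ref{Artinian-quotient-ring-finitness-conditions-theorem}. By Corollary~\ref{dedualizing-comparison-with-Pmgm-sect5-cor}, our definition of a dedualizing complex agrees with the one from~\cite[Section~5]{Pmgm}. So the task reduces to checking that, under the hypotheses that $S$ is Noetherian and $S/J$ is Artinian, the definitions of a dedualizing complex given in~\cite[Section~4]{Pmgm} and~\cite[Section~5]{Pmgm} coincide.

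I would compare the two definitions condition by condition. The homothety isomorphism condition is identical in both (and in our condition~(iii)), so no comparison is required there. For the homological dimension condition, in the Noetherian case Lemma~\ref{torsion-and-abstract-homological-dimensions} shows that the various invariants $\ppd_{(S,J)}$, $\cfd_{(S,J)}$, $\ppd_S$, and $\fd_S$ are all finite simultaneously (they differ by at most the number of generators of $J$, and in the Noetherian setting the sharper equalities discussed after the lemma are available). Hence the ``finite projective dimension'' condition of~\cite[Section~4]{Pmgm} and the ``finite contraflat dimension'' condition of~\cite[Section~5]{Pmgm} (equivalently, our condition~(i) above) are interchangeable under our hypotheses.

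The central step, where the assumption that $S/J$ is Artinian enters in an essential way, is the equivalence between the finiteness condition of~\cite[Section~4]{Pmgm}, namely that the cohomology $S$-modules of $B^\bu$ are Artinian, and the compactness/finite generatedness finiteness condition of~\cite[Section~5]{Pmgm}, which coincides with condition~(ii) in our definition (via~\cite[Proposition~5.1]{Pmgm} together with Lemma~\ref{comparison-with-finiteness-condition-in-Pmgm-Sect5}, as already used in the proof of Corollary~\ref{dedualizing-comparison-with-Pmgm-sect5-cor}). This equivalence is precisely the content of Theorem~\ref{Artinian-quotient-ring-finitness-conditions-theorem}\,(1)\,$\Leftrightarrow$\,(2).

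The only real obstacle is making sure the precise wording of each clause in~\cite[Section~4]{Pmgm} matches one of the ingredients above; in particular, one needs to verify that the hypothesis~(1) of Theorem~\ref{Artinian-quotient-ring-finitness-conditions-theorem} (``Artinian cohomology $S$-modules'') is indeed the finiteness condition used in~\cite[Section~4]{Pmgm}, and that the projective/contraflat dimension bound there is the kind handled by Lemma~\ref{torsion-and-abstract-homological-dimensions}. Once these identifications are made, the proof is a short reference to the three pieces: Corollary~\ref{dedualizing-comparison-with-Pmgm-sect5-cor} (the bridge to Section~5 of~\cite{Pmgm}), Theorem~\ref{Artinian-quotient-ring-finitness-conditions-theorem} (the finiteness equivalence), and Lemma~\ref{torsion-and-abstract-homological-dimensions} (the dimension equivalence).
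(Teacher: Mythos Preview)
Your approach is correct, but it takes an unnecessary detour. You go through Corollary~\ref{dedualizing-comparison-with-Pmgm-sect5-cor} to reduce to comparing \cite[Section~4]{Pmgm} against \cite[Section~5]{Pmgm}, and then compare those two. The paper instead compares its own definition \emph{directly} against \cite[Section~4]{Pmgm}, condition by condition: the homological dimension condition~(i) here is \emph{literally identical} to the one in \cite[Section~4]{Pmgm} (no need for Lemma~\ref{torsion-and-abstract-homological-dimensions}); the homothety condition~(iii) matches \cite[Section~4]{Pmgm}'s condition~(ii) via Theorem~\ref{torsion-modules-inclusion-derived-fully-faithful}; and the finiteness condition~(ii) matches \cite[Section~4]{Pmgm}'s condition~(iii) via Theorem~\ref{Artinian-quotient-ring-finitness-conditions-theorem}. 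Your route works, but it requires the extra machinery of Lemma~\ref{torsion-and-abstract-homological-dimensions} and Lemma~\ref{comparison-with-finiteness-condition-in-Pmgm-Sect5} (the latter conditional on~(i)) that the direct comparison avoids entirely. One small inaccuracy: you claim the homothety conditions are ``identical'' and need no comparison, but in the paper's direct route the homothety condition~(iii) is phrased in $\sD^\bb(S\Modl)$ while \cite[Section~4]{Pmgm} phrases it in the torsion derived category, and Theorem~\ref{torsion-modules-inclusion-derived-fully-faithful} is invoked to bridge them; in your indirect route (comparing the two \cite{Pmgm} definitions to each other) they may indeed coincide on the nose, so this is not a gap in your argument, just a point where the two routes differ.
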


\begin{proof}
 The homological dimension condition~(i) above coincides with
the homological dimension condition~(i) from~\cite[Section~4]{Pmgm}.
 The homothety isomorphism condition~(iii) from
Section~\ref{auslander-and-bass-secn} above is equivalent to
the homothety isomorphism condition~(ii) from~\cite[Section~4]{Pmgm} in
view of Theorem~\ref{torsion-modules-inclusion-derived-fully-faithful}
(we recall that all ideals in a Noetherian commutative ring are
weakly proregular; see Section~\ref{prelims-on-wpr-secn}).

 Finally, the finiteness condition~(ii) from
Section~\ref{auslander-and-bass-secn} above is equivalent to
the finiteness condition~(iii) from~\cite[Section~4]{Pmgm} by
Theorem~\ref{Artinian-quotient-ring-finitness-conditions-theorem}.
\end{proof}

 Now we can conclude that the definition of a dedualizing complex
from~\cite[Section~4]{Pmgm} agrees with the one
from~\cite[Section~5]{Pmgm}.
 This question was left open in the paper~\cite{Pmgm};
see~\cite[Remark~5.9]{Pmgm}.

\begin{cor} \label{dedualizing-comparison-of-Pmgm-sect4-and-sect5}
 Let $S$ be a Noetherian commutative ring and $J\subset S$ be an ideal
such that the quotient ring $S/J$ is Artinian.
 Let $B^\bu$ be a finite complex of $J$\+torsion $S$\+modules.
 Then $B^\bu$ is a dedualizing complex for the ideal $J\subset S$ in
the sense of the definition in~\cite[Section~4]{Pmgm} if and only if
$B^\bu$ is a dedualizing complex for the ideal $J\subset S$ in
the sense of the definition in~\cite[Section~5]{Pmgm}.
\end{cor}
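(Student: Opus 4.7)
The plan is to obtain this as an immediate formal consequence of the two preceding corollaries, Corollary~\ref{dedualizing-comparison-with-Pmgm-sect5-cor} and Corollary~\ref{dedualizing-comparison-with-Pmgm-sect4-cor}. Both corollaries compare the respective notion of a dedualizing complex from~\cite[Section~4]{Pmgm} or~\cite[Section~5]{Pmgm} with the definition of a dedualizing complex of $J$\+torsion $S$\+modules given above (in the sense of Section~\ref{auslander-and-bass-secn} supplemented by condition~(i)). So the strategy is to use this common definition as an intermediary.

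First, I would check that the hypotheses of both preceding corollaries are satisfied. Since $S$ is a Noetherian commutative ring, every ideal of $S$ is weakly proregular (see the discussion in Section~\ref{prelims-on-wpr-secn}), so in particular $J\subset S$ is a weakly proregular finitely generated ideal. Hence Corollary~\ref{dedualizing-comparison-with-Pmgm-sect5-cor} applies to the pair $(S,J)$, telling us that $B^\bu$ is a dedualizing complex in the sense of~\cite[Section~5]{Pmgm} if and only if $B^\bu$ is a dedualizing complex of $J$\+torsion $S$\+modules in the sense of the present paper. The additional assumption that $S/J$ is Artinian puts us in the setting of Corollary~\ref{dedualizing-comparison-with-Pmgm-sect4-cor}, which in turn tells us that $B^\bu$ is a dedualizing complex in the sense of~\cite[Section~4]{Pmgm} if and only if it is a dedualizing complex of $J$\+torsion $S$\+modules in the sense of the present paper.

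Combining the two ``if and only if'' statements through the common intermediate notion then yields the desired equivalence between the two definitions from~\cite[Section~4]{Pmgm} and~\cite[Section~5]{Pmgm}. There is no real obstacle here: all of the technical work, namely the comparison of the homological dimension condition via Lemma~\ref{torsion-and-abstract-homological-dimensions}(d), the comparison of the homothety isomorphism condition via Theorem~\ref{torsion-modules-inclusion-derived-fully-faithful}, and the comparison of the finiteness conditions via Theorem~\ref{Artinian-quotient-ring-finitness-conditions-theorem} and Lemma~\ref{comparison-with-finiteness-condition-in-Pmgm-Sect5}, has already been carried out in the proofs of the two preceding corollaries. The proof of Corollary~\ref{dedualizing-comparison-of-Pmgm-sect4-and-sect5} should therefore be a single sentence citing Corollaries~\ref{dedualizing-comparison-with-Pmgm-sect5-cor} and~\ref{dedualizing-comparison-with-Pmgm-sect4-cor}.
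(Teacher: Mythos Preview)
Your proposal is correct and matches the paper's own proof exactly: the paper's proof consists of the single sentence ``Compare Corollary~\ref{dedualizing-comparison-with-Pmgm-sect5-cor} with Corollary~\ref{dedualizing-comparison-with-Pmgm-sect4-cor}.'' Your verification that the Noetherian hypothesis ensures weak proregularity (so that Corollary~\ref{dedualizing-comparison-with-Pmgm-sect5-cor} applies) is the only point worth noting, and you handled it correctly.
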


\begin{proof}
 Compare Corollary~\ref{dedualizing-comparison-with-Pmgm-sect5-cor}
with Corollary~\ref{dedualizing-comparison-with-Pmgm-sect4-cor}.
\end{proof}

 Finally, we proceed to obtain the triangulated equivalences
of~\cite[Theorems~4.9 and~5.10]{Pmgm} as particular cases of
Theorem~\ref{bass-auslander-derived-equivalence} above.

 Let $B^\bu$ be a dedualizing complex of $J$\+torsion $S$\+modules
concentrated in the cohomological degrees $-d_1\le m\le d_2$.
 Let us choose the parameter~$l_1$ in such a way that both
the projective and the contraflat dimensions of $B^\bu$ do not
exceed~$l_1$, that is $\ppd_{(S,J)}B^\bu\le l_1$ and
$\cfd_{(S,J)}B^\bu\le l_1$ (this is possible by condition~(i)
and Lemma~\ref{torsion-and-abstract-homological-dimensions}(d)).
 One can see that any one of these two conditions implies $l_1\ge d_1$
if $H^{-d_1}(B^\bu)\ne0$ (take $M$ to be an injective cogenerator of
$S\Modl_{J\tors}$ or $P=\fS$ in the definitions of the projective and
contraflat dimensions).

\begin{lem} \label{dedualizing-bass-and-auslander-classes}
 Let $B^\bu$ be a dedualizing complex of $J$\+torsion $S$\+modules,
and let the parameter~$l_1$ be chosen as stated above.
 Then the related Bass and Auslander classes\/
$\sE_{l_1}=\sE_{l_1}(B^\bu)$ and\/ $\sF_{l_1}=\sF_{l_1}(B^\bu)$
coincide with the whole categories of $J$\+torsion $S$\+modules and
$J$\+contramodule $S$\+modules, $\sE_{l_1}=S\Modl_{J\tors}$ and\/
$\sF_{l_1}=S\Modl_{J\ctra}$.
\end{lem}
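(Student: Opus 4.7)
The plan is to verify, for every $J$\+torsion $S$\+module $E$ and every $J$\+contramodule $S$\+module $F$, the two conditions defining $\sE_{l_1}(B^\bu)$ and $\sF_{l_1}(B^\bu)$, respectively. The Ext and Tor vanishing parts are immediate from the homological-dimension definitions recalled at the beginning of Section~\ref{dedualizing-secn}: the hypothesis $\ppd_{(S,J)}B^\bu\le l_1$ says exactly that $\Ext^n_S(B^\bu,E)=0$ for every $E\in S\Modl_{J\tors}$ and every $n>l_1$, and dually the hypothesis $\cfd_{(S,J)}B^\bu\le l_1$ says exactly that $\Tor^S_n(B^\bu,F)=0$ for every $F\in S\Modl_{J\ctra}$ and every $n>l_1$. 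So no further work is needed for the numerical conditions.

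The substantive point is to show that the adjunction morphisms
\begin{equation*}
 B^\bu\ot_S^\boL\boR\Hom_S(B^\bu,E)\lrarrow E
 \qquad\text{and}\qquad
 F\lrarrow\boR\Hom_S(B^\bu,\>B^\bu\ot_S^\boL F)
\end{equation*}
are isomorphisms in $\sD(S\Modl)$. My plan is to appeal to the MGM equivalence \cite[Theorem~5.10]{Pmgm}, which is applicable to the present $B^\bu$ in view of Corollary~\ref{dedualizing-comparison-with-Pmgm-sect5-cor}. That theorem furnishes a triangulated equivalence $\sD(S\Modl_{J\tors})\simeq\sD(S\Modl_{J\ctra})$ whose two directions are (naturally isomorphic to) the restrictions of $\boR\Hom_S(B^\bu,-)$ and $B^\bu\ot_S^\boL-$. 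Because an adjoint triangulated equivalence has unit and counit that are natural isomorphisms, specializing them to a single $J$\+torsion module $E$ (resp.\ a single $J$\+contramodule module $F$) placed in cohomological degree zero yields precisely the required adjunction isomorphisms.

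The main point requiring care, and the one I expect to be the only obstacle, is to ensure that the $\boR\Hom$ and $\ot^\boL$ figuring in the statement of the MGM equivalence coincide with those computed in the ambient derived category $\sD(S\Modl)$ appearing in the definitions of $\sE_{l_1}$ and $\sF_{l_1}$. This is exactly the content of the full-and-faithfulness results of Section~\ref{cors-of-derived-fullyf-thms-secn}: Theorems~\ref{torsion-modules-inclusion-derived-fully-faithful} and~\ref{contramodules-inclusion-derived-fully-faithful} identify $\sD(S\Modl_{J\tors})$ and $\sD(S\Modl_{J\ctra})$ with the full subcategories of $\sD(S\Modl)$ of complexes with $J$\+torsion, respectively $J$\+contramodule, cohomology, and Lemma~\ref{tors-contra-tensor-hom-for-complexes} guarantees that both sides of each adjunction morphism lie in the corresponding subcategory. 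Once this identification is in place, the two adjunction maps are morphisms in these subcategories of $\sD(S\Modl)$, and the MGM equivalence tells us they are isomorphisms, completing the verification that $\sE_{l_1}=S\Modl_{J\tors}$ and $\sF_{l_1}=S\Modl_{J\ctra}$.
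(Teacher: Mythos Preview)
Your argument is correct, but it follows a different route from the paper's own proof. The paper does not import the MGM equivalence \cite[Theorem~5.10]{Pmgm}; instead it stays within the abstract framework of Section~\ref{abstract-classes-secn}. Concretely, the paper observes that it suffices, by Lemma~\ref{abstract-corresponding-classes-adjunction-isoms} and the discussion following it, to check that the pair $\sE=S\Modl_{J\tors}$, $\sF=S\Modl_{J\ctra}$ satisfies conditions (I)--(IV) with the given $l_1$ and $l_2=d_2$. Conditions (I)--(II) are vacuous; conditions (III)--(IV) reduce exactly to the bounds $\ppd_{(S,J)}B^\bu\le l_1$ and $\cfd_{(S,J)}B^\bu\le l_1$, which is how~$l_1$ was chosen. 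The inclusion $\sE\subset\sE_{l_1}$, $\sF\subset\sF_{l_1}$ then follows, and since $\sE$, $\sF$ are the whole categories, equality holds.

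The difference in what each approach buys: your route is quicker if one is willing to cite \cite[Theorem~5.10]{Pmgm} as a black box, and the identification of the intrinsic derived functors with the ambient ones via Theorems~\ref{torsion-modules-inclusion-derived-fully-faithful}--\ref{contramodules-inclusion-derived-fully-faithful} (really via Lemma~\ref{Hom-tensor-underived=derived-lemma}, which you might cite explicitly) is indeed the only care point. The paper's route, by contrast, serves the expository purpose of the section: the whole point of Lemma~\ref{dedualizing-bass-and-auslander-classes} in the paper's narrative is to exhibit the MGM equivalence of~\cite{Pmgm} as a special case of the pseudo-dualizing machinery developed here (Theorem~\ref{bass-auslander-derived-equivalence}), so invoking \cite[Theorem~5.10]{Pmgm} to prove it would undercut that goal even though it is logically sound.
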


\begin{proof}
 In view of Lemma~\ref{abstract-corresponding-classes-adjunction-isoms}
and the subsequent discussion, it suffices to check that conditions
(I\+-IV) of Section~\ref{abstract-classes-secn} hold for the classes
$\sE=S\Modl_{J\tors}$ and $\sF=S\Modl_{J\ctra}$ with the given
parameter~$l_1$ and some integer $l_2\ge d_2$.
 Indeed, let us take $l_2=d_2$.
 Then conditions~(I\+-II) are obvious, and conditions~(III\+-IV) follow
from~(i) and Lemma~\ref{torsion-and-abstract-homological-dimensions}(d)
(or from the choice of~$l_1$).
\end{proof}

 It is clear from Lemma~\ref{dedualizing-bass-and-auslander-classes}
that for a dedualizing complex of $J$\+torsion $S$\+modules $B^\bu$
one has
$$
 \sD'_{B^\bu}(S\Modl_{J\tors})=\sD(S\Modl_{J\tors})
 \quad\text{and}\quad
 \sD''_{B^\bu}(S\Modl_{J\ctra})=\sD(S\Modl_{J\ctra}).
$$

\begin{cor}
 Let $J$ be a weakly proregular finitely generated ideal in
a commutative ring $S$, and let $B^\bu$ be a dedualizing complex of
$J$\+torsion $S$\+modules.
 Then, for any conventional or absolute derived category symbol\/
$\st=\bb$, $+$, $-$, $\varnothing$, $\abs+$, $\abs-$, or\/~$\abs$,
there is a triangulated equivalence\/ $\sD^\st(S\Modl_{J\tors})\simeq
\sD^\st(S\Modl_{J\ctra})$ provided by (appropriately defined)
mutually inverse derived functors\/ $\boR\Hom_S(L^\bu,{-})$ and
$L^\bu\ot_S^\boL\nobreak{-}$.
\end{cor}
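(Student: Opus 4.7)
The plan is to present this statement as a direct consequence of the machinery already established, with essentially no new work required beyond invoking the appropriate lemmas. Since $B^\bu$ is a dedualizing complex, it is in particular a pseudo-dualizing complex of $J$\+torsion $S$\+modules in the sense of Section~\ref{auslander-and-bass-secn}, satisfying additionally the finite projective dimension condition~(i). Let $-d_1\le m\le d_2$ be the range of cohomological degrees of $B^\bu$.

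First I would choose the numerical parameter~$l_1\ge d_1$ large enough so that both $\ppd_{(S,J)}B^\bu\le l_1$ and $\cfd_{(S,J)}B^\bu\le l_1$. This is possible because condition~(i) of a dedualizing complex gives $\ppd_{(S,J)}B^\bu<\infty$, and then Lemma~\ref{torsion-and-abstract-homological-dimensions}(d) provides $\cfd_{(S,J)}B^\bu\le\ppd_{(S,J)}B^\bu+m<\infty$ as well.

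Next, I would apply Lemma~\ref{dedualizing-bass-and-auslander-classes} to the chosen~$l_1$ to obtain the equalities $\sE_{l_1}(B^\bu)=S\Modl_{J\tors}$ and $\sF_{l_1}(B^\bu)=S\Modl_{J\ctra}$. With this identification in hand, Theorem~\ref{bass-auslander-derived-equivalence} applied to the pseudo-dualizing complex $L^\bu=B^\bu$ yields the triangulated equivalence
\[
 \sD^\st(S\Modl_{J\tors})=\sD^\st(\sE_{l_1})\simeq\sD^\st(\sF_{l_1})=
 \sD^\st(S\Modl_{J\ctra})
\]
for every conventional or absolute derived category symbol $\st=\bb$, $+$, $-$, $\varnothing$, $\abs+$, $\abs-$, or~$\abs$, with the equivalence being given by (the appropriately defined derived versions of) the mutually inverse functors $\boR\Hom_S(B^\bu,{-})$ and $B^\bu\ot_S^\boL{-}$.

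There is essentially no obstacle here, as the corollary is designed to be a straightforward specialization of Theorem~\ref{bass-auslander-derived-equivalence}: all of the work has already been done in establishing that the Bass and Auslander classes exhaust the whole ambient abelian categories in the dedualizing case. The only subtlety worth remarking on is verifying that the derived functors produced by the construction of~\cite[Appendix~A]{Pps}, referenced via Theorem~\ref{abstract-classes-derived-equivalence}, indeed agree with the usual derived functors $\boR\Hom_S$ and $\ot_S^\boL$ on $\sD^\st(S\Modl)$; this follows from Lemma~\ref{Hom-tensor-underived=derived-lemma} together with Corollary~\ref{contraflat-contramods-tensor-underived=derived-cor} and the full-and-faithfulness Theorems~\ref{torsion-modules-inclusion-derived-fully-faithful} and~\ref{contramodules-inclusion-derived-fully-faithful}, which identify the values on complexes of injective $J$\+torsion modules and projective $J$\+contramodules with the classical Hom/tensor complexes.
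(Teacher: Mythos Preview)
Your proof is correct and follows essentially the same approach as the paper: invoke Lemma~\ref{dedualizing-bass-and-auslander-classes} to identify the Bass and Auslander classes with the full ambient categories, then apply Theorem~\ref{bass-auslander-derived-equivalence}. The paper's proof is a one-line reference to exactly these two results (together with remarks that the statement restates~\cite[Theorem~5.10]{Pmgm} and generalizes~\cite[Theorem~4.9]{Pmgm}); you have simply unpacked the choice of~$l_1$ already discussed in the text preceding Lemma~\ref{dedualizing-bass-and-auslander-classes}.
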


\begin{proof}
 This is a restatement of~\cite[Theorem~5.10]{Pmgm} (in view of
Corollary~\ref{dedualizing-comparison-with-Pmgm-sect5-cor}),
a generalization of~\cite[Theorem~4.9]{Pmgm} (in view of
Corollary~\ref{dedualizing-comparison-with-Pmgm-sect4-cor}), and
a particular case of Theorem~\ref{bass-auslander-derived-equivalence} 
above (in view of Lemma~\ref{dedualizing-bass-and-auslander-classes}).
\end{proof}

\Section{Adically Coherent Rings and Coherent Torsion Modules}
\label{adically-coherent-secn}

 We start with a general ring-theoretic lemma~\cite[Lemma~1
and Theorem~2]{Harr}.

\begin{lem} \label{quotient-ring-coherence-lemma}
 Let $A$ be an associative ring and $I\subset A$ be a two-sided ideal.
 Assume that $I$ is finitely generated as a left $A$\+module.
 In this context: \par
\textup{(a)} a left $A/I$\+module is finitely presented if and only if
it is finitely presented as a left $A$\+module; \par
\textup{(b)} if the ring $A$ is left coherent, then so is
the ring~$A/I$.
\end{lem}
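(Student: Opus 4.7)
The plan is to first establish part~(a) and then deduce part~(b) from it. The key observation throughout is that, since $I$ is finitely generated as a left $A$\+module, the short exact sequence $0\to I\to A\to A/I\to 0$ shows that $A/I$ itself is a finitely presented left $A$\+module. I will also rely on the standard three-term lemma: in a short exact sequence $0\to K\to M\to N\to 0$ of left $A$\+modules with $M$ finitely presented and $K$ finitely generated, the module $N$ is finitely presented (and dually: if $M$ is finitely generated and $N$ is finitely presented, then $K$ is finitely generated).

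For the \emph{if} direction of~(a), suppose $M$ is a finitely presented left $A/I$\+module with presentation $(A/I)^p\to(A/I)^q\to M\to 0$. Since $(A/I)^p$ and $(A/I)^q$ are finitely presented as $A$\+modules (finite direct sums of finitely presented modules are finitely presented), a two-step application of the three-term lemma shows that $M$ is finitely presented as an $A$\+module: first $M$ is finitely generated over $A$ as a quotient of $(A/I)^q$, and the kernel of $(A/I)^q\to M$ is finitely generated over $A$ as a quotient of $(A/I)^p$. For the \emph{only if} direction, suppose $M$ is finitely presented as an $A$\+module and $IM=0$. Pick an $A$\+presentation $A^p\to A^q\to M\to 0$ and apply $A/I\ot_A{-}$; right-exactness of the tensor product, combined with $A/I\ot_A M=M/IM=M$, delivers the desired finite $A/I$\+presentation $(A/I)^p\to(A/I)^q\to M\to 0$.

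For part~(b), to show $A/I$ is left coherent it suffices to show that every finitely generated left ideal $\bar J\subset A/I$ is finitely presented as a left $A/I$\+module. Lift finitely many generators $\bar a_1,\dotsc,\bar a_n$ of $\bar J$ to elements $a_1,\dotsc,a_n\in A$, and set $J=Aa_1+\dotsb+Aa_n+I\subset A$. Since $I$ is finitely generated, $J$ is a finitely generated left ideal of $A$, hence finitely presented as a left $A$\+module by the left coherence of $A$. The short exact sequence $0\to I\to J\to\bar J\to 0$ of left $A$\+modules now has finitely presented middle term and finitely generated kernel, so $\bar J$ is finitely presented as a left $A$\+module by the three-term lemma. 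Applying part~(a) converts this into finite presentation of $\bar J$ over $A/I$, completing the proof.

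I do not expect any serious obstacle: the argument is a purely formal manipulation with the three-term exact sequence lemma, and the only place where the hypothesis on $I$ is used beyond the bare finite generation is in deducing that $A/I$ is itself a finitely presented $A$\+module.
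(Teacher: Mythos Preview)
Your proof is correct and follows essentially the same route as the paper's: both arguments observe that $A/I$ is finitely presented over $A$, use this to transport finite presentation across the ring map in part~(a) via a presentation and the tensor product $A/I\otimes_A{-}$, and for part~(b) lift generators of $\bar J$ to $A$, adjoin $I$, invoke left coherence, and then apply part~(a) to the quotient $\bar J=(K+I)/I$. One small cosmetic point: your ``if'' and ``only if'' labels are swapped relative to the statement (``$M$ f.p.\ over $A/I$ $\Leftrightarrow$ $M$ f.p.\ over $A$''), and the paper notes explicitly that the direction ``f.p.\ over $A$ $\Rightarrow$ f.p.\ over $A/I$'' does not require $I$ to be finitely generated.
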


\begin{proof}
 Part~(a): let $M$ be a finitely presented left $A/I$\+module;
so $M$ is the cokernel of a morphism of finitely generated projective
left $A/I$\+modules $Q_1\rarrow Q_0$.
 Then both $Q_1$ and $Q_0$ are finitely presented left $A$\+modules
(since $A/I$ is a finitely presented left $A$\+module); so $M$ is
the cokernel of a morphism of finitely presented left $A$\+modules.
 Thus $M$ is finitely presented as a left $A$\+module.

 The converse implication does not depend on the assumtion that $I$
is finitely generated.
 Let $M$ be a left $A/I$\+module that is finitely presented as
a left $A$\+module.
 So $M$ is the cokernel of a morphism of finitely generated projective
left $A$\+modules $f\:P_1\rarrow P_0$.
 Then $M$ is also the cokernel of the morphism of finitely generated
projective left $A/I$\+modules $A/I\ot_Af\:A/I\ot_AP_1\rarrow
A/I\ot_AP_0$.

 Part~(b): let $J\subset A/I$ be a finitely generated left ideal.
 Then there is a finitely generated left ideal $K\subset A$ such that
$J=(K+I)/I$ (lift any finite set of generators of $J$ to some elements
of $A$ and generate the ideal $K$ by the resulting elements).
 Since $I$ is a finitely generated left $A$\+module by assumption,
the ideal $K+I\subset A$ is also finitely generated.
 By assumption, it follows that $K+I$ is a finitely presented
left $A$\+module.
 As $I$ is a finitely generated left $A$\+module, it follows that
$J=(K+I)/I$ is a finitely presented left $A$\+module.
 Using part~(a), we can conclude that $J$ is a finitely presented
left $A/I$\+module, as desired.
\end{proof}

\begin{rem} \label{nilpotent-noncoherence-counterex-remark}
 Let $I\subset A$ be a nilpotent two-sided ideal; so $I^n=0$ for some
$n\ge1$.
 Assume that $I$ is finitely generated as a left $A$\+module.
 Then one can easily see that the ring $A$ is left Noetherian
whenever the ring $A/I$ is left Noetherian.
 The analogous assertion for coherent rings is \emph{not} true.
 For example, let $k$~be a field and $k[s;x_1,x_2,\dotsc]$ be
the commutative ring of polynomials in a countable family of variables
$x_1$, $x_2$,~\dots\ and an additional variable~$s$ over~$k$.
 Let $A$ be the quotient ring of $k[s;x_1,x_2,\dotsc,]$ by
the ideal spanned by the elements $s^2$ and $sx_n$, \,$n\ge1$.
 Let $I=(s)\subset A$ be the principal ideal spanned by
the element~$s$.
 Then $I^2=0$ and $A/I=k[x_1,x_2,\dotsc]$ is the ring of polynomials
in the countable family of variables $x_1$, $x_2$,~\dots\ over~$k$;
so the ring $A/I$ is coherent.
 But the ring $A$ is \emph{not} coherent, since the ideal $I=(s)$ is
not finitely presented as an $A$\+module.
\end{rem}

 Let $\sT$ be a category with direct limits.
 We recall that an object $M\in\sT$ is said to be \emph{finitely
presented} if the functor $\Hom_\sT(M,{-})\:\sT\rarrow\mathsf{Sets}$
preserves direct limits~\cite[Definition~1.1]{AR}.
 The category $\sT$ is called \emph{locally finitely presentable} if
all colimits exist in $\sT$ and there is a set of finitely presented
objects $\sS\subset\sT$ such that all the objects of $\sT$ are
direct limits of objects from~$\sS$ \,\cite[Definition~1.9]{AR}.

\begin{lem} \label{torsion-modules-locally-finitely-presentable}
 Let $R$ be a commutative ring and $I\subset R$ be a finitely generated
ideal.
 Then the abelian category $R\Modl_{I\tors}$ is locally finitely
presentable.
 An $I$\+torsion $R$\+module $M$ is finitely presented as an object
of $R\Modl_{I\tors}$ if and only if it is finitely presented as
an object of $R\Modl$.
 Equivalently, $M$ is finitely presented in $R\Modl_{I\tors}$ if and
only if there exists an integer $n\ge1$ such that $I^nM=0$ \emph{and}
the $R/I^n$\+module $M$ is finitely presented.
\end{lem}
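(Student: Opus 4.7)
The plan is to prove the three claims together, using Lemma~\ref{quotient-ring-coherence-lemma}(a) as the crucial bridge between $R$-modules and $R/I^n$-modules.

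First I would establish the equivalent characterization at the end of the statement. Since $I^n\subset R$ is finitely generated for every $n\ge1$ (as $I$ is), Lemma~\ref{quotient-ring-coherence-lemma}(a) applied to $A=R$ and the ideal $I^n$ says that an $R/I^n$-module is finitely presented as such if and only if it is finitely presented as an $R$-module. Moreover, any finitely presented $I$-torsion $R$-module is generated by finitely many torsion elements, hence has a uniform annihilator power $I^n$. Combining these observations shows that an $I$-torsion $R$-module $M$ is finitely presented as an $R$-module if and only if there exists $n\ge1$ with $I^nM=0$ and $M$ finitely presented over $R/I^n$; this reduces the third claim of the lemma to the second.

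Next I would observe that finite presentation as an $R$-module implies finite presentation in $R\Modl_{I\tors}$. The inclusion $R\Modl_{I\tors}\rarrow R\Modl$ has the right adjoint $\Gamma_I$ recalled in Section~\ref{prelims-on-wpr-secn}, so it preserves all colimits; in particular, direct limits in $R\Modl_{I\tors}$ coincide with those computed in $R\Modl$. Therefore, if $N$ is finitely presented as an $R$-module, then $\Hom_R(N,\varinjlim X_\alpha)=\varinjlim\Hom_R(N,X_\alpha)$ for any direct system in $R\Modl_{I\tors}$, showing that $N$ is also finitely presented in $R\Modl_{I\tors}$.

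The key structural step is to exhibit every $I$-torsion $R$-module $M$ as a direct limit in $R\Modl_{I\tors}$ of $I$-torsion $R$-modules that are finitely presented as $R$-modules. Letting $M[I^n]$ denote the submodule of $M$ annihilated by $I^n$, we have $M=\bigcup_{n\ge1}M[I^n]$, a directed union in $R\Modl_{I\tors}$; and each $M[I^n]$, viewed as an $R/I^n$-module, is the filtered colimit of its finitely presented $R/I^n$-submodules (as for any module over any ring). By the first paragraph, every such finitely presented $R/I^n$-submodule is finitely presented as an $R$-module. Assembling these filtered systems exhibits $M$ in the required form and, together with the previous paragraph, establishes local finite presentability of $R\Modl_{I\tors}$.

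The main obstacle is the converse direction of the second claim: finite presentation in $R\Modl_{I\tors}$ must imply finite presentation in $R\Modl$. Given $M$ finitely presented in $R\Modl_{I\tors}$, write it as a direct limit $M=\varinjlim M_\alpha$ with the $M_\alpha$ finitely presented $R$-modules, as just constructed. Because $M$ is finitely presented in $R\Modl_{I\tors}$, the identity $\id_M$ factors through some structural map $M_{\alpha_0}\rarrow M$, exhibiting $M$ as a retract of $M_{\alpha_0}$ in $R\Modl$. Since finitely presented $R$-modules are closed under retracts (the functor $\Hom_R(M,-)$ is a direct summand of the colimit-preserving functor $\Hom_R(M_{\alpha_0},-)$, and direct limits commute with finite direct sums), $M$ is finitely presented as an $R$-module, completing the proof.
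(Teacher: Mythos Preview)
Your proof is essentially correct and follows broadly the same outline as the paper's, but there is one imprecision and one genuine difference in method worth noting.

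The imprecision: you assert that each $M[I^n]$ is the filtered colimit of its finitely presented $R/I^n$-\emph{submodules}. This is not true for arbitrary rings (take $A=k[x_1,x_2,\dotsc]$ and $M=A/(x_1,x_2,\dotsc)\cong k$, whose only submodules are $0$ and $M$, neither nonzero one finitely presented). What is true, and what you need, is that every $R/I^n$-module is a filtered colimit of finitely presented $R/I^n$-modules (with transition maps that need not be injective). Your retract argument in the final paragraph only uses this weaker fact, so the overall proof survives once the word ``submodules'' is dropped.

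The difference in method: for the converse implication (finitely presented in $R\Modl_{I\tors}$ implies finitely presented in $R\Modl$), the paper takes a shorter route. It first observes that such an $M$ must be finitely generated, hence annihilated by some $I^n$; then, since the full subcategory $R/I^n\Modl\subset R\Modl_{I\tors}$ is closed under direct limits, $M$ is automatically finitely presented in $R/I^n\Modl$, and Lemma~\ref{quotient-ring-coherence-lemma}(a) finishes. This is the same ``closed under direct limits'' trick you used for the forward implication, applied a second time. Your retract argument is perfectly valid but requires first building the filtered presentation of $M$, which the paper avoids. For local finite presentability, the paper invokes \cite[Theorem~1.11]{AR} (a generating set of finitely presented objects suffices), whereas you verify the definition directly; both are fine.
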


\begin{proof}
 Clearly, any finitely presented object of $R\Modl_{I\tors}$ must be
finitely generated as an $R$\+module; hence there exists $n\ge1$
such that $I^nM=0$.
 If this is the case, then Lemma~\ref{quotient-ring-coherence-lemma}(a)
says that $M$ is finitely presented over $R/I^n$ if and only if it is
finitely presented over~$R$.
 Any object $M\in R\Modl_{I\tors}$ that is finitely presented in
$R\Modl$ is also finitely presented in $R\Modl_{I\tors}$, since
the full subcategory $R\Modl_{I\tors}$ is closed under direct limits
in $R\Modl$.
 Similarly, any object of $R/I^n\Modl$ that is finitely presented in
$R\Modl_{I\tors}$ is also finitely presented in $R/I^n\Modl$, since
the full subcategory $R/I^n\Modl$ is closed under direct limits in
$R\Modl_{I\tors}$.
 This proves the second and third assertions of the lemma.
 It follows that representatives of isomorphism classes of finitely
presented objects form a set of generators in the abelian category
$R\Modl_{I\tors}$, hence the category $R\Modl_{I\tors}$ is locally
finitely presentable by~\cite[Theorem~1.11]{AR}.
 (Notice that, for abelian categories, there is no difference between
a set of generators and a set of strong generators.)
\end{proof}

 Let $R$ be a commutative ring and $I\subset R$ be a finitely generated
ideal.
 We will say that the ring $R$ is \emph{$I$\+adically coherent} if
the rings $R/I^n$ are coherent for all integers $n\ge1$.
 Clearly, an $I$\+adically coherent ring $R$ need not be coherent
(take $I=R$).
 Moreover, the counterexample in
Remark~\ref{nilpotent-noncoherence-counterex-remark} shows that
coherence of the ring $R/I$ does \emph{not} imply coherence of
the ring $R/I^2$.
 However, by Lemma~\ref{quotient-ring-coherence-lemma}(b), any
coherent ring $R$ is $I$\+adically coherent with respect to every
finitely generated ideal $I\subset R$.

\begin{cor} \label{radical-inclusion-adic-coherence}
 Let $R$ be a commutative ring and $I$, $J\subset R$ be two finitely
generated ideals such that $\sqrt{I}\subset\sqrt{J}$.
 Assume that the ring $R$ is $I$\+adically coherent.
 Then $R$ is also $J$\+adically coherent.
\end{cor}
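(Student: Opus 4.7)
The plan is to reduce to Lemma~\ref{quotient-ring-coherence-lemma}(b) applied with $A = R/I^{Nm}$ for a suitable $N$, using the hypothesis to produce a power of $I$ sitting inside $J$. Concretely, I will first establish that there exists an integer $N\ge1$ with $I^N\subset J$, then deduce $I^{Nm}\subset J^m$ for every $m\ge1$, and finally exhibit $R/J^m$ as a quotient of the coherent ring $R/I^{Nm}$ by a finitely generated ideal.

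For the first step, I use that $I$ is finitely generated, say $I=(a_1,\dotsc,a_p)$. The assumption $\sqrt{I}\subset\sqrt{J}$ implies each $a_i\in\sqrt{J}$, so there exists $k\ge1$ with $a_i^k\in J$ for every $i=1,\dotsc,p$. Setting $N=pk$, any generator of $I^N$ has the form $a_{i_1}\dotsb a_{i_N}$ with $N$ indices drawn from $\{1,\dotsc,p\}$; by the pigeonhole principle, some index appears at least $k$~times in such a product, so the product is divisible by $a_{i_j}^k\in J$. Hence $I^N\subset J$, and consequently $I^{Nm}\subset J^m$ for all $m\ge1$.

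For the second step, fix $m\ge1$. The ring $R/I^{Nm}$ is coherent by the assumption of $I$\+adic coherence. The ideal $J^m/I^{Nm}\subset R/I^{Nm}$ is finitely generated as an $R/I^{Nm}$\+module because $J^m$ is finitely generated as an ideal of $R$ (the ideal $J$ is finitely generated, hence so is each of its powers). Lemma~\ref{quotient-ring-coherence-lemma}(b), applied to the ring $A=R/I^{Nm}$ and the two-sided ideal $J^m/I^{Nm}\subset A$, then yields that $A/(J^m/I^{Nm})=R/J^m$ is coherent. Since $m\ge1$ was arbitrary, $R$ is $J$\+adically coherent, as desired. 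There is no real obstacle here; the entire argument is a bookkeeping around the elementary radical-to-power-inclusion lemma combined with the already-proved quotient-coherence lemma.
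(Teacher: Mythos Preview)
Your proof is correct and follows essentially the same approach as the paper: find a power $I^N\subset J$ using finite generation of $I$, deduce $I^{Nm}\subset J^m$, and then apply Lemma~\ref{quotient-ring-coherence-lemma}(b) to the coherent ring $R/I^{Nm}$ and the finitely generated ideal $J^m/I^{Nm}$. You simply spell out the pigeonhole argument for $I^N\subset J$ in more detail than the paper does.
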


\begin{proof}
 We have $I\subset\sqrt{I}\subset\sqrt{J}$.
 Since the ideal $I$ is finitely generated, it follows that there exists
$m\ge1$ for which $I^m\subset J$.
 Hence $I^{mn}\subset J^n$ for all $n\ge1$.
 Since the ring $A=R/I^{mn}$ is coherent by assumption and the ideal
$J^n/I^{mn}\subset A$ is finitely generated, it follows by virtue of
Lemma~\ref{quotient-ring-coherence-lemma}(b) that the ring $A/J^n$ is
coherent, too.
\end{proof}

 Similarly to the definition above, given a finitely generated ideal
$I\subset R$, let us say that the ring $R$ is \emph{$I$\+adically
Noetherian} if the ring $R/I$ is Noetherian.
 If this is the case, then all the rings $R/I^n$, \,$n\ge1$, are
Noetherian, too (see
Remark~\ref{nilpotent-noncoherence-counterex-remark}).
 Similarly to Corollary~\ref{radical-inclusion-adic-coherence},
if $\sqrt{I}\subset\sqrt{J}$ for finitely generated ideals $I$,
$J\subset R$ and $R$ is $I$\+adically Noetherian, then $R$ is
also $J$\+adically Noetherian.

 Weak proregularity of a finitely generated ideal $I\subset R$ does
not imply $I$\+adic coherence of $R$ (for example, the zero ideal in
any commutative ring is weakly proregular).
 The two properties are independent of each other:
the converse implication is not true, either, as the following
remark explains.

\begin{rem}
 All ideals in Noetherian commutative rings are weakly proregular.
 However, if the ring $R$ is $I$\+adically Noetherian, then the ideal
$I\subset R$ \emph{need not} be weakly proregular.
 It suffices to consider the case of a principal ideal $I=(s)\subset R$.

 Given an element $s\in R$, one says that the ring $R$ has \emph{bounded
$s$\+torsion} if there exists an integer $n_0\ge1$ such that $s^nr=0$
for $r\in R$ and $n\ge1$ implies $s^{n_0}r=0$.
 It is easy to see that the principal ideal $I=(s)$ is weakly proregular
in $R$ if and only if the $s$\+torsion in $R$ is bounded.
 Now let $k$~be a field, $S=k[s]$ be the ring of polynomials in one
variable~$s$ over~$k$, and $P=k[s,s^{-1}]/k[s]$ be the Pr\"ufer
$S$\+module.
 Consider the trivial extension $R=S\oplus P$.
 So $S$ is a subring in $R$, the product of any two elements from $S$
and $P$ in $R$ is given by the action of $S$ on $P$, and the product
of any two elements from $P$ in $R$ vanishes.
 Then the $s$\+torsion is not bounded in $R$; hence the ideal $I=(s)$ is
not weakly proregular in~$R$.
 However, the quotient ring $R/I$ is isomorphic to~$k$,
while the quotient ring $R/I^n$ is isomorphic to $k[s]/(s^n)$
for every $n\ge1$; all these quotient rings are Noetherian.

 To give another example, consider the ring $k[s;x_1,x_2,\dotsc,]$ as
in Remark~\ref{nilpotent-noncoherence-counterex-remark}; and let $R$
be the quotient ring of $k[s;x_1,x_2,\dotsc,]$ by the ideal spanned by
the elements $s^nx_n$, \,$n\ge1$.
 Then the $s$\+torsion in $R$ is not bounded; so the ideal $I=(s)$ is
not weakly proregular in~$R$.
 Still, for every $n\ge1$, the quotient ring $R/I^n$ is the ring of
polynomials in a countable set of variables $x_n$, $x_{n+1}$,~\dots\
over a commutative $k$\+algebra with a finite set of generators $s$,
$x_1$,~\dots,~$x_{n-1}$.
 So the ring $R/I^n$ is coherent.
\end{rem}

\begin{lem} \label{torsion-modules-locally-coherent}
 Let $R$ be a commutative ring and $I\subset R$ be a finitely
generated ideal such that the ring $R$ is $I$\+adically coherent.
 Then finitely presented objects form a set of generators of
$R\Modl_{I\tors}$, and the full subcategory of finitely presented
objects is closed under kernels, cokernels, and extensions
in $R\Modl_{I\tors}$.
 In other words, $R\Modl_{I\tors}$ is a locally coherent Grothendieck
category in the sense of\/~\cite[Section~2]{Roos},
\cite[Section~8.2]{PS5}.
\end{lem}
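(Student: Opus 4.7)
The plan is to reduce each closure property for finitely presented objects of $R\Modl_{I\tors}$ to the analogous property of finitely presented modules over a coherent quotient ring $R/I^k$. The key input is Lemma~\ref{torsion-modules-locally-finitely-presentable}: the finitely presented objects of $R\Modl_{I\tors}$ are precisely those $I$-torsion $R$-modules $M$ for which $I^k M = 0$ for some $k\geq 1$ and which are finitely presented as $R$-modules, equivalently (by Lemma~\ref{quotient-ring-coherence-lemma}(a)) as $R/I^k$-modules. Since the hypothesis gives coherence of each $R/I^k$, this translation is the entire point.

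The existence of a generating set of finitely presented objects is already built into Lemma~\ref{torsion-modules-locally-finitely-presentable}, which asserts that $R\Modl_{I\tors}$ is locally finitely presentable. Closure under cokernels needs no coherence hypothesis: the cokernel of a morphism in $R\Modl_{I\tors}$ agrees with its cokernel in $R\Modl$, and the cokernel of any morphism of finitely presented $R$-modules is finitely presented.

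For closure under kernels and under extensions, I would choose $k$ large enough that all relevant modules are annihilated by $I^k$, and then invoke coherence of $R/I^k$. Given a morphism $f\colon M\to N$ between finitely presented objects of $R\Modl_{I\tors}$, pick $k$ with $I^k M = 0 = I^k N$; by Lemma~\ref{quotient-ring-coherence-lemma}(a), both $M$ and $N$ are finitely presented $R/I^k$-modules, so by coherence of $R/I^k$ the kernel $\ker(f)$ is finitely presented over $R/I^k$, hence also over $R$ by Lemma~\ref{quotient-ring-coherence-lemma}(a). For a short exact sequence $0\to M\to L\to N\to 0$ in $R\Modl_{I\tors}$ with $I^m M=0$ and $I^n N=0$, a direct computation shows $I^{m+n} L=0$ (since $I^n L\subset M$ and $I^m M=0$), so $L$, $M$, and $N$ are all $R/I^{m+n}$-modules with $M$ and $N$ finitely presented there; the standard fact that extensions of finitely presented modules are finitely presented (valid over any ring) yields that $L$ is finitely presented over $R/I^{m+n}$, and hence over $R$.

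The concluding "in other words" assertion follows immediately from the preceding statements together with the definition of a locally coherent Grothendieck category cited from \cite{Roos, PS5}. I do not anticipate any real obstacle; the only point requiring attention is careful bookkeeping of annihilator exponents and applying Lemma~\ref{quotient-ring-coherence-lemma}(a) in both directions to translate "finitely presented" between $R$ and $R/I^k$.
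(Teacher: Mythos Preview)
Your proposal is correct and follows essentially the same route as the paper's proof: both reduce the closure properties to the corresponding closure properties of finitely presented modules over the coherent rings $R/I^n$, invoking Lemma~\ref{torsion-modules-locally-finitely-presentable} for the generation statement and the identification of finitely presented objects. The paper states this in a single sentence, while you spell out the bookkeeping of annihilator exponents and the back-and-forth translation via Lemma~\ref{quotient-ring-coherence-lemma}(a); your added detail is accurate and the argument goes through without obstacle.
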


\begin{proof}
 Follows from Lemma~\ref{torsion-modules-locally-finitely-presentable}
together with the fact that the full subcategory of finitely
presented $R/I^n$\+modules is closed under kernels, cokernels, and
extensions in $R/I^n\Modl$ for every $n\ge1$.
\end{proof}

 Let $R$ be a commutative ring and $I\subset R$ be a finitely generated
ideal such that the ring $R$ is $I$\+adically coherent.
 We will say that an $I$\+torsion $R$\+module $M$ is \emph{coherent}
(as an $I$\+torsion $R$\+module) if $M$ is finitely generated and
every finitely generated submodule of $M$ is finitely presented as
a module over $R/I^n$ for some $n\ge1$.
 In view of Lemma~\ref{quotient-ring-coherence-lemma}(a),
\,$M$ is coherent as an $I$\+torsion $R$\+module if and only if it is
coherent as an $R$\+module.
 It follows from Lemma~\ref{torsion-modules-locally-coherent} that
an $I$\+torsion $R$\+module is coherent if and only if it is
finitely presented as an object of $R\Modl_{I\tors}$.

 The following definition is most useful in the $I$\+adically coherent
case, but makes sense for any finitely generated ideal $I$ in
a commutative ring~$R$.
 An $I$\+torsion $R$\+module $K$ is said to be \emph{fp\+injective}
(as an $I$\+torsion $R$\+module) if $\Ext^1_{R\Modl_{I\tors}}(M,K)=0$
for all finitely presented $I$\+torsion $R$\+modules~$M$.
 Clearly, all injective objects of $R\Modl_{I\tors}$ are
fp\+injective.
 Denote the class of fp\+injective $I$\+torsion $R$\+modules by
$R\Modl_{I\tors}^\fpinj\subset R\Modl_{I\tors}$.

 Specializing the previous definition to the case of a ring $R$
with the zero ideal $I=0$ (when all $R$\+modules are $I$\+torsion),
we obtain the classical concept of an \emph{fp\+injective
$R$\+module}~\cite{Sten}.
 Notice that an fp\+injective $I$\+torsion $R$\+module \emph{need not}
be fp\+injective as an $R$\+module.

\begin{lem} \label{fp-injective-torsion-modules-lemma}
 Let $R$ be a commutative ring and $I\subset R$ be a finitely generated
ideal such that the ring $R$ is $I$\+adically coherent.
 In this context: \par
\textup{(a)} An $I$\+torsion $R$\+module $K$ is fp\+injective if and
only if the functor $M\longmapsto\Hom_R(M,K)$ is exact on the abelian
category of finitely presented/coherent $I$\+torsion $R$\+modules~$M$.
\par
\textup{(b)} An $I$\+torsion $R$\+module $K$ is fp\+injective if and
only if\/ $\Ext^n_{R\Modl_{I\tors}}(M,K)=0$ for all finitely presented
$I$\+torsion $R$\+modules~$M$ and all $n\ge1$. \par
\textup{(c)} The full subcategory of fp\+injective objects is closed
under extensions, cokernels of injective morphisms, infinite direct
sums, and direct limits in $R\Modl_{I\tors}$. \par
\textup{(d)} For any finitely presented $I$\+torsion $R$\+module $M$,
the functor\/ $\Hom_R(M,{-})$ is exact on the exact category of
fp\+injective $I$\+torsion $R$\+modules. \par
\textup{(e)} An $I$\+torsion $R$\+module $K$ is fp\+injective if and
only if the $R/I^n$\+module\/ $\Hom_R(R/I^n,K)$ is fp\+injective
for every $n\ge1$.
\end{lem}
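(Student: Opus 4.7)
The plan is to prove the statements in the logical order (a), (e), (c), (b), (d), each step depending only on earlier ones.

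First, part~(a) is a reformulation. The forward direction follows from the long exact $\Ext$ sequence for a short exact sequence of finitely presented modules, using that finitely presented objects form an abelian subcategory of $R\Modl_{I\tors}$ (Lemma~\ref{torsion-modules-locally-coherent}). For the converse, given an extension $0\to K\to E\to M\to 0$ with $M$ finitely presented, I would approximate $E$ by a finitely presented submodule $E'\subseteq E$ surjecting onto $M$ (which exists because $M$ is a compact object of the locally finitely presentable category $R\Modl_{I\tors}$), set $K':=K\cap E'$ (which is finitely presented by closure of coherent objects under kernels), and apply the assumed lifting property along the monomorphism $K'\hookrightarrow E'$ of fp modules to extend $K'\hookrightarrow K$ to a map $E'\to K$; the resulting pushout data yields a retraction $E\to K$.

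Next, (e) follows from (a). For the ``only if'' direction, given a short exact sequence $0\to K_n\to E\to M\to 0$ in $R/I^n\Modl$ with $M$ finitely presented, I would form the pushout $E':=K\amalg_{K_n}E$ in $R\Modl_{I\tors}$, obtaining a sequence $0\to K\to E'\to M\to 0$ that splits by fp-injectivity of~$K$; direct inspection of the pushout shows $\Hom_R(R/I^n,E')\cong E$, so the original sequence splits. For ``if'', any monomorphism $M'\hookrightarrow M$ of finitely presented objects in $R\Modl_{I\tors}$ can be viewed as a monomorphism of finitely presented $R/I^n$-modules for a common~$n$ (Lemma~\ref{torsion-modules-locally-finitely-presentable}); a map $M'\to K$ factors through $K_n$, extends to $M\to K_n$ by fp-injectivity of $K_n$, and includes back into $K$, so (a) applies.

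For (c), closure under extensions is the usual $\Ext^1$ long exact sequence argument. The key nontrivial closure, under cokernels of monomorphisms, uses (a) together with a lifting trick: for a short exact sequence $0\to K'\to K\to K''\to 0$ with $K',K$ fp-injective and a map $f\:M'\to K''$ from a finitely presented submodule $M'\hookrightarrow M$ with $M$ finitely presented, the vanishing $\Ext^1(M',K')=0$ lifts $f$ to some $\tilde f\:M'\to K$, which then extends to $\tilde g\:M\to K$ by fp-injectivity of~$K$; projecting along $K\twoheadrightarrow K''$ gives the required extension of~$f$. Closure under direct sums and filtered colimits rests on the standard fact that $\Ext^1(M,-)$ commutes with filtered colimits when $M$ is coherent in a locally coherent Grothendieck category.

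Part~(b) is deduced from (c) via an injective coresolution $K\to J^\bu$ in $R\Modl_{I\tors}$. Setting $Z^k:=\ker(J^k\to J^{k+1})$, an induction on~$k$ using the short exact sequences $0\to Z^{k-1}\to J^{k-1}\to Z^k\to 0$ and closure under cokernels of monomorphisms from (c) shows each $Z^k$ is fp-injective. Dimension shifting along these sequences, using $\Ext^{\ge 1}(M,J^k)=0$, yields $\Ext^n(M,K)\cong\Ext^1(M,Z^{n-1})=0$ for finitely presented $M$ and $n\ge 1$. Finally, (d) is immediate from (c): in a short exact sequence $0\to K'\to K\to K''\to 0$ in the exact category of fp-injectives, the connecting map $\Hom(M,K'')\to\Ext^1(M,K')$ vanishes because $K'$ is fp-injective. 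The main obstacle is the careful logical ordering to avoid circularity: closure under cokernels of monomorphisms in (c) must be established by the direct lifting argument using only $\Ext^1=0$, rather than via the higher-$\Ext$ vanishing statement (b), which in turn is built on top of that closure.
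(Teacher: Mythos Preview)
Your proof is correct and the logical ordering you chose (proving closure under cokernels of monomorphisms directly from the $\Ext^1$ vanishing, then deducing the higher $\Ext$ vanishing from that closure) is a sound way to avoid circularity. The paper's own proof takes a rather different route in presentation: it does not argue (a)--(c) directly at all, but simply observes that these hold in any locally coherent Grothendieck category and cites \cite[Appendix~B]{Sto2} and \cite[Lemma~E.2.1]{Pcosh}; it then remarks that (e) follows from~(a) and that (d) follows from the definitions together with~(c). So the mathematical content you supply is essentially what the cited references contain, but your treatment is self-contained and makes the dependencies explicit, whereas the paper trades that for brevity and generality (the cited results apply to arbitrary locally coherent Grothendieck categories, not just $R\Modl_{I\tors}$). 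Your pushout arguments for~(a) and~(e) and the lifting argument for cokernel-closure in~(c) are the standard ones and would be at home in the references the paper invokes.
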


\begin{proof}
 The assertions~(a\+-c) hold for an arbitrary locally coherent
Grothendieck category $\sA$ in place of $\sA=R\Modl_{I\tors}$.
 See, e.~g., \cite[Appendix~B]{Sto2}.
 Part~(e) follows from part~(a), and part~(d) follows from
the definitions (while the existence of the inherited exact structure
on the full subcategory of fp\+injective $I$\+torsion $R$\+modules
follows from part~(c)).
 Another relevant reference is~\cite[Lemma~E.2.1]{Pcosh}.
\end{proof}

 In the case of a coherent ring $R$, it is clear from
Lemma~\ref{fp-injective-torsion-modules-lemma}(a) that the functor
$\Gamma_I\:R\Modl\rarrow R\Modl_{I\tors}$ takes fp\+injective
$R$\+modules to fp\+injective $I$\+torsion $R$\+modules.

 If the ring $R$ is $I$\+adically Noetherian, then the abelian
category $R\Modl_{I\tors}$ is locally Noetherian.
 In this case, the Noetherian objects of $R\Modl_{I\tors}$ are simply
the $I$\+torsion $R$\+modules that are finitely generated as
$R$\+modules.
 All finitely generated $I$\+torsion $R$\+modules are finitely
presented in this case, and all fp\+injective $I$\+torsion $R$\+modules
are injective (as objects of $R\Modl_{I\tors}$).

 The following lemma complements
Lemma~\ref{contraflat-contramodules-lemma}(a).
 Taken together, these two lemmas form a dual-analogous version of
Lemma~\ref{fp-injective-torsion-modules-lemma}(c).

\begin{lem} \label{products-of-contraflat-contramodules-lemma}
 Let $R$ be a commutative ring and $I\subset R$ be a finitely generated
ideal such that the ring $R$ is $I$\+adically coherent.
 Then the full subcategory of contraflat $I$\+contramodule $R$\+modules
is closed under infinite products in $R\Modl_{I\ctra}$.
\end{lem}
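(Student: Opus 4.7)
The plan is to reduce the statement to the classical theorem of Chase that over a coherent ring arbitrary products of flat modules are flat.

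First, I would recall from the discussion preceding Lemma~\ref{contraflat-contramodules-lemma} that an $I$\+contramodule $R$\+module $F$ is contraflat if and only if $F/I^nF$ is a flat $R/I^n$\+module for every $n\ge1$. Moreover, since $R\Modl_{I\ctra}\subset R\Modl$ is closed under infinite products, products in $R\Modl_{I\ctra}$ agree with products of the underlying $R$\+modules. So, given a family of contraflat $I$\+contramodule $R$\+modules $F_\alpha$, the goal is to show that $P=\prod_\alpha F_\alpha$ (computed in $R\Modl$) satisfies: $P/I^nP$ is a flat $R/I^n$\+module for every $n\ge1$.

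Second, I would observe that since $I$ is finitely generated, so is every power $I^n$, and therefore the formation of $I^nM$ commutes with arbitrary products. Indeed, if $r_1,\dotsc,r_k$ generate $I^n$, then given any $(x_\alpha)\in\prod_\alpha I^nF_\alpha$, the axiom of choice allows us to select, for each~$\alpha$, elements $y^{(1)}_\alpha,\dotsc,y^{(k)}_\alpha\in F_\alpha$ with $x_\alpha=\sum_i r_iy^{(i)}_\alpha$; assembling these shows $(x_\alpha)=\sum_i r_i(y^{(i)}_\alpha)\in I^n\prod_\alpha F_\alpha$. The reverse inclusion is obvious, so
$$
 I^n\!\prod_\alpha F_\alpha=\prod_\alpha I^nF_\alpha,
 \qquad\text{and hence}\qquad
 P/I^nP\;\cong\;\prod_\alpha F_\alpha/I^nF_\alpha.
$$

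Third, by the $I$\+adic coherence of $R$, each quotient ring $R/I^n$ is coherent. Each factor $F_\alpha/I^nF_\alpha$ is a flat $R/I^n$\+module by hypothesis. Invoking Chase's classical theorem that over a coherent ring arbitrary products of flat modules are flat, we conclude that $\prod_\alpha F_\alpha/I^nF_\alpha$ is a flat $R/I^n$\+module, hence $P/I^nP$ is flat over $R/I^n$ for every $n\ge1$. Thus $P$ is a contraflat $I$\+contramodule $R$\+module, as required. The only nontrivial ingredient is Chase's theorem; everything else is a straightforward verification using the finite generatedness of $I$ and the definition of contraflatness.
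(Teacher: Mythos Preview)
Your proof is correct and follows essentially the same approach as the paper: the paper's proof likewise observes that the functor $P\mapsto P/I^nP$ preserves infinite products (because $I^n$ is finitely generated) and that infinite products of flat $R/I^n$\+modules are flat (because $R/I^n$ is coherent). Your version simply spells out these two steps in more detail and names the latter fact as Chase's theorem.
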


\begin{proof}
 The assertion holds because the functor $P\longmapsto P/I^nP\:
R\Modl\rarrow R/I^n\Modl$ preserves infinite products for all
$n\ge1$ (as the ideal $I^n\subset R$ is finitely generated), and
infinite products of flat $R/I^n$\+modules are flat $R/I^n$\+modules
(as the ring $R/I^n$ is coherent).
\end{proof}

\Section{Dualizing Complexes}  \label{dualizing-complexes-secn}

 We refer to Section~\ref{cors-of-derived-fullyf-thms-secn} for
the definition of \emph{contraflat $I$\+contramodule $R$\+modules}.
 The discussion of \emph{finitely presented $I$\+torsion $R$\+modules}
can be found in Section~\ref{adically-coherent-secn}.
 The following lemma is very general.

\begin{lem} \label{finitely-presented-hom-tensor-isomorphisms}
 Let $I$ be a finitely generated ideal in a commutative ring $R$,
and let $M$ be a finitely presented $I$\+torsion $R$\+module.
 In this context: \par
\textup{(a)} For any contraflat $I$\+contramodule $R$\+module $P$ and
any $I$\+torsion $R$\+module $K$, the natural map
$$
 \Hom_R(M,K)\ot_RP\lrarrow\Hom_R(M,\>K\ot_RP)
$$
is an isomorphism. \par
\textup{(b)} For any injective object $H$ of the category
$R\Modl_{I\tors}$ and any $I$\+torsion $R$\+module $K$, the natural map
$$
 M\ot_R\Hom_R(K,H)\lrarrow\Hom_R(\Hom_R(M,K),H)
$$
is an isomorphism.
\end{lem}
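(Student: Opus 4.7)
The plan is to use the standard ``finite presentation'' argument. By Lemma~\ref{torsion-modules-locally-finitely-presentable}, the module $M$ is also finitely presented as an $R$-module, so it admits a presentation $R^b\to R^a\to M\to 0$ by finitely generated free $R$-modules. Both statements will follow by comparing two natural exact sequences obtained from this presentation, once one recognizes that the contraflatness of $P$ plays the role of flatness and the injectivity of $H$ in $R\Modl_{I\tors}$ plays the role of injectivity, provided all objects in sight are $I$-torsion.

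For part~(a), I would apply $\Hom_R(-,K)$ to the presentation to produce a left-exact sequence $0\to\Hom_R(M,K)\to K^a\to K^b$ of $I$-torsion $R$-modules (the first term embeds in $K^a$, which is $I$-torsion because $K$ is). Factor the map $K^a\to K^b$ through its image $N\subseteq K^b$, and apply $-\ot_R P$ to the resulting short exact sequence $0\to\Hom_R(M,K)\to K^a\to N\to 0$ and the inclusion $0\to N\to K^b$; both remain exact by the $\Tor^R_1(-,P)$-vanishing on $I$-torsion modules from Lemma~\ref{contraflat-contramodules-lemma}(b). Splicing yields a left-exact sequence $0\to\Hom_R(M,K)\ot_R P\to (K\ot_R P)^a\to (K\ot_R P)^b$. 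Applying $\Hom_R(-,K\ot_R P)$ directly to the presentation of $M$ yields a second left-exact sequence with the same last two terms and the same map between them; the natural map of the assertion is the resulting comparison of kernels that restricts to the identity on $(K\ot_R P)^a$ and $(K\ot_R P)^b$, hence it is an isomorphism.

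For part~(b), I would again apply $\Hom_R(-,K)$ to the presentation to obtain the left-exact sequence $0\to\Hom_R(M,K)\to K^a\to K^b$ of $I$-torsion $R$-modules; here $\Hom_R(M,K)$ is $I$-torsion because $I^nM=0$ for some $n\ge 1$ forces $I^n\Hom_R(M,K)=0$. Since $H$ is injective in $R\Modl_{I\tors}$, the contravariant functor $\Hom_R(-,H)$ is exact on $R\Modl_{I\tors}$, so it transforms the preceding sequence into a right-exact sequence $\Hom_R(K,H)^b\to\Hom_R(K,H)^a\to\Hom_R(\Hom_R(M,K),H)\to 0$. On the other side, applying the right-exact functor $-\ot_R\Hom_R(K,H)$ to the presentation of $M$ gives a right-exact sequence with the same first two terms; the natural morphism of the assertion restricts to the identity on those two terms and therefore induces an isomorphism on the cokernels. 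The main obstacle in both parts is purely bookkeeping: one has to verify that the sequences to which Lemma~\ref{contraflat-contramodules-lemma}(b) and the injectivity of $H$ in $R\Modl_{I\tors}$ are applied actually consist of $I$-torsion modules, so that the relevant exactness statements legitimately apply.
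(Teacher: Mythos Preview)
Your argument is correct in substance, but there is one citation issue: Lemma~\ref{contraflat-contramodules-lemma}(b), which you invoke for part~(a), carries a weak proregularity hypothesis on the ideal that Lemma~\ref{finitely-presented-hom-tensor-isomorphisms} does \emph{not} assume. Fortunately you do not actually need $\Tor_1^R$-vanishing: the very definition of a contraflat $I$-contramodule $R$-module is that the functor ${-}\otimes_R P$ is exact on $R\Modl_{I\tors}$, and since $\Hom_R(M,K)$, $K^a$, $N$, $K^b$, and $K^b/N$ are all $I$-torsion, this exactness is all your splicing argument requires. With that correction, your proof goes through.

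Your approach differs genuinely from the paper's. For part~(a), the paper writes $K=\varinjlim_n K_n$ where $K_n$ is the $I^n$-annihilated part, passes the direct limit through $\Hom_R(M,{-})$ using finite presentability, and then works over $R/I^n$ where $P/I^nP$ is honestly flat; this reduces everything to the classical isomorphism $\Hom(M,{-})\otimes F\simeq\Hom(M,{-}\otimes F)$ for a finitely presented module and a flat module. For part~(b), the paper exploits the structural fact that any injective of $R\Modl_{I\tors}$ is a direct summand of $\Gamma_I(G)$ for an injective $R$-module $G$, and then (since $K$ and $\Hom_R(M,K)$ are $I$-torsion) replaces $H$ by $G$ and invokes the classical evaluation isomorphism for finitely presented $M$ and injective~$G$. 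Your route is the more hands-on five-lemma/presentation argument and avoids both the colimit passage and the $\Gamma_I(G)$ description of injectives; the paper's route is shorter once those structural inputs are in hand and makes the role of ``contraflat $=$ flat mod $I^n$'' and ``injective in $I$-torsion $=$ summand of $\Gamma_I$ of an injective'' more transparent.
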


\begin{proof}
 Part~(a): let $m\ge1$ be an integer such that $M$ is
an $R/I^m$\+module.
 For every $n\ge m$, denote by $K_n\subset K$ the submodule of all
elements annihilated by $I^n$ in~$K$.
 Then we have $K\ot_RP=\varinjlim_{n\ge m}K_n\ot_RP$, and therefore
\begin{multline*}
 \Hom_R(M,\>K\ot_RP)\simeq
 \varinjlim\nolimits_{n\ge m}\Hom_R(M,\>K_n\ot_RP) \\ \simeq
 \varinjlim\nolimits_{n\ge m}\Hom_{R/I^n}(M,\>K_n\ot_{R/I^n} P/I^nP) \\
 \simeq\varinjlim\nolimits_{n\ge m}
 \bigl(\Hom_{R/I^n}(M,K_n)\ot_{R/I^n}P/I^nP\bigr) \\ \simeq
 \varinjlim\nolimits_{n\ge m}\bigl(\Hom_{R/I^n}(M,K_n)\ot_R P\bigr)
 \simeq\Hom_R(M,K)\ot_RP,
\end{multline*}
since $M$ is a finitely presented object of $R\Modl_{I\tors}$ and
$P/I^nP$ is a flat $R/I^n$\+module.

 Part~(b):  Following the discussion in
Section~\ref{prelims-on-wpr-secn}, there exists an injective $R$\+module
$G$ such that $H$ is a direct summand of $\Gamma_I(G)$.
 Notice that both $K$ and $\Hom_R(M,K)$ are $I$\+torsion
$R$\+modules; hence we have $\Hom_R(K,\Gamma_I(G))=\Hom_R(K,G)$
and $\Hom_R(\Hom_R(M,K),\Gamma_I(G))\simeq\Hom_R(\Hom_R(M,K),G)$.
 It remains to recall that $M$ is a finitely presented $R$\+module
by Lemma~\ref{torsion-modules-locally-finitely-presentable};
so the natural map
$$
 M\ot_R\Hom_R(K,G)\lrarrow\Hom_R(\Hom_R(M,K),G)
$$
is an isomorphism.
\end{proof}

 Let $R$ be a commutative ring and $I\subset R$ be a finitely generated
ideal such that the ring $R$ is $I$\+adically coherent (in the sense
of the definition in Section~\ref{adically-coherent-secn}).
 The definition of \emph{fp\+injective $I$\+torsion $R$\+modules} was
also given in Section~\ref{adically-coherent-secn}.

\begin{lem} \label{tensor-hom-fp-injective-contraflat-lemma}
\textup{(a)} Let $P$ be a contraflat $I$\+contramodule $R$\+module
and $K$ be an fp\+injective $I$\+torsion $R$\+module.
 Then the tensor product $K\ot_RP$ is an fp\+injective $I$\+torsion
$R$\+module. \par
\textup{(b)} Let $H$ be an injective object of $R\Modl_{I\tors}$ and
$K$ be an fp\+injective $I$\+torsion $R$\+module.
 Then the\/ $\Hom$ module\/ $\Hom_R(K,H)$ is a contraflat
$I$\+contramodule $R$\+module.
\end{lem}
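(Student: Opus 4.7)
The plan for part~(a) is to first observe that $K\ot_R P$ is an $I$\+torsion $R$\+module by Lemma~\ref{tors-contra-tensor-hom-lemma}(a), and then to verify fp\+injectivity by means of the criterion in Lemma~\ref{fp-injective-torsion-modules-lemma}(a): it suffices to show that $\Hom_R({-},\,K\ot_R P)$ is exact on the abelian category of finitely presented $I$\+torsion $R$\+modules. The key input will be the natural isomorphism of Lemma~\ref{finitely-presented-hom-tensor-isomorphisms}(a), namely $\Hom_R(M,\,K\ot_R P)\simeq\Hom_R(M,K)\ot_R P$ for finitely presented $I$\+torsion~$M$. Given a short exact sequence $0\to M_1\to M_2\to M_3\to 0$ of finitely presented $I$\+torsion modules, fp\+injectivity of~$K$ produces a short exact sequence of the modules $\Hom_R(M_i,K)$. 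Choosing an integer $n\ge1$ with $I^n M_i=0$ for $i=1,2,3$, each $\Hom_R(M_i,K)$ becomes an $R/I^n$\+module, on which $-\ot_R P$ agrees with $-\ot_{R/I^n}(P/I^n P)$; this latter functor is exact by the contraflatness of~$P$.

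The plan for part~(b) is dually structured. First I would note that $\Hom_R(K,H)$ is an $I$\+contramodule $R$\+module (and in fact $I$\+adically separated and complete) by Lemma~\ref{tors-contra-tensor-hom-lemma}(b). Invoking the criterion for contraflatness recalled in Section~\ref{cors-of-derived-fullyf-thms-secn} --- that an $I$\+contramodule $F$ is contraflat iff $F/I^n F$ is a flat $R/I^n$\+module for every $n\ge1$ --- the task reduces to showing that $\Hom_R(K,H)/I^n\Hom_R(K,H)$ is flat over $R/I^n$ for each~$n$. By the standard characterization of flatness over the coherent ring $R/I^n$, it suffices to verify that for every short exact sequence $0\to M_1\to M_2\to M_3\to0$ of finitely presented $R/I^n$\+modules, tensoring with $\Hom_R(K,H)/I^n\Hom_R(K,H)$ over $R/I^n$ preserves exactness. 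By Lemma~\ref{finitely-presented-hom-tensor-isomorphisms}(b) one has natural isomorphisms $M_i\ot_R\Hom_R(K,H)\simeq\Hom_R(\Hom_R(M_i,K),\,H)$, and the tensor product over $R/I^n$ agrees with that over~$R$ on $R/I^n$\+modules. Applying $\Hom_R({-},K)$ to the sequence of $M_i$'s yields a short exact sequence of $R/I^n$\+modules by fp\+injectivity of~$K$; applying $\Hom_R({-},H)$, which is exact on $R\Modl_{I\tors}$ because $H$ is injective there and these $\Hom$\+modules are $I$\+torsion, then delivers the desired exactness.

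The main point requiring attention is the identification of tensor products on the appropriate quotient ring: one must verify in both parts that the modules being tensored (or hom'd) are annihilated by some~$I^n$, which requires choosing $n$ simultaneously for the three terms of the short exact sequence at hand. This is immediate from the finite presentation of the $M_i$'s. Otherwise the argument is a routine assembly of the tensor--Hom isomorphisms of Lemma~\ref{finitely-presented-hom-tensor-isomorphisms} with the defining properties of fp\+injective and contraflat modules, combined with the injectivity of~$H$ in $R\Modl_{I\tors}$; no deeper homological tools are needed beyond the lemmas already in place.
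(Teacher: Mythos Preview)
Your proof is correct and follows essentially the same approach as the paper's own proof: both parts reduce to the exactness criterion on finitely presented $I$\+torsion modules via Lemma~\ref{fp-injective-torsion-modules-lemma}(a) (or the contraflatness criterion via $R/I^n$), then invoke the isomorphisms of Lemma~\ref{finitely-presented-hom-tensor-isomorphisms} and conclude from fp\+injectivity of~$K$ together with exactness of ${-}\ot_RP$ or $\Hom_R({-},H)$ on $I$\+torsion modules. The only cosmetic difference is that in part~(a) the paper appeals directly to the definition of contraflatness (exactness of ${-}\ot_RP$ on $R\Modl_{I\tors}$) rather than passing through $R/I^n$, but your route is equally valid.
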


\begin{proof}
 Part~(a): by Lemma~\ref{fp-injective-torsion-modules-lemma}(a),
we need to prove that the functor $M\longmapsto\Hom_R(M,\>K\ot_RP)$
is exact on the abelian category of finitely presented $I$\+torsion
$R$\+modules~$M$.
 By Lemma~\ref{finitely-presented-hom-tensor-isomorphisms}(a), we
have $\Hom_R(M,\>K\ot_RP)\simeq\Hom_R(M,K)\ot_RP$.
 It remains to point out that the functor $M\longmapsto\Hom_R(M,K)$
is exact on the category of finitely presented $I$\+torsion
$R$\+modules $M$, the $R$\+module $\Hom_R(M,K)$ is $I$\+torsion for
all such $R$\+modules $M$, and the functor ${-}\ot_RP$ is exact on
the category of $I$\+torsion $R$\+modules $R\Modl_{I\tors}$.

 Part~(b): All $I$\+torsion $R$\+modules are direct limits of
finitely presented $I$\+torsion $R$\+modules by
Lemma~\ref{torsion-modules-locally-finitely-presentable}.
 As the class of finitely presented $I$\+torsion $R$\+modules is closed
under kernels by Lemma~\ref{torsion-modules-locally-coherent},
it follows easily that all short exact sequences of $I$\+torsion
$R$\+modules are direct limits of short exact sequences of finitely
presented $I$\+torsion $R$\+modules.
 Thus it suffices to prove that the functor $M\longmapsto
M\ot_R\Hom_R(K,H)$ is exact on the abelian category of finitely
presented $I$\+torsion $R$\+modules~$M$.
 Alternatively, one can say that in order to prove that
the $R/I^n$\+module $\Hom_R(K,H)/I^n\Hom_R(K,H)$ is flat, it suffices
to show that the functor $M\longmapsto M\ot_R\Hom_R(K,H)$ is exact on
the abelian category of finitely presented $R/I^n$\+modules~$M$.

 By Lemma~\ref{finitely-presented-hom-tensor-isomorphisms}(b), we
have $M\ot_R\Hom_R(K,H)\simeq\Hom_R(\Hom_R(M,K),H)$.
 It remains to point out that the functor $M\longmapsto\Hom_R(M,K)$
is exact on the category of finitely presented $I$\+torsion
$R$\+modules $M$, the $R$\+module $\Hom_R(M,K)$ is $I$\+torsion for
all such $R$\+modules $M$, and the functor $\Hom_R({-},H)$ is exact on
the category of $I$\+torsion $R$\+modules $R\Modl_{I\tors}$.
\end{proof}

 We start with the definition of a dualizing complex of modules over
a coherent commutative ring~$A$.
 A complex of $A$\+modules $D^\bu_A$ is said to be a \emph{dualizing
complex} if the following three conditions are satisfied:
\begin{enumerate}
\renewcommand{\theenumi}{\roman{enumi}${}^\circ$}
\item the complex $D_A^\bu$ is quasi-isomorphic to a finite complex of
fp\+injective $A$\+modules;
\item the cohomology modules of the complex $D_A^\bu$ are finitely
presented $A$\+modules;
\item the homothety map $A\rarrow\Hom_{\sD(A\Modl)}(D^\bu,D^\bu[*])$
is an isomorphism of graded rings.
\end{enumerate}
 Here the complex $D^\bu_A$ is viewed as an object of the derived
category $\sD(A\Modl)$.

 Let $A$ be a commutative ring and $I\subset A$ be an ideal.
 The derived functor
$$
 \boR\Hom_A(A/I,{-})\:\sD(A\Modl)\lrarrow\sD(A/I\Modl)
$$
is constructed by applying the functor $\Hom_A(A/I,{-})\:\sK(A\Modl)
\rarrow\sK(A/I\Modl)$ to homotopy injective complexes of $A$\+modules.
 Similarly, the derived functor
$$
 A/I\ot_A^\boL{-}\,\:\sD(A\Modl)\lrarrow\sD(A/I\Modl)
$$
is constructed by applying the functor $A/I\ot_A{-}\,\:\sK(A\Modl)
\rarrow\sD(A/I\Modl)$ to homotopy flat complexes of $A$\+modules.

 Let $R$ be a commutative ring and $I\subset R$ be a weakly proregular
finitely generated ideal such that the ring $R$ is $I$\+adically
coherent.
 A \emph{dualizing complex of $I$\+torsion $R$\+modules} $L^\bu=D^\bu$
is a pseudo-dualizing complex (according to the definition in
Section~\ref{auslander-and-bass-secn}) satisfying the following
additional condition:
\begin{enumerate}
\renewcommand{\theenumi}{\roman{enumi}}
\item the complex $D^\bu$ is quasi-isomorphic to a finite complex of
fp\+injective $I$\+torsion $R$\+modules.
\end{enumerate}

 In order to prove the results below, we will have to assume that
all fp\+injective $I$\+torsion $R$\+modules have finite injective
dimensions as objects of $R\Modl_{I\tors}$.
 This assumption holds whenever there exists an integer $m\ge0$
such that every ideal in $R/I$ is generated by at most $\aleph_m$
elements~\cite[Lemma~E.2.2(a\+-b)]{Pcosh}.

 In some results, we will also have to assume that all contraflat
$I$\+contramodule $R$\+modules have finite projective dimensions
as objects of $R\Modl_{I\ctra}=R\Modl_{I\ctra}^\qs$.
 This assumption holds whenever all flat $R/I$\+modules have finite
projective dimensions~\cite[Lemma~E.2.2(c)]{Pcosh} (notice that
the projective dimensions of flat $R/I^n$\+modules do not exceed
the projective dimensions of flat $R/I$\+modules, since a flat
$R/I^n$\+module $F$ is projective whenever the $R/I$\+module $F/IF$
is projective).

 The following theorem establishes a comparison between the two
preceding definitions in the case when the ring $R$ is coherent
(rather than merely $I$\+adically coherent).

\begin{thm} \label{definitions-of-dualizing-complex-comparison}
 Let $R$ be a coherent commutative ring and $I\subset R$ be
a weakly proregular finitely generated ideal.
 Let $D^\bu$ be finite complex of $I$\+torsion $R$\+modules.
 Assume that all fp\+injective $I$\+torsion $R$\+modules have finite
injective dimensions in $R\Modl_{I\tors}$.
 Then the following conditions are equivalent:
\begin{enumerate}
\item $D^\bu$ is a dualizing complex of $I$\+torsion $R$\+modules;
\item for every integer $n\ge1$, the complex\/ $D_{R/I^n}^\bu=
\boR\Hom_R(R/I^n,D^\bu)$ is a dualizing complex of $R/I^n$\+modules;
\item for some integer $n\ge1$, the complex\/ $D_{R/I^n}^\bu=
\boR\Hom_R(R/I^n,D^\bu)$ is a dualizing complex of $R/I^n$\+modules.
\end{enumerate}
\end{thm}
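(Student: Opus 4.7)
The implication $(2)\Rightarrow(3)$ is immediate. To close the cycle of equivalences I plan to prove $(1)\Rightarrow(2)$ directly, $(3)\Rightarrow(2)$ by induction on $n$, and finally $(2)\Rightarrow(1)$ by assembling $D^\bu$ from the tower $\{D^\bu_{R/I^n}\}$.

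For $(1)\Rightarrow(2)$, the assumption that fp-injective $I$-torsion $R$-modules have finite injective dimension in $R\Modl_{I\tors}$ allows me to replace $D^\bu$ by a quasi-isomorphic finite complex $H^\bu$ of \emph{injective} $I$-torsion $R$-modules. Then Lemma~\ref{Hom-tensor-underived=derived-lemma}(a) yields $D^\bu_{R/I^n}\simeq\Hom_R(R/I^n,H^\bu)$, and the description of the injectives of $R\Modl_{I\tors}$ recalled in Section~\ref{prelims-on-wpr-secn} shows each term $\Hom_R(R/I^n,H^i)$ to be an injective $R/I^n$-module, giving~(i$^\circ$). Condition~(iii$^\circ$) is obtained from condition~(iii) for $D^\bu$ by base change along $\fR\twoheadrightarrow\fR/I^n\fR\simeq R/I^n$, the latter identification using finite generation of $I$. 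For~(ii$^\circ$), i.e., finite presentability of $\Ext^i_R(R/I^n,D^\bu)$ over $R/I^n$, I apply condition~(ii) of the pseudo-dualizing complex to the finite f.g.\ free complex $K_\bu(R,\br^n)$ with $I$-torsion cohomology (where $\br$ generates $I$) and use weak proregularity together with the Koszul comparisons of Section~\ref{prelims-on-wpr-secn} to relate its cohomology to $\Ext^*_R(R/I^n,D^\bu)$, obtaining finite generation; finite presentation then follows from coherence of $R/I^n$.

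For $(3)\Rightarrow(2)$, descent from $n$ to $n'<n$ uses the change-of-rings identity $D^\bu_{R/I^{n'}}\simeq\boR\Hom_{R/I^n}(R/I^{n'},D^\bu_{R/I^n})$; since $R/I^{n'}$ is finitely presented over $R/I^n$ by coherence, it is reflexive with respect to $D^\bu_{R/I^n}$, and all three conditions (i$^\circ$)--(iii$^\circ$) transfer, the last one using Lemma~\ref{quotient-ring-coherence-lemma}(a) to pass from $R/I^n$- to $R/I^{n'}$-finite presentation. For the ascent $n\to n+1$, I apply $\boR\Hom_R({-},D^\bu)$ to the short exact sequence
\[
0\to I^n/I^{n+1}\to R/I^{n+1}\to R/I^n\to 0,
\]
which yields a distinguished triangle relating $D^\bu_{R/I^{n+1}}$ to $D^\bu_{R/I^n}$ and $\boR\Hom_R(I^n/I^{n+1},D^\bu)$. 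Since $I^n/I^{n+1}$ is a finitely presented $R/I$-module (by coherence of $R$), the last term is controlled by $D^\bu_{R/I}$, which is already known dualizing by descent, and the three dualizing conditions transfer to $D^\bu_{R/I^{n+1}}$.

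For $(2)\Rightarrow(1)$, the identification $D^\bu\simeq\varinjlim_n D^\bu_{R/I^n}$ in $\sD(R\Modl_{I\tors})$ (valid since $D^\bu$ is $I$-torsion) transfers (i$^\circ$) and (iii$^\circ$) to $D^\bu$ via, respectively, closure of fp-injective $I$-torsion $R$-modules under directed colimits (Lemma~\ref{fp-injective-torsion-modules-lemma}(c,e)) and passage to the inverse limit of the $R/I^n$-homotheties (using $\fR=\varprojlim R/I^n$ and weak proregularity to kill the derived $\varprojlim^1$). Condition~(ii) uses that $\Hom_R(K^\bu,{-})$ commutes with directed colimits for a finite f.g.\ projective complex $K^\bu$, so that $\Hom_R(K^\bu,D^\bu)=\varinjlim_n\Hom_R(K^\bu,D^\bu_{R/I^n})$, and combines this with the $R/I^n$-finite-presentability from (ii$^\circ$) to produce a bounded above complex of f.g.\ projective $R$-modules quasi-isomorphic to $\Hom_R(K^\bu,D^\bu)$. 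The main obstacle is this last step, since the $R/I^n$-level finite presentation statements must be glued coherently across the tower to an $R$-level pseudo-coherence property, requiring careful use of both the Koszul machinery of Section~\ref{prelims-on-wpr-secn} and coherence of each $R/I^n$.
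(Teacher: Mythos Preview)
Your overall scheme---proving the cycle with $(3)\Rightarrow(2)$ by induction on~$n$ and $(2)\Rightarrow(1)$ by passage to the direct/inverse limit over the tower---differs substantially from the paper's approach, which establishes the equivalence \emph{condition-by-condition}: Proposition~\ref{definitions-of-dualizing-complex-homoldim} shows that (i) for $D^\bu$ is equivalent to~(i$^\circ$) for $D^\bu_{R/I^n}$, Proposition~\ref{definitions-of-dualizing-complex-finiteness} does the same for (ii)~and~(ii$^\circ$), and Proposition~\ref{definitions-of-dualizing-complex-homothety} for (iii)~and~(iii$^\circ$).  No induction on~$n$ is needed, since each proposition already treats ``some $n$'' and ``all $n$'' simultaneously.

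Your approach has two genuine gaps.  First, in the ascent step of $(3)\Rightarrow(2)$, the distinguished triangle relating $D^\bu_{R/I^{n+1}}$, $D^\bu_{R/I^n}$, and $\boR\Hom_R(I^n/I^{n+1},D^\bu)$ does not by itself transfer the homothety condition~(iii$^\circ$): the condition $R/I^{n+1}\simeq\boR\Hom_{R/I^{n+1}}(D^\bu_{R/I^{n+1}},D^\bu_{R/I^{n+1}})$ is quadratic, not linear, in $D^\bu_{R/I^{n+1}}$.  One can repair this by checking that the $R/I^{n+1}$-double-dual restricts to the $R/I^n$- and $R/I$-double-duals on the two outer terms and then invoking reflexivity of finitely presented modules with respect to a dualizing complex, but this argument is missing and not entirely trivial.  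Your ``base change along $\fR\twoheadrightarrow R/I^n$'' in $(1)\Rightarrow(2)$ for (iii$^\circ$) is similarly too vague.  The paper avoids all of this via Proposition~\ref{definitions-of-dualizing-complex-homothety}, whose proof rests on the Nakayama-type Lemma~\ref{contraflat-contramodule-complex-nakayama} applied to the finite complex of contraflat $I$-contramodules $\Hom_R(G^\bu,H^\bu)$, together with the identification $R/I\ot_R\Hom_R(G^\bu,H^\bu)\simeq\Hom_{R/I}(\Hom_R(R/I,G^\bu),\Hom_R(R/I,H^\bu))$ of Lemma~\ref{finitely-presented-hom-tensor-isomorphisms}(b).

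Second, your direct-limit argument for condition~(i) in $(2)\Rightarrow(1)$ does not work as written: the representatives of $D^\bu_{R/I^n}$ by finite complexes of fp-injective $R/I^n$-modules do not form a compatible directed system, and an fp-injective $R/I^n$-module is not in general fp-injective as an $I$-torsion $R$-module (Lemma~\ref{fp-injective-torsion-modules-lemma}(e) requires the condition at \emph{every} level~$m$, not just at~$n$).  The paper's Proposition~\ref{definitions-of-dualizing-complex-homoldim} instead starts from a single bounded-below fp-injective resolution $G^\bu$ of $D^\bu$ in $R\Modl_{I\tors}$, truncates it at the appropriate degree~$j$, and verifies by a direct $\Ext$-computation that the cocycle module $Z=\ker(G^j\to G^{j+1})$ is fp-injective, using that finitely presented $I$-torsion modules are iterated extensions of finitely presented $R/I$-modules.
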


 In particular, in the case of a Noetherian commutative ring $R$,
Theorem~\ref{definitions-of-dualizing-complex-comparison} tells us that
the definition of a dualizing complex of $I$\+torsion $R$\+modules
above in this section agrees with the one in~\cite[Section~D.1]{Pcosh}
(see also~\cite[Sections~D.5 and~E.2]{Pcosh} for generalizations).
 The proof of Theorem~\ref{definitions-of-dualizing-complex-comparison}
occupies most of the remaining part of
Section~\ref{dualizing-complexes-secn}.

\begin{prop} \label{definitions-of-dualizing-complex-homoldim}
 Let $R$ be a commutative ring and $I\subset R$ be a finitely generated
ideal such that the ring $R$ is $I$\+adically coherent.
 Let $G^\bu$ be a bounded below complex of fp\+injective $I$\+torsion
$R$\+modules.
 Then the following conditions are equivalent:
\begin{enumerate}
\item the complex $G^\bu$ is quasi-isomorphic to a finite complex of
fp\+injective $I$\+torsion $R$\+modules;
\item for every integer $n\ge1$ the complex of $R/I^n$\+modules\/
$\Hom_R(R/I^n,G^\bu)$ is quasi-isomorphic to a finite complex
of fp\+injective $R/I^n$\+modules concentrated in the cohomological
degrees\/~$\le j$, where a fixed integer~$j$ does not depend on~$n$;
\item the complex $G^\bu$ is cohomologically bounded and there exists
an integer $n\ge1$ for which the complex of $R/I^n$\+modules\/
$\Hom_R(R/I^n,G^\bu)$ is quasi-isomorphic to a finite complex
of fp\+injective $R/I^n$\+modules.
\end{enumerate}
\end{prop}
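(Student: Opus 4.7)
The plan is to dispose of $(1) \Rightarrow (2) \Rightarrow (3)$ directly and then focus on $(3) \Rightarrow (1)$. For $(1) \Rightarrow (2)$: given a finite complex $F^\bu$ of fp\+injective $I$\+torsion $R$\+modules quasi-isomorphic to $G^\bu$, I apply $\Hom_R(R/I^n,{-})$ termwise. By Lemma~\ref{fp-injective-torsion-modules-lemma}(d) this functor is exact on short exact sequences of fp\+injectives in $R\Modl_{I\tors}$, hence preserves quasi-isomorphisms between complexes of such; by Lemma~\ref{fp-injective-torsion-modules-lemma}(e) it carries fp\+injective $I$\+torsion $R$\+modules to fp\+injective $R/I^n$\+modules. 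Thus $\Hom_R(R/I^n,G^\bu) \simeq \Hom_R(R/I^n,F^\bu)$ is a finite complex of fp\+injective $R/I^n$\+modules in the degree range of $F^\bu$, independently of~$n$. For $(2) \Rightarrow (3)$: since $G^\bu$ is $I$\+torsion, $G^\bu = \varinjlim_n \Hom_R(R/I^n,G^\bu)$, so $H^i(G^\bu) = \varinjlim_n H^i(\Hom_R(R/I^n,G^\bu)) = 0$ for $i > j$, giving cohomological boundedness.

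For $(3) \Rightarrow (1)$, let $n_0$ and $j_0$ witness~(3): $L^\bu_{n_0} := \Hom_R(R/I^{n_0},G^\bu)$ is quasi-isomorphic to a finite complex $E^\bu$ of fp\+injective $R/I^{n_0}$\+modules concentrated in degrees $\le j_0$. Let $b_{\mathrm{coh}}$ bound the cohomology of $G^\bu$, and set $b = \max(b_{\mathrm{coh}},j_0)$. Define $F^\bu$ by $F^i = G^i$ for $i \le b$, $F^{b+1} = \ker(G^{b+1} \to G^{b+2})$ (equal to $\mathrm{im}(G^b \to G^{b+1})$ since $H^{b+1}(G^\bu) = 0$), and $F^i = 0$ for $i > b+1$. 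The inclusion $F^\bu \hookrightarrow G^\bu$ is a quasi-isomorphism, so the whole argument reduces to showing that the one non-fp-injective-by-construction term $F^{b+1}$ is fp\+injective as an $I$\+torsion $R$\+module.

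The core of the argument is the identity
\[
 \Ext^k_{R\Modl_{I\tors}}(M, F^{b+1}) = H^{k+b+1}(\Hom_R(M, G^\bu))
\]
for every finitely presented $I$\+torsion $R$\+module~$M$ and every $k \ge 1$, obtained from the tail coresolution $F^{b+1} \hookrightarrow G^{b+1} \to G^{b+2} \to \dotsb$ together with the exactness of $\Hom_R(M,{-})$ on short exact sequences of fp\+injectives for finitely presented~$M$ (Lemma~\ref{fp-injective-torsion-modules-lemma}(d)). In the base case $I^{n_0} M = 0$, the module~$M$ is finitely presented over $R/I^{n_0}$ and $\Hom_R(M, G^\bu) = \Hom_{R/I^{n_0}}(M, L^\bu_{n_0}) \simeq \Hom_{R/I^{n_0}}(M, E^\bu)$, whose cohomology is concentrated in degrees~$\le j_0$; since $k+b+1 \ge b+2 \ge j_0+2 > j_0$ for $k \ge 1$, the right-hand side vanishes, giving $\Ext^k_{R\Modl_{I\tors}}(M, F^{b+1}) = 0$.

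The main obstacle is handling finitely presented modules~$M$ with $I^{n_M} M = 0$ for $n_M > n_0$, which I plan to address by strong induction on~$n_M$. Consider the short exact sequence $0 \to I^{n_0} M \to M \to M/I^{n_0} M \to 0$. The quotient $M/I^{n_0} M$ is finitely presented over $R/I^{n_0}$ (by quotienting the presentation of~$M$), so it falls into the base case above. The submodule $I^{n_0} M$ is finitely generated in~$M$, hence finitely presented in the locally coherent Grothendieck category $R\Modl_{I\tors}$ (Lemma~\ref{torsion-modules-locally-coherent}), and it satisfies $I^{n_M - n_0}(I^{n_0} M) = 0$, so its annihilator index is $n_M - n_0 < n_M$, permitting the inductive step. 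The long exact Ext-sequence for the above short exact sequence, with both flanking terms vanishing in degrees $\ge 1$ by base case and induction, yields $\Ext^k_{R\Modl_{I\tors}}(M, F^{b+1}) = 0$ for all $k \ge 1$; hence $F^{b+1}$ is fp\+injective, completing the argument.
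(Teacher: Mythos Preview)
Your proof is correct and follows essentially the same approach as the paper: truncate $G^\bu$ to a finite complex whose top term must be shown fp\+injective, compute $\Ext^k(M,{-})$ via the tail coresolution and the quasi-isomorphism over $R/I^{n_0}$, then extend to all finitely presented $I$\+torsion modules by a filtration argument.  The only cosmetic differences are that the paper filters $M$ via the $I$\+adic filtration $0\to IM\to M\to M/IM\to 0$ (reducing to finitely presented $R/I$\+modules) rather than by $I^{n_0}$, and proves $(2)\Rightarrow(3)$ by lifting a nonzero cohomology class to a map from $R/I^n$ rather than via the direct limit $G^\bu=\varinjlim_n\Hom_R(R/I^n,G^\bu)$.
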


\begin{proof}
 The implications (1)~$\Longrightarrow$~(2) and
(1)~$\Longrightarrow$~(3) hold in view of
Lemma~\ref{fp-injective-torsion-modules-lemma}(c\+-e).

 (2)~$\Longrightarrow$~(3) It suffices to prove that $H^i(G^\bu)=0$
for $i>j$.
 Indeed, we have $H^i\Hom_R(R/I^n,G^\bu)=0$ for all $n\ge1$, and
$G^\bu\simeq\varinjlim_{n\ge1}\Hom_R(R/I^n,G^\bu)$.

 (3)~$\Longrightarrow$~(1)
 Let $j$~be an integer such that $H^i(G^\bu)=0$ for $i\ge j$ and
the complex of $R/I^n$\+modules $\Hom_R(R/I^n,G^\bu)$ is
quasi-isomorphic to a bounded below complex of fp\+injective
$R/I^n$\+modules concentrated in the cohomological degrees~$\le j$.
 Put $Z=\ker(G^j\to G^{j+1})$ and $N^t=G^{j+t}$ for all integers
$t\ge0$; so $0\rarrow Z\rarrow N^0\rarrow N^1\rarrow N^2\rarrow\dotsb$
is the shifted canonical truncation $(\tau_{\ge j}G^\bu)[j]$ of
the complex~$G^\bu$.
 Then $N^\bu$ is a coresolution of the $R$\+module $Z$ by fp\+injective
$I$\+torsion $R$\+modules $N^t$, and we need to prove that
the $I$\+torsion $R$\+module $Z$ is fp\+injective, too.
 After this is established, we will have a finite complex of
fp\+injective $I$\+torsion $R$\+modules $\dotsb\rarrow G^{j-2}\rarrow
G^{j-1}\rarrow Z\rarrow0$ quasi-isomorphic to~$G^\bu$.

 For any finitely presented $R/I^n$\+module $M$ and every $t\ge1$,
we have
\begin{multline*}
 \Ext^t_{R\Modl_{I\tors}}(M,Z)=
 H^t\Hom_R(M,N^\bu)=H^{j+t}\Hom_R(M,G^\bu) \\
 \simeq H^{j+t}\Hom_{R/I^n}(M,\Hom_R(R/I^n,G^\bu))=0,
\end{multline*}
since a quasi-isomorphism of the bounded below complex of
fp\+injective $R/I^n$\+mod\-ules $\Hom_R(R/I^n,G^\bu)$ with a bounded
below complex of fp\+injective $R/I^n$\+mod\-ules concentrated in
the cohomological degrees~$\le j$ is preserved by the functor
$\Hom_{R/I^n}(M,{-})$.
 The latter assertion holds by
Lemma~\ref{fp-injective-torsion-modules-lemma}(c\+-d) (applied to
the zero ideal in the coherent commutative ring~$R/I^n$).

 It remains to point out that every finitely presented $I$\+torsion
$R$\+module $L$ is a finitely iterated extension of finitely
presented $R/I$\+modules in the abelian category $R\Modl_{I\tors}$.
 Indeed, we have a short exact sequence $0\rarrow IL\rarrow L
\rarrow L/IL\rarrow0$ in $R\Modl_{I\tors}$, and the $R/I$\+module
$L/IL$ is finitely presented as the cokernel of a morphism from
a direct sum of a finite number of copies of $L$ into~$L$.
 In view of the coherence assumption on $R$, the kernel $IL$ of
the morphism $L\rarrow L/IL$ is a finitely presented $I$\+torsion
$R$\+module, too; and we can proceed by induction.
\end{proof}

 The following lemma is obvious.

\begin{lem} \label{finitely-presented-ext-lemma}
 Let $R$ be a coherent commutative ring and $I\subset R$ be
a finitely generated ideal.
 Let $V$ be an $R/I$\+module.
 Then the following conditions are equivalent:
\begin{enumerate}
\item the $R/I$\+module $V$ is finitely presented;
\item the $R/I$\+module\/ $\Ext_R^i(R/I,V)$ is finitely presented
for every $i\ge0$.
\end{enumerate}
\end{lem}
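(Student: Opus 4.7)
The plan is to treat the two implications separately; neither is deep, but one needs to assemble the right standard facts.

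First, for (2)$\Rightarrow$(1), I would simply specialize to $i=0$. Since $V$ is an $R/I$-module, every $R$-linear map $R/I\to V$ is $R/I$-linear, and such a map is determined by the image of $1+I$; one verifies immediately that $\Ext_R^0(R/I,V)=\Hom_R(R/I,V)\simeq V$ as $R/I$-modules. So if the left-hand side is finitely presented over $R/I$, then so is $V$.

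For (1)$\Rightarrow$(2), the key preliminary is that the ring $R/I$ is itself coherent. This follows from Lemma \ref{quotient-ring-coherence-lemma}(b) applied to $A=R$, since $I$ is finitely generated as an $R$-module. Moreover, because $R$ is coherent and $I$ is finitely generated, $R/I$ is finitely presented as an $R$-module; and then, using coherence of $R$ inductively, $R/I$ admits a (possibly infinite) resolution $P_\bullet\to R/I\to 0$ by finitely generated free $R$-modules. Applying $\Hom_R(-,V)$ to this resolution, I compute $\Ext_R^i(R/I,V)=H^i\Hom_R(P_\bullet,V)$.

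Each term $\Hom_R(P_n,V)$ is a finite direct sum of copies of $V$ (because $P_n$ is finitely generated free), and therefore finitely presented as an $R/I$-module by hypothesis~(1). The differentials in the complex $\Hom_R(P_\bullet,V)$ are $R/I$-linear, since maps into $V$ automatically factor through $P_n/IP_n$. Because $R/I$ is coherent, the full subcategory of finitely presented $R/I$-modules is abelian, in particular closed under kernels and cokernels of morphisms between its objects. Hence the subquotients $\Ext_R^i(R/I,V)=\ker/\operatorname{im}$ of the complex $\Hom_R(P_\bullet,V)$ are finitely presented $R/I$-modules for every $i\ge 0$.

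There is essentially no serious obstacle; the one point worth checking carefully is the existence of the resolution $P_\bullet$ by finitely generated free $R$-modules, which rests on the fact that in a coherent ring a finitely presented module has syzygies that are again finitely presented, so one can iterate. Everything else is a direct application of Lemma~\ref{quotient-ring-coherence-lemma}(b) and the standard closure properties of finitely presented modules over a coherent ring.
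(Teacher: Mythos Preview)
Your proof is correct and follows essentially the same route as the paper: for (2)$\Rightarrow$(1) you both take $i=0$, and for (1)$\Rightarrow$(2) you both use a resolution of $R/I$ by finitely generated projective (free) $R$-modules, available because $R$ is coherent and $R/I$ is finitely presented over~$R$. The only cosmetic difference is that you invoke Lemma~\ref{quotient-ring-coherence-lemma}(b) to argue that $R/I$ is coherent and then close under kernels and cokernels in $R/I\Modl$, whereas the paper instead keeps Lemma~\ref{quotient-ring-coherence-lemma}(a) in mind, implicitly working over $R$ (where coherence is given) and transferring finite presentation back and forth between $R$ and $R/I$; the two viewpoints are equivalent here.
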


\begin{proof}
 The implication (2)~$\Longrightarrow$~(1) holds because one can take
$i=0$.
 The implication (1)~$\Longrightarrow$~(2) holds for any $R$\+module
$V$, not necessarily annihilated by~$I$, because the $R$\+module $R/I$
has a resolution by finitely generated projective $R$\+modules.
 (It is helpful to keep Lemma~\ref{quotient-ring-coherence-lemma}(a)
in mind.)
\end{proof}

\begin{prop} \label{definitions-of-dualizing-complex-finiteness}
 Let $R$ be a coherent commutative ring and $I\subset R$ be a finitely
generated ideal.
 Let $G^\bu$ be a finite complex of $R$\+modules.
 Then the following two conditions are equivalent:
\begin{enumerate}
\item the cohomology $R/I$\+modules of the complex of $R/I$\+modules\/
$\boR\Hom_R(R/I,G^\bu)$ are finitely presented; {\hfuzz=2pt\par}
\item for every finite complex of finitely generated projective
$R$\+modules $K^\bu$ with $I$\+torsion cohomology $R$\+modules,
the complex of $R$\+modules\/ $\Hom_R(K^\bu,G^\bu)$ is
quasi-isomorphic to a bounded above complex of finitely generated
projective $R$\+modules.
\end{enumerate}
\end{prop}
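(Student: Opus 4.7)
For (1) implies (2), I plan to use a Postnikov filtration argument. First reduce to showing that $\boR\Hom_R(L, G^\bu)$ has finitely presented cohomology for every finitely presented $I$\+torsion $R$\+module~$L$: by Lemma~\ref{torsion-modules-locally-finitely-presentable}, such $L$ is a finitely presented $R/I^k$\+module for some $k$, and its $I$\+adic filtration expresses it as a finite iterated extension of finitely presented $R/I$\+modules, each of which is a cokernel of a morphism between finite direct sums of copies of~$R/I$. Condition~(1), long exact sequences of $\Ext$, Lemma~\ref{quotient-ring-coherence-lemma}(a), and the closure properties of Lemma~\ref{torsion-modules-locally-coherent} then yield the claim for all such~$L$. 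For a perfect $R$\+complex $K^\bu$ with $I$\+torsion cohomology, the modules $H^n(K^\bu)$ are finitely presented and $I$\+torsion, so the finite Postnikov filtration of $K^\bu$ in $\sD^\bb(R\Modl)$ reduces the general case to the single-cohomology case just handled, and coherence of~$R$ then produces the required quasi-isomorphism of $\Hom_R(K^\bu, G^\bu)$ with a bounded above complex of finitely generated projective $R$\+modules.

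For (2) implies (1), my plan is induction on the number $m$ of generators of $I = (s_1, \dotsc, s_m)$. The base case $m = 0$ is immediate, since then $R/I = R$ and condition~(2) applied to $K^\bu = R$ gives (1) directly. For the inductive step, set $I' = (s_1, \dotsc, s_{m-1})$ and introduce the auxiliary complex $G'^\bu = \boR\Hom_R(K_\bu(R, s_m), G^\bu)$, equivalently the homotopy fiber of multiplication by~$s_m$ on~$G^\bu$. The key observation is that condition~(2) for the pair $(I, G^\bu)$ implies condition~(2) for $(I', G'^\bu)$: for a perfect $R$\+complex $K^\bu$ with $I'$\+torsion cohomology, the tensor product $K^\bu \ot_R K_\bu(R, s_m)$ is the cone of multiplication by~$s_m$ on~$K^\bu$, hence a perfect $R$\+complex whose cohomology modules are simultaneously $I'$\+torsion and $s_m$\+torsion, and therefore $I$\+torsion; the $\Hom$\+tensor adjunction then transports the pseudo-coherence. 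The inductive hypothesis applied to~$I'$ and~$G'^\bu$ gives that $\boR\Hom_R(R/I', G'^\bu) \simeq \boR\Hom_R(A^\bu, G^\bu)$ has finitely presented cohomology, where $A^\bu = R/I' \ot_R^\boL K_\bu(R, s_m)$ is the two-term complex with cohomology $\ker(s_m \colon R/I' \to R/I')$ in degree~$-1$ and $R/I$ in degree~$0$.

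To extract from this the finite presentability of $\Ext^*_R(R/I, G^\bu)$, the plan is a secondary strong induction on the cohomological degree~$i$, showing that $\Ext^j_R(R/I, G^\bu)$ is finitely presented for all $j \le i$; the base is that $\Ext^j_R(-, G^\bu) = 0$ for $j$ strictly below the lowest cohomological degree of~$G^\bu$. The main obstacle will be simultaneous control over $\Ext^*_R(\ker(s_m), G^\bu)$, and this is resolved by the following: once $\Ext^j_R(R/I, G^\bu)$ is known to be finitely presented for all $j \le i - 1$, the closure of the class of finitely presented $R/I$\+modules under kernels, cokernels, and extensions (ensured by coherence of~$R/I$, Lemma~\ref{quotient-ring-coherence-lemma}(b)) together with a finite presentation of $\ker(s_m)$ by copies of~$R/I$ implies that $\Ext^j_R(M, G^\bu)$ is also finitely presented for all $j \le i - 1$ and every finitely presented $R/I$\+module~$M$, in particular for $M = \ker(s_m)$. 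The long exact sequence associated with the distinguished triangle $\ker(s_m)[1] \to A^\bu \to R/I$ then presents $\Ext^i_R(R/I, G^\bu)$ as an extension of a finitely presented submodule of the finitely presented module $\Ext^i_R(A^\bu, G^\bu)$ by a finitely presented quotient of $\Ext^{i-2}_R(\ker(s_m), G^\bu)$, closing the induction by coherence of~$R$.
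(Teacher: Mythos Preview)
Your approach is different from the paper's in both directions, and while (1)$\Rightarrow$(2) is sound, there is a genuine gap in your (2)$\Rightarrow$(1).

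The paper exploits the dual Koszul complex $K^\bu=K^\bu(R,\bs)$ directly rather than inducting on the number of generators.  For (2)$\Rightarrow$(1), it observes that $R/I\ot_R K^\bu$ has zero differential with $R/I$ as a direct summand; hence it suffices that $\boR\Hom_R(R/I\ot_R K^\bu,G^\bu)\simeq\boR\Hom_R(R/I,\Hom_R(K^\bu,G^\bu))$ have finitely presented cohomology, which follows at once from~(2) and a resolution of $R/I$ by finitely generated projectives.  For (1)$\Rightarrow$(2), the paper reduces to $K^\bu=K^\bu(R,\bs)$ by generation, notes that the cohomology of $V^\bu=\Hom_R(K^\bu,G^\bu)$ is annihilated by~$I$, and then lifts finite presentability from $\boR\Hom_R(R/I,V^\bu)$ to $V^\bu$ by a short induction on cohomological degree using Lemma~\ref{finitely-presented-ext-lemma}.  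Both directions are thus one-step Koszul arguments, avoiding your outer induction on~$m$ entirely.

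The gap in your (2)$\Rightarrow$(1) is in the secondary induction.  You claim that once $\Ext^j_R(R/I,G^\bu)$ is finitely presented for all $j\le i-1$, the same holds for $\Ext^j_R(M,G^\bu)$ for every finitely presented $R/I$-module~$M$, appealing to closure under kernels, cokernels, and extensions together with a presentation of~$M$ by copies of~$R/I$.  But the long exact sequence from $0\to N\to(R/I)^a\to M\to 0$ reads
\[
\Ext^{j-1}_R(N,G^\bu)\to\Ext^j_R(M,G^\bu)\to\Ext^j_R(R/I,G^\bu)^a\to\Ext^j_R(N,G^\bu),
\]
so controlling $\Ext^j_R(M,G^\bu)$ at the top degree $j=i-1$ requires $\Ext^{i-1}_R(N,G^\bu)$, which is exactly the statement you are trying to prove for another finitely presented module at the \emph{same} degree.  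The d\'evissage is circular.  The claim itself is correct, but you need a different justification: use the adjunction $\boR\Hom_R(M,G^\bu)\simeq\boR\Hom_{R/I}(M,W^\bu)$ with $W^\bu=\boR\Hom_R(R/I,G^\bu)$, and observe that for $j\le i-1$ one has $H^j\boR\Hom_{R/I}(M,W^\bu)\simeq H^j\boR\Hom_{R/I}(M,\tau_{\le i-1}W^\bu)$ since $\boR\Hom_{R/I}(M,\tau_{\ge i}W^\bu)$ lives in degrees~$\ge i$.  As $\tau_{\le i-1}W^\bu$ has finitely presented cohomology throughout and $R/I$ is coherent, this settles the claim.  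With this patch your argument goes through, though the paper's Koszul trick is much shorter.
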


\begin{proof}
 Notice first of all that, over a coherent ring $R$, a bounded above
complex of $R$\+modules $V^\bu$ is quasi-isomorphic to a bounded
above complex of finitely generated projective $R$\+modules if and
only if the cohomology $R$\+modules of $V^\bu$ are finitely presented.
 So condition~(2) means simply that the cohomology $R$\+modules of
the complex $\Hom_R(K^\bu,G^\bu)$ are finitely presented.

 (2)~$\Longrightarrow$~(1)
 Let $\bs=(s_1,\dotsc,s_m)$ be a finite sequence of generators of
the ideal $I\subset R$ and $K^\bu=K^\bu(R,\bs)$ be the related dual
Koszul complex from Section~\ref{prelims-on-wpr-secn}.
 Then the complex of $R/I$\+modules $R/I\ot_RK^\bu$ has zero
differential, and the one-term complex $R/I$ is a direct summand of
$R/I\ot_RK^\bu$.
 Hence it suffices to show that the complex
$\boR\Hom_R(R/I\ot_RK^\bu,\>G^\bu)\simeq
\boR\Hom_R(R/I,\Hom_R(K^\bu,G^\bu))$ has finitely presented
$R/I$\+modules of cohomology.

 Now $V^\bu=\Hom_R(K^\bu,G^\bu)$ is a finite complex of $R$\+modules
with finitely presented cohomology $R$\+modules (by~(2)).
 Pick a resolution $P_\bu$ of the $R$\+module $R/I$ by finitely
generated projective $R$\+modules.
 Then the derived category object $\boR\Hom_R(R/I,V^\bu)\in
\sD^+(R/I\Modl)$, viewed as an object of the derived category of
$R$\+modules, is represented by the complex $\Hom_R(P_\bu,V^\bu)$.
 It follows that the cohomology $R/I$\+modules of the complex
$\Hom_R(R/I,V^\bu)$ are finitely presented as $R$\+modules.
 Using Lemma~\ref{quotient-ring-coherence-lemma}(a), we see that
these cohomology modules are also finitely presented over~$R/I$.

 (1)~$\Longrightarrow$~(2)
 By~\cite[Proposition~6.1]{BN} or~\cite[Proposition~6.6]{Rou}
(see also~\cite[Proposition~5.1 and proof of Lemma~5.4(a)]{Pmgm}),
it suffices to consider the case of the dual Koszul complex
$K^\bu(R,\bs)$ in the role of~$K^\bu$, as in the previous argument.
 Then all the elements of $I$ act on the complex $K^\bu$ by
endomorphisms homotopic to zero, so the cohomology $R$\+modules
of the complex $V^\bu=\Hom_R(K^\bu,G^\bu)$ are annihilated by~$I$.
 Furthermore, it follows from~(1) that the cohomology $R/I$\+modules
of the complex $\boR\Hom_R(R/I,\Hom_R(K^\bu,G^\bu))\simeq
\Hom_R(K^\bu,\boR\Hom_R(R/I,G^\bu))$ are finitely presented.
 We need to prove that the cohomology $R$\+modules of the complex
$V^\bu=\Hom_R(K^\bu,G^\bu)$ are finitely presented.

 For any bounded below complex of $R$\+modules $V^\bu$ with
the cohomology $R$\+modules annihilated by~$I$, the claim is that
the cohomology $R$\+modules of $V^\bu$ are finitely presented whenever
the cohomology $R/I$\+modules of $\boR\Hom_R(R/I,V^\bu)$ are
finitely presented.
 This is provable by increasing induction on the cohomological degree
using Lemma~\ref{finitely-presented-ext-lemma} and taking
Lemma~\ref{quotient-ring-coherence-lemma}(a) into account.
\end{proof}

 Given a finitely generated ideal $I$ in a commutative ring $R$,
we denote by $\fR=\varprojlim_{n\ge1}R/I^n$ the $I$\+adic completion
of the ring~$R$.

\begin{lem} \label{contraflat-contramodule-complex-nakayama}
 Let $I$ be a finitely generated ideal in a commutative ring $R$
and $F^\bu$ be a bounded above complex of contraflat quotseparated
$I$\+contramodule $R$\+modules.
 Then the complex $F^\bu$ is acyclic if and only if the complex of
$R/I$\+modules $F^\bu/IF^\bu$ is acyclic.
\end{lem}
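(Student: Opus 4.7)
The claim has two directions, both of which rest on the quotseparated version of Lemma~\ref{contraflat-contramodules-lemma}(a), which (as noted in the proof of that lemma) holds without weak proregularity.

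For the ``only if'' direction, suppose $F^\bu$ is acyclic with $F^n = 0$ for $n > n_0$. Writing $Z^i = \ker d^i$, acyclicity yields short exact sequences $0 \to Z^i \to F^i \to Z^{i+1} \to 0$ for every $i$. Starting from $Z^{n_0+1} = 0$ and using that the class of contraflat quotseparated $I$\+contramodule $R$\+modules is closed under kernels of surjections, a backward induction shows that each $Z^i$ is contraflat quotseparated. The functor $R/I \otimes_R {-}$ then preserves these short exact sequences (the tensor-exactness half of Lemma~\ref{contraflat-contramodules-lemma}(a), applied with $M = R/I$), and splicing the results gives the acyclicity of $F^\bu/IF^\bu$.

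For the ``if'' direction, assume $F^\bu/IF^\bu$ is acyclic and, toward a contradiction, that $F^\bu$ is not acyclic. Let $i_0$ be the largest integer with $H^{i_0}(F^\bu) \neq 0$. By the same backward induction (using $H^j(F^\bu) = 0$ for $j > i_0$), each $Z^j$ with $j \ge i_0+1$ is contraflat quotseparated, and consequently so is $Z^{i_0}$, via the short exact sequence $0 \to Z^{i_0} \to F^{i_0} \to Z^{i_0+1} \to 0$ (which is exact because $H^{i_0+1}(F^\bu) = 0$). Applying $R/I \otimes_R {-}$ produces the short exact sequence
$$0 \to Z^{i_0}/IZ^{i_0} \to F^{i_0}/IF^{i_0} \to Z^{i_0+1}/IZ^{i_0+1} \to 0.$$
Injectivity of the first map translates into the equality $IZ^{i_0} = Z^{i_0} \cap IF^{i_0}$ inside $F^{i_0}$, while the factorization of the reduced differential $\bar d^{i_0}$ through the injection $Z^{i_0+1}/IZ^{i_0+1} \hookrightarrow F^{i_0+1}/IF^{i_0+1}$ identifies $\ker\bar d^{i_0}$ with the image of $Z^{i_0}/IZ^{i_0}$ in $F^{i_0}/IF^{i_0}$. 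Combining this with the acyclicity of $F^\bu/IF^\bu$, which forces $\ker\bar d^{i_0} = \im\bar d^{i_0-1} = (B^{i_0} + IF^{i_0})/IF^{i_0}$ (where $B^{i_0} = \im d^{i_0-1}$), I obtain $Z^{i_0} + IF^{i_0} = B^{i_0} + IF^{i_0}$ as subsets of $F^{i_0}$; intersecting with $Z^{i_0}$ and using $IZ^{i_0} = Z^{i_0} \cap IF^{i_0}$ yields $Z^{i_0} = B^{i_0} + IZ^{i_0}$. Passing to the quotient, $H^{i_0}(F^\bu) = I \cdot H^{i_0}(F^\bu)$.

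Finally, $H^{i_0}(F^\bu)$ is an $I$\+contramodule $R$\+module, since it is a subquotient in the abelian category $R\Modl_{I\ctra}$. Nakayama's lemma for contramodules---the standard fact that any $I$\+contramodule $P$ with $IP = P$ vanishes, cf.~\cite{Pcta}---forces $H^{i_0}(F^\bu) = 0$, contradicting the choice of~$i_0$. The main obstacle is the careful bookkeeping of kernels and images modulo $I$ in the critical middle step, which requires using \emph{both} halves of the quotseparated form of Lemma~\ref{contraflat-contramodules-lemma}(a): closure of the contraflat quotseparated class under kernels of surjections (to know that $Z^{i_0}$ is contraflat quotseparated, hence $Z^{i_0}/IZ^{i_0} \hookrightarrow F^{i_0}/IF^{i_0}$ is injective), and exactness of $R/I \otimes_R {-}$ on short exact sequences of such contramodules (to identify $\ker\bar d^{i_0}$).
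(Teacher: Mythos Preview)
Your proof is correct and uses the same essential ingredients as the paper's proof: closure of the class of contraflat quotseparated $I$\+contramodules under kernels of surjections, exactness of $R/I\otimes_R{-}$ on short exact sequences of such modules, and the contramodule Nakayama lemma.

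The organization differs slightly. The paper peels off the top term: letting $i$ be the largest index with $F^i\neq 0$, the top cohomology $H^i(F^\bu)$ is simply the cokernel of $d^{i-1}$, and acyclicity of $F^\bu/IF^\bu$ immediately gives surjectivity of $\bar d^{i-1}$, hence $H^i(F^\bu)=I\cdot H^i(F^\bu)$, so $H^i(F^\bu)=0$ by Nakayama; one then replaces $F^\bu$ by $\dotsb\rarrow F^{i-2}\rarrow\ker d^{i-1}\rarrow0$ and repeats. Your version instead jumps directly to the largest $i_0$ with $H^{i_0}(F^\bu)\neq0$, which forces you to carry out the intersection argument $Z^{i_0}\cap IF^{i_0}=IZ^{i_0}$ to extract $H^{i_0}=IH^{i_0}$. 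The paper's route avoids this bookkeeping at the cost of an outer induction; your route does all the work in a single step but needs the explicit identification of $\ker\bar d^{i_0}$. Both are fine.
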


\begin{proof}
 The ``only if'' implication follows from
Lemma~\ref{contraflat-contramodules-lemma}(a) and its proof.
 The ``if'' is essentially the result of~\cite[Corollary~0.3]{PSY2}.
 Specifically, let $i$~be the largest integer such that $F^i\ne0$.
 Then the map $F^{i-1}/IF^{i-1}\rarrow F^i/IF^i$ is surjective
by assumption.
 So, denoting by $P=H^i(F^\bu)$ the cokernel of the differential
$F^{i-1}\rarrow F^i$, we obtain a (quotseparated) $I$\+contramodule
$R$\+module $P$ such that $P=IP$.
 Using the fact that $sQ=Q$ implies $Q=0$ for an $s$\+contramodule
$R$\+module $Q$, and arguing by induction on the number of generators
of the ideal $I$, one proves that $P=0$.
 Hence the differential $F^{i-1}\rarrow F^i$ is surjective; denote its
kernel by ${}'\!F^{i-1}$.
 By Lemma~\ref{contraflat-contramodules-lemma}(a) and its proof,
${}'\!F^{i-1}$ is a contraflat quotseparated $I$\+contramodule
$R$\+module and the short sequence $0\rarrow R/I\ot_R{}'\!F^{i-1}
\rarrow R/I\ot_RF^{i-1}\rarrow R/I\ot_RF^i\rarrow0$ is exact.
 Replacing the complex $F^\bu$ with the complex $\dotsb\rarrow
F^{i-3}\rarrow F^{i-2}\rarrow{}'\!F^{i-1}\rarrow0$ and proceeding in
the same way, one proves the desired assertion by decreasing induction
on the cohomological degree.
\end{proof}

\begin{prop} \label{definitions-of-dualizing-complex-homothety}
 Let $R$ be a commutative ring and $I\subset R$ be a finitely generated
ideal such that the ring $R$ is $I$\+adically coherent.
 Let $G^\bu$ be a finite complex of fp\+injective $I$\+torsion
$R$\+modules, $H^\bu$ be a finite complex of injective $I$\+torsion
$R$\+modules, and $G^\bu\rarrow H^\bu$ be a morphism of complexes.
 Then the homothety map\/ $\fR\rarrow\Hom_R(G^\bu,H^\bu)$ is
a quasi-isomorphism of complexes of $R$\+modules if and only if
the homothety map
$R/I\rarrow\Hom_{R/I}(\Hom_R(R/I,G^\bu),\Hom_R(R/I,H^\bu))$
is a quasi-isomorphism of complexes of $R/I$\+modules.
\end{prop}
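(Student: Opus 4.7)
Write $U^\bu = \Hom_R(G^\bu, H^\bu)$ and $V^\bu = \Hom_{R/I}(\Hom_R(R/I, G^\bu), \Hom_R(R/I, H^\bu))$, and denote by $\Phi\:\fR \rarrow U^\bu$ and $\phi\:R/I\rarrow V^\bu$ the two homothety maps associated with the given morphism $\alpha\:G^\bu\rarrow H^\bu$. The plan is to show that the mapping cone $C^\bu$ of~$\Phi$ is a bounded complex of contraflat quotseparated $I$\+contramodule $R$\+modules whose reduction modulo~$I$ is the mapping cone of~$\phi$, and then apply the Nakayama-type Lemma~\ref{contraflat-contramodule-complex-nakayama} to transfer acyclicity back and forth.

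For the contraflatness step, each term of~$U^\bu$ is a finite direct sum of modules of the form $\Hom_R(G^i, H^j)$; such modules are $I$\+adically separated and complete (hence quotseparated) $I$\+contramodule $R$\+modules by Lemma~\ref{tors-contra-tensor-hom-lemma}(b), and they are contraflat by Lemma~\ref{tensor-hom-fp-injective-contraflat-lemma}(b), exploiting precisely the fp\+injectivity of~$G^i$ and the injectivity of~$H^j$ in $R\Modl_{I\tors}$. The module $\fR = \Lambda_I(R)$ is projective in $R\Modl_{I\ctra}^\qs$ by the discussion at the end of Section~\ref{prelims-on-wpr-secn}, hence also contraflat and quotseparated. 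Thus every term of~$C^\bu$ is a contraflat quotseparated $I$\+contramodule.

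To identify $U^\bu/IU^\bu \cong V^\bu$, apply Lemma~\ref{finitely-presented-hom-tensor-isomorphisms}(b) with $M = R/I$, $K = G^i$, and $H = H^j$ termwise:
\[
 R/I \ot_R \Hom_R(G^i, H^j) \simeq \Hom_R(\Hom_R(R/I, G^i),\, H^j)
 = \Hom_{R/I}(\Hom_R(R/I, G^i),\, \Hom_R(R/I, H^j)),
\]
where the final equality uses that $\Hom_R(R/I, G^i)$ is an $R/I$\+module, so every $R$\+linear map from it to~$H^j$ automatically lands in $\Hom_R(R/I, H^j)$. Naturality in both variables ensures that the termwise isomorphisms assemble into an isomorphism of complexes $U^\bu/IU^\bu \cong V^\bu$ compatible with the differentials, and a brief tracing through shows that the induced map $\bar\Phi\: R/I = \fR/I\fR \rarrow U^\bu/IU^\bu$ is precisely the homothety~$\phi$.

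Finally, because $R/I$ is $I$\+torsion, the functor $R/I \ot_R {-}$ preserves exactness of short exact sequences of contraflat quotseparated $I$\+contramodule $R$\+modules by Lemma~\ref{contraflat-contramodules-lemma}(a), whose quotseparated version requires no weak proregularity hypothesis. Applying it to the tautological short exact sequence $0 \rarrow U^\bu \rarrow C^\bu \rarrow \fR[1] \rarrow 0$ identifies $C^\bu/IC^\bu$ with the mapping cone of~$\phi$, and Lemma~\ref{contraflat-contramodule-complex-nakayama} then yields that $C^\bu$ is acyclic if and only if $C^\bu/IC^\bu$ is. The main technical step is the identification in the previous paragraph, which crucially relies on the fp\+injectivity of~$G^\bu$ and the injectivity of~$H^\bu$ in $R\Modl_{I\tors}$; the rest is essentially formal.
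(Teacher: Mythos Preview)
Your proof is correct and follows essentially the same approach as the paper: both establish that $\Hom_R(G^\bu,H^\bu)$ and $\fR$ are finite complexes of contraflat quotseparated $I$\+contramodules via Lemmas~\ref{tors-contra-tensor-hom-lemma}(b) and~\ref{tensor-hom-fp-injective-contraflat-lemma}(b), identify the reduction modulo~$I$ with the $R/I$\+side using Lemma~\ref{finitely-presented-hom-tensor-isomorphisms}(b), and conclude with Lemma~\ref{contraflat-contramodule-complex-nakayama} applied to the cone. Your write-up is simply more detailed; the one minor redundancy is that the mapping-cone sequence $0\rarrow U^\bu\rarrow C^\bu\rarrow\fR[1]\rarrow0$ is termwise split, so invoking Lemma~\ref{contraflat-contramodules-lemma}(a) to pass it through $R/I\ot_R{-}$ is unnecessary.
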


\begin{proof}
 By Lemmas~\ref{tors-contra-tensor-hom-lemma}(b)
and~\ref{tensor-hom-fp-injective-contraflat-lemma}(b),
$\Hom_R(G^\bu,H^\bu)$ is a finite complex of contraflat quotseparated
$I$\+contramodule $R$\+modules.
 So is the one-term complex~$\fR$.
 Furthermore, by
Lemma~\ref{finitely-presented-hom-tensor-isomorphisms}(b) we have
$\Hom_{R/I}(\Hom_R(R/I,G^\bu),\Hom_R(R/I,H^\bu))\simeq
R/I\ot_R\Hom_R(G^\bu,H^\bu)$.
 It remains to apply the result of
Lemma~\ref{contraflat-contramodule-complex-nakayama} to the cone of
the morphism of complexes $\fR\rarrow\Hom_R(G^\bu,H^\bu)$.
\end{proof}

\begin{proof}[Proof of
Theorem~\ref{definitions-of-dualizing-complex-comparison}]
 The coherence assumption on the ring $R$ obviously implies
the $I$\+adic coherence
(see Lemma~\ref{quotient-ring-coherence-lemma}(b)).
 For any weakly proregular finitely generated ideal $I$ in
a commutative ring $R$ such that the ring $R$ is $I$\+adically coherent,
and for any bounded below complex of fp\+injective $I$\+torsion
$R$\+modules $G^\bu$, the complex of $R/I^n$\+modules
$\Hom_R(R/I^n,G^\bu)$ represents the derived category object
$\boR\Hom_R(R/I^n,G^\bu)$.

 Indeed, following the definition above, in order to compute the derived
category object $\boR\Hom_R(R/I^n,G^\bu)\in\sD^+(R/I^n\Modl)$, one
needs to apply the functor $\Hom_R(R/I^n,{-})$ to a bounded below
complex of injective $R$\+modules $J^\bu$ quasi-isomorphic to~$G^\bu$.
 Let $H^\bu$ be a bounded below complex of injective objects in
$R\Modl_{I\tors}$ quasi-isomorphic to~$G^\bu$.
 Then we have quasi-isomorphisms of complexes of $R$\+modules
$G^\bu\rarrow H^\bu\rarrow J^\bu$.
 It follows from
Lemma~\ref{fp-injective-torsion-modules-lemma}(c\+-d) that
the induced map of complexes of $R/I^n$\+modules $\Hom_R(R/I^n,G^\bu)\rarrow\Hom_R(R/I^n,H^\bu)$ is a quasi-isomorphism.
 By Lemma~\ref{Hom-tensor-underived=derived-lemma}(a), the induced map
of complexes of $R/I^n$\+modules $\Hom_R(R/I^n,H^\bu)\rarrow
\Hom_R(R/I^n,J^\bu)$ is a quasi-isomorphism, too.

 Now let $G^\bu$ be a bounded below complex of fp\+injective
$I$\+torsion $R$\+modules quasi-isomorphic to~$D^\bu$.
 Then it is clear from
Proposition~\ref{definitions-of-dualizing-complex-homoldim} that
the complex $D^\bu$ is quasi-isomorphic to a finite complex of
fp\+injective $I$\+torsion $R$\+modules if and only if the complex
of $R/I^n$\+modules $\boR\Hom_R(R/I^n,D^\bu)$ is quasi-isomorphic to
a finite complex of fp\+injective $R/I^n$\+modules for some
(or equivalently, for all) $n\ge1$.
 So condition~(i) holds for $D^\bu$ if and only if
condition~(i${}^\circ$) holds for $\boR\Hom_R(R/I^n,D^\bu)$.

 Now assume that condition~(i) is satisfied for~$D^\bu$.
 Let $G^\bu$ be a finite complex of fp\+injective $I$\+torsion
$R$\+modules quasi-isomorphic to~$D^\bu$.
 Applying Proposition~\ref{definitions-of-dualizing-complex-finiteness}
(and replacing $I$ by $I^n$ if needed), we see that the complex $D^\bu$
satisfies condition~(ii) from Section~\ref{auslander-and-bass-secn}
if and only if the complex of $R/I^n$\+modules $\boR\Hom_R(R/I^n,D^\bu)$
satisfies condition~(ii${}^\circ$) for some (or equivalently, for all)
$n\ge1$.

 Finally, in order to compare conditions~(iii) and~(iii${}^\circ$),
we need the assumption that all fp\+injective $I$\+torsion
$R$\+modules have finite injective dimensions in $R\Modl_{I\tors}$.
 Let $H^\bu$ be a finite complex of injective $I$\+torsion $R$\+modules
endowed with a quasi-isomorphism $G^\bu\rarrow H^\bu$.
 Then Proposition~\ref{definitions-of-dualizing-complex-homothety}
(for the ideal $I^n$ in the role of~$I$) tells us that the complex
$D^\bu$ satisfies condition~(iii) from
Section~\ref{auslander-and-bass-secn} if and only if the complex
of $R/I^n$\+modules $\boR\Hom_R(R/I^n,D^\bu)$ satisfies
condition~(iii${}^\circ$) for some (or equivalently, for all) $n\ge1$.
\end{proof}

\begin{ex}
 Let $R$ be a coherent commutative ring and $I\subset R$ be a weakly
proregular finitely generated ideal.
 Let $D_R^\bu$ be a dualizing complex of $R$\+modules, and let
$G_R^\bu$ be a finite complex of fp\+injective $R$\+modules
quasi-isomorphic to $D_R^\bu$, as per condition~(i${}^\circ$).

 Then $D_{R/I}^\bu=G_{R/I}^\bu=\Hom_R(R/I,G_R^\bu)$ is a finite complex
of fp\+injective $R/I$\+modules.
 Condition~(ii${}^\circ$) is satisfied for $D_{R/I}^\bu$, since
the complex $D_{R/I}^\bu$ represents the derived category
$\boR\Hom_R(R/I,D_R^\bu)\in\sD^+(R/I\Modl)$, which, viewed as an object
of the derived category of $R$\+modules, can be also computed using
a resolution of $R/I$ by finitely generated projective $R$\+modules.
 Lemma~\ref{quotient-ring-coherence-lemma}(a) is helpful here.

 Assuming further that all fp\+injective $R$\+modules have finite
injective dimensions, condition~(iii${}^\circ$) is also satisfied
for~$D_{R/I}^\bu$.
 Indeed, let $H_R^\bu$ be a finite complex of injective $R$\+modules
endowed with a quasi-isomorphism $G_R^\bu\rarrow H_R^\bu$.
 Then $R\rarrow\Hom_R(G_R^\bu,H_R^\bu)$ is a quasi-isomorphism of
finite complexes of flat $R$\+modules (by
Lemma~\ref{tensor-hom-fp-injective-contraflat-lemma}(b) applied to
the zero ideal in~$R$), so it remains a quasi-isomorphism after
the functor $R/I\ot_R{-}$ is applied.
 It remains to point out the natural isomorphisms of complexes of
$R/I$\+modules $R/I\ot_R\Hom_R(G_R^\bu,H_R^\bu)\simeq
\Hom_{R/I}(\Hom_R(R/I,G_R^\bu),H_R^\bu)\simeq
\Hom_{R/I}(\Hom_R(R/I,G_R^\bu),\Hom_R(R/I,H_R^\bu))$.
 Thus $D^\bu_{R/I}$ is a dualizing complex of $R/I$\+modules.

 Now consider the finite complex of $I$\+torsion $R$\+modules
$D^\bu=\boR\Gamma_I(D_R^\bu)=\Gamma_I(G_R^\bu)$.
 By~\cite[Theorem~3.2(iii)]{Sch}, \cite[Corollary~4.26]{PSY},
or~\cite[Lemma~1.2(a)]{Pmgm}, the complex $D^\bu$ is quasi-isomorphic
to the tensor product $K^\bu_\infty(R,\bs)\ot_RD_R^\bu$, where $\bs$~is
any finite sequence of generators of the ideal $I\subset R$ and
$K^\bu_\infty(R,\bs)$ is the infinite dual Koszul complex from
Section~\ref{prelims-on-wpr-secn}.

 We claim that $D^\bu$ is a dualizing complex of $I$\+torsion
$R$\+modules, because it satisfies the condition of
Theorem~\ref{definitions-of-dualizing-complex-comparison}(2).
 In fact, $\Gamma_I(G_R^\bu)$ is a finite complex of fp\+injective
$I$\+torsion $R$\+modules as per the paragraph after
Lemma~\ref{fp-injective-torsion-modules-lemma}, so one has
$\boR\Hom_R(R/I^n,D^\bu)=\Hom_R(R/I^n,\Gamma_I(G_R^\bu))=
\Hom_R(R/I^n,G_R^\bu)=D_{R/I^n}^\bu$ in view of the argument in
the beginning of the proof of
Theorem~\ref{definitions-of-dualizing-complex-comparison}.
\end{ex}

\Section{Co-Contra Correspondence for a Dualizing Complex}

 Let $R$ be a commutative ring and $I\subset R$ be a weakly proregular
finitely generated ideal such that the ring $R$ is $I$\+adically
coherent, and let $D^\bu$ be a dualizing complex of $I$\+torsion
$R$\+modules.
 Let us choose the parameter~$l_2$ in such a way that $D^\bu$ is
quasi-isomorphic to a complex of fp\+injective $I$\+torsion $R$\+modules
concentrated in the cohomological degrees $-d_1\le m\le l_2$.

\begin{prop} \label{dualizing-complex-minimal-corresponding-classes}
 Let $R$ be a commutative ring and $I\subset R$ be a weakly proregular
finitely generated ideal such that the ring $R$ is $I$\+adically
coherent.
 Let $n\ge0$ be an integer such that the injective dimensions of
fp\+injective $I$\+torsion $R$\+modules (as objects of
$R\Modl_{I\tors}$) do not exceed~$n$.

 Let $L^\bu=D^\bu$ be a dualizing complex of $I$\+torsion $R$\+modules,
and let the parameter~$l_2$ be chosen as stated above.
 Then the related minimal corresponding classes\/
$\sE^{l_2}=\sE^{l_2}(D^\bu)$ and\/ $\sF^{l_2}=\sF^{l_2}(D^\bu)$ 
(defined in Section~\ref{minimal-classes-secn}) are contained
in the classes of fp\+injective $I$\+torsion $R$\+modules
and contraflat $I$\+contramodule $R$\+modules,
$\sE^{l_2}\subset R\Modl_{I\tors}^\fpinj$ and\/
$\sF^{l_2}\subset R\Modl_{I\ctra}^\ctrfl$.

 Moreover, the classes\/ $\sE=R\Modl_{I\tors}^\fpinj$ and\/
$\sF=R\Modl_{I\ctra}^\ctrfl$ satisfy conditions (I\+-IV)
from Section~\ref{abstract-classes-secn} with the parameters
$l_1=n+d_1$ and~$l_2$.
\end{prop}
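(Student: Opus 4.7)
The plan is to verify conditions (I)--(IV) for the pair $(\sE,\sF)=(R\Modl_{I\tors}^\fpinj,\,R\Modl_{I\ctra}^\ctrfl)$ with the stated parameters $l_1=n+d_1$ and $l_2$, and then to deduce the containments $\sE^{l_2}\subset R\Modl_{I\tors}^\fpinj$ and $\sF^{l_2}\subset R\Modl_{I\ctra}^\ctrfl$ as an immediate consequence of Remark~\ref{minimal-classes-remark}. So I would state and prove the ``moreover'' part first, and then obtain the containments from it.

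Conditions (I) and (II) are essentially bookkeeping. For (I), closure of the fp-injectives under extensions and cokernels of admissible monomorphisms is part of Lemma~\ref{fp-injective-torsion-modules-lemma}(c), and all injective objects of $R\Modl_{I\tors}$ are fp-injective by definition. For (II), closure of the contraflat $I$-contramodules under extensions and kernels of surjections is Lemma~\ref{contraflat-contramodules-lemma}(a), and the discussion just before that lemma records that every projective object of $R\Modl_{I\ctra}$ is contraflat.

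Condition (IV) is where the fp-injective resolution of $D^\bu$ gets used on the ``tensor side''. I would fix once and for all a quasi-isomorphism $G^\bu\to D^\bu$, where $G^\bu$ is a finite complex of fp-injective $I$-torsion $R$-modules concentrated in cohomological degrees $-d_1\le m\le l_2$ (such a $G^\bu$ exists by definition of a dualizing complex and the choice of $l_2$). Given $F\in\sF$, Corollary~\ref{contraflat-contramods-tensor-underived=derived-cor} identifies $D^\bu\ot_R^\boL F$ with the underived tensor product $G^\bu\ot_RF$, whose terms $G^m\ot_RF$ are fp-injective $I$-torsion by Lemma~\ref{tensor-hom-fp-injective-contraflat-lemma}(a). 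This complex sits in degrees $[-d_1,l_2]\subset[-l_1,l_2]$, which is exactly what (IV) requires.

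Condition (III) is the step on which I expect to spend the most care, since it is the only place where the hypothesis on~$n$ enters. Given $E\in R\Modl_{I\tors}^\fpinj$, the bound on injective dimension allows me to choose a finite coresolution $0\to E\to H^0\to\dotsb\to H^n\to 0$ with $H^p$ injective in $R\Modl_{I\tors}$. Then Lemma~\ref{Hom-tensor-underived=derived-lemma}(a), applied to the bounded below complex of injectives $H^\bu$ and the complex $N^\bu=G^\bu$, lets me represent $\boR\Hom_R(D^\bu,E)$ by the honest $\Hom$ complex $\Hom_R(G^\bu,H^\bu)$. This is a finite complex concentrated in degrees $-l_2\le m\le n+d_1=l_1$, each term being a finite direct sum (equivalently finite product) of groups $\Hom_R(G^{-q},H^p)$, which are contraflat $I$-contramodule $R$-modules by Lemma~\ref{tensor-hom-fp-injective-contraflat-lemma}(b); closure under finite products completes the verification. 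With (I)--(IV) in hand for $(\sE,\sF)$, Remark~\ref{minimal-classes-remark} gives the containments for $\sE^{l_2}$ and $\sF^{l_2}$ and the proposition follows.
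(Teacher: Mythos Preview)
Your proof is correct and follows essentially the same route as the paper's own argument: first verify (I)--(IV) for the pair $(R\Modl_{I\tors}^\fpinj,\,R\Modl_{I\ctra}^\ctrfl)$ using the same ingredients (Lemma~\ref{fp-injective-torsion-modules-lemma}(c), Lemma~\ref{contraflat-contramodules-lemma}(a), a finite fp\+injective model ${}'\!D^\bu$ for $D^\bu$, the injective coresolution $H^\bu$ of~$E$, Lemma~\ref{Hom-tensor-underived=derived-lemma}(a), Corollary~\ref{contraflat-contramods-tensor-underived=derived-cor}, and Lemma~\ref{tensor-hom-fp-injective-contraflat-lemma}), then invoke Remark~\ref{minimal-classes-remark} for the containments.
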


\begin{proof}
 In view of Remark~\ref{minimal-classes-remark}, it suffices to prove
the moreover clause.
 Indeed, condition~(I) holds for $\sE=R\Modl_{I\tors}^\fpinj$ by
Lemma~\ref{fp-injective-torsion-modules-lemma}(c), and condition~(II)
holds for $\sF=R\Modl_{I\ctra}^\ctrfl$ by
Lemma~\ref{contraflat-contramodules-lemma}(a) and the paragraph
preceding it.

 To prove condition~(III), let $E\in\sE$ be an fp\+injective
$I$\+torsion $R$\+module.
 By assumption, there exists a finite injective coresolution
$0\rarrow E\rarrow H^0\rarrow H^1\rarrow\dotsb\rarrow H^n\rarrow0$
of $E$ in $R\Modl_{I\tors}$.
 Let ${}'\!D^\bu$ be a complex of fp\+injective $I$\+torsion
$R$\+modules concentrated in the cohomological degrees between
$-d_1$ and~$l_2$ and quasi-isomorphic to~$D^\bu$.
 By Lemma~\ref{Hom-tensor-underived=derived-lemma}(a), the complex
of $R$\+modules $\Hom_R({}'\!D^\bu,H^\bu)$ represents the derived
category object $\boR\Hom_R(D^\bu,E)\in\sD^+(R\Modl)$.
 Clearly, the complex $\Hom_R({}'\!D^\bu,H^\bu)$ is concentrated
in the cohomological degrees from $-l_2$ to $n+d_1$.
 By Lemma~\ref{tensor-hom-fp-injective-contraflat-lemma}(b),
\,$\Hom_R({}'\!D^\bu,H^\bu)$ is a complex of contraflat
$I$\+contramodule $R$\+modules.

 To prove condition~(IV), let $F\in\sF$ be a contraflat
$I$\+contramodule $R$\+module.
 By Corollary~\ref{contraflat-contramods-tensor-underived=derived-cor},
the complex of $R$\+modules ${}'\!D^\bu\ot_RF$ represents the derived
category object $D^\bu\ot_R^\boL F\in\sD^-(R\Modl)$.
 Clearly, the complex ${}'\!D^\bu\ot_RF$ is concentrated in
the cohomological degrees from $-d_1$ to~$l_2$.
 By Lemma~\ref{tensor-hom-fp-injective-contraflat-lemma}(a),
\,${}'\!D^\bu\ot_RF$ is a complex of fp\+injective $I$\+torsion
$R$\+modules.
\end{proof}

\begin{prop} \label{torsion-modules-coderived-categories-prop}
 Let $I$ be a finitely generated ideal in a commutative ring $R$ such
that the ring $R$ is $I$\+adically coherent.
 Then \par
\textup{(a)} For any coderived category symbol\/ $\st=\co$ or\/~$\bco$,
the inclusion of exact categories $R\Modl_{I\tors}^\fpinj\rarrow
R\Modl_{I\tors}$ induces a triangulated equivalence of the coderived
categories
$$
 \sD^\st(R\Modl_{I\tors}^\fpinj)\simeq\sD^\st(R\Modl_{I\tors}).
$$ \par
\textup{(b)} A complex in the exact category $R\Modl_{I\tors}^\fpinj$
is Becker-coacyclic if and only if it is acyclic, that is\/
$\Ac^\bco(R\Modl_{I\tors}^\fpinj)=\Ac(R\Modl_{I\tors}^\fpinj)$.
 So, for the exact category $R\Modl_{I\tors}^\fpinj$, the Becker
coderived category coincides with the derived category,
$$
 \sD^\bco(R\Modl_{I\tors}^\fpinj)=\sD(R\Modl_{I\tors}^\fpinj).
$$
 The inclusions of additive/exact/abelian categories
$R\Modl_{I\tors}^\inj\rarrow R\Modl_{I\tors}^\fpinj\rarrow
R\Modl_{I\tors}$ induce triangulated equivalences
$$
 \sK(R\Modl_{I\tors}^\inj)\simeq\sD(R\Modl_{I\tors}^\fpinj)
 =\sD^\bco(R\Modl_{I\tors}^\fpinj)\simeq\sD^\bco(R\Modl_{I\tors}).
$$ \par
\textup{(c)} If all fp\+injective $I$\+torsion $R$\+modules have finite
injective dimensions, then the classes of Positselski-coacyclic and
Becker-coacyclic complexes in the abelian category $R\Modl_{I\tors}$
coincide, that is\/
$\Ac^\co(R\Modl_{I\tors})=\Ac^\bco(R\Modl_{I\tors})$.
 The inclusion of additive/abelian categories $R\Modl_{I\tors}^\inj
\rarrow R\Modl_{I\tors}$ induces a triangulated equivalence between
the homotopy category and the coderived category
$$
 \sK(R\Modl_{I\tors}^\inj)\simeq
 \sD^\co(R\Modl_{I\tors})=\sD^\bco(R\Modl_{I\tors})
$$
in this case.
\end{prop}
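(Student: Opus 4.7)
The plan is to first establish the ambient structure: $\sA := R\Modl_{I\tors}$ is a locally coherent Grothendieck category by Lemma~\ref{torsion-modules-locally-coherent}, and $\sA^\fpinj := R\Modl_{I\tors}^\fpinj$ is a coresolving subcategory closed under direct limits and infinite direct sums that contains all injective objects of $\sA$, by Lemma~\ref{fp-injective-torsion-modules-lemma}(c). Regarded as an exact category with the inherited exact structure, $\sA^\fpinj$ therefore has enough injectives---precisely the objects of $\sA^\inj$. These are the only ambient facts the argument uses.

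I would begin with part~(b). An acyclic complex in $\sA^\fpinj$ is the same as a complex acyclic in $\sA$ whose cocycles $Z^n$ are fp-injective, so that each $0\to Z^n\to E^n\to Z^{n+1}\to 0$ is admissible in $\sA^\fpinj$. Given such $E^\bu$ and a chain map $f\colon E^\bu\to J^\bu$ into a complex of injective objects of $\sA^\fpinj$, I would construct the null-homotopy $s^n\colon E^n\to J^{n-1}$ by induction on~$n$: the chain-map relation together with the induction hypothesis forces $(f^n-ds^n)|_{Z^n}=0$, so $f^n-ds^n$ factors through $E^n/Z^n\cong Z^{n+1}\hookrightarrow E^{n+1}$ and extends to $s^{n+1}$ by injectivity of $J^{n-1}$ in $\sA^\fpinj$. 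For unbounded $E^\bu$, the induction has to be anchored via canonical truncations $\tau_{\ge-k}E^\bu$ handled compatibly and passed to the colimit, which is the subtle step. For the converse, I would fix an admissible injective coresolution $E^\bu\to I^\bu$ in $\sA^\fpinj$ (with $I^\bu$ a complex of injective objects of $\sA$) whose cone is acyclic: by the forward direction the cone is Becker-coacyclic, hence so is $I^\bu$, so $I^\bu$ is contractible, making $E^\bu$ quasi-isomorphic to a contractible complex and thus acyclic in $\sA^\fpinj$.

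For part~(a) with $\st=\bco$, I would combine~(b) with Theorem~\ref{becker-co-contra-derived-of-loc-pres-abelian}(a) and the observation that admissible injective coresolutions in $\sA^\fpinj$ take values in $\sA^\inj$, yielding $\sD^\bco(\sA^\fpinj)=\sD(\sA^\fpinj)\simeq\sK(\sA^\inj)\simeq\sD^\bco(\sA)$. For $\st=\co$, I would invoke the general principle that for a coresolving subcategory closed under direct limits and direct sums in a locally coherent Grothendieck category the inclusion induces an equivalence of Positselski coderived categories; this follows from \cite[Propositions~A.5.8 and~B.7.9]{Pcosh} combined with the existence of fp-injective coresolutions built from degreewise injective envelopes (whose cones are Positselski-coacyclic).

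For part~(c), I would first make the finite-injective-dimension hypothesis uniform via the argument of \cite[Lemma~E.2.2(a-b)]{Pcosh}, obtaining a single bound $n$ valid for all fp-injective $I$-torsion $R$-modules. Under this bound, any acyclic complex of injective objects of $\sA$ can be represented as a telescope of bounded acyclic complexes of injectives, each absolutely acyclic by \cite[Lemma~A.1.2]{Pcosh}, with the uniform bound keeping the telescope inside the Positselski-coacyclic class. Combining with Theorem~\ref{becker-co-contra-derived-of-loc-pres-abelian}(a) would then give $\sK(\sA^\inj)\simeq\sD^\co(\sA)$ and hence $\Ac^\co(\sA)=\Ac^\bco(\sA)$. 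The main obstacles are the unbounded case in part~(b)---anchoring the inductive null-homotopy across a colimit of truncations---and the analogous telescope argument in part~(c), which must promote Becker-coacyclic complexes of injectives to Positselski-coacyclic ones using only the uniform injective-dimension bound.
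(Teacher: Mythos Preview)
Your approach to part~(b) contains a genuine gap that you yourself flag but do not resolve.  The inductive construction of a null-homotopy for $E^\bu\to J^\bu$ requires, at each step, the prior step as input; for an unbounded complex there is no place to anchor.  Your proposed fix via canonical truncations $\tau_{\ge -k}E^\bu$ does not start either: at the bottom one needs $f^{-k}|_{Z^{-k}} = d_J^{-k-1}s^{-k}$, but $f^{-k}(Z^{-k})$ lands only in $\ker d_J^{-k}$, not in $\operatorname{im} d_J^{-k-1}$, since $J^\bu$ is an arbitrary complex of injectives, not an acyclic one.  So the bounded-below truncated problem has no null-homotopy to begin with, and there is nothing to pass to the colimit.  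The paper does not attempt this; it cites \cite[Proposition~6.11]{Sto2}, whose proof uses purity theory specific to locally coherent Grothendieck categories (roughly: an acyclic complex of fp\+injectives with fp\+injective cocycles is pure-acyclic, hence a direct limit of contractible complexes).  That ingredient is what your argument is missing.

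Two smaller points.  For part~(a) with $\st=\co$, the references \cite[Propositions~A.5.8 and~B.7.9]{Pcosh} concern finite (co)resolution dimension and do not apply here; the paper invokes the dual of \cite[Proposition~A.3.1(b)]{Pcosh}, which is exactly the statement that a coresolving subcategory closed under coproducts gives an equivalence of Positselski coderived categories---and this is precisely the ``fp-injective coresolution with Positselski-coacyclic cone'' argument you mention parenthetically.  For part~(a) with $\st=\bco$, the paper avoids relying on~(b) altogether: since the injective objects of $R\Modl_{I\tors}^\fpinj$ and of $R\Modl_{I\tors}$ coincide, a complex in $R\Modl_{I\tors}^\fpinj$ is Becker-coacyclic in one category iff in the other, and both Becker coderived categories identify with $\sK(R\Modl_{I\tors}^\inj)$ via Theorem~\ref{becker-co-contra-derived-of-loc-pres-abelian}(a).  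For part~(c), the paper simply cites \cite[Theorem~B.7.7(a)]{Pcosh}; your telescope idea is in the right spirit and is close to how that theorem is proved, but note that the uniform bound is needed not on fp\+injectives per se but on all objects (which follows, since every object has an fp\+injective coresolution of length~$0$ followed by the uniform bound).
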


\begin{proof}
 Part~(a): the case of the Positselski coderived category, $\st=\co$,
is an application of the dual version
of~\cite[Proposition~A.3.1(b)]{Pcosh}.
 (Notice that the full subcategory $R\Modl_{I\tors}^\fpinj$ is
closed under infinite direct sums in $R\Modl_{I\tors}$ by
Lemma~\ref{fp-injective-torsion-modules-lemma}(c).)
 The case of the Becker coderived category, $\st=\bco$, follows from
Theorem~\ref{becker-co-contra-derived-of-loc-pres-abelian}(a)
(for $\sA=R\Modl_{I\tors}$) together with the fact that the classes of
injective objects in the abelian category $R\Modl_{I\tors}$ and in
the exact category $R\Modl_{I\tors}^\fpinj$ coincide.

 The first assertion of part~(b) is a particular case (for
$\sA=R\Modl_{I\tors}$) of a result applicable to all locally coherent
Grothendieck categories~$\sA$; see~\cite[Proposition~6.11]{Sto2}.
 One only needs to point out that the classes of injective objects
in $R\Modl_{I\tors}^\fpinj$ and in $R\Modl_{I\tors}$ coincide
(as $R\Modl_{I\tors}^\fpinj$ is a coresolving subcategory closed
under direct summands in $R\Modl_{I\tors}$, by
Lemma~\ref{fp-injective-torsion-modules-lemma}(c)); so a complex
in $R\Modl_{I\tors}^\fpinj$ is Becker-coacyclic in
$R\Modl_{I\tors}^\fpinj$ if and only if it is Becker-coacyclic
in $R\Modl_{I\tors}$.
 The second assertion of part~(b) follows by part~(a) and
Theorem~\ref{becker-co-contra-derived-of-loc-pres-abelian}(a).
 Part~(c) is a particular case of~\cite[Theorem~B.7.7(a)]{Pcosh}.
\end{proof}

\begin{prop} \label{contramodules-contraderived-categories-prop}
 Let $I$ be a weakly proregular finitely generated ideal in
a commutative ring $R$ such that the ring $R$ is $I$\+adically coherent.
 Then \par
\textup{(a)} For any contraderived category symbol\/ $\st=\ctr$
or\/~$\bctr$, the inclusion of exact categories $R\Modl_{I\ctra}^\ctrfl
\rarrow R\Modl_{I\ctra}$ induces a triangulated equivalence of
the contraderived categories
$$
 \sD^\st(R\Modl_{I\ctra}^\ctrfl)\simeq\sD^\st(R\Modl_{I\ctra}).
$$ \par
\textup{(b)} A complex in the exact category $R\Modl_{I\ctra}^\ctrfl$
is Becker-contraacyclic if and only if it is acyclic, that is\/
$\Ac^\bctr(R\Modl_{I\ctra}^\ctrfl)=\Ac(R\Modl_{I\ctra}^\ctrfl)$.
 So, for the exact category $R\Modl_{I\ctra}^\ctrfl$, the Becker
contraderived category coincides with the derived category,
$$
 \sD^\bctr(R\Modl_{I\ctra}^\ctrfl)=\sD(R\Modl_{I\ctra}^\ctrfl).
$$
The inclusions of additive/exact/abelian categories
$R\Modl_{I\ctra}^\proj\rarrow R\Modl_{I\ctra}^\ctrfl\rarrow
R\Modl_{I\ctra}$ induce triangulated equivalences
$$
 \sK(R\Modl_{I\ctra}^\proj)\simeq\sD(R\Modl_{I\ctra}^\ctrfl)
 =\sD^\bctr(R\Modl_{I\ctra}^\ctrfl)\simeq\sD^\bctr(R\Modl_{I\ctra}).
$$ \par
\textup{(c)} If all contraflat $I$\+contramodule $R$\+modules have
finite projective dimensions, then the classes of
Positselski-contraacyclic and Becker-contraacyclic complexes in
the abelian category $R\Modl_{I\ctra}$ coincide, that is\/
$\Ac^\ctr(R\Modl_{I\ctra})=\Ac^\bctr(R\Modl_{I\ctra})$.
 The inclusion of additive/abelian categories $R\Modl_{I\ctra}^\proj
\rarrow R\Modl_{I\ctra}$ induces a triangulated equivalence between
the homotopy category and the contraderived category
$$
 \sK(R\Modl_{I\ctra}^\proj)\simeq
 \sD^\ctr(R\Modl_{I\ctra})=\sD^\bctr(R\Modl_{I\ctra})
$$
in this case.
\end{prop}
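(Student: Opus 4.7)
The plan is to prove the proposition in parallel with its torsion-module dual, Proposition~\ref{torsion-modules-coderived-categories-prop}, exploiting two structural facts: the full subcategory $R\Modl_{I\ctra}^\ctrfl\subset R\Modl_{I\ctra}$ is resolving and closed under infinite products (by Lemma~\ref{contraflat-contramodules-lemma}(a) and Lemma~\ref{products-of-contraflat-contramodules-lemma}), and all projective objects of the ambient abelian category $R\Modl_{I\ctra}$ already lie in $R\Modl_{I\ctra}^\ctrfl$, so both categories share the same projective class.

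For part~(a), the case $\st=\ctr$ will be handled by a direct application of \cite[Proposition~A.3.1(b)]{Pcosh} to the resolving inclusion $R\Modl_{I\ctra}^\ctrfl\subset R\Modl_{I\ctra}$, the exact-products hypothesis being supplied by Lemma~\ref{products-of-contraflat-contramodules-lemma}. The case $\st=\bctr$ reduces to Theorem~\ref{becker-co-contra-derived-of-loc-pres-abelian}(b) for $\sB=R\Modl_{I\ctra}$, since Becker-contraacyclicity of a complex in $R\Modl_{I\ctra}^\ctrfl$ is tested by the very same projectives $R\Modl_{I\ctra}^\proj$ as in $R\Modl_{I\ctra}$, and every object of $\sD^\bctr(R\Modl_{I\ctra})$ is represented by a complex of projectives, which automatically lies in $R\Modl_{I\ctra}^\ctrfl$; essential surjectivity and full faithfulness of the induced functor then fall out.

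Part~(b) is the main obstacle. The direction ``Becker-contraacyclic $\Rightarrow$ acyclic'' is immediate from Lemma~\ref{becker-co-contra-acyclic-are-acyclic}(b) applied to the exact category $R\Modl_{I\ctra}^\ctrfl$, which has enough projectives and in which kernels of morphisms computed inside $R\Modl_{I\ctra}$ remain contraflat by Lemma~\ref{contraflat-contramodules-lemma}(a). For the converse, I would start from an acyclic complex $F^\bu$ in $R\Modl_{I\ctra}^\ctrfl$ and observe that all its cocycles $Z^n\subset F^n$ are contraflat (again by Lemma~\ref{contraflat-contramodules-lemma}(a)); then for any projective $P\in R\Modl_{I\ctra}^\proj$, projectivity makes $\Hom_R(P,-)$ exact on the short sequences $0\to Z^n\to F^n\to Z^{n+1}\to 0$, so that $\Hom_R(P,F^\bu)$ is an acyclic complex of abelian groups. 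Promoting this to acyclicity of the total $\Hom$-complex $\Hom_R^\bu(P^\bu,F^\bu)$ for a possibly unbounded complex of projectives $P^\bu$ is the technical heart of the argument; the plan is to execute it by a bicomplex filtration argument dual to the one underlying the torsion analog \cite[Proposition~6.11]{Sto2}, where the crucial inputs are exactness of infinite products of abelian groups and the stability of the class of acyclic complexes with contraflat cocycles under degreewise products, ensuring that the filtration is complete Hausdorff and the associated spectral sequence converges strongly to zero.

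For part~(c), the extra hypothesis uniformly bounds the projective dimension of every object of $R\Modl_{I\ctra}^\ctrfl$ as an object of $R\Modl_{I\ctra}$, placing us in the scope of \cite[Theorem~B.7.7(b)]{Pcosh}. This yields the collapse $\Ac^\ctr(R\Modl_{I\ctra})=\Ac^\bctr(R\Modl_{I\ctra})$, and combining with Theorem~\ref{becker-co-contra-derived-of-loc-pres-abelian}(b) promotes the equivalence to $\sK(R\Modl_{I\ctra}^\proj)\simeq\sD^\ctr(R\Modl_{I\ctra})=\sD^\bctr(R\Modl_{I\ctra})$, as claimed.
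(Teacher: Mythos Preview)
Your treatment of parts~(a) and~(c) matches the paper's proof essentially verbatim: \cite[Proposition~A.3.1(b)]{Pcosh} for $\st=\ctr$, Theorem~\ref{becker-co-contra-derived-of-loc-pres-abelian}(b) plus coincidence of projective classes for $\st=\bctr$, and \cite[Theorem~B.7.7(b)]{Pcosh} for part~(c).

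The divergence is in part~(b), and there is a genuine gap. First a minor slip: Lemma~\ref{contraflat-contramodules-lemma}(a) only guarantees that kernels of \emph{surjective} morphisms of contraflat contramodules are contraflat, not kernels of arbitrary morphisms; so $R\Modl_{I\ctra}^\ctrfl$ need not have all kernels, and you cannot invoke Lemma~\ref{becker-co-contra-acyclic-are-acyclic}(b) directly on it. (The easy direction is salvageable: pass to the ambient abelian category $R\Modl_{I\ctra}$, where the lemma applies, then observe that in an acyclic complex the cocycles are kernels of surjections, hence contraflat.)

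The real problem is the hard direction. You describe it as a ``bicomplex filtration argument'' with inputs ``exactness of infinite products of abelian groups'' and ``stability of the class of acyclic complexes with contraflat cocycles under degreewise products.'' These inputs are not enough. For an unbounded complex of projectives $P^\bu$, the bicomplex $\Hom_R(P^{-j},F^i)$ is unbounded in both directions at each total degree, and neither column filtration is simultaneously exhaustive and complete; the spectral sequence does not converge on formal grounds. Knowing that $\Hom_R(P,F^\bu)$ is acyclic for each single projective $P$ does \emph{not} formally imply acyclicity of $\Hom_R^\bu(P^\bu,F^\bu)$---this is precisely the nontrivial content of Neeman's flat-and-projective theorem, and its failure to be a routine filtration argument is why that theorem is a named result. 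The paper does not attempt to prove this step from scratch; it invokes \cite[Theorems~5.1 and~6.1]{Pbc}, the contramodule generalization of Neeman's theorem, whose applicability rests on the structural input \cite[Proposition~1.5]{Pdc} (which in turn is where weak proregularity enters, guaranteeing $R\Modl_{I\ctra}^\qs=R\Modl_{I\ctra}$). Your intuition that the argument is ``dual to \cite[Proposition~6.11]{Sto2}'' is correct in spirit---that is exactly what \cite{Pbc} accomplishes---but the execution requires genuine contramodule-theoretic machinery, not just product-exactness.
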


\begin{proof}
 The assumption of weak proregularity of the ideal $I$ is only used
in parts~(a\+-b), and only in order to claim that all $I$\+contramodule
$R$\+modules are quotseparated, $R\Modl_{I\ctra}^\qs=R\Modl_{I\ctra}$
(cf.~\cite[Corollary~3.7 and Remark~3.8]{Pdc}).
 Without the weak proregularity assumption, part~(c) holds
in both the contexts of $R\Modl_{I\ctra}$ and $R\Modl_{I\ctra}^\qs$,
while parts~(a) and~(b) hold for $R\Modl_{I\ctra}^\qs$.

 Part~(a): the case of the Positselski contraderived category,
$\st=\ctr$, is an application of~\cite[Proposition~A.3.1(b)]{Pcosh}.
 (Notice that the full subcategory $R\Modl_{I\ctra}^\ctrfl$ is
closed under infinite products in $R\Modl_{I\ctra}$ by
Lemma~\ref{products-of-contraflat-contramodules-lemma}.)
 The case of the Becker contraderived category, $\st=\bctr$, follows
from Theorem~\ref{becker-co-contra-derived-of-loc-pres-abelian}(b)
(for $\sB=R\Modl_{I\ctra}$) together with the fact that the classes of
projective objects in the abelian category $R\Modl_{I\ctra}$ and in
the exact category $R\Modl_{I\ctra}^\ctrfl$ coincide.

 The first assertion of part~(b) is a particular case
of~\cite[Theorems~5.1 and~6.1]{Pbc}, which are applicable in view
of~\cite[Proposition~1.5]{Pdc}.
 One only needs to point out that the classes of projective objects
in $R\Modl_{I\ctra}^\ctrfl$ and in $R\Modl_{I\ctra}$ coincide
(as $R\Modl_{I\ctra}^\ctrfl$ is a resolving subcategory closed
under direct summands in $R\Modl_{I\ctra}$, by
Lemma~\ref{contraflat-contramodules-lemma}(a)); so a complex
in $R\Modl_{I\ctra}^\ctrfl$ is Becker-contraacyclic in
$R\Modl_{I\ctra}^\ctrfl$ if and only if it is Becker-coacyclic
in $R\Modl_{I\ctra}$.
 The second assertion of part~(b) follows by part~(a) and
Theorem~\ref{becker-co-contra-derived-of-loc-pres-abelian}(b).
 Part~(c) is a particular case of~\cite[Theorem~B.7.7(b)]{Pcosh}.
\end{proof}

\begin{cor} \label{dualizing-complex-becker-co-contra-derived-equiv}
 Let $I$ be a weakly proregular finitely generated ideal in
a commutative ring $R$ such that the ring $R$ is $I$\+adically coherent.
 Assume that the injective dimensions of fp\+injective $I$\+torsion
$R$\+modules (as objects of $R\Modl_{I\tors}$) are finite.
 Let $L^\bu=D^\bu$ be a dualizing complex of $I$\+torsion $R$\+modules.
 Then there is a triangulated equivalence between the Becker coderived
and contraderived categories
$$
 \sD^\bco(R\Modl_{I\tors})\simeq\sD^\bctr(R\Modl_{I\ctra})
$$
provided by (appropriately defined) mutually inverse derived functors\/
$\boR\Hom_R(D^\bu,{-})$ and $D^\bu\ot_R^\boL{-}$.
\end{cor}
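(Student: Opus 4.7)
The plan is to deduce the corollary by combining Theorem~\ref{abstract-classes-derived-equivalence} with Propositions~\ref{dualizing-complex-minimal-corresponding-classes}, \ref{torsion-modules-coderived-categories-prop}, and~\ref{contramodules-contraderived-categories-prop}. The idea is that fp-injective $I$-torsion modules and contraflat $I$-contramodule modules form ``just the right'' corresponding classes for a dualizing complex: from the torsion side, passing to fp-injectives collapses the Becker coderived category onto the ordinary derived category, and dually on the contramodule side, passing to contraflats collapses the Becker contraderived category onto the ordinary derived category, so an equivalence of the ordinary derived categories of these two ``small'' classes is exactly what is needed.

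First, I would choose the parameter $l_2$ so that $D^\bu$ is quasi-isomorphic to a complex of fp-injective $I$-torsion $R$-modules concentrated in cohomological degrees between $-d_1$ and $l_2$ (possible by the definition of a dualizing complex), and choose $l_1=n+d_1$, where $n$ is a finite upper bound for the injective dimensions of fp-injective $I$-torsion $R$-modules (which exists by hypothesis). Then Proposition~\ref{dualizing-complex-minimal-corresponding-classes} tells us that $\sE=R\Modl_{I\tors}^\fpinj$ and $\sF=R\Modl_{I\ctra}^\ctrfl$ satisfy the abstract conditions (I--IV) of Section~\ref{abstract-classes-secn} with these parameters. Applying Theorem~\ref{abstract-classes-derived-equivalence} in the case $\st=\varnothing$ then yields a triangulated equivalence
\[
 \sD(R\Modl_{I\tors}^\fpinj)\simeq\sD(R\Modl_{I\ctra}^\ctrfl)
\]
implemented by the derived functors $\boR\Hom_R(D^\bu,{-})$ and $D^\bu\ot_R^\boL{-}$.

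Next I would identify the left-hand side with the Becker coderived category of $R\Modl_{I\tors}$: by Proposition~\ref{torsion-modules-coderived-categories-prop}(b), acyclicity and Becker-coacyclicity coincide on the exact category $R\Modl_{I\tors}^\fpinj$, so $\sD(R\Modl_{I\tors}^\fpinj)=\sD^\bco(R\Modl_{I\tors}^\fpinj)$; and by Proposition~\ref{torsion-modules-coderived-categories-prop}(a) applied with $\st=\bco$, the inclusion of fp-injective torsion modules into all torsion modules induces $\sD^\bco(R\Modl_{I\tors}^\fpinj)\simeq\sD^\bco(R\Modl_{I\tors})$. In the same way, Proposition~\ref{contramodules-contraderived-categories-prop}(a,b) (which uses weak proregularity of $I$ to identify $R\Modl_{I\ctra}^\qs$ with $R\Modl_{I\ctra}$) gives
\[
 \sD(R\Modl_{I\ctra}^\ctrfl)=\sD^\bctr(R\Modl_{I\ctra}^\ctrfl)\simeq
 \sD^\bctr(R\Modl_{I\ctra}).
\]
Chaining the three equivalences produces the desired equivalence $\sD^\bco(R\Modl_{I\tors})\simeq\sD^\bctr(R\Modl_{I\ctra})$.

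The only nontrivial bookkeeping is to check that, under these identifications, the derived functors advertised in the statement really do implement the equivalence. The functors $\boR\Hom_R(D^\bu,{-})$ and $D^\bu\ot_R^\boL{-}$ coming from Theorem~\ref{abstract-classes-derived-equivalence} are constructed ``appropriately'' via the DG-functorial framework of \cite[Appendix~A]{Pps}, computed through resolutions by injective $J$-torsion modules and projective $J$-contramodule modules respectively; I would note that under the identifications $\sD^\bco(R\Modl_{I\tors})\simeq\sK(R\Modl_{I\tors}^\inj)$ and $\sK(R\Modl_{I\ctra}^\proj)\simeq\sD^\bctr(R\Modl_{I\ctra})$ provided by Theorem~\ref{becker-co-contra-derived-of-loc-pres-abelian} (cf.\ Proposition~\ref{torsion-modules-coderived-categories-prop}(c) and its dual), these are exactly the standard derived functors. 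The mildly subtle point, which I view as the main thing to verify carefully rather than a genuine obstacle, is the compatibility of the quotient and inclusion functors at both ends of the chain with the derived functors coming out of Theorem~\ref{abstract-classes-derived-equivalence}; but this is routine once one has Propositions~\ref{torsion-modules-coderived-categories-prop} and~\ref{contramodules-contraderived-categories-prop} at one's disposal.
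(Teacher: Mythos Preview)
Your proposal is correct and follows essentially the same route as the paper's own proof: use Proposition~\ref{dualizing-complex-minimal-corresponding-classes} to verify conditions (I--IV) for $\sE=R\Modl_{I\tors}^\fpinj$ and $\sF=R\Modl_{I\ctra}^\ctrfl$, apply Theorem~\ref{abstract-classes-derived-equivalence} with $\st=\varnothing$, and then invoke Propositions~\ref{torsion-modules-coderived-categories-prop}(a--b) and~\ref{contramodules-contraderived-categories-prop}(a--b) to identify the resulting derived categories with the Becker co/contraderived categories. One small caution: in your final bookkeeping paragraph you cite ``Proposition~\ref{torsion-modules-coderived-categories-prop}(c) and its dual'', but part~(c) of Proposition~\ref{contramodules-contraderived-categories-prop} requires finite projective dimension of contraflats, which is \emph{not} assumed here; the identification $\sD^\bctr(R\Modl_{I\ctra})\simeq\sK(R\Modl_{I\ctra}^\proj)$ you need comes from Theorem~\ref{becker-co-contra-derived-of-loc-pres-abelian}(b) alone.
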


\begin{proof}
 By Proposition~\ref{dualizing-complex-minimal-corresponding-classes},
the pair of classes $\sE=R\Modl_{I\tors}^\fpinj$ and
$\sF=R\Modl_{I\ctra}^\ctrfl$ satisfies conditions (I\+-IV)
from Section~\ref{abstract-classes-secn} with suitable parameters $l_1$
and~$l_2$.
 By Proposition~\ref{torsion-modules-coderived-categories-prop}(a\+-b),
we have $\sD^\bco(R\Modl_{I\tors})\simeq\sD(R\Modl_{I\tors}^\fpinj)$.
 By
Proposition~\ref{contramodules-contraderived-categories-prop}(a\+-b),
we have $\sD^\bctr(R\Modl_{I\ctra})\simeq\sD(R\Modl_{I\ctra}^\ctrfl)$.
 Now the desired triangulated equivalence $\sD(\sE)\simeq\sD(\sF)$
is provided by Theorem~\ref{abstract-classes-derived-equivalence}.
\end{proof}

\begin{cor} \label{dualizing-complex-all-derived-equivs}
 Let $I$ be a weakly proregular finitely generated ideal in
a commutative ring $R$ such that the ring $R$ is $I$\+adically coherent.
 Assume that the injective dimensions of fp\+injective $I$\+torsion
$R$\+modules (as objects of $R\Modl_{I\tors}$) are finite, and
the projective dimensions of contraflat $I$\+contramodule $R$\+modules
(as objects of $R\Modl_{I\ctra}$) are finite.
 Let $L^\bu=D^\bu$ be a dualizing complex of $I$\+torsion $R$\+modules.
 Then there is a triangualted equivalence between the coderived and
contraderived categories
$$
 \sD^{\co=\bco}(R\Modl_{I\tors})\simeq\sD^{\ctr=\bctr}(R\Modl_{I\ctra})
$$
provided by (appropriately defined) mutually inverse derived functors\/
$\boR\Hom_R(D^\bu,{-})$ and $D^\bu\ot_R^\boL{-}$.
 Here the notation\/ $\co=\bco$ and\/ $\ctr=\bctr$ means that
the Positselski co/contraderived categories coincide with
the Becker ones in this case.

 Furthermore, there is a chain of triangulated equivalences
\begin{multline*}
 \sD^{\co=\bco}(R\Modl_{I\tors})\simeq
 \sD^{\abs=\varnothing}(R\Modl_{I\tors}^\fpinj)\simeq
 \sD^{\abs=\varnothing}(\sE^{l_2})\simeq\sK(R\Modl_{I\tors}^\inj) \\
 \simeq\sK(R\Modl_{I\ctra}^\proj)\simeq\sD^{\abs=\varnothing}(\sF^{l_2})
 \simeq\sD^{\abs=\varnothing}(R\Modl_{I\ctra}^\ctrfl)\simeq
 \sD^{\ctr=\bctr}(R\Modl_{I\ctra}).
\end{multline*}
 Here the notation\/ $\sD^{\abs=\varnothing}(\sT)$ means that
the absolute derived category concides with the conventional derived
category for an exact category\/~$\sT$.
 Moreover, for any conventional derived category symbol\/ $\st=\bb$,
$+$, $-$, or\/~$\varnothing$, there are triangulated equivalences
\begin{multline*}
 \sD^\st(R\Modl_{I\tors}^\fpinj)\simeq\sD^\st(\sE^{l_2})\simeq
 \sK^\st(R\Modl_{I\tors}^\inj) \\ \simeq\sK^\st(R\Modl_{I\ctra}^\proj)
 \simeq\sD^\st(\sF^{l_2})\simeq\sD^\st(R\Modl_{I\ctra}^\ctrfl).
\end{multline*}
\end{cor}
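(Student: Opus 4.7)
The plan is to deduce this corollary from its predecessor (Corollary~\ref{dualizing-complex-becker-co-contra-derived-equiv}) by using the additional projective-dimension hypothesis to collapse the Becker co/contraderived categories to the Positselski ones, and then to patch together the long chain of equivalences from the exact-category machinery already established. First I would address the main displayed equivalence. The preceding Corollary~\ref{dualizing-complex-becker-co-contra-derived-equiv} already furnishes $\sD^\bco(R\Modl_{I\tors})\simeq\sD^\bctr(R\Modl_{I\ctra})$ via $\boR\Hom_R(D^\bu,{-})$ and $D^\bu\ot_R^\boL{-}$. Under the hypothesis that all fp\+injective $I$\+torsion $R$\+modules have finite injective dimension in $R\Modl_{I\tors}$, Proposition~\ref{torsion-modules-coderived-categories-prop}(c) gives $\sD^\co(R\Modl_{I\tors})=\sD^\bco(R\Modl_{I\tors})$, and under the hypothesis that all contraflat $I$\+contramodule $R$\+modules have finite projective dimension in $R\Modl_{I\ctra}$, Proposition~\ref{contramodules-contraderived-categories-prop}(c) gives $\sD^\ctr(R\Modl_{I\ctra})=\sD^\bctr(R\Modl_{I\ctra})$. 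These identifications transport the Becker equivalence (and its derived-functor realization) into the claimed equivalence of Positselski co/contraderived categories.

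For the chain of equivalences I would read inward from both ends. The outermost identifications $\sD^\bco(R\Modl_{I\tors})\simeq\sK(R\Modl_{I\tors}^\inj)$ and $\sD^\bctr(R\Modl_{I\ctra})\simeq\sK(R\Modl_{I\ctra}^\proj)$ are immediate from Theorem~\ref{becker-co-contra-derived-of-loc-pres-abelian} applied to the Grothendieck category $R\Modl_{I\tors}$ and the locally $\aleph_1$\+presentable abelian category $R\Modl_{I\ctra}$. The next layer uses Proposition~\ref{torsion-modules-coderived-categories-prop}(a,b), which rewrites $\sD^\bco(R\Modl_{I\tors})$ as $\sD^\bco(R\Modl_{I\tors}^\fpinj)=\sD(R\Modl_{I\tors}^\fpinj)$, and dually Proposition~\ref{contramodules-contraderived-categories-prop}(a,b) gives $\sD^\bctr(R\Modl_{I\ctra})\simeq\sD(R\Modl_{I\ctra}^\ctrfl)$. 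The collapse $\sD^\abs=\sD^\varnothing$ on the exact categories $R\Modl_{I\tors}^\fpinj$ and $R\Modl_{I\ctra}^\ctrfl$ follows because the finite (in)jective-dimension hypotheses make these exact categories of finite homological dimension, so that Propositions~A.5.8 and~B.7.9 of~\cite{Pcosh} (used in the same way as in the proof of Proposition~\ref{bass-auslander-derived-independence-of-l1}) apply.

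The middle of the chain is handled by Proposition~\ref{dualizing-complex-minimal-corresponding-classes}, which asserts $\sE^{l_2}\subset R\Modl_{I\tors}^\fpinj$ and $\sF^{l_2}\subset R\Modl_{I\ctra}^\ctrfl$. Both inclusions are coresolving (respectively, resolving) and contain the same injective (respectively, projective) objects as the ambient abelian categories, so by the same finite (co)resolution-dimension arguments the inclusions induce triangulated equivalences at every level of derived category considered. The central equivalence $\sK(R\Modl_{I\tors}^\inj)\simeq\sK(R\Modl_{I\ctra}^\proj)$, equivalently $\sD^\st(\sE^{l_2})\simeq\sD^\st(\sF^{l_2})$ for $\st=\varnothing$ (or $\abs$), is then supplied by Theorem~\ref{minimal-classes-derived-equivalence} applied to the dualizing complex $L^\bu=D^\bu$, which also certifies that the equivalence is realized by $\boR\Hom_R(D^\bu,{-})$ and $D^\bu\ot_R^\boL{-}$. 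Splicing together these links produces the asserted chain of eight triangulated equivalences.

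The ``moreover'' clause, asserting the analogous chain for every conventional derived category symbol $\st=\bb,+,-,\varnothing$, is then read off from Theorem~\ref{minimal-classes-derived-equivalence} (for the middle equivalence $\sD^\st(\sE^{l_2})\simeq\sD^\st(\sF^{l_2})$) together with the finite-(co)resolution-dimension arguments which identify $\sD^\st(\sE^{l_2})\simeq\sD^\st(R\Modl_{I\tors}^\fpinj)$ and $\sD^\st(\sF^{l_2})\simeq\sD^\st(R\Modl_{I\ctra}^\ctrfl)$ for every such $\st$. The homotopy-category endpoints $\sK^\st(R\Modl_{I\tors}^\inj)$ and $\sK^\st(R\Modl_{I\ctra}^\proj)$ match by the bounded versions of Theorem~\ref{becker-co-contra-derived-of-loc-pres-abelian} (or directly: over a category of finite homological dimension, the homotopy category of bounded complexes of injectives computes the bounded derived category). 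The only real technical obstacle I foresee is verifying that all the $\sD^\abs=\sD^\varnothing$ collapses actually hold on the nose for the various coresolving/resolving subcategories involved; but each such collapse is a direct application of the finite-homological-dimension criteria already cited, so no genuinely new ideas are needed beyond what has been assembled in Sections~\ref{prelim-exotic-derived-secn}, \ref{auslander-and-bass-secn}, \ref{minimal-classes-secn}, and~\ref{adically-coherent-secn}.
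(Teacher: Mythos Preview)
Your proposal is correct and follows essentially the same route as the paper: reduce the first assertion to Corollary~\ref{dualizing-complex-becker-co-contra-derived-equiv} via Propositions~\ref{torsion-modules-coderived-categories-prop}(c) and~\ref{contramodules-contraderived-categories-prop}(c), then assemble the chain from the finite-homological-dimension collapse on the exact categories $R\Modl_{I\tors}^\fpinj$, $\sE^{l_2}$, $\sF^{l_2}$, $R\Modl_{I\ctra}^\ctrfl$ together with Theorem~\ref{minimal-classes-derived-equivalence} for the central link.

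Two small remarks on citations. For the collapse $\sD^\abs=\sD^\varnothing$ on an exact category of finite homological dimension, the paper invokes \cite[Remark~2.1]{Psemi} or \cite[Theorem~B.7.6]{Pcosh} rather than Propositions~A.5.8/B.7.9 of~\cite{Pcosh}; the latter are (co)resolution-dimension comparison results and only give what you want after an extra step (applying them to the split-exact subcategory of injectives/projectives), whereas B.7.6 states the conclusion directly. Also, there are no ``bounded versions of Theorem~\ref{becker-co-contra-derived-of-loc-pres-abelian}'' in the paper; your parenthetical argument (finite homological dimension forces $\sD^\st\simeq\sK^\st$ of injectives/projectives) is the actual justification and is what the paper uses.
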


\begin{proof}
 Under the assumptions of the corollary, one has
$\sD^\co(R\Modl_{I\tors})=\sD^\bco(R\Modl_{I\tors})$ by
Proposition~\ref{torsion-modules-coderived-categories-prop}(c)
and $\sD^\ctr(R\Modl_{I\ctra})=\sD^\bctr(R\Modl_{I\ctra})$ by
Proposition~\ref{contramodules-contraderived-categories-prop}(c).
 So the first assertion follows from
Corollary~\ref{dualizing-complex-becker-co-contra-derived-equiv}.
{\hbadness=1750\par}

 The rest of the proof is very similar to that
of~\cite[Corollary~7.6]{Pps}.
 The exact categories $R\Modl_{I\tors}^\fpinj$ and
$R\Modl_{I\ctra}^\ctrfl$ have finite homological dimensions
by assumption.
 Hence so do their full subcategories $\sE^{l_2}$ and $\sF^{l_2}$
satisfying condition (I) or~(II).
 It follows easily (see, e.~g., \cite[Remark~2.1]{Psemi}
or~\cite[Theorem~B.7.6]{Pcosh}) that a complex in any one of these
exact categories is acyclic if and only if it is absolutely acyclic,
and that their conventional/absolute derived categories are
equivalent to the homotopy categories of complexes of injective
or projective objects.
 The same applies to the Becker coderived/contraderived categories,
and also to the Positselski coderived/contraderived categories of
those of these exact categories that happen to be closed under
infinite direct sums/products in their respective abelian categories
of torsion modules/contramodules.
 The same applies also to the bounded versions of the derived
and homotopy categories.

 Propositions~\ref{torsion-modules-coderived-categories-prop}(a,c)
and~\ref{contramodules-contraderived-categories-prop}(a,c)
provide the equivalences of the categories mentioned in the previous
paragraph with the coderived category $\sD^{\co=\bco}(R\Modl_{I\tors})$
or the contraderived category $\sD^{\ctr=\bctr}(R\Modl_{I\ctra})$.
 The equivalence $\sD^\st(\sE^{l_2})\simeq\sD^\st(\sF^{l_2})$ can be
obtained as a particular case of
Theorem~\ref{minimal-classes-derived-equivalence}.
\end{proof}

\Section{Quotflat Morphisms of Ring-Ideal Pairs}  \label{quotflat-secn}

 Let $I$ be a finitely generated ideal in a commutative ring $R$ and
$J$ be a finitely generated ideal in a commutative ring~$S$.
 Suppose that we are given a ring homomorphism $f\:R\rarrow S$ such
that $f(I)\subset J$.
 The aim of this section is to discuss a flatness condition on
a ring homomorphism~$f$ depending on the ideals $I$ and~$J$.
 We proceed to deduce applications to the preservation of
fp\+injectivity (of torsion modules) and contraflatness
(of contramodules) by the restriction of scalars.

\begin{lem} \label{quotflat-characterizations}
 In the setting above, the following two conditions are equivalent:
\begin{enumerate}
\item there exist descending sequences of finitely generated ideals\/
$\dotsb\subset I_{n+1}\subset I_n\subset\dotsb\subset R$ and\/
$\dotsb\subset J_{n+1}\subset J_n\subset\dotsb\subset S$, indexed by
the integers $n\ge1$, such that
\begin{itemize}
\item one has $I_n\subset I^n$ and $J_n\subset J^n$ for every $n\ge1$;
\item for every $n\ge1$ there exists $q\ge n$ such that
$I^q\subset I_n$ and $J^q\subset J_n$;
\item one has $f(I_n)\subset J_n$ for all $n\ge1$, and the ring
$S/J_n$ is a flat module over the ring $R/I_n$.
\end{itemize}
\item for every pair of integers $p\ge m\ge1$ and every finitely
generated ideal $I'\subset R$ such that $I^p\subset I'\subset I^m$,
there exists an integer $q\ge m$ and a finitely generated ideal
$J'\subset S$ such that $J^q\subset J'\subset J^m$, and the following
conditions hold:
\begin{itemize}
\item one has $f(I')\subset J'$, and the ring $S/J'$ is a flat module
over the ring $R/I'$.
\end{itemize}
\end{enumerate}
\end{lem}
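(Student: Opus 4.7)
The plan is to prove each implication directly: (1)$\Rightarrow$(2) by a base change argument, and (2)$\Rightarrow$(1) by an inductive construction of the sequences $(I_n)$ and $(J_n)$.

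For (1)$\Rightarrow$(2), given $p\ge m\ge 1$ and a finitely generated ideal $I'$ with $I^p\subset I'\subset I^m$, I would set $J':=f(I')S+J_p$. Since $I_p\subset I^p\subset I'$, base change of the flat ring homomorphism $R/I_p\rarrow S/J_p$ along the surjection $R/I_p\twoheadrightarrow R/I'$ identifies $(S/J_p)\ot_{R/I_p}(R/I')$ with $S/(f(I')S+J_p)=S/J'$, giving flatness of $S/J'$ over $R/I'$. Finite generatedness of $J'$, the inclusion $J'\subset J^m$ (from $I'\subset I^m$, $f(I)\subset J$, and $p\ge m$), and the existence of $q\ge m$ with $J^q\subset J_p\subset J'$ (from the cofinality hypothesis on the $J_n$) all follow directly.

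For (2)$\Rightarrow$(1), I would construct $I_n$ and $J_n$ by induction on $n\ge 0$, maintaining integers $a_n,b_n\ge n$ with $I^{a_n}\subset I_n$ and $J^{b_n}\subset J_n$. Starting from $I_0=R$, $J_0=S$, $a_0=b_0=0$, at step $n\ge 1$ I would choose $m_n:=\max(n,a_{n-1},b_{n-1})$ and apply~(2) with $p=m_n$ and $I'=I^{m_n}$ to obtain an integer $q\ge m_n$ and a finitely generated ideal $J'\subset S$ with $J^q\subset J'\subset J^{m_n}$, $f(I^{m_n})\subset J'$, and $S/J'$ flat over $R/I^{m_n}$. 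Setting $I_n:=I^{m_n}$, $J_n:=J'$, $a_n:=m_n$, $b_n:=q$ produces the next step of the sequences, and $I_n\subset I_{n-1}$ follows immediately from $m_n\ge a_{n-1}$.

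The main obstacle is ensuring the descending condition $J_n\subset J_{n-1}$, which is not supplied directly by (2): the statement only controls $J'$ through the upper bound $J'\subset J^{m_n}$, and this power of $J$ need not be contained in the previously constructed ideal $J_{n-1}$. The trick resolving this is precisely the choice $m_n\ge b_{n-1}$, which forces $J_n=J'\subset J^{m_n}\subset J^{b_{n-1}}\subset J_{n-1}$. Enlarging the exponent $m_n$ beyond the minimum value $n$ in order to absorb the cofinality bound of the previous step is the key idea; a naive choice such as $I_n=I^n$ would leave successive outputs of (2) potentially incompatible, and the descending chain could fail to form.
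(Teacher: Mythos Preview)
Your proposal is correct and follows essentially the same approach as the paper. Both directions match the paper's argument closely: for (1)$\Rightarrow$(2) the paper also sets $J'=SI'+J_p$ and invokes the base-change isomorphism $S/J'\simeq R/I'\ot_{R/I_p}S/J_p$; for (2)$\Rightarrow$(1) the paper runs the same inductive construction, taking $I_{n+1}=I^{q+1}$ where $q$ is the cofinality bound from the previous step (your $m_n=\max(n,a_{n-1},b_{n-1})$ is a slightly more explicit bookkeeping of the same idea, with the paper using a single common $q$ in place of your separate $a_{n-1},b_{n-1}$).
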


\begin{proof}
 (1)~$\Longrightarrow$~(2)
 By~(1), we have $f(I_p)\subset J_p\subset J^p$, and the ring $S/J_p$
is a flat module over $R/I_p$.
 Put $J'=SI'+J_p$ (where $SI'$ denotes the ideal generated by $f(I')$
in~$S$).
 Then there exists $q\ge p$ such that $J^q\subset J_p$, hence
$J^q\subset J'$.
 Furthermore, $J'\subset SI^m+J^p\subset J^m$.
 Finally, we have $S/J'=R/I'\ot_{R/I_p}S/J_p$, so flatness of
$S/J_p$ as a module over $R/I_p$ implies flatness of $S/J'$ as
a module over $R/I'$.

 (2)~$\Longrightarrow$~(1)
 The construction of ideals $I_n\subset R$ and $J_n\subset S$
proceeds by induction on $n\ge1$.
 For $n=1$, put $p=m=1$ and $I'=I$.
 By~(2), there exists an integer $q\ge1$ and a finitely generated ideal
$J'\subset S$ such that $J^q\subset J'\subset J$, \ $f(I)\subset J'$,
and the ring $S/J'$ is a flat module over $R/I$.
 So we can put $I_1=I$ and $J_1=J'$.
 
 Suppose that we already have ideals $I_n\subset R$ and $J_n\subset S$
such that $I^q\subset I_n$ and $J^q\subset J_n$ for some $q\ge n$,
and the other conditions listed in~(1) are satisfied.
 Put $p=m=q+1$ and $I'=I^{q+1}$.
 By~(2), there exists an integer $q'\ge m$ and a finitely generated
ideal $J'\subset S$ such that $J^{q'}\subset J'\subset J^{q+1}$,
\ $f(I^{q+1})\subset J'$, and the ring $S/J'$ is a flat module
over $R/I^{q+1}$.
 So we can put $I_{n+1}=I^{q+1}$ and $J_{n+1}=J'$, and have
$I^{q'}\subset I_{n+1}\subset I^{n+1}$, \ $I_{n+1}\subset I_n$
and $J^{q'}\subset J_{n+1}\subset J^{n+1}$, \ $J_{n+1}\subset J_n$.
\end{proof}

 We will speak of the \emph{morphism of pairs} $f\:(R,I)\rarrow
(S,J)$, meaning that $f\:R\rarrow S$ is a ring homomorphism,
$I\subset R$ and $J\subset S$ are finitely generated ideals,
and $f(I)\subset J$.
 We say that a morphism of pairs $f\:(R,I)\rarrow(S,J)$ is
\emph{quotflat} if the equivalent conditions of
Lemma~\ref{quotflat-characterizations} are satisfied.

\begin{exs} \label{quotflat-and-nonquotflat-examples}
 (0)~Let $f\:R\rarrow S$ be a homomorphism of commutative rings such
that $S$ is a flat $R$\+module, and let $I\subset R$ be a finitely
generated ideal.
 Let $J=SI$ be the ideal generated by $f(I)$ in~$S$.
 Then the morphism of pairs $f\:(R,I)\rarrow(S,J)$ is quotflat.
 Indeed, put $I_n=I^n$ and $J_n=J^n=SI^n$ for all $n\ge1$.
 Then the ring $S/J^n=R/I^n\ot_RS$ is a flat module over $R/I^n$ for
every $n\ge1$, so the conditions of
Lemma~\ref{quotflat-characterizations}(1) are satisfied.

\smallskip
 (1)~Let $k$~be a field, $R=k[x]$ be the ring of polynomials in
one variable~$x$ over~$k$, and $S=k[x,y]$ be the ring of polynomials
in two variables $x$,~$y$.
 Let $f\:R\rarrow S$ be the natural inclusion map.
 Consider the maximal ideal $I=(x)\subset R$ and the maximal ideal
$J=(x,y)\subset R$.
 Then the ring $S/J^n$ is \emph{not} a flat module over $R/I^n$ when
$n\ge2$.
 For example, the ring $S/J^2$ is not a flat module over $R/I^2$,
since the coset $y+J^2\in S/J^2$ is annihilated by multliplication
with the coset $x+I^2\in R/I^2$, but $y+J^2$ is \emph{not} divisible by
$x+I^2$ in $S/J^2$.

 Nevertheless, the morphism of pairs $f\:(R,I)\rarrow(S,J)$ is
quotflat.
 Indeed, put $I_n=(x^n)\subset R$ and $J_n=(x^n,y^n)\subset S$.
 Then the ring $S/J_n=k[x]/(x^n)\ot_k k[y]/(y^n)$ is a flat module
over the ring $R/I_n=k[x]/(x^n)$.
 We also have $I_n=I^n$ and $J^{2n-1}\subset J_n\subset J^n$ for
every $n\ge1$, so the conditions of
Lemma~\ref{quotflat-characterizations}(1) are satisfied.

\smallskip
 (2)~In the notation of~(1), let $S=R=k[x]$ be the ring of polynomials
in one variable~$x$ over~$k$, and let $f\:R\rarrow S$ be the identity
map.
 Put $I=0\subset R$ and $J=(x)\subset S$.
 Then the morphism of pairs $f\:(R,I)\rarrow(S,J)$ is \emph{not}
quotflat.
 Indeed, for any integers $q\ge n\ge 1$ and any ideals $I'\subset R$
and $J'\subset S$ such that $I^q\subset I'\subset I^n$ and $J^q\subset
J'\subset J^n$, the Artinian ring $S/J'$ is \emph{not} a flat module
over the polynomial ring $R/I'=R$.

 Let us emphasize that the ring $S$ is a flat $R$\+module in this
example.
 Moreover, the $J$\+adic completion $k[[x]]=\fS=
\varprojlim_{n\ge1}S/J^n$ of the ring $S$ is a flat module over
the $I$\+adic completion $k[x]=\fR=\varprojlim_{n\ge1}R/I^n$ of
the ring $R$, or in other words, one can say that $\fS$ is a contraflat
$I$\+contramodule $R$\+module.
 Nevertheless, the morphism of pairs is not quotflat.
\end{exs}

 The following proposition explains what we need the notion of
a quotflat morphism for.

\begin{prop} \label{restriction-of-scalars-fp-injective}
 Let $I$ be a finitely generated ideal in a commutative ring $R$, let
$J$ be a finitely generated ideal in a commutative ring $S$, and let
$f\:R\rarrow S$ be a ring homomorphism such that $f(I)\subset J$.
 Assume that the morphism of pairs $f\:(R,I)\rarrow(S,J)$ is
quotflat and the ring $R$ is $I$\+adically coherent.
 Then every fp\+injective $J$\+torsion $S$\+module is also fp\+injective
as an $I$\+torsion $R$\+module.
\end{prop}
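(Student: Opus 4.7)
The plan is to invoke Lemma~\ref{fp-injective-torsion-modules-lemma}(e), reducing the problem to showing that the $R/I^m$\+module $\Hom_R(R/I^m,K)$ is fp\+injective for every $m\ge 1$. The quotflat hypothesis, in the form of Lemma~\ref{quotflat-characterizations}(1), supplies descending cofinal sequences of finitely generated ideals $I_n\subset I^n$ in $R$ and $J_n\subset J^n$ in $S$ with $f(I_n)\subset J_n$ and $S/J_n$ flat over $R/I_n$. Each quotient ring $R/I_n$ is coherent: pick $q_n\ge n$ with $I^{q_n}\subset I_n$; then $R/I^{q_n}$ is coherent by $I$\+adic coherence of $R$, while $I_n/I^{q_n}$ is a finitely generated ideal in it, so Lemma~\ref{quotient-ring-coherence-lemma}(b) applies.

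Set $K_n:=\Hom_S(S/J_n,K)$. First I show that $K_n$ is fp\+injective as an $R/I_n$\+module. Given a short exact sequence $0\to M_1\to M_2\to M_3\to 0$ of finitely presented $R/I_n$\+modules, flatness of $S/J_n$ over $R/I_n$ yields an exact sequence $0\to N_1\to N_2\to N_3\to 0$ with $N_i=S/J_n\ot_{R/I_n}M_i$ finitely presented over $S/J_n$, hence also over $S$ by Lemma~\ref{quotient-ring-coherence-lemma}(a). Since each $N_i$ is annihilated by $J_n$ (and thus by $J^{q_n}$), it is a finitely presented $J$\+torsion $S$\+module. Fp\+injectivity of $K$ in $S\Modl_{J\tors}$ therefore gives exactness of $\Hom_S(-,K)$ on the sequence $(N_i)$, and tensor-hom adjunction identifies $\Hom_S(N_i,K)\cong\Hom_{R/I_n}(M_i,K_n)$. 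Thus $\Hom_{R/I_n}(-,K_n)$ is exact on short exact sequences of finitely presented $R/I_n$\+modules, and since $R/I_n$ is coherent this implies $\Ext^1_{R/I_n}(M,K_n)=0$ for every such $M$.

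Next, for $n\ge m$ the inclusion $I_n\subset I^m$ realizes $R/I^m$ as the quotient of $R/I_n$ by the finitely generated ideal $I^m/I_n$. A standard change-of-rings argument then shows $\Hom_{R/I_n}(R/I^m,K_n)$ is fp\+injective over $R/I^m$: any finitely presented $R/I^m$\+module $M$ is finitely presented over $R/I_n$ by Lemma~\ref{quotient-ring-coherence-lemma}(a), and the adjunction $\Hom_{R/I^m}(M,\Hom_{R/I_n}(R/I^m,K_n))\cong\Hom_{R/I_n}(M,K_n)$ transports fp\+injectivity of $K_n$ over $R/I_n$ to that of $\Hom_{R/I_n}(R/I^m,K_n)$ over $R/I^m$.

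Finally, cofinality of $\{J_n\}$ in the $J$\+adic filtration and the fact that $K$ is $J$\+torsion give $K=\bigcup_n K_n$, whence
$$
\Hom_R(R/I^m,K)=\varinjlim_{n\ge m}\Hom_{R/I_n}(R/I^m,K_n).
$$
Each term of this filtered colimit is fp\+injective over the coherent ring $R/I^m$, so the colimit is too (the locally coherent analog of Lemma~\ref{fp-injective-torsion-modules-lemma}(c) applied to $R/I^m\Modl$). Hence $\Hom_R(R/I^m,K)$ is fp\+injective over $R/I^m$ for every $m$, and Lemma~\ref{fp-injective-torsion-modules-lemma}(e) completes the proof. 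The main conceptual point---and the only real obstacle---is recognizing that plain flatness of $S$ over $R$ is insufficient (Example~\ref{quotflat-and-nonquotflat-examples}(2)), so one must use the quotient-level flatness $R/I_n\to S/J_n$ at each finite level and assemble the global result by taking a colimit along the $J$\+adic filtration on $K$.
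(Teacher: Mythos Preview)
Your proof is correct and rests on the same core ideas as the paper's: use the quotflat hypothesis to produce cofinal sequences $I_n\subset R$ and $J_n\subset S$ with $S/J_n$ flat over the (coherent) ring $R/I_n$, write $K$ as the union of the submodules $K_n=\Hom_S(S/J_n,K)$, push finitely presented $R$-side modules across via $S/J_n\ot_{R/I_n}{-}$, apply tensor--hom adjunction, and assemble the result as a filtered colimit of fp\+injectives.

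The only difference is organizational. The paper invokes criterion~(a) of Lemma~\ref{fp-injective-torsion-modules-lemma} directly on $R\Modl_{I\tors}$: it fixes a short exact sequence of finitely presented $R/I_m$\+modules, computes
\[
 \Hom_R(M,K)\simeq\varinjlim_{n\ge m}\Hom_S(S/J_n\ot_{R/I_n}M,\>K),
\]
and observes that each term of the colimit is exact in~$M$. You instead invoke criterion~(e), which forces an additional change-of-rings step ($R/I_n\to R/I^m$) to descend fp\+injectivity of $K_n$ to fp\+injectivity of $\Hom_{R/I_n}(R/I^m,K_n)$ over $R/I^m$, before taking the colimit. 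Both routes are valid; the paper's is a bit shorter because criterion~(a) lets one work with a single short exact sequence rather than tracking fp\+injectivity as a property across several ring maps.
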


\begin{proof}
 Let $I_n\subset R$ and $J_n\subset S$ be descending sequences of
ideals as in Lemma~\ref{quotflat-characterizations}(1).
 Let $H$ be an fp\+injective $J$\+torsion $S$\+module.
 According to Lemma~\ref{fp-injective-torsion-modules-lemma}(a),
we need to prove that the functor $M\longmapsto\Hom_R(M,H)$ is exact
on the abelian category of finitely presented $I$\+torsion
$R$\+modules~$M$.

 Notice that coherence of the rings $R/I^q$ for all $q\ge1$ implies
coherence of the rings $R/I_n$ for all $n\ge1$ by
Lemma~\ref{quotient-ring-coherence-lemma}(b).
 Any short exact sequence of finitely presented $I$\+torsion
$R$\+modules is a short exact sequence of finitely presented
$R/I_m$\+modules for some $m\ge1$; so it suffices to let $M$ range
over the abelian category of finitely presented $R/I_m$\+modules.
 In this case, we have
\begin{multline*}
 \Hom_R(M,H)\simeq
 \Hom_R\bigl(M,\,\varinjlim\nolimits_{n\ge1}\Hom_S(S/J_n,H)\bigr) \\
 \simeq\varinjlim\nolimits_{n\ge1}\Hom_R(M,\Hom_S(S/J_n,H))
 \simeq\varinjlim\nolimits_{n\ge m}\Hom_{R/I_n}(M,\Hom_S(S/J_n,H)) \\
 \simeq\varinjlim\nolimits_{n\ge m}
 \Hom_{S/J_n}(S/J_n\ot_{R/I_n}M,\>\Hom_S(S/J_n,H)) \\
 \simeq\varinjlim\nolimits_{n\ge m}
 \Hom_S(S/J_n\ot_{R/I_n}M,\>H),
\end{multline*}
where the second isomorphism holds by
Lemma~\ref{torsion-modules-locally-finitely-presentable}.
 Now, for any finitely presented $R/I_n$\+module $M$,
the $S/J_n$\+module $S/J_n\ot_{R/I_n}M$ is finitely presented;
hence $S/J_n\ot_{R/I_n}M$ is also finitely presented as
a ($J$\+torsion) $S$\+module.
 Since $S/J_n$ is flat as a module over $R/I_n$ and $H$ is
fp\+injective as a $J$\+torsion $S$\+module, it is clear that
the functor $M\longmapsto\Hom_S(S/J_n\ot_{R/I_n}M,\>H)$ is exact
on the category of finitely presented $R/I_m$\+modules $M$ for
all $n\ge m$.
\end{proof}

 In the rest of this section, our aim is to prove the following
dual-analogous version of
Proposition~\ref{restriction-of-scalars-fp-injective}.

\begin{prop} \label{restriction-of-scalars-contraflat}
 Let $I$ be a weakly proregular finitely generated ideal in
a commutative ring $R$, let $J$ be a finitely generated ideal in
a commutative ring $S$, and let $f\:R\rarrow S$ be a ring homomorphism
such that $f(I)\subset J$.
 Assume that the morphism of pairs $f\:(R,I)\rarrow(S,J)$ is
quotflat and the ring $R$ is $I$\+adically coherent.
 Then every contraflat quotseparated $J$\+contramodule $S$\+module is
also contraflat as an $I$\+contramodule $R$\+module.
\end{prop}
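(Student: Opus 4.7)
The plan is to dualize the proof of Proposition~\ref{restriction-of-scalars-fp-injective}, with the contramodule Nakayama lemma replacing the use of fp\+injectivity at the final step. Let $\{I_n\subset R\}$ and $\{J_n\subset S\}$ be descending sequences of finitely generated ideals as in the quotflat condition~(1). First, $F$ is an $I$\+contramodule $R$\+module: for each $s\in I$ one has $R[s^{-1}]\ot_R S\cong S[f(s)^{-1}]$, so, using a two-term flat resolution of $R[s^{-1}]$ over $R$ and Hom-tensor adjunction, the vanishings $\Hom_R(R[s^{-1}],F)=0=\Ext^1_R(R[s^{-1}],F)$ reduce to the corresponding vanishings at $f(s)\in J$, which hold by hypothesis. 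Since a $J$\+contramodule $S$\+module is contraflat if and only if its quotients $F/J^n F$ are flat over $S/J^n$ for all $n$, and by cofinality of $\{I_n\}$ with $\{I^n\}$, it suffices to show that $F/I_n F$ is flat over $R/I_n$ for every $n\ge1$.

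As $R/I_n$ is coherent, this reduces further to showing $T:=\Tor_1^{R/I_n}(M,\,F/I_n F)=0$ for every finitely presented $R/I_n$\+module $M$. Choose a resolution $P_\bullet=(R/I_n)^{a_\bullet}$ of $M$ by finitely generated free $R/I_n$\+modules, so $T\cong H_1\bigl((F/I_n F)^{a_\bullet}\bigr)$. Each term $(F/I_n F)^{a_i}$ is a $J$\+contramodule $S$\+module, because $F/I_n F$ is the cokernel of the $S$\+linear multiplication map $\bigoplus f(I_n)\cdot F\rarrow F$, and $S\Modl_{J\ctra}\subset S\Modl$ is closed under kernels, cokernels, and finite direct sums. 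The differentials of the chain complex have matrix entries in $R/I_n$ and therefore are $S/f(I_n)S$\+linear; hence the subquotient $T$ inherits the structure of a $J$\+contramodule $S$\+module.

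The quotflatness of $f$ now enters. By the quotflat condition, $S/J_n$ is flat over $R/I_n$; by contraflatness of $F$ together with cofinality of $\{J_n\}$ with $\{J^n\}$, $F/J_n F$ is flat over $S/J_n$, hence flat over $R/I_n$ by composition. The same chain-complex computation with $F/J_n F$ in place of $F/I_n F$ therefore gives $\Tor_1^{R/I_n}(M,\,F/J_n F)=0$. Forming the short exact sequence of chain complexes
\[
 0\rarrow (J_n F/I_n F)^{a_\bullet}\rarrow (F/I_n F)^{a_\bullet}\rarrow (F/J_n F)^{a_\bullet}\rarrow 0,
\]
and tracing the long exact homology sequence while using that the $S$\+linear inclusion $J_n F/I_n F\hookrightarrow F/I_n F$ sends cycles to $J_n$\+linear combinations of cycles, one deduces $T=J_n T$. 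Since $J_n\subset J$, this gives $JT=T$. Now the contramodule Nakayama lemma---the same principle used in the proof of Lemma~\ref{contraflat-contramodule-complex-nakayama}, reducing to the principal case and using that $sQ=Q$ forces $Q=0$ for an $s$\+contramodule $Q$, then iterating on the generators of the finitely generated ideal $J$---yields $T=0$, as desired.

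The main obstacle is the deduction of $T=J_n T$ from the vanishing of $\Tor_1^{R/I_n}(M,\,F/J_n F)$. The straightforward identity $T\ot_{R/I_n}S/J_n\cong T/J_n T$ fails in general because the tensor product $F/I_n F\ot_{R/I_n}S/J_n$ (computing the outer flatness change of base) differs from $F/I_n F\ot_{S/f(I_n)S}S/J_n\cong F/J_n F$. This obstructs a direct base-change argument, and one must instead analyze the image of $H_1\bigl((J_n F/I_n F)^{a_\bullet}\bigr)$ in $T$ via the long exact sequence, exploiting the $S$\+module structure of the chain complex to show that this image consists precisely of classes representable by $J_n$\+linear combinations of cycles, i.e.\ lies in $J_n T$.
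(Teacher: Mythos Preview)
Your argument has a genuine gap at precisely the point you flag as the ``main obstacle'': the deduction that the image of $H_1\bigl((J_nF/I_nF)^{a_\bullet}\bigr)$ in $T$ lies in $J_nT$. You assert that the $S$\+linear inclusion $J_nF/I_nF\hookrightarrow F/I_nF$ ``sends cycles to $J_n$\+linear combinations of cycles'', but this is exactly what would need to be proved. A cycle $z$ with entries in $J_nF/I_nF$ can be written as $z=\sum_j s_jw_j$ with $s_j$ generating $J_n$ and $w_j\in(F/I_nF)^{a_1}$; however, the $w_j$ are under no obligation to be cycles themselves. In the principal case $J_n=(s)$, you would need: whenever $s\cdot dw=0$, the element $sw$ is cohomologous to $sw'$ for some cycle~$w'$. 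This requires finding $u$ with $su=0$ and $du=dw$, and there is no reason for such $u$ to exist. So the Nakayama argument never gets off the ground.

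A telltale sign that something is wrong is that your argument makes no use of the weak proregularity of~$I$, which the statement assumes. The paper's proof is structurally quite different. Rather than analyzing $F/I_nF$ directly, it exploits the fact that a contraflat quotseparated $J$\+contramodule is $J$\+adically separated and complete, so $F\simeq\varprojlim_n F/J_nF$, yielding the telescope sequence $0\to F\to\prod_n F/J_nF\to\prod_n F/J_nF\to 0$. The key technical work (Lemmas~\ref{tor-with-product-of-contramodules}--\ref{tor-computing-telescope-type-sequence}) shows that applying $M\ot_R{-}$ for finitely presented $I$\+torsion $M$ preserves this short exact sequence: one needs $\Tor$ to commute with the product (using contraflat resolutions in $R\Modl_{I\ctra}$ and Corollary~\ref{contraflat-contramods-tensor-underived=derived-cor}, which is where weak proregularity of~$I$ enters) and a pro-zero argument for $\varprojlim^1\Tor_1^R(M,F/J_nF)$. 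Then $M\ot_RF$ is expressed as the kernel of a surjection between products of the manifestly exact functors $M\mapsto M\ot_RF/J_nF$.
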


 The proof of Proposition~\ref{restriction-of-scalars-contraflat} is
based on a sequence of lemmas.

\begin{lem} \label{tor-with-product-of-contramodules}
 Let $I$ be a weakly proregular finitely generated ideal in
a commutative ring $R$ such that the ring $R$ is $I$\+adically coherent.
 Let\/ $\Xi$ be an indexing set and $(P_\xi)_{\xi\in\Xi}$ be a family
of $I$\+contramodule $R$\+modules.
 Let $M$ be a finitely presented $I$\+torsion $R$\+module.
 Then the natural map
$$
 \Tor_i^R\left(M,\>\prod\nolimits_{\xi\in\Xi}P_\xi\right)\lrarrow
 \prod\nolimits_{\xi\in\Xi}\Tor_i^R(M,P_\xi)
$$
is an isomorphism for all $i\ge0$.
\end{lem}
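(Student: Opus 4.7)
The plan exploits Lemma~\ref{contraflat-contramodules-lemma}(b), which says that every contraflat $I$\+contramodule $R$\+module $F$ is $\Tor^R$\+acyclic against any $I$\+torsion $R$\+module, i.e.\ $\Tor^R_n(M,F)=0$ for $n\ge 1$.  Consequently, for any resolution $F_\bu\to P$ of an $I$\+contramodule $P$ by contraflat $I$\+contramodule $R$\+modules which is exact in $R\Modl$, the standard acyclic-resolution principle identifies $\Tor_i^R(M,P)\simeq H^{-i}(M\ot_R F_\bu)$.  The idea is to build such a resolution of $\prod_\xi P_\xi$ as the termwise product of resolutions of the $P_\xi$.

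First, for each $\xi\in\Xi$ pick a projective resolution $F_\bu^\xi\to P_\xi$ in the abelian category $R\Modl_{I\ctra}$.  Each $F_j^\xi$ is projective, hence contraflat, by the discussion preceding Lemma~\ref{contraflat-contramodules-lemma}.  Since the inclusion $R\Modl_{I\ctra}\rarrow R\Modl$ has a left adjoint (discussed after Lemma~\ref{tors-contra-tensor-hom-lemma}), it is a right adjoint and preserves products; as products are moreover exact in $R\Modl$, the termwise product $\prod_\xi F_\bu^\xi\rarrow\prod_\xi P_\xi$ is exact in $R\Modl$ (and in $R\Modl_{I\ctra}$).  By Lemma~\ref{products-of-contraflat-contramodules-lemma}, each $\prod_\xi F_j^\xi$ is contraflat.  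Thus $\prod_\xi F_\bu^\xi$ is a resolution of $\prod_\xi P_\xi$ by contraflat $I$\+contramodule $R$\+modules, and the acyclic-resolution principle applied to the right exact functor $M\ot_R({-})$ gives
\[
 \Tor_i^R\Bigl(M,\,\prod\nolimits_\xi P_\xi\Bigr)\;\simeq\;H^{-i}\Bigl(M\ot_R\prod\nolimits_\xi F_\bu^\xi\Bigr).
\]

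Second, by Lemma~\ref{torsion-modules-locally-finitely-presentable} the module $M$ is finitely presented as an $R$\+module; hence the canonical map $M\ot_R\prod_\xi F_j^\xi\rarrow\prod_\xi(M\ot_R F_j^\xi)$ is an isomorphism for every $j$.  Since products are exact in $R\Modl$, cohomology commutes with them, and combining with the previous display yields
\[
 \Tor_i^R\Bigl(M,\,\prod\nolimits_\xi P_\xi\Bigr)\;\simeq\;\prod\nolimits_\xi H^{-i}(M\ot_R F_\bu^\xi)\;=\;\prod\nolimits_\xi\Tor_i^R(M,P_\xi),
\]
as required.  I do not anticipate any serious obstacle; every ingredient (contraflatness of projective $I$\+contramodules, closure of contraflat $I$\+contramodules under infinite products, $\Tor^R$\+acyclicity of contraflat $I$\+contramodules against $I$\+torsion modules, and finite presentation of $M$ as an $R$\+module) has already been established earlier in the paper, and the identification of the naturality of all the maps involved is routine.
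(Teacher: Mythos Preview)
Your proposal is correct and follows essentially the same approach as the paper: choose projective (hence contraflat) resolutions in $R\Modl_{I\ctra}$ for each $P_\xi$, take their product (still contraflat by Lemma~\ref{products-of-contraflat-contramodules-lemma}), and use that $M$ is finitely presented over $R$ so that $M\ot_R{-}$ commutes with products. The only cosmetic difference is that the paper invokes Corollary~\ref{contraflat-contramods-tensor-underived=derived-cor} to justify computing $\Tor$ via contraflat resolutions, whereas you invoke Lemma~\ref{contraflat-contramodules-lemma}(b) together with the standard acyclic-resolution principle; these are equivalent here.
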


\begin{proof}
 For every index $\xi\in\Xi$, choose a resolution $F_{\xi,\bu}\rarrow
P_\xi$ of the $I$\+contramodule $R$\+module $P_\xi$ by contraflat
$I$\+contramodule $R$\+modules $F_{\xi,i}$, \,$i\ge0$.
 (For example, any projective resolutions in $R\Modl_{I\ctra}$
are suitable.)
 Then $\prod_{\xi\in\Xi}F_{\xi,\bu}\rarrow\prod_{\xi\in\Xi}P_\xi$
is a resolution of the $I$\+contramodule $R$\+module
$\prod_{\xi\in\Xi}P_\xi$, and by
Lemma~\ref{products-of-contraflat-contramodules-lemma}
the $I$\+contramodule $R$\+modules $\prod_{\xi\in\Xi}F_{\xi,i}$
are contraflat for all $i\ge0$.
 By Corollary~\ref{contraflat-contramods-tensor-underived=derived-cor},
the complex $M\ot_R F_{\xi,\bu}$ computes $\Tor_*^R(M,P_\xi)$ for
every $\xi\in\Xi$, and the complex $M\ot_R\prod_{\xi\in\Xi}F_{\xi,\bu}$
computes $\Tor_*^R\bigl(M,\>\prod_{\xi\in\Xi}P_\xi\bigr)$.
 It remains to point out that $M$ is a finitely presented $R$\+module
by Lemma~\ref{torsion-modules-locally-finitely-presentable}, so
the functor $M\ot_R{-}$ preserves infinite products of $R$\+modules.
\end{proof}

\begin{lem} \label{Tor1-pro-zero-lemma}
 Let $I$ be an ideal in a commutative ring $R$, and let $M$ be
an $R/I^m$\+mod\-ule for some integer $m\ge1$.
 Then the projective system of\/ $\Tor_1^R$ modules
$$
 (\Tor^R_1(M,R/I^n))_{n\ge1}
$$
is pro-zero (in the sense of the definition in
Section~\ref{prelims-on-wpr-secn}).
\end{lem}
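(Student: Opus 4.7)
The plan is to reduce the statement to the special case $M = R/I^m$ by a standard dimension-shifting argument, and then to compute $\Tor_1^R(R/I^m, R/I^n)$ explicitly via the short exact sequence $0 \to I^n \to R \to R/I^n \to 0$. First I would choose a presentation of $M$ as an $R/I^m$-module, i.e., a short exact sequence of $R/I^m$-modules $0 \to K \to F \to M \to 0$ with $F$ a free $R/I^m$-module. For every $n \ge m$, one has $R/I^n \otimes_R R/I^m = R/(I^n + I^m) = R/I^m$, so tensoring over $R$ with $R/I^n$ leaves any $R/I^m$-module unchanged. In particular, the sequence $0 \to K \to F$ stays injective after applying ${-}\otimes_R R/I^n$, and the long exact $\Tor$ sequence shows that the natural map
$$ \Tor_1^R(F, R/I^n) \lrarrow \Tor_1^R(M, R/I^n) $$
is surjective for all $n \ge m$. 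Since $F$ is a direct sum of copies of $R/I^m$ and $\Tor$ commutes with direct sums, this reduces pro-vanishing for $M$ to pro-vanishing for $R/I^m$: a surjective morphism of projective systems takes pro-zero systems to pro-zero systems (check directly by chasing the commutative square).

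Next I would compute $\Tor_1^R(R/I^m, R/I^n)$ for $n \ge m$ from $0 \to I^n \to R \to R/I^n \to 0$:
$$ \Tor_1^R(R/I^m, R/I^n) = \ker\bigl(R/I^m \otimes_R I^n \to R/I^m\bigr) = I^n/I^{n+m}, $$
and the transition map induced by $R/I^{n+k} \to R/I^n$ fits into a morphism of short exact sequences whose leftmost column is the inclusion $I^{n+k}\hookrightarrow I^n$. Hence the transition map $\Tor_1^R(R/I^m, R/I^{n+k}) \to \Tor_1^R(R/I^m, R/I^n)$ is identified with the obvious map $I^{n+k}/I^{n+k+m} \to I^n/I^{n+m}$, $x + I^{n+k+m} \mapsto x + I^{n+m}$. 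The key observation is that as soon as $k \ge m$, one has $I^{n+k}\subseteq I^{n+m}$, so this map is identically zero.

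Putting the two steps together: given any $j\ge 1$, set $n = \max(j,m)$, take any $k \ge m$, and consider the composition
$$ \Tor_1^R(M, R/I^{n+k}) \lrarrow \Tor_1^R(M, R/I^n) \lrarrow \Tor_1^R(M, R/I^j). $$
By the surjectivity from the first paragraph applied both at $n+k$ and at $n$, the first arrow is the image of the transition $\Tor_1^R(R/I^m, R/I^{n+k}) \to \Tor_1^R(R/I^m, R/I^n)$ along a surjection of $\Tor$-groups, which vanishes by the second paragraph. Hence the composition is zero, proving pro-vanishing. The only slightly delicate point I anticipate is bookkeeping the indices for $j < m$, which is handled by routing through $n = m$. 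No commutative-algebra hypothesis beyond the bare definitions is needed; in particular, weak proregularity of $I$ is not used here.
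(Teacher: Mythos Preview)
Your argument is correct, and it lands on the same key computation as the paper: the projective system $(I^n/I^{n+m})_{n\ge1}$ is pro-zero because the transition map $I^k/I^{k+m}\to I^j/I^{j+m}$ vanishes once $k\ge j+m$. The route is slightly different, though. You first reduce to $M=R/I^m$ via a free $R/I^m$\+presentation and the observation that a termwise surjective image of a pro-zero system is pro-zero. The paper instead works directly with an arbitrary $R/I^m$\+module $M$: from $0\to I^n\to R\to R/I^n\to0$ it identifies $\Tor_1^R(M,R/I^n)$ with $\ker(M\otimes_R I^n\to M)$, then uses that a projective subsystem of a pro-zero system is pro-zero, reducing to the pro-vanishing of $M\otimes_R I^n\simeq M\otimes_{R/I^m}(I^n/I^{n+m})$. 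So you use the ``quotient of pro-zero is pro-zero'' principle, the paper uses the ``subobject of pro-zero is pro-zero'' principle; the paper's version avoids the presentation step and the index bookkeeping for $j<m$, while yours makes the $\Tor$ groups entirely explicit in the free case. Both are short and self-contained, and neither needs weak proregularity, as you note.
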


\begin{proof}
 One can immediately see from the homological $\Tor_*^R$ sequence
induced by the short exact sequence of $R$\+modules $0\rarrow I^n
\rarrow R\rarrow R/I^n\rarrow0$ that $\Tor^R_1(M,R/I^n)$ is
the kernel of the natural map $M\ot_R I^n\rarrow M$.
 Clearly, any projective subsystem (projective system of subobjects)
of a pro-zero projective system is pro-zero; so it suffices to check
that the projective system $(M\ot_RI^n)_{n\ge1}$ is pro-zero.
 Indeed, we have $M\ot_RI^n\simeq M\ot_{R/I^m}(R/I^m\ot_R I^n)
\simeq M\ot_{R/I^m}(I^n/I^{n+m})$, and it remains to point out
that the map $I^k/I^{k+m}\rarrow I^j/I^{j+m}$ vanishes for all
positive integers $j$ and~$k$ such that $k\ge j+m$.
\end{proof}

\begin{lem} \label{derived-projlim-vanishing-lemma}
 Let $I$ be an ideal in a commutative ring $R$ and\/ $\dotsb\subset
I_{n+1}\subset I_n\subset\dotsb\subset R$ be a descending sequence of
ideals, indexed by the integers $n\ge1$, such that for every $n\ge1$
there exists an integer $q\ge n$ for which $I^q\subset I_n\subset I^n$.
 Let $(G_n)_{n\ge1}$ be a projective system of $R$\+modules such
that $G_n$ is a flat $R/I_n$\+module for every $n\ge1$.
 Let $M$ be an $R/I^m$\+module for some integer $m\ge1$.
 Then one has
$$
 \varprojlim\nolimits_{n\ge1}\Tor_1^R(M,G_n)=0=
 \varprojlim\nolimits^1_{n\ge1}\Tor_1^R(M,G_n),
$$
where\/ $\varprojlim^1_{n\ge1}$ denotes the first derived functor of
projective limit.
\end{lem}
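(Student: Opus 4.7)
The plan is to prove that the projective system $(\Tor_1^R(M,G_n))_{n\ge1}$ is pro-zero in the sense of Section~\ref{prelims-on-wpr-secn}; both $\varprojlim_{n\ge1}$ and $\varprojlim^1_{n\ge1}$ will then vanish, since pro-zero trivially implies the Mittag--Leffler condition and forces every element of the limit to be zero. The starting observation is a standard change-of-rings identification: fixing a projective resolution $P_\bu\rarrow M$ over~$R$, flatness of $G_n$ over~$R/I_n$ gives $P_\bu\ot_R G_n=(P_\bu\ot_R R/I_n)\ot_{R/I_n}G_n$, and taking homology commutes with $\ot_{R/I_n}G_n$, yielding a natural isomorphism
$$\Tor_i^R(M,G_n)\simeq\Tor_i^R(M,R/I_n)\ot_{R/I_n}G_n.$$

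Next I would verify that the auxiliary system $(\Tor_1^R(M,R/I_n))_{n\ge1}$ is itself pro-zero, by adapting Lemma~\ref{Tor1-pro-zero-lemma} with the family $I_n$ in place of~$I^n$. From the short exact sequence $0\rarrow I_n\rarrow R\rarrow R/I_n\rarrow 0$ the module $\Tor_1^R(M,R/I_n)$ is identified with a submodule of $M\ot_R I_n$, so it suffices to show that $(M\ot_R I_n)_{n\ge1}$ is pro-zero. Given~$j$, the hypothesis on $(I_n)$ supplies an integer $q\ge j$ with $I^q\subset I_j$; then any $k\ge m+q$ yields $I_k\subset I^k\subset I^{m+q}\subset I^m\cdot I_j$. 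Since $M$ is annihilated by~$I^m$, every generator $x\ot y$ with $y\in I^m\cdot I_j$ vanishes in $M\ot_R I_j$, so the transition map $M\ot_R I_k\rarrow M\ot_R I_j$ is zero.

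The final and crucial step is to promote pro-zero-ness from $(\Tor_1^R(M,R/I_n))$ to $(\Tor_1^R(M,G_n))$, which is subtle because the coefficient module $G_n$ varies with~$n$. For $k\ge j$ I would factor the $R$-linear map $G_k\rarrow G_j$ through $G_k\twoheadrightarrow G_k/I_jG_k\rarrow G_j$; the middle module $G_k/I_jG_k\simeq G_k\ot_{R/I_k}R/I_j$ is flat over~$R/I_j$, being a base change of the flat $R/I_k$-module~$G_k$, so the change-of-rings isomorphism of the first paragraph applies to it as well. A direct computation with the resolution $P_\bu\rarrow M$ then shows that the induced map
$$\Tor_1^R(M,G_k)\simeq\Tor_1^R(M,R/I_k)\ot_{R/I_k}G_k\,\rarrow\,\Tor_1^R(M,R/I_j)\ot_{R/I_k}G_k\simeq\Tor_1^R(M,G_k/I_jG_k)$$
is precisely the transition map $\Tor_1^R(M,R/I_k)\rarrow\Tor_1^R(M,R/I_j)$ tensored over~$R/I_k$ with~$G_k$ (using the identification $\Tor_1^R(M,R/I_j)\ot_{R/I_j}(G_k/I_jG_k)\simeq\Tor_1^R(M,R/I_j)\ot_{R/I_k}G_k$). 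By the previous step this transition vanishes for $k$ sufficiently large, and hence so does its composition with the second factor $\Tor_1^R(M,G_k/I_jG_k)\rarrow\Tor_1^R(M,G_j)$, completing the argument. The main technical obstacle lies in this last step: one has to reconcile the change-of-rings identification for the source (written over~$R/I_k$) with that for the intermediate target (written over~$R/I_j$), and to recognize the composite as a clean base change of the auxiliary transition map.
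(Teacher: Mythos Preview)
Your proof is correct and follows essentially the same approach as the paper's: the change-of-rings isomorphism $\Tor_i^R(M,G_n)\simeq\Tor_i^R(M,R/I_n)\ot_{R/I_n}G_n$, pro-zero-ness of $(\Tor_1^R(M,R/I_n))_{n\ge1}$ via Lemma~\ref{Tor1-pro-zero-lemma}, and hence pro-zero-ness of the tensored system, whence vanishing of both $\varprojlim$ and $\varprojlim^1$. Your factorization through $G_k/I_jG_k$ usefully spells out why pro-zero-ness survives tensoring with the varying $G_n$, a point the paper leaves implicit.
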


\begin{proof}
 The point is that for any $R$\+module $N$ one has
$N\ot_RG_n\simeq(N\ot_RR/I_n)\ot_{R/I_n}G_n$, and the functor
${-}\ot_{R/I_n}G_n$ is exact.
 Therefore, there is a natural isomorphism
$\Tor_i^R(N,G_n)\simeq\Tor_i^R(N,R/I_n)\ot_{R/I_n}G_n$ for all $i\ge0$.
 Returning to the situation at hand, the projective system
$(\Tor_1^R(M,R/I_n))_{n\ge1}$ is pro-zero by 
Lemma~\ref{Tor1-pro-zero-lemma}, and it follows that the projective
system $(\Tor_1^R(N,R/I_n)\ot_{R/I_n}G_n)_{n\ge1}$ is pro-zero, too.
 For a pro-zero projective system (of abelian groups or $R$\+modules
indexed by nonnegative integers), both the underived and the first
derived projective limits vanish.
\end{proof}

\begin{lem} \label{tor-computing-telescope-type-sequence}
 Let $I$ be a weakly proregular finitely generated ideal in
a commutative ring $R$, let $J$ be a finitely generated ideal in
a commutative ring $S$, and let $f\:R\rarrow S$ be a ring homomorphism
such that $f(I)\subset J$.
 Assume that the morphism of pairs $f\:(R,I)\rarrow(S,J)$ is quotflat,
and let $I_n\subset R$ and $J_n\subset S$ be descending sequences
of ideals as in Lemma~\ref{quotflat-characterizations}(1).
 Let $F$ be a contraflat quotseparated $J$\+contramodule $S$\+module.
 Then, for every finitely presented $I$\+torsion $R$\+module $M$ and
every integer $m\ge1$, there is a natural short exact sequence
of $R$\+modules
$$
 0\lrarrow M\ot_RF\lrarrow
 \prod\nolimits_{n\ge m}M\ot_R F/J_nF\lrarrow
 \prod\nolimits_{n\ge m}M\ot_R F/J_nF\lrarrow0.
$$
\end{lem}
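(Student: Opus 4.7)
The strategy is to apply the derived tensor product $M \otimes_R^\boL(-)$ to the Mittag-Leffler telescope short exact sequence of $I$\+contramodule $R$\+modules
\[
0 \lrarrow \Lambda_J(F) \lrarrow \prod_{n \ge m} F/J_n F \xrightarrow{1-\sigma} \prod_{n \ge m} F/J_n F \lrarrow 0,
\]
whose right-hand exactness follows from the surjectivity of the transition maps $F/J_{n+1} F \twoheadrightarrow F/J_n F$ (so $\varprojlim\nolimits^1_n F/J_n F = 0$), with $\Lambda_J(F) = \varprojlim_n F/J_n F = \varprojlim_n F/J^n F$ by cofinality. Each quotient $F/J_n F$ is annihilated by $I_n$, hence by $I^q$ for some $q$, so it is an $I$\+contramodule $R$\+module, and therefore so is the product. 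Under quotflatness, $F/J_n F$ is moreover flat over $R/I_n$: the contraflatness of $F$ over $S$ gives $F/J_n F$ flat over $S/J_n$, and $S/J_n$ is flat over $R/I_n$ by assumption.

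Next, I would apply $M \otimes_R^\boL(-)$ to this telescope sequence. Lemma~\ref{tor-with-product-of-contramodules} commutes $\Tor_i^R(M,-)$ past the products, and Lemma~\ref{derived-projlim-vanishing-lemma} yields $\varprojlim_n \Tor_1^R(M, F/J_n F) = 0 = \varprojlim\nolimits^1_n \Tor_1^R(M, F/J_n F)$; equivalently the map $1-\sigma$ on $\prod_n \Tor_1^R(M, F/J_n F)$ is a bijection. Hence the long exact sequence of Tors collapses to
\[
0 \lrarrow M \otimes_R \Lambda_J(F) \lrarrow \prod_{n \ge m} M \otimes_R F/J_n F \xrightarrow{1-\sigma} \prod_{n \ge m} M \otimes_R F/J_n F \lrarrow 0.
\]

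The remaining step, which I expect to be the main obstacle, is to identify $M \otimes_R \Lambda_J(F)$ with $M \otimes_R F$. Setting $X := \bigcap_n J^n F = \ker(F \twoheadrightarrow \Lambda_J(F))$, the induced surjection $M \otimes_R F \twoheadrightarrow M \otimes_R \Lambda_J(F)$ has kernel the image of $M \otimes_R X$ in $M \otimes_R F$, which lies in $\bigcap_n J^n(M \otimes_R F)$; so it suffices to show that the $S$\+module $M \otimes_R F$ is $J$\+adically separated. I would invoke $I$\+adic coherence of $R$ to realize $M$ as $\operatorname{coker}(A\colon R^a \to R^b)$ with finitely generated $K_{(n)} := \ker(A\colon(R/I_n)^a \to (R/I_n)^b)$ at every level, so that $M \otimes_R F = F^b/AF^a$ and the problem becomes showing $AF^a$ is $J$\+adically closed in $F^b$. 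Given $\bar f \in \bigcap_n(AF^a + J_n F^b)$ with approximations $g^{(n)} \in F^a$ satisfying $\bar f - Ag^{(n)} \in J_n F^b$, the crux is a successive approximation: adjusting $g^{(n+1)}$ by a lift of an element of $\ker(A\colon (F/J_n F)^a \to (F/J_n F)^b) = K_{(n)} \otimes_{R/I_n} F/J_n F$ (via flatness of $F/J_n F$ over $R/I_n$) to arrange $g^{(n+1)} - g^{(n)} \in J_n F^a$; the Cauchy sequence then converges in the $J$\+adically complete $F^a$ (complete because $F$ is a $J$\+contramodule) to some $g \in F^a$ witnessing $\bar f \in AF^a$. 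The principal technical difficulty is guaranteeing the lift at each adjustment step and pinning down the equality $Ag = \bar f$ rather than merely the congruence modulo $\bigcap_n J_n F^b$; this rests on the Mittag-Leffler behaviour of the system $(K_{(n)} \otimes_{R/I_n} F/J_n F)_n$, which in turn depends on finite presentation of $K_{(n)}$ and on flatness supplied by the quotflat hypothesis.
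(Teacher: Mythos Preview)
Your telescope-plus-$\Tor$ computation matches the paper's argument exactly: apply $\Tor_*^R(M,-)$ to the telescope sequence, push $\Tor$ past the products via Lemma~\ref{tor-with-product-of-contramodules}, and use Lemma~\ref{derived-projlim-vanishing-lemma} to see that the shift map on $\prod_n\Tor_1^R(M,F/J_nF)$ is bijective. The divergence is in how you place $M\ot_RF$ at the left end of the sequence.

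The paper does not pass through $\Lambda_J(F)$ at all. It invokes the fact that a \emph{contraflat} quotseparated $J$\+contramodule $S$\+module is already $J$\+adically separated (and complete), so $F\simeq\varprojlim_{n}F/J_nF$ on the nose; the telescope sequence then begins with $F$ itself and there is nothing further to identify. This is a nontrivial input, cited from~\cite[Lemmas~8.1 and~8.3]{Pflcc} together with~\cite[Proposition~1.5]{Pdc}; quotseparatedness alone does not give separatedness, but contraflatness plus quotseparatedness does.

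Your workaround---proving directly that $M\ot_RF$ is $J$\+adically separated by showing $AF^a$ is closed in $F^b$---does not close the gap you yourself flag. The successive-approximation argument produces $g\in F^a$ with $\bar f-Ag\in\bigcap_n J_nF^b$, and the Mittag--Leffler behaviour of $(K_{(n)}\ot_{R/I_n}F/J_nF)_n$ governs only the adjustment step (finding compatible lifts), not the vanishing of $\bigcap_n J_nF^b$. To conclude $\bar f=Ag$ you still need $\bigcap_n J_nF^b=0$, i.e.\ precisely that $F$ is $J$\+adically separated. So the argument is circular unless you import the separation of $F$ from elsewhere---which is exactly the cited result the paper uses, and which renders your entire step~3 unnecessary (since then $X=0$ and $F=\Lambda_J(F)$).
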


\begin{proof}
 By a very general result of~\cite[Lemmas~8.1 and~8.3]{Pflcc}, which is
applicable in view of~\cite[Proposition~1.5]{Pdc}, all contraflat
quotseparated $J$\+contramodule $S$\+modules are $J$\+adically separated
(and complete), so we have $F\simeq\varprojlim_{n\ge m}F/J_nF$.
 As the transition maps $F/J_{n+1}F\rarrow F/J_nF$ are surjective for
all $n\ge1$, we have the telescope short exact sequence of $R$\+modules
\begin{equation} \label{telescope-sequence}
 0\lrarrow F\lrarrow\prod\nolimits_{n\ge m}F/J_nF\lrarrow
 \prod\nolimits_{n\ge m}F/J_nF\lrarrow0.
\end{equation}
 Applying the derived functor $\Tor_*^R(M,{-})$
to~\eqref{telescope-sequence}, we obtain a long exact sequence
of $R$\+modules
\begin{multline} \label{Tor-into-telescope-sequence}
 \dotsb\lrarrow\Tor_1^R\left(M,\>\prod\nolimits_{n\ge m}F/J_nF\right)
 \lrarrow\Tor_1^R\left(M,\>\prod\nolimits_{n\ge m}F/J_nF\right) \\
 \lrarrow M\ot_RF\lrarrow M\ot_R\prod\nolimits_{n\ge m}F/J_nF
 \lrarrow M\ot_R\prod\nolimits_{n\ge m}F/J_nF\lrarrow0.
\end{multline}

 By Lemma~\ref{tor-with-product-of-contramodules}, we have
$\Tor_i^R\bigl(M,\>\prod_{n\ge m}F/J_nF)\simeq
\prod_{n\ge m}\Tor_i^R(M,F/J_nF)$ for all $i\ge0$ (notice that
every $S/J_n$\+module is a $J$\+contramodule $S$\+module, hence also
an $I$\+contramodule $R$\+module).
 So the sequence~\eqref{Tor-into-telescope-sequence} takes the form
\begin{multline} \label{Tor-into-telescope-sequence-rewritten}
 \dotsb\lrarrow\prod\nolimits_{n\ge m}\Tor_1^R(M,F/J_nF)
 \lrarrow\prod\nolimits_{n\ge m}\Tor_1^R(M,F/J_nF) \\
 \lrarrow M\ot_RF\lrarrow\prod\nolimits_{n\ge m}M\ot_RF/J_nF
 \lrarrow\prod\nolimits_{n\ge m}M\ot_RF/J_nF\lrarrow0.
\end{multline}

 It remains to show that the map
\begin{equation} \label{id-minus-shift-map-for-Tor1}
 \prod\nolimits_{n\ge m}\Tor_1^R(M,F/J_nF)
 \lrarrow\prod\nolimits_{n\ge m}\Tor_1^R(M,F/J_nF)
\end{equation}
in the first line of~\eqref{Tor-into-telescope-sequence-rewritten}
is surjective.
 In fact, we will see that~\eqref{id-minus-shift-map-for-Tor1}
is an isomorphism.
 Indeed, the kernel and cokernel of~\eqref{id-minus-shift-map-for-Tor1}
are the projective limit and the first derived projective limit
$$
 \varprojlim\nolimits_{n\ge m}\Tor_1^R(M,F/J_nF)
 \quad\text{and}\quad
 \varprojlim\nolimits^1_{n\ge m}\Tor_1^R(M,F/J_nF),
$$
respectively.
 It is important for us to show that
$\varprojlim\nolimits^1_{n\ge m}\Tor_1^R(M,F/J_nF)=0$.
 Here it suffices to notice that $F/J_nF$ is a flat $S/J_n$\+module
by the contraflatness assumption on~$F$.
 Since $S/J_n$ is a flat $R/I_n$\+module by one of the conditions in
Lemma~\ref{quotflat-characterizations}(1), it follows that
$F/J_nF$ is a flat $R/I_n$\+module.
 As a finitely presented $I$\+torsion $R$\+module, $M$ is
an $R/I^k$\+module for some $k\ge1$.
 Thus Lemma~\ref{derived-projlim-vanishing-lemma} is applicable.
\end{proof}

\begin{proof}[Proof of
Proposition~\ref{restriction-of-scalars-contraflat}]
 Let $I_n\subset R$ and $J_n\subset S$ be descending sequences
of ideals as in Lemma~\ref{quotflat-characterizations}(1), and
let $F$ be a contraflat quotseparated $J$\+contramodule $S$\+module.
 Similarly to the proof of
Lemma~\ref{tensor-hom-fp-injective-contraflat-lemma}(b), it suffices
to show that the functor $M\longmapsto M\ot_RF$ is exact on
the abelian category of finitely presented $R/I_m$\+modules for
every $m\ge1$.
 We use the result of
Lemma~\ref{tor-computing-telescope-type-sequence}.
 For every $n\ge m$, we have $M\ot_R F/J_nF\simeq M\ot_{R/I_n}F/J_nF$,
which is an exact functor of $M\in R/I_m\Modl$ since $F/J_nF$ is
a flat $R/I_n$\+module (as explained in the proof of
Lemma~\ref{tor-computing-telescope-type-sequence}).
 It remains to point out that the infinite products of $R$\+modules
are exact functors, and the kernel of a surjective morphism is
an exact functor.
\end{proof}

\begin{rems} \label{alternative-assumptions-remark}
 The assumptions in
Propositions~\ref{restriction-of-scalars-fp-injective}
and~\ref{restriction-of-scalars-contraflat} are sufficient for
the conclusions.
 They are certainly \emph{not} necessary.
 Other sets of sufficient assumptions exist.
 Let us describe two of them.

\smallskip
 (1)~Let $f\:R\rarrow S$ be a homomorphism of commutative rings such
that $S$ is a flat $R$\+module, let $I\subset R$ be a finitely generated
ideal, and let $J=SI$ be the ideal generated by $f(I)$ in~$S$
(as in Example~\ref{quotflat-and-nonquotflat-examples}(0)).
 Then all fp\+injective $J$\+torsion $S$\+modules are also fp\+injective
as $I$\+torsion $R$\+modules, and all contraflat $J$\+contramodule
$S$\+modules are contraflat as $I$\+contramodule $R$\+modules.

 Indeed, if $J=SI$, then the functor of extension of scalars
$S\ot_R{-}\,\:R\Modl\rarrow S\Modl$ takes $I$\+torsion $R$\+modules
to $J$\+torsion $S$\+modules.
 So we have an exact functor $S\ot_R{-}\,\:R\Modl_{I\tors}\rarrow
S\Modl_{J\tors}$ left adjoint to the exact functor of restriction of
scalars $S\Modl_{J\tors}\rarrow R\Modl_{I\tors}$.
 Therefore, for any $I$\+torsion $R$\+module $M$ and any $J$\+torsion
$S$\+module $H$, there is a natural isomorphism of Ext modules
$\Ext^i_{R\Modl_{I\tors}}(M,H)\simeq
\Ext^i_{S\Modl_{J\tors}}(S\ot_RM,\>H)$ for all $i\ge0$.
 As the functor $S\ot_R{-}$ also takes finitely presented ($I$\+torsion)
$R$\+modules to finitely presented ($J$\+torsion) $S$\+modules, it
follows that $H$ is fp\+injective as an $I$\+torsion $R$\+module
whenever it is fp\+injective as a $J$\+torsion $S$\+module.

 Dual-analogously, if $J=SI$, then one has $J^n=SI^n$ for every
$n\ge1$, and it follows that $R/I^n\ot_RF\simeq S/J^n\ot_SF$ for
any $S$\+module~$F$.
 Since $S$ is a flat $R$\+module, the ring $S/J^n$ is a flat module
over $R/I^n$ by Example~\ref{quotflat-and-nonquotflat-examples}(0).
 So the $R/I^n$\+module $F/I^nF$ is flat whenever the $S/J^n$\+module
$F/J^nF$ is flat.
 Thus $F$ is contraflat as an $I$\+contramodule $R$\+module whenever
it is contraflat as a $J$\+contramodule $S$\+module.

\smallskip
 (2)~Let $f\:R\rarrow S$ be a homomorphism of commutative rings such
that $S$ is a flat $R$\+module.
 Let $I\subset R$ and $J\subset S$ be finitely generated ideals such
that $f(I)\subset J$.
 Assume that the ring $S$ is Noetherian.
 Notice that the morphism of pairs $f\:(R,I)\rarrow(S,J)$
\emph{need not} be quotflat in this case,
as Example~\ref{quotflat-and-nonquotflat-examples}(2) illustrates.
 Nevertheless, all fp\+injective $J$\+torsion $S$\+modules are again
fp\+injective as $I$\+torsion $R$\+modules, and all contraflat
$J$\+contramodule $S$\+modules are contraflat as $I$\+contramodule
$R$\+modules.

 Indeed, if the ring $S$ is $J$\+adically Noetherian, then the classes
of injective and fp\+injective $J$\+torsion $S$\+modules coincide
(see Section~\ref{adically-coherent-secn}).
 If, moreover, the ring $S$ is Noetherian, then a $J$\+torsion
$S$\+module $H$ is injective in $S\Modl_{J\tors}$ if and only if it is
injective in $S\Modl$ (see
Remark~\ref{Bass-Auslander-direct-sum-product-closedness-remark}).
 If this is the case, then $H$ is also injective in $R\Modl$, since
$S$ is a flat $R$\+module.
 As $H$ is $I$\+torsion as an $R$\+module, it follows that $H$ is
injective in $R\Modl_{I\tors}$.

 Dual-analogously, if the ring $S$ is Noetherian, then
a $J$\+contramodule $S$\+module $F$ is contraflat if and only if $F$
is a flat $S$\+module~\cite[Corollary~10.3(a)]{Pcta}.
 If this is the case, then $F$ is also a flat $R$\+module, since $S$
is a flat $R$\+module.
 As $F$ is an $I$\+contramodule $R$\+module, it follows that $F$ is
a contraflat $I$\+contramodule $R$\+module.

 In the rest of this paper, whenever the quotflatness assumption is
invoked, it is only used in order to refer to the results of
Propositions~\ref{restriction-of-scalars-fp-injective}
and~\ref{restriction-of-scalars-contraflat}.
 So, if one is willing to assume that $S$ is Noetherian (and flat
as an $R$\+module) instead, then the quotflatness assumption
can be dropped.
\end{rems}

\Section{Relative Context and Base Change}  \label{base-change-secn}

 Let $I$ be a weakly proregular finitely generated ideal in
a commutative ring $R$ and $J$ be a weakly proregular finitely
generated ideal in a commutative ring~$S$.
 Suppose that we are given a morphism of pairs $f\:(R,I)\rarrow(S,J)$,
i.~e., a ring homomorphism $f\:R\rarrow S$ such that $f(I)\subset J$.
 Assume that $S$ is a flat $R$\+module.

 Let $L^\bu$ be a pseudo-dualizing complex of $I$\+torsion $R$\+modules
(in the sense of the definition in
Section~\ref{auslander-and-bass-secn}).
 Let $\bs$~be a finite sequence of generators of the ideal $J\subset S$,
and let $K^\bu_\infty(S,\bs)$ be the infinite dual Koszul complex from
Section~\ref{prelims-on-wpr-secn}.
 Recall that $K^\bu_\infty(S,\bs)$ is a finite complex of countably
presented flat $S$\+modules.

 We are interested in the finite complex of $S$\+modules
$K^\bu_\infty(S,\bs)\ot_RL^\bu$.
 Denoting by $SI\subset S$ the ideal generated by $f(I)$ in $S$,
we notice that both $S\ot_RL^\bu$ and $K^\bu_\infty(S,\bs)\ot_RL^\bu$
are finite complexes of $SI$\+torsion $S$\+modules.
 The first key observation is that $K^\bu_\infty(S,\bs)\ot_RL^\bu$
is quasi-isomorphic to a finite complex of $J$\+torsion $S$\+modules
$U^\bu$, which is defined uniquely up to a quasi-isomorphism of
finite complexes in $S\Modl_{J\tors}$.

 Indeed, by
Theorem~\ref{torsion-modules-inclusion-derived-fully-faithful},
the functor $\sD^\bb(S\Modl_{J\tors})\rarrow\sD^\bb(S\Modl)$ is fully
faithful, and its essential image consists of all the bounded complexes
of $S$\+modules with $J$\+torsion cohomology modules.
 By~\cite[Lemma~1.1(a)]{Pmgm}, the cohomology $S$\+modules of
the complex $K^\bu_\infty(S,\bs)\ot_SM^\bu$ are $J$\+torsion for
any complex of $S$\+modules~$M^\bu$.
 Returning to the situation at hand, it follows that there is
a uniquely defined object $U^\bu\in\sD^\bb(S\Modl_{J\tors})$ isomorphic
to $K^\bu_\infty(S,\bs)\ot_RL^\bu$ in $\sD^\bb(S\Modl)$.

\begin{thm} \label{base-change-is-pseudo-dualizing}
 The finite complex of $J$\+torsion $S$\+modules $U^\bu$ is
a pseudo-dualizing complex of $J$\+torsion $S$\+modules.
\end{thm}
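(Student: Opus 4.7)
The plan is to verify conditions (ii) and (iii) of the pseudo-dualizing definition (Section~\ref{auslander-and-bass-secn}) for $U^\bu$; condition (i), that $U^\bu$ is a finite complex of $J$-torsion $S$-modules, holds by construction. Both remaining conditions will be reduced to their counterparts for $L^\bu$ by carefully tracking the base change from $(R,I)$ to $(S,J)$, exploiting the flatness of $S$ over $R$.

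For condition~(ii), the property ``$\Hom_S(K^\bu,U^\bu)$ is q.i.\ to a bounded above complex of f.g.\ projective $S$-modules'' is closed under shifts, cones and retracts in $K^\bu$, so by the Bondal--Van~den~Bergh / Rouquier generation cited in the proof of Proposition~\ref{definitions-of-dualizing-complex-finiteness}\,(1)$\Rightarrow$(2) it suffices to check a single convenient $K^\bu$. I take the generating sequence $\bs' = (f(r_1),\dots,f(r_k),s_1,\dots,s_m)$ of $J$, where $\br=(r_1,\dots,r_k)$ generates $I$, so that $K^\bu(S,\bs') \simeq (S\otimes_R K^\bu(R,\br))\otimes_S K^\bu(S,\bs)$ and its $S$-dual satisfies $K_\bu(S,\bs') \simeq K_\bu(S,\bs)\otimes_S(S\otimes_R K_\bu(R,\br))$. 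Using the standard duality $\Hom_S(K^\bu,M)\simeq\Hom_S(K^\bu,S)\otimes_S M$ for perfect $K^\bu$, together with $K_\bu(S,\bs)\otimes_S K^\bu_\infty(S,\bs)\simeq K_\bu(S,\bs)$ in $\sD(S\Modl)$ (the right adjoint $\boR\Gamma_J=K^\bu_\infty(S,\bs)\otimes_S{-}$ acts as identity on $\sD_{J\tors}(S\Modl)$), one computes
\[
\Hom_S(K^\bu(S,\bs'),U^\bu)\simeq K_\bu(S,\bs)\otimes_R\bigl(K_\bu(R,\br)\otimes_R L^\bu\bigr).
\]
Condition~(ii) for $L^\bu$ applied to $K^\bu(R,\br)$ yields a bounded above complex $P^\bu$ of f.g.\ projective $R$-modules q.i.\ to $K_\bu(R,\br)\otimes_R L^\bu$, and $K_\bu(S,\bs)\otimes_R P^\bu\simeq K_\bu(S,\bs)\otimes_S(S\otimes_R P^\bu)$ is a bounded above complex of f.g.\ projective $S$-modules, as required.

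For condition~(iii), I compute via the tensor-Hom adjunction, treating $K^\bu_\infty(S,\bs)$ as an $(S,R)$-bimodule complex:
\[
\boR\Hom_S(U^\bu,U^\bu)\simeq\boR\Hom_R\bigl(L^\bu,\,\boR\Hom_S(K^\bu_\infty(S,\bs),U^\bu)\bigr).
\]
Under the weak proregularity of $J$, the functor $\boR\Hom_S(K^\bu_\infty(S,\bs),-)$ coincides with the derived $J$-adic completion $\boL\Lambda_J$ (via $K^\bu_\infty=\varinjlim_n K^\bu(S,\bs^n)$ and the paper's identification of $\Hom_S(T^\bu(S,\bs),-)$ as the left adjoint to $\sD_{J\ctra}(S\Modl)\to\sD(S\Modl)$). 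Since MGM gives $\boL\Lambda_J\circ\boR\Gamma_J\simeq\boL\Lambda_J$ and $U^\bu\simeq\boR\Gamma_J(S\otimes_R L^\bu)$, this reduces to $\boL\Lambda_J(S\otimes_R L^\bu)$. Flatness of $S$ over $R$, combined with the $\fR$-module structure on the $I$-torsion complex $L^\bu$ (where $\fR\otimes_\fR L^\bu\simeq L^\bu$), identifies the last term with $\fS\otimes_\fR L^\bu$. Since $\boR\Hom_R(L^\bu,-)\simeq\boR\Hom_\fR(L^\bu,-)$ between $\fR$-module complexes, a base-change / projection formula and condition~(iii) for $L^\bu$ yield
\[
\boR\Hom_\fR(L^\bu,\fS\otimes_\fR L^\bu)\simeq\fS\otimes_\fR^\boL\boR\Hom_\fR(L^\bu,L^\bu)\simeq\fS\otimes_\fR^\boL\fR\simeq\fS
\]
concentrated in degree~$0$; naturality of the adjunctions ensures the composite is the intended homothety map of graded rings.

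The most substantive step is the base-change argument underlying condition~(iii): both the identification $\boL\Lambda_J(S\otimes_R L^\bu)\simeq\fS\otimes_\fR L^\bu$ and the projection-formula exchange of $\fS\otimes_\fR{-}$ with $\boR\Hom_\fR(L^\bu,-)$ require a delicate interplay between the $I$- and $J$-adic structures through the completed homomorphism $\hat f\colon\fR\to\fS$, exploiting weak proregularity of both ideals, flatness of $S$ over $R$, and the pseudo-coherence of $L^\bu$ encoded in its condition~(ii).
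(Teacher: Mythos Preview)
Your argument for condition~(ii) is essentially the paper's own: reduce to a single Koszul complex via thick-subcategory generation, split the Koszul complex along $\bs=(\bt,f(\br))$, and apply condition~(ii) for $L^\bu$. This part is fine.

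Your argument for condition~(iii), however, has a genuine gap. The identification $\boL\Lambda_J(S\ot_R L^\bu)\simeq\fS\ot_\fR L^\bu$ is not merely ``delicate''---it is false in general. Take $R=k[y]$, $I=(y)$, $L^\bu=k[y,y^{-1}]/k[y]$, $S=k[x,y]$, $J=(x,y)$. Then $\boL\Lambda_J(S\ot_R L^\bu)$ is the $x$-adic completion $\prod_{i\ge0}L^\bu\cdot x^i$, whose elements are sequences $(a_i)$ with each $a_i$ killed by \emph{some} power of~$y$; whereas $\fS\ot_\fR L^\bu=\bigl(\prod_{i\ge0}k[[y]]\bigr)\ot_{k[[y]]}L^\bu=\varinjlim_m\prod_i k[y]/(y^m)$ consists only of sequences killed by a \emph{uniform} power of~$y$. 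Tensor product does not commute with the infinite product hidden in~$\fS$. The subsequent projection-formula step $\boR\Hom_\fR(L^\bu,\fS\ot_\fR L^\bu)\simeq\fS\ot_\fR^\boL\boR\Hom_\fR(L^\bu,L^\bu)$ fails for the same reason: condition~(ii) does not make $L^\bu$ pseudo-coherent over~$\fR$; it only makes $K^\bu\ot_R L^\bu$ pseudo-coherent for compact torsion $K^\bu$.

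The paper avoids both problems by never passing to $\fS$ directly. It writes $\boR\Hom_S(U^\bu,U^\bu)\simeq\boR\varprojlim_n\boR\Hom_S(T_n^\bu(S,\bs)\ot_R L^\bu,\,T^\bu(S,\bs)\ot_R L^\bu)$ and works at each finite level~$n$, where $T_n^\bu(S,\bs)\ot_R L^\bu$ \emph{is} pseudo-coherent (quasi-isomorphic to a bounded-above complex $Q_n^\bu$ of finitely generated projective $S$-modules, by condition~(ii) for $L^\bu$). That finiteness is exactly what licenses the tensor--Hom commutation $\Hom_S(Q_n^\bu,T^\bu(S,\bs)\ot_R L^\bu)\simeq T^\bu(S,\bs)\ot_S\Hom_S(Q_n^\bu,S\ot_R L^\bu)$; the derived limit is only taken at the very end, after condition~(iii) for $L^\bu$ and \cite[Lemma~5.3(b)]{Pmgm} have reduced each term to $\Hom_S(T_n^\bu(S,\bs),T^\bu(S,\bs))$. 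Your approach tries to absorb the Koszul filtration into a completion before invoking finiteness, which is precisely where the argument breaks.
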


\begin{proof}
 As mentioned in Section~\ref{prelims-on-wpr-secn}, the complex
$K^\bu_\infty(S,\bs)$, viewed up to quasi-isomorphism, does not
depend on the choice of a finite sequence~$\bs$ of generators of
the ideal $J\subset S$.
 More specifically, for any two finite sequences $\bs'$ and~$\bs''$
of generators of the ideal $J\subset S$, the related two complexes
$K^\bu_\infty(S,\bs')$ and $K^\bu_\infty(S,\bs'')$ are naturally
connected by a chain of quasi-isomorphisms.
 These are quasi-isomorphisms of finite complexes of flat $S$\+modules,
so the functor ${-}\ot_RL^\bu$ takes them to quasi-isomorphisms.
 Thus we can choose the finite sequence~$\bs$ as we prefer.
 Pick a finite sequence $\br=(r_1,\dotsc,r_l)$ of generators of
the ideal $I\subset R$, and put $\bs=(\bt,f(\br))$, where $\bt$~is some
finite sequence of elements of the ideal $J\subset S$ such that
the finite sequence~$\bs$ generates~$J$.
 Here the notation is $f(\br)=(f(r_1),\dotsc,f(r_l))$.

 Let us prove condition~(ii) from
Section~\ref{auslander-and-bass-secn} for the complex~$U^\bu$.
 Similarly to the proof of
Proposition~\ref{definitions-of-dualizing-complex-finiteness},
by~\cite[Proposition~6.1]{BN} or~\cite[Proposition~6.6]{Rou}
(see also~\cite[Proposition~5.1 and proof of Lemma~5.4(a)]{Pmgm}),
it suffices to consider the case of the dual Koszul complex
$K^\bu(S,\bs)$ in the role of~$K^\bu$.
 Then the complex of $S$\+modules $\Hom_S(K^\bu(S,\bs),U^\bu)$ is
quasi-isomorphic to $K_\bu(S,\bs)\ot_SK^\bu_\infty(S,\bs)\ot_RL^\bu$.

 As mentioned in the proof of
Lemma~\ref{torsion-and-abstract-homological-dimensions},
following, e.~g., the discussion in~\cite[Section~2]{Pdc},
the functor $K^\bu_\infty(S,\bs)\ot_S{-}\,\:\sD(S\Modl)\rarrow
\sD_{J\tors}(S\Modl)$ is right adjoint to the inclusion functor
$\sD_{J\tors}(S\Modl)\rarrow\sD(S\Modl)$, where
$\sD_{J\tors}(S\Modl)$ denotes the full subcategory of complexes with
$J$\+torsion cohomology modules in $\sD(S\Modl)$.
 The complex of $S$\+modules $K_\bu(S,\bs)$ has $J$\+torsion cohomology
modules, hence the natural morphism of complexes
$K^\bu_\infty(S,\bs)\ot_S K_\bu(S,\bs)\rarrow K_\bu(S,\bs)$ is
a quasi-isomorphism.
 So the complex of $S$\+modules $\Hom_S(K^\bu(S,\bs),U^\bu)$ is
quasi-isomorphic to $K_\bu(S,\bs)\ot_RL^\bu$.

 Finally, we have an isomorphism of complexes
$K_\bu(S,\bs)\simeq K_\bu(S,\bt)\ot_RK_\bu(R,\br)$.
 By assumption, condition~(ii) holds for the complex of $I$\+torsion
$R$\+modules $L^\bu$, so the complex of $R$\+modules
$K_\bu(R,\br)\ot_RL^\bu$ is quasi-isomorphic to a bounded above
complex of finitely generated projective $R$\+modules~$P^\bu$.
 It follows that the complex of $S$\+modules
$\Hom_S(K^\bu(S,\bs),U^\bu)$ is quasi-isomorphic to the bounded
above complex of finitely generated projective $S$\+modules
$K_\bu(S,\bt)\ot_RP^\bu$.

 Let us prove condition~(iii) for~$U^\bu$.
 For this purpose, it is convenient to use the complexes
$T^\bu(S,\bs)$ and $T^\bu_n(S,\bs)$, \,$n\ge1$, from
Section~\ref{prelims-on-wpr-secn}.
 We compute
\begin{multline} \label{RHom-U-U-computation-started}
 \boR\Hom_S(U^\bu,U^\bu)\simeq
 \boR\Hom_S(T^\bu(S,\bs)\ot_RL^\bu,\>T^\bu(S,\bs)\ot_RL^\bu) \\
 \simeq\boR\varprojlim\nolimits_{n\ge1}
 \boR\Hom_S(T^\bu_n(S,\bs)\ot_RL^\bu,\>T^\bu(S,\bs)\ot_RL^\bu),
\end{multline}
where $\boR\varprojlim_{n\ge1}$ denotes the derived functor of
projective limit of projective systems of complexes of $S$\+modules
indexed by the linearly ordered set of positive integers.

 Using the notation above, we have $T^\bu_n(S,\bs)\simeq
T^\bu_n(S,\bt)\ot_ST^\bu_n(S,\br)\simeq
T^\bu_n(S,\bt)\ot_RT^\bu_n(R,\br)$, where $T^\bu_n(S,\bt)$ is a finite
complex of finitely generated free $S$\+modules and $T^\bu_n(R,\br)$
is a finite complex of finitely generated free $R$\+modules.
 Following Section~\ref{prelims-on-wpr-secn} and the argument above,
the complex $T^\bu_n(R,\br)\ot_RL^\bu$ is homotopy equivalent to
the complex $K_\bu(R,\br^n)\ot_RL^\bu$, which is quasi-isomorphic
to a bounded above complex of finitely generated projective
$R$\+modules~$P_n^\bu$.
 Hence we have
\begin{multline} \label{RHom-U-U-computation-intermediate-step}
 \boR\Hom_S(T^\bu_n(S,\bs)\ot_RL^\bu,\>T^\bu(S,\bs)\ot_RL^\bu) \\
 =\Hom_S(T^\bu_n(S,\bt)\ot_RP_n^\bu,\>T^\bu(S,\bs)\ot_RL^\bu) \\
 \simeq\Hom_S(T^\bu_n(S,\bt),T^\bu(S,\bs))\ot_R\Hom_R(P_n^\bu,L^\bu) \\
 = \Hom_S(T^\bu_n(S,\bt),T^\bu(S,\bs))\ot_R
 \boR\Hom_R\bigl(T^\bu_n(R,\br)\ot_RL^\bu,\>L^\bu\bigr) \\
 \simeq \Hom_S(T^\bu_n(S,\bt),T^\bu(S,\bs))\ot_R
 \Hom_R\bigl(T^\bu_n(R,\br),\boR\Hom_R(L^\bu,L^\bu)\bigr) \\
 \simeq \Hom_S(T^\bu_n(S,\bt),T^\bu(S,\bs))\ot_R
 \Hom_R(T^\bu_n(R,\br),\fR) \\ \simeq
 \Hom_S\bigl(T^\bu_n(S,\bt)\ot_RT^\bu_n(R,\br),
 \>T^\bu(S,\bs)\ot_R\fR\bigr) \\
 \simeq\Hom_S\bigl(T^\bu_n(S,\bs),\>T^\bu(S,\bs)\ot_R\fR\bigr),
\end{multline}
where $\fR=\varprojlim_{m\ge1}R/I^m$.

 Now we have isomorphisms of complexes of $S$\+modules
$T^\bu(S,\bs)\simeq T^\bu(S,\bt)\ot_ST^\bu(S,\br)\simeq
T^\bu(S,\bt)\ot_RT^\bu(R,\br)$.
 By~\cite[Lemma~5.3(b)]{Pmgm}, the completion map $R\rarrow\fR$
induces a quasi-isomorphism of complexes of $R$\+modules
$T^\bu(R,\br)\rarrow T^\bu(R,\br)\ot_R\fR$.
 Hence the same completion map also induces a quasi-isomorphism of
complexes of $S$\+modules $T^\bu(S,\bs)\rarrow T^\bu(S,\bs)\ot_R\fR$.
 So, using~\eqref{RHom-U-U-computation-intermediate-step}, we can
finish the computation~\eqref{RHom-U-U-computation-started} as
\begin{multline}
 \boR\varprojlim\nolimits_{n\ge1}
 \boR\Hom_S\bigl(T^\bu_n(S,\bs)\ot_RL^\bu,\>T^\bu(S,\bs)\ot_RL^\bu\bigr)
 \\ \simeq
 \boR\varprojlim\nolimits_{n\ge1}\Hom_S(T^\bu_n(S,\bs),T^\bu(S,\bs))
 =\Hom_S(T^\bu(S,\bs),T^\bu(S,\bs)).
\end{multline}
 Finally, the completion map $S\rarrow\fS$ induces a quasi-isomorphism
of complexes of $S$\+modules $T^\bu(S,\bs)\rarrow T^\bu(S,\bs)\ot_S\fS$
by~\cite[Lemma~5.3(b)]{Pmgm}, and it follows that the homothety morphism
$\fS\rarrow\Hom_S(T^\bu(S,\bs),T^\bu(S,\bs))$ is an isomorphism in
$\sD(S\Modl)$ by~\cite[Lemma~5.2(b)]{Pmgm}.
\end{proof}

 Assume that the complex $L^\bu$ is concentrated in the cohomological
degrees $-d_1\le m\le d_2$ and the complex $U^\bu$ is concentrated in
the cohomological degrees $-t_1\le m\le t_2$ (where $d_1$, $d_2$, $t_1$,
$t_2$ are some integers).
 The definitions of the Bass and Auslander classes $\sE_{l_1}$ and
$\sF_{l_1}$ can be found in Section~\ref{auslander-and-bass-secn}.

 The following proposition is our version
of~\cite[Proposition~8.5]{Pps}.
 Notice the difference between their formulation, however:
 The assertions of~\cite[Proposition~8.5]{Pps} are ``if and only if''
results, while the assertions of our proposition are only implications
in one direction.

\begin{prop} \label{base-change-bass-auslander-classes}
 Let $l_1$~be an integer such that $l_1\ge d_1$ and $l_1\ge t_1$.
 Then \par
\textup{(a)} a $J$\+torsion $S$\+module belongs to the full
subcategory\/ $\sE_{l_1}(U^\bu)\subset S\Modl_{J\tors}$ whenever
its underlying $I$\+torsion $R$\+module belongs to the full
subcategory\/ $\sE_{l_1}(L^\bu)\subset R\Modl_{I\tors}$; \par
\textup{(b)} a $J$\+contramodule $S$\+module belongs to the full
subcategory\/ $\sF_{l_1}(U^\bu)\subset S\Modl_{J\ctra}$ whenever
its underlying $I$\+contramodule $R$\+module belongs to the full
subcategory\/ $\sF_{l_1}(L^\bu)\subset R\Modl_{I\ctra}$. \hbadness=1550
\end{prop}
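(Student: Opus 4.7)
The plan is to exploit the identification $U^\bu \simeq K^\bu_\infty(S,\bs) \otimes_R L^\bu$ in $\sD^\bb(S\Modl)$ (established in the proof of Theorem~\ref{base-change-is-pseudo-dualizing}) in order to reduce the defining conditions of $\sE_{l_1}(U^\bu)$ and $\sF_{l_1}(U^\bu)$ over $S$ to the corresponding conditions over $R$. First I would apply tensor-hom adjunction for the change of rings $f\:R\rarrow S$, after replacing $L^\bu$ by a bounded-above projective resolution over $R$, to obtain natural isomorphisms in $\sD(S\Modl)$:
\begin{align*}
 \boR\Hom_S(U^\bu,E) &\simeq \boR\Hom_R(L^\bu,\boL\Delta_J E), \\
 U^\bu \otimes_S^\boL M &\simeq \boR\Gamma_J(L^\bu \otimes_R^\boL M),
\end{align*}
where $\boR\Gamma_J = K^\bu_\infty(S,\bs) \otimes_S^\boL -$ and $\boL\Delta_J = \boR\Hom_S(K^\bu_\infty(S,\bs),-)$ denote the derived $J$-torsion and $J$-contramodule functors implementing the MGM equivalence $\sD_{J\tors}(S\Modl) \simeq \sD_{J\ctra}(S\Modl)$.

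Next I would combine the MGM identity $\boR\Hom_R(L^\bu,-) \simeq \boR\Hom_R(L^\bu,\boR\Gamma_I -)$ over $R$ (valid because $L^\bu$ has $I$-torsion cohomology) with the decomposition $K^\bu_\infty(S,\bs) \simeq K^\bu_\infty(S,\bt) \otimes_R K^\bu_\infty(R,\br)$ (with $\bs = (\bt,f(\br))$) used in the proof of Theorem~\ref{base-change-is-pseudo-dualizing}, together with the identification $\boR\Gamma_I M \simeq \boR\Gamma_{f(I)S} M$ for $S$-modules $M$ (which follows from $K^\bu_\infty(S,f(\br)) \simeq S \otimes_R K^\bu_\infty(R,\br)$ by flatness of $S$ over $R$). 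Combined with the MGM identity $\boR\Gamma_I \boL\Delta_I X \simeq \boR\Gamma_I X$, these allow me to reduce Ext and Tor computations over $S$ to analogous ones over $R$ where the hypothesis on the underlying $R$-module of $E$ or $F$ can be applied directly.

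For part (a), the counit map $U^\bu \otimes_S^\boL \boR\Hom_S(U^\bu,E) \rarrow E$ factors as
\[
 \boR\Gamma_J(L^\bu \otimes_R^\boL \boR\Hom_R(L^\bu,\boL\Delta_J E)) \xrightarrow{\boR\Gamma_J(\varepsilon_R)} \boR\Gamma_J \boL\Delta_J E \xrightarrow{\sim} E,
\]
where $\varepsilon_R$ is the $R$-side counit and the second map is the $S$-MGM isomorphism (an iso because $E$ is $J$-torsion). The hypothesis that the underlying $R$-module of $E$ lies in $\sE_{l_1}(L^\bu)$ provides the $R$-counit iso for $E$, and the MGM manipulations from the previous paragraph propagate this through $\boL\Delta_J$ and $\boR\Gamma_J$ to show $\boR\Gamma_J(\varepsilon_R)$ is an iso. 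The Ext vanishing $\Ext_S^n(U^\bu,E) = 0$ for $n > l_1$ is obtained by translating the hypothesis $\Ext_R^n(L^\bu,E) = 0$ via the isomorphism $\boR\Hom_S(U^\bu,E) \simeq \boR\Hom_R(L^\bu,\boL\Delta_J E)$ and the MGM identifications. Part (b) follows by the dual argument, with $F$ a $J$-contramodule $S$-module in place of $E$ and the roles of torsion and contramodule sides interchanged. The hardest part will be rigorously verifying that the $R$-side adjunction iso for $E$ (resp.\ $F$) itself, rather than for $\boL\Delta_J E$ (resp.\ $\boR\Gamma_J F$), suffices to produce the $S$-side adjunction iso — this requires careful bookkeeping with the MGM equivalences over both $(R,I)$ and $(S,J)$ and the interplay via the factorization $J = f(I)S + (\bt)$.
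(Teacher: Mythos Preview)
Your approach is essentially the same as the paper's, but you have inserted a detour that creates the very difficulty you flag as ``the hardest part''. The paper writes the key adjunction isomorphism with the MGM functor on the \emph{outside} rather than the inside:
\[
 \boR\Hom_S(U^\bu,E)\simeq\Hom_S(T^\bu(S,\bs),\boR\Hom_R(L^\bu,E)),
\]
using the bounded complex of free $S$\+modules $T^\bu(S,\bs)\simeq K^\bu_\infty(S,\bs)$. Your formula $\boR\Hom_S(U^\bu,E)\simeq\boR\Hom_R(L^\bu,\boL\Delta_J E)$ is equivalent to this (since $\Hom_S(T^\bu(S,\bs),{-})$, being built from countable products, commutes with $\boR\Hom_R(L^\bu,{-})$), but the paper's ordering makes both checks immediate. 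The Ext vanishing follows at once because $T^\bu(S,\bs)$ sits in nonnegative degrees. For the adjunction isomorphism, one application of the MGM identity $T^\bu(S,\bs)\ot_S\Hom_S(T^\bu(S,\bs),M^\bu)\simeq T^\bu(S,\bs)\ot_S M^\bu$ gives
\[
 U^\bu\ot_S^\boL\boR\Hom_S(U^\bu,E)\simeq
 T^\bu(S,\bs)\ot_S\bigl(L^\bu\ot_R^\boL\boR\Hom_R(L^\bu,E)\bigr),
\]
and now the $R$\+hypothesis applies directly to $E$: the $R$\+counit identifies the parenthesis with $E$, and since $E$ has $J$\+torsion cohomology, $T^\bu(S,\bs)\ot_S E\simeq E$. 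Part~(b) is handled dually with the MGM identity $\Hom_S(T^\bu(S,\bs),T^\bu(S,\bs)\ot_S M^\bu)\simeq\Hom_S(T^\bu(S,\bs),M^\bu)$. No bookkeeping through $\boL\Delta_J E$ or $\boR\Gamma_J F$ is needed, and the decomposition $\bs=(\bt,f(\br))$ plays no role in this proposition (it is used in Theorem~\ref{base-change-is-pseudo-dualizing} and Proposition~\ref{base-change-abstract-classes}, not here).
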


\begin{proof}
 Part~(a): for any $S$\+module $E$, we have
$$
 \boR\Hom_S(U^\bu,E)\simeq\Hom_S(T^\bu(S,\bs),\boR\Hom_R(L^\bu,E)).
$$
 Now if $H^n\boR\Hom_R(L^\bu,E)=0$ for $n>l_1$, then
$H^n\boR\Hom_S(U^\bu,E)=0$ for $n>l_1$ (since the finite complex
of countably generated projective $S$\+modules $T^\bu(S,\bs)$ is
concentrated in the nonnegative cohomological degrees).

 Similarly, one computes
\begin{multline*}
 U^\bu\ot_S^\boL\boR\Hom_S(U^\bu,E)\simeq
 L^\bu\ot_R^\boL T^\bu(S,\bs)\ot_S
 \Hom_S(T^\bu(S,\bs),\boR\Hom_R(L^\bu,E)) \\
 \simeq L^\bu\ot_R^\boL T^\bu(S,\bs)\ot_S\boR\Hom_R(L^\bu,E)
 \simeq T^\bu(S,\bs)\ot_S(L^\bu\ot_R^\boL\boR\Hom_R(L^\bu,E)),
\end{multline*}
where the second isomorphism holds because the natural map
$T^\bu(S,\bs)\ot_SM^\bu\rarrow
T^\bu(S,\bs)\ot_S\Hom_S(T^\bu(S,\bs),M^\bu)$ is a quasi-isomorphism
for any complex of $S$\+modules $M^\bu$
by~\cite[proof of Lemma~5.2(a)]{Pmgm}.

 Now if $E$ is a $J$\+torsion $S$\+module and the adjunction morphism
$L^\bu\ot_R^\boL\boR\Hom_R(L^\bu,E)\rarrow E$ is an isomorphism in
$\sD(R\Modl)$ (hence also in $\sD(S\Modl)$), then $L^\bu\ot_R^\boL
\boR\Hom_R(L^\bu,E)$ is a complex of $S$\+modules with $J$\+torsion
cohomology modules.
 Hence the natural map $T^\bu(S,\bs)\ot_S
(L^\bu\ot_R^\boL\boR\Hom_R(L^\bu,E))\rarrow
L^\bu\ot_R^\boL\boR\Hom_R(L^\bu,E)$ is a quasi-isomorphism of
complexes of $S$\+modules by~\cite[Lemma~1.1(c)]{Pmgm}, and it follows
that the adjunction morphism $U^\bu\ot_S^\boL\boR\Hom_S(U^\bu,E)
\allowbreak\rarrow E$ is an isomorphism whenever the adjunction morphism
$L^\bu\ot_R^\boL\boR\Hom_R(L^\bu,E)\allowbreak\rarrow E$
is an isomorphism.  {\hbadness=1525\par}

 Part~(b): for any $S$\+module $F$, we have
$$
 U^\bu\ot_S^\boL F \simeq T^\bu(S,\bs)\ot_S(L^\bu\ot_R^\boL F).
$$
 Now if $H^{-n}(L^\bu\ot_R^\boL F)=0$ for $n>l_1$, then
$H^{-n}(U^\bu\ot_S^\boL F)=0$ for $n>l_1$.

 Similarly, one computes
\begin{multline*}
 \boR\Hom_S(U^\bu,\>U^\bu\ot_S^\boL F)\simeq
 \boR\Hom_R\bigl(L^\bu,\>\Hom_S\bigl(T^\bu(S,\bs),\>T^\bu(S,\bs)\ot_S
 (L^\bu\ot_R^\boL F)\bigr)\bigr) \\
 \simeq\boR\Hom_R(L^\bu,\>\Hom_S(T^\bu(S,\bs),\>L^\bu\ot_R^\boL F))
 \simeq\Hom_S(T^\bu(S,\bs),\>\boR\Hom_R(L^\bu,\>L^\bu\ot_R^\boL F)),
\end{multline*}
where the second isomorphism holds because the natural map
$\Hom_S(T^\bu(S,\bs),\allowbreak\> T^\bu(S,\bs)\ot_SM^\bu)
\rarrow\Hom_S(T^\bu(S,\bs),M^\bu)$ is a quasi-isomorphism for any
complex of $S$\+modules $M^\bu$ by~\cite[proof of Lemma~5.2(b)]{Pmgm}.

 Now if $F$ is a $J$\+contramodule $S$\+module and the adjunction
morphism $F\allowbreak\rarrow\boR\Hom_R(L^\bu,\>L^\bu\ot_R^\boL F)$ is
an isomorphism in $\sD(R\Modl)$ (hence also in $\sD(S\Modl)$), then
$\boR\Hom_R(L^\bu,\>L^\bu\ot_R^\boL F)$ is a complex of $S$\+modules
with $J$\+contramodule cohomology modules.
 Hence the natural map $\boR\Hom_R(L^\bu,\>L^\bu\ot_R^\boL F)\rarrow
\Hom_S(T^\bu(S,\bs),\>\boR\Hom_R(L^\bu,\>L^\bu\ot_R^\boL F))$
is a quasi-isomorphism of complexes of $S$\+modules
by~\cite[Lemma~2.2(c)]{Pmgm}, and it follows that the adjunction
morphism $F\rarrow\boR\Hom_S(U^\bu,\>U^\bu\ot_S^\boL F)$ is
an isomorphism whenever the adjunction morphism $F\rarrow
\boR\Hom_R(L^\bu,\>L^\bu\ot_R^\boL F)$ is an isomorphism.
\hbadness=1600
\end{proof}

 The proposition above pertains to the Bass and Auslander classes.
 Now we turn to abstract corresponding classes.
 Given a class of $I$\+torsion $R$\+modules
$\sE\subset R\Modl_{I\tors}$, we denote by $\sG_\sE\subset
S\Modl_{J\tors}$ the class of all $J$\+torsion $S$\+modules whose
underlying $I$\+torsion $R$\+modules belong to~$\sE$.
 Similarly, given a class of $I$\+contramodule $R$\+modules
$\sF\subset R\Modl_{I\ctra}$, we denote by $\sH_\sF\subset
S\Modl_{J\ctra}$ the class of all $J$\+contramodule $S$\+modules
whose underlying $I$\+contramodule $R$\+modules belong to~$\sF$.

 As a special case of the setting described in the beginning of
this Section~\ref{base-change-secn}, one can consider the situation
when $J=SI$ is the ideal generated by $f(I)$ in~$S$.
 Notice that weak proregularity of $I$ in $R$ and flatness of $S$ over
$R$ imply weak proregularity of $J$ in $S$ in this
case~\cite[Example~2.4]{Pmgm}.
 Furthermore, an $S$\+module is $SI$\+torsion if and only if it is
$I$\+torsion as an $R$\+module, and an $S$\+module is
an $SI$\+contramodule if and only if it is $I$\+contramodule as
an $R$\+module.

 For an ideal $J\subset S$ such that $f(I)\subset J$, one has
$SI\subset J$.
 Hence the exact inclusions of abelian categories $S\Modl_{J\tors}
\subset S\Modl_{SI\tors}$ and $S\Modl_{J\ctra}\subset S\Modl_{SI\ctra}$.

 In this context, we will use the notation $\sG_\sE^\circ\subset
S\Modl_{SI\tors}$ for the class of all ($SI$\+torsion) $S$\+modules
whose underlying ($I$\+torsion) $R$\+modules belong to~$\sE$.
 Similarly, we denote by $\sH_\sF^\circ\subset S\Modl_{SI\ctra}$
the class of all ($SI$\+contramodule) $S$\+modules whose underlying
($I$\+contramodule) $R$\+modules belong to~$\sF$.
 So we have $\sG_\sE=S\Modl_{J\tors}\cap\sG_\sE^\circ\subset
S\Modl_{SI\tors}$ and $\sH_\sF=S\Modl_{J\ctra}\cap\sH_\sF^\circ
\subset S\Modl_{SI\ctra}$.

\begin{prop} \label{base-change-abstract-classes}
 Let $I$ be a weakly proregular finitely generated ideal in
a commutative ring $R$ and $J$ be a weakly proregular finitely
generated ideal in a commutative ring~$S$.
 Assume that the ring $R$ is $I$\+adically coherent.
 Let $f\:R\rarrow S$ be a ring homomorphism such that $f(I)\subset J$.
 Assume that $S$ is a flat $R$\+module and the morphism of pairs
$f\:(R,I)\rarrow(S,J)$ is quotflat in the sense of
Section~\ref{quotflat-secn}.
 Let $t\ge0$ be an integer such that the ideal $J/SI$ in the ring
$S/SI$ can be generated by $t$~elements.
 Let $L^\bu$ be a pseudo-dualizing complex of $I$\+torsion $R$\+modules
concentrated in the cohomological degrees $-d_1\le m\le d_2$,
and let $t_1$ and~$t_2$ be two integers such that the complex of
$S$\+modules $K^\bu_\infty(S,\bs)\ot_RL^\bu$ is quasi-isomorphic to
a finite complex of $J$\+torsion $S$\+modules $U^\bu$ concentrated
in the cohomological degrees $-t_1\le m\le t_2$ (as per the discussion
in the beginning of this Section~\ref{base-change-secn}).

 Let\/ $\sE\subset R\Modl_{I\tors}$ and\/ $\sF\subset R\Modl_{I\ctra}$
be a pair of full subcategories satisfying conditions (I\+-IV) from
Section~\ref{abstract-classes-secn} with respect to the pseudo-dualizing
complex of $I$\+torsion $R$\+modules $L^\bu$ with some parameters
$l_1\ge d_1$ and $l_2\ge d_2$.
 Assume that the class\/ $\sE$ is closed under countable direct sums in
$R\Modl_{I\tors}$ and contains all the fp\+injective $I$\+torsion
$R$\+modules, while the class\/ $\sF$ is closed under countable products
in $R\Modl_{I\ctra}$ and contains all the contraflat $I$\+contramodule
$R$\+modules.
 Let $u_1$ and~$u_2$ be two integers such that $u_1\ge l_1$,
\,$u_1\ge t_1$ and $u_2\ge l_2+t$, \,$u_2\ge t_2$.
 Then the pair of full subcategories\/ $\sG_\sE\subset S\Modl_{J\tors}$
and\/ $\sH_\sF\subset S\Modl_{J\ctra}$ satisfies conditions (I\+-IV)
with respect to the pseudo-dualizing complex of $J$\+torsion
$S$\+modules $U^\bu$ with the parameters $u_1$ and~$u_2$.
\end{prop}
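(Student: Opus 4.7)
The plan is to verify the four conditions in two stages. First, I handle the closure conditions (I) and (II), which reduce formally to the closure properties of $\sE,\sF$ together with one application of the quotflatness hypothesis. Second, I handle the complex-representation conditions (III) and (IV), where the main work lies.

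For (I) and (II), closure of $\sG_\sE$ (respectively, $\sH_\sF$) under extensions and cokernels of injective morphisms in $S\Modl_{J\tors}$ (respectively, under extensions and kernels of surjective morphisms in $S\Modl_{J\ctra}$) is inherited from $\sE$ (respectively, $\sF$) since the forgetful functors $S\Modl_{J\tors}\rarrow R\Modl_{I\tors}$ and $S\Modl_{J\ctra}\rarrow R\Modl_{I\ctra}$ are exact. The containment of injective objects of $S\Modl_{J\tors}$ in $\sG_\sE$ is where quotflatness enters: any injective object of $S\Modl_{J\tors}$ is fp\+injective, so by Proposition~\ref{restriction-of-scalars-fp-injective} its underlying $I$\+torsion $R$\+module is fp\+injective, hence lies in $\sE$ by hypothesis. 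Dually, any projective object of $S\Modl_{J\ctra}$ is contraflat, so by Proposition~\ref{restriction-of-scalars-contraflat} its underlying $I$\+contramodule $R$\+module is contraflat, hence lies in $\sF$ by hypothesis.

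For (III) and (IV), the central input is the quasi-isomorphism
\[U^\bu\;\simeq\;K^\bu_\infty(S,\bt)\ot_R L^\bu\qquad\text{in}\qquad\sD(S\Modl),\]
which follows from the factorization $K^\bu_\infty(S,\bs)\simeq K^\bu_\infty(S,\bt)\ot_R K^\bu_\infty(R,\br)$ (using $\bs=(\bt,f(\br))$ and flatness of $S$ over $R$) combined with the colocalization quasi-isomorphism $K^\bu_\infty(R,\br)\ot_R L^\bu\simeq L^\bu$ in $\sD(R\Modl)$, which holds because $L^\bu$ is a complex of $I$\+torsion $R$\+modules. For (IV), given $F\in\sH_\sF$, hypothesis (IV) for $\sE,\sF$ yields a complex $N^\bu$ of objects of $\sE$ concentrated in degrees $-l_1\le m\le l_2$ representing $L^\bu\ot_R^\boL F$; by choosing $N^\bu$ via a functorial resolution of $F$ as an $S$\+module, its terms carry an inherited $S$\+module structure. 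Then
\[U^\bu\ot_S^\boL F\;\simeq\;T^\bu(S,\bt)\ot_S N^\bu,\]
whose terms are countable direct sums of the $N^j$'s (since each $T^i(S,\bt)$ is a countably generated free $S$\+module), and hence lie in $\sE$ as $R$\+modules by the closure of $\sE$ under countable direct sums. Condition (III) is treated symmetrically using the adjoint identity $\boR\Hom_S(U^\bu,E)\simeq\boR\Hom_R(L^\bu,\boR\Hom_S(K^\bu_\infty(S,\bt),E))$, where the inner complex $\Hom_S(T^\bu(S,\bt),E)$ has terms that are countable products of copies of $E$; applying hypothesis (III) for $\sE,\sF$ to $E\in\sE$ and using the closure of $\sF$ under countable products produces a bicomplex with terms in $\sF$ as $R$\+modules.

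The main obstacle is that the bicomplex terms produced above are not generally $J$\+torsion (respectively, $J$\+contramodule) as $S$\+modules, only $SI$\+torsion (respectively, $I$\+contramodule): they fail to lie in $S\Modl_{J\tors}$ (respectively, $S\Modl_{J\ctra}$). To extract a complex of $J$\+torsion $S$\+modules with terms in $\sG_\sE$ concentrated in degrees $-u_1\le m\le u_2$ (respectively, a complex of $J$\+contramodule $S$\+modules with terms in $\sH_\sF$ in degrees $-u_2\le m\le u_1$), I first pass, via the full-faithfulness from Theorem~\ref{torsion-modules-inclusion-derived-fully-faithful} (respectively, Theorem~\ref{contramodules-inclusion-derived-fully-faithful}), to a complex of $J$\+torsion (respectively, $J$\+contramodule) $S$\+modules representing $U^\bu\ot_S^\boL F$ (respectively, $\boR\Hom_S(U^\bu,E)$) whose cohomology is bounded by the degree range computed above. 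Then I apply the truncation argument from the proof of Lemma~\ref{finite-complexes-in-Bass-Auslander-classes}: beginning with an injective coresolution in $S\Modl_{J\tors}$ (respectively, a projective resolution in $S\Modl_{J\ctra}$) and truncating at the appropriate degree, the terms prior to the last are injective (respectively, projective) objects, hence in $\sG_\sE$ (respectively, $\sH_\sF$) by Part~I; the last term is a cokernel of an injective morphism (respectively, kernel of a surjective morphism) between objects of $\sG_\sE$ (respectively, $\sH_\sF$), hence stays in $\sG_\sE$ (respectively, $\sH_\sF$) by the closure properties from Part~I. The parameter choices $u_1\ge\max(l_1,t_1)$ and $u_2\ge\max(l_2+t,t_2)$ are calibrated exactly to make this truncation fit within the prescribed degree range.
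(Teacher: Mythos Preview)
Your outline has the right overall shape, but there are two genuine gaps that the closure properties alone do not close.

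\textbf{The $S$-module structure on the representing complex.}  For condition~(IV) you invoke hypothesis~(IV) for $(\sE,\sF)$ to obtain a complex $N^\bu$ with terms in $\sE$ representing $L^\bu\ot_R^\boL F$, and then write $U^\bu\ot_S^\boL F\simeq T^\bu(S,\bt)\ot_S N^\bu$.  But hypothesis~(IV) only promises an $R$-module complex; the tensor $T^\bu(S,\bt)\ot_S N^\bu$ makes no sense unless $N^\bu$ carries an $S$-structure.  Your phrase ``by choosing $N^\bu$ via a functorial resolution of $F$ as an $S$-module'' does not establish this: the existence statement in~(IV) is not functorial, and a projective resolution of $F$ in $S\Modl_{J\ctra}$ does not obviously produce a complex with terms in~$\sE$.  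The paper handles this by introducing the intermediate class $\sG_\sE^\circ\subset S\Modl_{SI\tors}$ (those $SI$-torsion $S$-modules whose underlying $R$-module lies in~$\sE$), observing it is coresolving in $S\Modl_{SI\tors}$, and invoking independence of coresolution dimension \cite[Corollary~A.5.2]{Pcosh} to produce a complex $E^\bu$ of $S$-modules with terms in $\sG_\sE^\circ$ in the right degree range.  The same issue arises dually for~(III).

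\textbf{The final truncation.}  Your claim that the last term of the truncated injective coresolution lies in $\sG_\sE$ because it is ``a cokernel of an injective morphism between objects of $\sG_\sE$'' is incorrect.  In the Lemma~\ref{finite-complexes-in-Bass-Auslander-classes} construction, the last term is a pushout involving the original top-degree module, not a cokernel between injectives; and for a straight canonical truncation of an injective coresolution, the cosyzygy at degree~$u_2$ is not in $\sG_\sE$ merely by closure under cokernels of monomorphisms.  What is actually needed is this: from the bicomplex with terms in $\sG_\sE^\circ$ one deduces that $U^\bu\ot_S^\boL F$ has $\sG_\sE^\circ$-coresolution dimension $\le l_2+t+l_1$; since the injectives of $S\Modl_{J\tors}$ lie in $\sG_\sE^\circ$, independence of coresolution dimension forces the cosyzygy in the injective coresolution to lie in $\sG_\sE^\circ$, hence (being a $J$-torsion $S$-module) in $\sG_\sE$.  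The paper packages this passage from $\sG_\sE^\circ$ to $\sG_\sE$ via \cite[Corollary~A.5.5]{Pcosh}.  Your argument as written never connects the bicomplex computation to the truncation step, so the bound $u_2\ge l_2+t$ is left unused.
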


\begin{proof}
 By Proposition~\ref{restriction-of-scalars-fp-injective}, all
the injective objects of $S\Modl_{J\tors}$ belong to~$\sG_\sE$.
 By Proposition~\ref{restriction-of-scalars-contraflat}, all
the projective objects of $S\Modl_{J\ctra}$ belong to~$\sH_\sF$.
 It follows that conditions~(I) and~(II) are satisfied for $\sG_\sE$
and~$\sH_\sF$.

 Now let us choose a finite sequence of generators~$\bs$ for
the ideal $J\subset S$ as described in the first paragraph of
the proof of Theorem~\ref{base-change-is-pseudo-dualizing}.
 By assumption, it can be done in such a way that the sequence~$\bt$
consists of $t$~elements.
 Then we have $K^\bu_\infty(S,\bs)=K^\bu_\infty(S,t)\ot_R
K^\bu_\infty(R,\br)$.
 By~\cite[Lemma~1.1(c)]{Pmgm}, the complex of $R$\+modules
$K^\bu_\infty(R,\br)\ot_RL^\bu$ is quasi-isomorphic to $L^\bu$, so
the complex of $S$\+modules $K^\bu_\infty(S,\bt)\ot_RL^\bu$ is
quasi-isomorphic to~$U^\bu$.

 Hence, similarly to the beginning of the proof of
Proposition~\ref{base-change-bass-auslander-classes}(a), for any
$S$\+module $E$ we have
$$
 \boR\Hom_S(U^\bu,E)\simeq\Hom_S(T^\bu(S,\bt),\boR\Hom_R(L^\bu,E)).
$$
 Assume that $E\in\sG_\sE$ (or more generally, $E\in\sG_\sE^\circ$).
 By condition~(III) for the classes $\sE$ and $\sF$ with respect to
the pseudo-dualizing complex of $I$\+torsion $R$\+modules $L^\bu$,
the derived category object $\boR\Hom_R(L^\bu,E)\in
\sD^\bb(R\Modl_{I\ctra})\subset\sD^\bb(R\Modl)$ can be represented by
a complex of $I$\+contramodule $R$\+modules concentrated in
the cohomological degrees $-l_2\le m\le l_1$ with the terms
belonging to~$\sF$.

 The full subcategories $\sF\subset R\Modl_{I\ctra}$
and $\sH_\sF\subset S\Modl_{J\ctra}$ are resolving (by condition~(II),
which we have already proved).
 In particular, so is the full subcategory $\sH_\sF^\circ\subset
S\Modl_{SI\ctra}$ (cf.\ Remark~\ref{alternative-assumptions-remark}(1)).
 As the resolution dimension of a finite complex does not depend on
the choice of a resolution (by~\cite[Corollary~A.5.2]{Pcosh}), it
follows that the derived category object $\boR\Hom_R(L^\bu,E)\in
\sD^\bb(S\Modl_{SI\ctra})$ can be represented by a complex of
$SI$\+contramodule $S$\+modules $F^\bu$ concentrated in
the cohomological degrees $-l_2\le m\le l_1$ with the terms belonging
to~$\sH_\sF^\circ$.

 As $T^\bu(S,\bt)$ is a complex of countably generated free $S$\+modules
concentrated in the cohomological degrees $0\le m\le t$, and the full
subcategory $\sH_\sF^\circ$ is closed under countable products in
$S\Modl$, we can conclude that the complex
$\Hom_S(T^\bu(S,\bt),F^\bu)$ has the terms belonging to $\sH_\sF^\circ$,
is concentrated in the cohomological degrees $-l_2-t\le m\le l_1$,
and represents the derived category object
$\boR\Hom_S(U^\bu,E)\in\sD^+(S\Modl)$.
 On the other hand, we actually have $\boR\Hom_S(U^\bu,E)\in
\sD^\bb(S\Modl_{J\ctra})\subset\sD^\bb(S\Modl)$ by
Lemma~\ref{tors-contra-tensor-hom-for-complexes}(b), since $U^\bu$ is
a complex of $J$\+torsion $S$\+modules.

 Recall that we have $\sH_\sF=S\Modl_{J\ctra}\cap\sH_\sF^\circ\subset
S\Modl_{SI\ctra}$.
 By~\cite[Corollary~A.5.5]{Pcosh}, it follows that the object
$\boR\Hom_S(U^\bu,E)\in\sD^\bb(S\Modl_{J\ctra})$ can be represented
by a complex of $J$\+contramodule $S$\+modules concentrated in
the cohomological degrees $-l_2-t\le m\le l_1$ with the terms
belonging to~$\sH_\sF$.
 This proves condition~(III) for the classes $\sG_\sE$ and $\sH_\sF$
with respect to the pseudo-dualizing complex of $J$\+torsion
$S$\+modules~$U^\bu$.

 Dual-analogously, as in the beginning of the proof of
Proposition~\ref{base-change-bass-auslander-classes}(b), for any
$S$\+module $F$ we have
$$
 U\ot_S^\boL F\simeq T^\bu(S,\bt)\ot_S(L^\bu\ot_R^\boL F).
$$
 Assume that $F\in\sH_\sF$ (or more generally, $F\in\sH_\sF^\circ$).
 By condition~(IV) for the classes $\sE$ and $\sF$ with respect to
the pseudo-dualizing complex of $I$\+torsion $R$\+modules $L^\bu$,
the derived category object $L^\bu\ot_R^\boL F\in
\sD^\bb(R\Modl_{I\tors})\subset\sD^\bb(R\Modl)$ can be represented by
a complex of $I$\+torsion $R$\+modules concentrated in the cohomological
degrees $-l_1\le m\le l_2$ with the terms belonging to~$\sE$.

 The full subcategories $\sE\subset R\Modl_{I\tors}$ and
$\sG_\sE\subset S\Modl_{J\tors}$ are coresolving (by condition~(I),
which we have already proved).
 In particular, so is the full subcategory $\sG_\sE^\circ\subset
S\Modl_{SI\tors}$ (cf.\ Remark~\ref{alternative-assumptions-remark}(1)).
 In view of the dual version of~\cite[Corollary~A.5.2]{Pcosh}, it
follows that the derived category object $L^\bu\ot_R^\boL F\in
\sD^\bb(S\Modl_{SI\tors})$ can be represented by a complex of
$SI$\+torsion $S$\+modules $E^\bu$ concentrated in the cohomological
degrees $-l_1\le m\le l_2$ with the terms belonging to~$\sG_\sE^\circ$.

 As $T^\bu(S,\bt)$ is a complex of countably generated free $S$\+modules
concentrated in the cohomological degrees $0\le m\le t$, and the full
subcategory $\sG_\sE^\circ$ is closed under countable direct sums in
$S\Modl$, we can conclude that the complex $T^\bu(S,\bt)\ot_R E^\bu$ has
the terms belonging to $\sG_\sE^\circ$,
is concentrated in the cohomological degrees $-l_1\le m\le l_2+t$,
and represents the derived category object
$U^\bu\ot_S^\boL F\in\sD^-(S\Modl)$.
 On the other hand, we actually have $U^\bu\ot_S^\boL F\in
\sD^\bb(S\Modl_{J\tors})\subset\sD^\bb(S\Modl)$ by
Lemma~\ref{tors-contra-tensor-hom-for-complexes}(a), since
$U^\bu$ is a complex of $J$\+torsion $S$\+modules.

 Recall that we have $\sG_\sE=S\Modl_{J\tors}\cap\sG_\sE^\circ\subset
S\Modl_{SI\tors}$.
 By the dual version of~\cite[Corollary~A.5.5]{Pcosh}, it follows that
the object $U^\bu\ot_S^\boL F\in\sD^\bb(S\Modl_{J\tors})$ can be
represented by a complex of $J$\+torsion $S$\+modules concentrated in
the cohomological degrees $-l_1\le m\le l_2+t$ with the terms
belonging to~$\sG_\sE$.
 This proves condition~(IV) for the classes $\sG_\sE$ and $\sH_\sF$
with respect to the pseudo-dualizing complex of $J$\+torsion
$S$\+modules~$U^\bu$.
\end{proof}

\begin{cor} \label{base-change-abstract-classes-derived-equivalence}
 In the context and assumptions of
Proposition~\ref{base-change-abstract-classes}, for any conventional
or absolute derived category symbol\/ $\st=\bb$, $+$, $-$,
$\varnothing$, $\abs+$, $\abs-$, or\/~$\abs$, there is a triangulated
equivalence
$$
 \sD^\st(\sG_\sE)\simeq\sD^\st(\sH_\sF)
$$
provided by (appropriately defined) mutually inverse derived functors\/
$\boR\Hom_S(U^\bu,{-})$ and $U^\bu\ot_S^\boL{-}$.
\end{cor}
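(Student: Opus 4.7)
The plan is to deduce the corollary as an immediate consequence of the abstract machinery assembled earlier, with Proposition~\ref{base-change-abstract-classes} doing all the real work. First I would observe that by Theorem~\ref{base-change-is-pseudo-dualizing}, the complex $U^\bu$ is genuinely a pseudo-dualizing complex of $J$\+torsion $S$\+modules, so that the framework of Section~\ref{abstract-classes-secn} is applicable to it. Next, Proposition~\ref{base-change-abstract-classes} tells us precisely that the pair $(\sG_\sE,\sH_\sF)\subset S\Modl_{J\tors}\times S\Modl_{J\ctra}$ satisfies conditions (I\+-IV) with respect to $U^\bu$ (with parameters $u_1,u_2$ as specified).

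At this point the assertion is just the application of Theorem~\ref{abstract-classes-derived-equivalence} to the pseudo-dualizing complex $L^\bu=U^\bu$ over the ring-ideal pair $(S,J)$ and to the pair of corresponding classes $\sE=\sG_\sE$, $\sF=\sH_\sF$. This yields, for every $\st\in\{\bb,+,-,\varnothing,\abs+,\abs-,\abs\}$, a triangulated equivalence $\sD^\st(\sG_\sE)\simeq\sD^\st(\sH_\sF)$ induced by suitably defined mutually inverse derived functors $\boR\Hom_S(U^\bu,{-})$ and $U^\bu\ot_S^\boL{-}$, constructed by the recipe of~\cite[Sections~A.2\+-A.4]{Pps} starting from the adjoint DG\+functors
\[
 \Hom_S(U^\bu,{-})\:\sC^+(S\Modl_{J\tors}^\inj)\lrarrow\sC^+(S\Modl_{J\ctra}),
 \quad
 U^\bu\ot_S{-}\,\:\sC^-(S\Modl_{J\ctra}^\proj)\lrarrow\sC^-(S\Modl_{J\tors}).
\]

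There is essentially no obstacle here beyond bookkeeping: all the hard work (verification of (I\+-IV), in particular the finiteness of cohomological amplitude of $\boR\Hom_S(U^\bu,E)$ and $U^\bu\ot_S^\boL F$ and the fact that the relevant representatives can be chosen with terms in $\sH_\sF$ or $\sG_\sE$) has already been done in Proposition~\ref{base-change-abstract-classes}. The only mildly nontrivial point is to confirm that the classes $\sG_\sE$ and $\sH_\sF$ are legitimate coresolving/resolving subcategories of the abelian categories $S\Modl_{J\tors}$ and $S\Modl_{J\ctra}$, so that Theorem~\ref{abstract-classes-derived-equivalence} applies verbatim; but this is contained in the proof of Proposition~\ref{base-change-abstract-classes} (conditions~(I) and~(II) for the new classes), invoking Propositions~\ref{restriction-of-scalars-fp-injective} and~\ref{restriction-of-scalars-contraflat} to produce enough admissible monomorphisms into $S\Modl_{J\tors}^\inj\subset\sG_\sE$ and enough admissible epimorphisms from $S\Modl_{J\ctra}^\proj\subset\sH_\sF$.
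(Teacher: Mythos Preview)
Your proposal is correct and matches the paper's approach exactly: the paper's proof is the one-line statement that this is a particular case of Theorem~\ref{abstract-classes-derived-equivalence}, applicable in view of Theorem~\ref{base-change-is-pseudo-dualizing} and Proposition~\ref{base-change-abstract-classes}. Your additional remarks about the DG\+functor setup and the coresolving/resolving property of $\sG_\sE$ and $\sH_\sF$ are accurate elaborations but not required, since these are already subsumed in the cited results.
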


\begin{proof}
 This is a particular case of
Theorem~\ref{abstract-classes-derived-equivalence}, which is applicable
in view of Theorem~\ref{base-change-is-pseudo-dualizing}
and Proposition~\ref{base-change-abstract-classes}.
\end{proof}

 Notice that, according to
Remark~\ref{alternative-assumptions-remark}(2),
the quotflatness assumption can be dropped in
Proposition~\ref{base-change-abstract-classes}
and Corollary~\ref{base-change-abstract-classes-derived-equivalence}
if one is willing to assume the ring $S$ to be Noetherian instead.

 Let us also \emph{warn} the reader that, unlike in the context
of~\cite[diagram~(14)]{Pps}, the square diagram formed by
the triangulated equivalences and triangulated forgetful functors
\begin{equation} \label{pseudo-derived-equivs-and-forgetful}
\begin{gathered}
 \xymatrix{
  \sD^\st(\sG_\sE) \ar@{=}[r] \ar[d] & \sD^\st(\sH_\sF) \ar[d] \\
  \sD^\st(\sE) \ar@{=}[r] & \sD^\st(\sF)
 }
\end{gathered}
\end{equation}
is usually \emph{not commutative}.
 In fact, the diagram of triangulated
functors~\eqref{pseudo-derived-equivs-and-forgetful} is commutative
when $J=SI$, but \emph{not} in the general case.
 This is clear from the proof of
Proposition~\ref{base-change-abstract-classes}.

\Section{Pseudo-Derived Categories in the Relative Context}
\label{pseudo-derived-secn}

 Roughly speaking, a pseudo-coderived category is an intermediate
triangulated quotient category between the conventional derived and
the coderived category, while a pseudo-contraderived category is
an intermediate triangulated quotient category between the derived
and the contraderived category.
 The concept of pseudo-derived categories presently works better in
the context of the Positselski co/contraderived categories than
the Becker ones, for the simple reason that we do not have a good
technology for proving that all Becker co/contraacyclic complexes in
an exact category are acyclic (cf.\
Lemma~\ref{becker-co-contra-acyclic-are-acyclic}).
 That is why we only consider the Positselski co/contraderived
categories in this section.

 We start with a brief recollection of the discussion of pseudo-derived
categories from~\cite[Introduction and Section~4]{PS2}
and~\cite[Section~1]{Pps}.

 Let $\sA$ be an exact category with exact functors of infinite direct
sum (as defined in Section~\ref{prelim-exotic-derived-secn}).
 Then it is clear that every Positselski-coacyclic complex in $\sA$ is
acyclic (where, to be precise, acyclicity of a complex in an exact
category is understood in the sense of~\cite[Section~A.1]{Pcosh},
which agrees with the terminology in~\cite{Neem0,Bueh} when $\sA$
is idempotent-complete).
 So $\Ac^\co(\sA)\subset\Ac(\sA)$, and it follows that the derived
category $\sD(\sA)$ is naturally a triangulated Verdier quotient
category of $\sD^\co(\sA)$.
 In other words, the triangulated Verdier quotient functor
$Q_\sA\:\sK(\sA)\twoheadrightarrow\sD(\sA)$ factorizes naturally through
the triangulated Verdier quotient functor $Q_\sA^\co\:\sK(\sA)
\twoheadrightarrow\sD^\co(\sA)$, so $Q_\sA$ is the composition of
triangulated Verdier quotient functors
$$
 \xymatrix{
  \sK(\sA) \ar@{->>}[r]^-{Q_\sA^\co}
  & \sD^\co(\sA) \ar@{->>}[r]^-{R_\sA^\co} & \sD(\sA).
 }
$$
 A triangulated category $\sD'$ is said to be a \emph{pseudo-coderived
category} of $\sA$ if it is endowed with two triangulated Verdier
quotient functors $\sD^\co(\sA)\twoheadrightarrow\sD'\twoheadrightarrow
\sD(\sA)$ whose composition is the triangulated Verdier quotient
functor~$R_\sA^\co$.

 Dually, let $\sB$ be an exact category with exact functors of
infinite products.
 Then every Positselski-contraacyclic complex in $\sB$ is acyclic,
$\Ac^\ctr(\sB)\subset\Ac(\sB)$.
 So the triangulated Verdier quotient functor $Q_\sB\:\sK(\sB)
\twoheadrightarrow\sD(\sB)$ factorizes naturally through
the triangulated Verdier quotient functor $Q_\sB^\ctr\:\sK(\sB)
\twoheadrightarrow\sD^\ctr(\sB)$,
$$
 \xymatrix{
  \sK(\sB) \ar@{->>}[r]^-{Q_\sB^\ctr}
  & \sD^\ctr(\sB) \ar@{->>}[r]^-{R_\sB^\ctr} & \sD(\sB),
 }
$$
and we obtain a natural triangulated Verdier quotient functor
$R_\sB^\ctr\:\sD^\ctr(\sB)\twoheadrightarrow\sD(\sB)$.
 A triangulated category $\sD''$ is said to be
a \emph{pseudo-contraderived category} of $\sB$ if it is endowed
with two triangulated Verdier quotient functors $\sD^\ctr(\sB)
\twoheadrightarrow\sD''\twoheadrightarrow\sD(\sB)$ whose composition
is the triangulated Verdier quotient functor~$R_\sB^\ctr$.

 Now let $\sE\subset\sA$ be a coresolving full subcategory closed
under infinite direct sums.
 Then, by the dual version of~\cite[Proposition~A.3.1(b)]{Pcosh},
the inclusion functor $\sE\rarrow\sA$ induces a triangulated
equivalence $\sD^\co(\sE)\simeq\sD^\co(\sA)$.
 So we obtain a triangulated Verdier quotient functor
$\sD^\co(\sA)\simeq\sD^\co(\sE)\rarrow\sD(\sE)$, and it follows
that the functor $\sD(\sE)\rarrow\sD(\sA)$ induced by the inclusion
$\sE\rarrow\sA$ is a triangulated Verdier quotient functor,
too~\cite[Proposition~4.2(a)]{PS2}.
 Thus $\sD(\sE)$ is a pseudo-coderived category of~$\sA$,
$$
 \xymatrix{
  \sD^\co(\sA) \ar@{->>}[r] & \sD(\sE) \ar@{->>}[r] & \sD(\sA)
 }
$$
\cite[Section~1]{Pps}.

 Following the terminology in~\cite[Section~8]{Pps},
we will say that a complex in $\sA$ is \emph{$\sE$\+pseudo-coacyclic}
if it is annihilated by the composition of triangulated Verdier
quotient functors $\sK(\sA)\twoheadrightarrow\sD^\co(\sA)
\twoheadrightarrow\sD(\sE)$.
 Denote the thick subcategory of $\sE$\+pseudo-coacyclic complexes
by $\Ac^{\sE\psco}(\sA)\subset\sK(\sA)$.
 So we have the inclusions of thick subcategories
$$
 \Ac^\co(\sA)\subset\Ac^{\sE\psco}(\sA)\subset\Ac(\sA)\subset\sK(\sA)
$$
and a natural triangulated equivalence
\begin{equation} \label{pseudo-coderived-category-from-coresolving}
 \sD(\sE)\simeq\sK(\sA)/\Ac^{\sE\psco}(\sA).
\end{equation}

 Dually, let $\sF\subset\sB$ be a resolving full subcategory closed
under infinite products.
 Then, by~\cite[Proposition~A.3.1(b)]{Pcosh}, the inclusion functor
$\sF\rarrow\sB$ induces a triangulated equivalence
$\sD^\ctr(\sF)\simeq\sD^\ctr(\sB)$.
 Hence $\sD(\sF)$ becomes a pseudo-contraderived category of~$\sB$,
$$
 \xymatrix{
  \sD^\ctr(\sB) \ar@{->>}[r] & \sD(\sF) \ar@{->>}[r] & \sD(\sB)
 }
$$
\cite[Proposition~4.2(b)]{PS2}, \cite[Section~1]{Pps}.

 We will say that a complex in $\sB$ is
\emph{$\sF$\+pseudo-contraacyclic} if it is annihilated by
the composition of triangulated Verdier quotient functors $\sK(\sB)
\twoheadrightarrow\sD^\ctr(\sB)\twoheadrightarrow\sD(\sF)$.
 Denote the thick subcategory of $\sF$\+pseudo-contraacyclic complexes
by $\Ac^{\sF\psctr}(\sB)\subset\sK(\sB)$.
 So we have the inclusions of thick subcategories
$$
 \Ac^\ctr(\sB)\subset\Ac^{\sF\psctr}(\sB)\subset\Ac(\sB)\subset\sK(\sB)
$$
and a natural triangulated equivalence
\begin{equation} \label{pseudo-contraderived-category-from-resolving}
 \sD(\sF)\simeq\sK(\sB)/\Ac^{\sF\psctr}(\sB).
\end{equation}

\begin{lem} \label{pseudo-co-contra-acyclic-for-finite-homol-dim}
\textup{(a)} If the exact category\/ $\sE$ has finite homological
dimension, then the class of\/ $\sE$\+pseudo-coacyclic complexes in\/
$\sA$ coincides with the class of Positselski-coacyclic complexes,
$\Ac^{\sE\psco}(\sA)=\Ac^\co(\sA)$. \par
\textup{(b)} If the exact category\/ $\sF$ has finite homological
dimension, then the class of\/ $\sF$\+pseudo-contraacyclic complexes
in\/ $\sB$ coincides with the class of Positselski-contraadyclic
complexes, $\Ac^{\sF\psctr}(\sB)=\Ac^\ctr(\sB)$.
\end{lem}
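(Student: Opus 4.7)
The plan is to reduce both parts to the standard principle that, in an exact category of finite homological dimension with exact countable coproducts (respectively, products), every acyclic complex is absolutely acyclic, and hence Positselski-coacyclic (respectively, Positselski-contraacyclic). This principle is essentially~\cite[Remark~2.1]{Psemi} or~\cite[Theorem~B.7.6]{Pcosh}, and was already invoked in the proof of Corollary~\ref{dualizing-complex-all-derived-equivs}. The argument is by canonical truncation: a bounded acyclic complex fits into a finite sequence of short exact sequences in $\sE$ whose totalizations are absolutely acyclic, and finite homological dimension lets one reduce an unbounded acyclic complex to bounded pieces, which are absorbed into the coacyclic (resp.\ contraacyclic) class using exact countable direct sums (resp.\ products).

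For part~(a), I would first recall the definition of $\Ac^{\sE\psco}(\sA)$ from the preceding discussion: it is the kernel of the composition of Verdier quotient functors
\[
\sK(\sA)\twoheadrightarrow\sD^\co(\sA)\simeq\sD^\co(\sE)\twoheadrightarrow\sD(\sE),
\]
where the middle equivalence is the dual of~\cite[Proposition~A.3.1(b)]{Pcosh} applied to the coresolving subcategory $\sE\subset\sA$ closed under infinite direct sums. Since $\Ac^\co(\sA)$ is the kernel of the first arrow, the claim $\Ac^{\sE\psco}(\sA)=\Ac^\co(\sA)$ is equivalent to the assertion that the second Verdier quotient functor $\sD^\co(\sE)\rarrow\sD(\sE)$ is an equivalence, i.e., that every acyclic complex in $\sE$ is Positselski-coacyclic. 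The exact category $\sE$ inherits exact functors of infinite direct sum from $\sA$ (direct sums are computed in $\sA$, are exact there, and $\sE$ is closed under them), and by hypothesis $\sE$ has finite homological dimension; so the general principle recalled above applies and yields $\sD^\co(\sE)=\sD(\sE)$.

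Part~(b) is entirely parallel and dual: I would use~\cite[Proposition~A.3.1(b)]{Pcosh} itself (rather than its dual) to identify $\sD^\ctr(\sF)\simeq\sD^\ctr(\sB)$ via the resolving subcategory $\sF\subset\sB$ closed under infinite products, and then deduce that $\Ac^{\sF\psctr}(\sB)=\Ac^\ctr(\sB)$ if and only if the natural functor $\sD^\ctr(\sF)\rarrow\sD(\sF)$ is an equivalence. The latter holds by the contraderived form of the same finite-homological-dimension principle, using that $\sF$ has exact functors of infinite product inherited from $\sB$.

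There is no substantive obstacle in this argument; essentially everything has been done in the preceding discussion, and the only point requiring any care is the verification that the inherited exact structures on $\sE$ and on $\sF$ have exact countable coproducts/products, which is immediate from the closure hypotheses. Strictly speaking, one may wish to spell out \emph{why} finite homological dimension plus exactness of countable coproducts forces acyclic complexes to be absolutely acyclic in $\sE$ (and dually in $\sF$); I would do this by a short direct argument using the filtration of an acyclic complex by its canonical truncations, each shifted piece being absolutely acyclic because it has bounded length, and the stalkwise stabilization holding by finite homological dimension --- but since this is already covered by the cited results, a reference should suffice.
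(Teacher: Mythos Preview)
Your proposal is correct and follows essentially the same approach as the paper: both arguments unwind the definition of $\Ac^{\sE\psco}(\sA)$ as the kernel of the composition $\sK(\sA)\twoheadrightarrow\sD^\co(\sA)\simeq\sD^\co(\sE)\twoheadrightarrow\sD(\sE)$, then observe that the last functor is an equivalence because in an exact category of finite homological dimension with exact infinite direct sums every acyclic complex is Positselski-coacyclic (\cite[Remark~2.1]{Psemi}). The paper's proof is a bit terser and does not spell out the inheritance of exact direct sums by $\sE$, but there is no substantive difference.
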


\begin{proof}
 Let us prove part~(a) (part~(b) is dual).
 By the definition, the $\sE$\+pseudo-coacyclic complexes in $\sA$
are the complexes annihilated by the composition of triangulated
Verdier quotient functors
$$
 \xymatrix{
  \sK(\sA) \ar@{->>}[r] & \sD^\co(\sA) \ar@{=}[r]
  & \sD^\co(\sE) \ar@{->>}[r] & \sD(\sE).
 }
$$
 For an exact category $\sE$ of finite homological dimension with
exact functors of infinite direct sum, the classes of acyclic and
Positselski-coacyclic complexes coincide~\cite[Remark~2.1]{Psemi},
so the functor $\sD^\co(\sE)\rarrow\sD(\sE)$ is a triangulated
equivalence (in fact, isomorphism of triangulated categories).
\end{proof}

 The following lemma is a category-theoretic generalization
of~\cite[Lemma~8.3]{Pps}.

\begin{lem} \label{relative-pseudo-derived-conservative}
\textup{(a)} Let\/ $\sA$ and\/ $\sX$ be abelian categories with
exact functors of infinite direct sum and\/ $\Theta\:\sX\rarrow\sA$ be
a faithful exact functor preserving infinite direct sums.
 Let\/ $\sE\subset\sA$ be a coresolving subcategory closed under
infinite direct sums, and let\/ $\sG_\sE\subset\sX$ be the full
subcategory of all objects $E\in\sX$ such that\/ $\Theta(E)\in\sE$.
 Assume that the full subcategory\/ $\sG_\sE$ is coresolving in\/~$\sX$
(i.~e., in other words, every object of\/ $\sX$ is a subobject of
an object from\/~$\sG_\sE$).
 Then a complex $X^\bu$ in\/ $\sX$ belongs to the thick subcategory\/
$\Ac^{\sG_\sE\psco}(\sX)\subset\sK(\sX)$ if and only if the complex\/
$\Theta(X^\bu)\in\sK(\sA)$ belongs to the full subcategory\/
$\Ac^{\sE\psco}(\sA)\subset\sK(\sA)$. \par
\textup{(b)} Let\/ $\sB$ and\/ $\sY$ be abelian categories with
exact functors of infinite product and\/ $\Theta\:\sY\rarrow\sB$ be
a faithful exact functor preserving infinite products.
 Let\/ $\sF\subset\sB$ be a resolving subcategory closed under
infinite products, and let\/ $\sH_\sF\subset\sY$ be the full
subcategory of all objects $F\in\sY$ such that\/ $\Theta(F)\in\sF$.
 Assume that the full subcategory\/ $\sH_\sF$ is resolving in\/~$\sY$
(i.~e., in other words, every object of\/ $\sY$ is a quotient object
of an object from\/~$\sH_\sF$).
 Then a complex $Y^\bu$ in\/ $\sY$ belongs to the thick subcategory\/
$\Ac^{\sH_\sF\psctr}(\sY)\subset\sK(\sY)$ if and only if the complex\/
$\Theta(Y^\bu)\in\sK(\sB)$ belongs to the full subcategory\/
$\Ac^{\sF\psctr}(\sB)\subset\sK(\sB)$.
\end{lem}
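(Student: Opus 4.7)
\medskip

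The plan is to reduce both directions to the equivalence $\sD^\co(\sG_\sE)\simeq\sD^\co(\sX)$ (respectively $\sD^\ctr(\sH_\sF)\simeq\sD^\ctr(\sY)$) coming from the dual of~\cite[Proposition~A.3.1(b)]{Pcosh}.  This is applicable in part~(a): by hypothesis $\sG_\sE$ is coresolving in $\sX$, and it is closed under infinite direct sums because $\sE\subset\sA$ is so and $\Theta$ preserves infinite direct sums.  In view of~\eqref{pseudo-coderived-category-from-coresolving}, the subcategory $\Ac^{\sG_\sE\psco}(\sX)\subset\sK(\sX)$ is exactly the kernel of the composition of triangulated Verdier quotient functors $\sK(\sX)\twoheadrightarrow\sD^\co(\sX)\simeq\sD^\co(\sG_\sE)\twoheadrightarrow\sD(\sG_\sE)$, and similarly for $\sA$.

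First I would establish the ``only if'' direction, which is essentially formal.  Since $\Theta$ is exact and preserves infinite direct sums, it preserves Positselski-coacyclic complexes and so induces a triangulated functor $\sD^\co(\sX)\rarrow\sD^\co(\sA)$; since $\Theta(\sG_\sE)\subset\sE$, it further induces a triangulated functor $\sD(\sG_\sE)\rarrow\sD(\sE)$.  These assemble into a commutative square of Verdier quotient functors, so a complex $X^\bu$ annihilated by $\sK(\sX)\twoheadrightarrow\sD(\sG_\sE)$ has $\Theta(X^\bu)$ annihilated by $\sK(\sA)\twoheadrightarrow\sD(\sE)$.

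For the ``if'' direction, I would use the coderived equivalence to replace $X^\bu$ by a complex $G^\bu\in\sK(\sG_\sE)$ lying in the same class modulo $\Ac^\co(\sX)$; then $\Theta(G^\bu)\in\sK(\sE)$ represents $\Theta(X^\bu)$ in $\sD^\co(\sE)\simeq\sD^\co(\sA)$, and the hypothesis forces $\Theta(G^\bu)$ to be acyclic in the exact category $\sE$.  The key technical point is then to show that $G^\bu$ must itself be acyclic in $\sG_\sE$.  For this I would invoke the fact that any faithful exact functor $\Theta$ between abelian categories is automatically conservative: exactness ensures $\Theta$ commutes with kernels and cokernels, while faithfulness gives $\Theta(A)=0\Rightarrow\id_A=0\Rightarrow A=0$; combining these shows $\Theta(f)$ is an isomorphism if and only if $f$ is.  Applied to the canonical inclusions of images into kernels in $G^\bu$, this yields that $G^\bu$ is acyclic in $\sX$.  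Moreover, the cocycle objects of $G^\bu$ lie in $\sG_\sE$ since $\Theta$ preserves kernels and sends them to the cocycle objects of $\Theta(G^\bu)$, which lie in $\sE$.  Hence $G^\bu$ is acyclic in the exact category $\sG_\sE$, so $[X^\bu]=[G^\bu]=0$ in $\sD(\sG_\sE)$, i.e., $X^\bu\in\Ac^{\sG_\sE\psco}(\sX)$.

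Part~(b) proceeds dually: one uses the equivalence $\sD^\ctr(\sH_\sF)\simeq\sD^\ctr(\sY)$ from~\cite[Proposition~A.3.1(b)]{Pcosh}, applicable because $\sH_\sF$ is resolving (by hypothesis) and closed under infinite products ($\sF$ is so in $\sB$, and $\Theta$ preserves infinite products), together with the dual conservativity argument applied to cocycle objects on the image side.  The only mild obstacle worth flagging is the verification that the class $\sG_\sE$ (resp. $\sH_\sF$) inherits the closure properties needed to invoke the coderived (resp. contraderived) equivalence; once this and the faithful-exact-implies-conservative observation are in place, both parts follow by a straightforward chase through the definitions, with no further subtleties.
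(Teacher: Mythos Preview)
Your proof is correct and follows essentially the same route as the paper: both arguments reduce to showing that the induced functor $\Theta\:\sD(\sG_\sE)\rarrow\sD(\sE)$ is conservative (via the observation that a complex in $\sG_\sE$ is acyclic in $\sG_\sE$ iff its image under $\Theta$ is acyclic in~$\sE$), and then chase the commutative square of Verdier quotient functors. The paper packages both directions at once through conservativity plus the diagram, while you split into ``only if'' and ``if'' halves, but the content is identical.
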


\begin{proof}
 Let us prove part~(a).
 The full subcategory $\Ac(\sE)\subset\sK(\sE)$ consists of all
complexes in $\sE$ that are acyclic in $\sA$ with the objects of
cocycles belonging to~$\sE$.
 Similarly, the full subcategory $\Ac(\sG_\sE)\subset\sK(\sG_\sE)$
consists of all complexes in $\sG_\sE$ that are acyclic in $\sX$ with
the objects of cocycles belonging to~$\sG_\sE$.
 Now a complex $X^\bu$ is acyclic in $\sX$ if and only if the complex
$\Theta(X^\bu)$ is acyclic in~$\sA$ (since the functor of abelian
categories $\Theta\:\sX\rarrow\sA$ is exact and faithful).
 Thus a complex $E^\bu$ in $\sG_\sE$ belongs to $\Ac(\sG_\sE)$ if and
only if the complex $\Theta(E^\bu)$ belongs to $\Ac(\sE)$.

 We have proved that the triangulated functor $\Theta\:\sD(\sG_\sE)
\rarrow\sD(\sE)$ takes nonzero objects to nonzero objects (or in
other words, takes nonisomorphisms to nonisomorphisms, i.~e., it is
conservative).
 In order to prove the assertion of part~(a), it remains to consider
the commutative diagram of triangulated functors
$$
 \xymatrix{
  \sK(\sX) \ar@{->>}[r] \ar[d]^\Theta
  & \sD^\co(\sX) \ar@{=}[r] \ar[d]^\Theta
  & \sD^\co(\sG_\sE) \ar@{->>}[r] \ar[d]^\Theta
  & \sD(\sG_\sE) \ar[d]^\Theta \\
  \sK(\sA) \ar@{->>}[r] & \sD^\co(\sA) \ar@{=}[r]
  & \sD^\co(\sE) \ar@{->>}[r] & \sD(\sE)
 }
$$
implying that a complex $X^\bu$ in $\sX$ is annihilated by
the triangulated Verdier quotient functor
$\sK(\sX)\twoheadrightarrow\sD(\sG_E)$ if and only if the complex
$\Theta(X^\bu)$ is annihilated by the triangulated Verdier quotient
functor $\sK(\sA)\twoheadrightarrow\sD(\sE)$.
\end{proof}

\Section{Semiderived Categories}  \label{semiderived-secn}

 The ``semiderived categories'' is an umbrella term for
the semicoderived and the semicontraderived categories.
 The notion of the semiderived category goes back to
the book~\cite{Psemi}.
 Other sources relevant to our context include the paper~\cite{Pfp},
the book~\cite{Psemten}, and the preprint~\cite[Section~8]{Pcosh}.

 Let $I$ be an ideal in a commutative ring $R$, let $J$ be an ideal
in a commutative ring $S$, and let $f\:R\rarrow S$ be a ring
homomorphism such that $f(I)\subset J$.
 Denote by functor of restriction of scalars by $\Theta\:S\Modl
\rarrow R\Modl$.
 We will use the same notation for functors of restriction of scalars
acting between the categories of torsion modules or contramodules,
that is $\Theta\:S\Modl_{J\tors}\rarrow R\Modl_{I\tors}$
and $\Theta\:S\Modl_{J\ctra}\rarrow R\Modl_{I\ctra}$.
 When the ideals $I\subset R$ and $J\subset S$ are finitely generated,
the latter functor preserves the quotseparatedness property of
contramodules, so we also have the forgetful functor
$\Theta\:S\Modl_{J\ctra}^\qs\rarrow R\Modl_{I\ctra}^\qs$.

 A complex of $J$\+torsion $S$\+modules $X^\bu$ is said to be
\emph{Positselski-semicoacyclic} (\emph{relative to $(R,I)$}) if it is
Positselski-coacyclic as a complex of $I$\+torsion $R$\+modules, i.~e.,
if $\Theta(X^\bu)\in\Ac^\co(R\Modl_{I\tors})$.
 We denote the thick subcategory of Positselski-semicoacyclic complexes
by $\Ac^\sico_{(R,I)}(S\Modl_{J\tors})\subset\sK(S\Modl_{J\tors})$.
 The \emph{Positselski semicoderived category of $J$\+torsion
$S$\+modules} (\emph{relative to $(R,I)$}) is defined as
the triangulated Verdier quotient category
$$
 \sD^\sico_{(R,I)}(S\Modl_{J\tors})=
 \sK(S\Modl_{J\tors})/\Ac^\sico_{(R,I)}(S\Modl_{J\tors}).
$$

 The forgetful functor $\Theta\:S\Modl_{J\tors}\rarrow R\Modl_{I\tors}$
is exact and preserves infinite direct sums, so it takes
Positselski-coacyclic complexes to Positselski-coacyclic complexes.
 Hence any Positselski-coacyclic complex in $S\Modl_{J\tors}$ is
Positselski-semicoacyclic.
 The forgetful functor $\Theta\:S\Modl_{J\tors}\rarrow R\Modl_{I\tors}$
is also exact and faithful, so a complex $X^\bu$ in $S\Modl_{J\tors}$
is acyclic if and only if the complex $\Theta(X^\bu)$ is acyclic
in $R\Modl_{I\tors}$.
 The abelian category $R\Modl_{I\tors}$ has exact functors of infinite
direct sums, so all Positselski-coacyclic complexes in $R\Modl_{I\tors}$
are acyclic.
 It follows that any Positselski-semicoacyclic complex in
$S\Modl_{J\tors}$ is acyclic.
 Therefore, we have
$$
 \Ac^\co(S\Modl_{J\tors})\subset\Ac^\sico_{(R,I)}(S\Modl_{J\tors})
 \subset\Ac(S\Modl_{J\tors}).
$$
 Thus the Positselski semicoderived category is an intermediate
triangulated quotient category between the derived and the coderived
categories of $S\Modl_{J\tors}$, i.~e., the Positselski semicoderived
category is an example of a pseudo-coderived category of $J$\+torsion
$S$\+modules in the sense of Section~\ref{pseudo-derived-secn}.

 Dual-analogously, a complex of $J$\+contramodule $S$\+modules $Y^\bu$
is said to be \emph{Positselski-semicontraacyclic} (\emph{relative to
$(R,I)$}) if it is Positselski-contraacyclic as a complex of
$I$\+contramodule $R$\+modules, i.~e., if
$\Theta(Y^\bu)\in\Ac^\ctr(R\Modl_{I\ctra})$.
 We denote the thick subcategory of Positselski-semicontraacyclic
complexes by $\Ac^\sictr_{(R,I)}(S\Modl_{J\ctra})\subset
\sK(S\Modl_{J\ctra})$.
 The \emph{Positselski semicontraderived category of $J$\+contramodule
$S$\+modules} (\emph{relative to $(R,I)$}) is defined as
the triangulated Verdier quotient category
$$
 \sD^\sictr_{(R,I)}(S\Modl_{J\ctra})=
 \sK(S\Modl_{J\ctra})/\Ac^\sictr_{(R,I)}(S\Modl_{J\ctra}).
$$

 The forgetful functor $\Theta\:S\Modl_{J\ctra}\rarrow R\Modl_{I\ctra}$
is exact and preserves infinite products, so it takes
Positselski-contraacyclic complexes to Positselski-contraacyclic
complexes.
 Hence any Positselski-contraacyclic complex in $S\Modl_{J\ctra}$ is
Positselski-semicontraacyclic.
 The forgetful functor $\Theta\:S\Modl_{J\ctra}\rarrow R\Modl_{I\ctra}$
is also exact and faithful, so a complex $Y^\bu$ in $S\Modl_{J\ctra}$
is acyclic if and only if the complex $\Theta(Y^\bu)$ is acyclic
in $R\Modl_{I\ctra}$.
 The abelian category $R\Modl_{I\ctra}$ has exact functors of infinite
product, so all Positselski-contraacyclic complexes in $R\Modl_{I\ctra}$
are acyclic.
 It follows that any Positselski-semicontraacyclic complex in
$S\Modl_{J\ctra}$ is acyclic.
 Therefore, we have
$$
 \Ac^\ctr(S\Modl_{J\ctra})\subset\Ac^\sictr_{(R,I)}(S\Modl_{J\ctra})
 \subset\Ac(S\Modl_{J\ctra}).
$$
 Thus the Positselski semicontraderived category is an intermediate
triangulated quotient category between the derived and the contraderived
categories of $S\Modl_{J\ctra}$, i.~e., the Positselski
semicontraderived category is an example of a pseudo-con\-tra\-de\-rived
category of $J$\+contramodule $S$\+modules in the sense of
Section~\ref{pseudo-derived-secn}.

 Assume that the ideals $I\subset R$ and $J\subset S$ are finitely
generated.
 Then a complex of quotseparated $J$\+contramodule $S$\+modules
$Y^\bu$ is said to be \emph{Positselski-semicontraacyclic}
(\emph{relative to $(R,I)$}) if it is Positselski-contraacyclic as
a complex of quotseparated $I$\+contramodule $R$\+modules, i.~e., if
$\Theta(Y^\bu)\in\Ac^\ctr(R\Modl_{I\ctra}^\qs)$.
 We denote the thick subcategory of Positselski-semicontraacyclic
complexes by $\Ac^\sictr_{(R,I)}(S\Modl_{J\ctra}^\qs)\subset
\sK(S\Modl_{J\ctra}^\qs)$.
 The \emph{Positselski semicontraderived category of quotseparated
$J$\+contramodule $S$\+modules} (\emph{relative to $(R,I)$}) is
defined as the triangulated Verdier quotient category
$$
 \sD^\sictr_{(R,I)}(S\Modl_{J\ctra}^\qs)=
 \sK(S\Modl_{J\ctra}^\qs)/\Ac^\sictr_{(R,I)}(S\Modl_{J\ctra}^\qs).
$$

 All the arguments in the discussion above are applicable in
the case of quotseparated contramodules just as well.
 So we have the inclusions of thick subcategories in the homotopy
category
$$
 \Ac^\ctr(S\Modl_{J\ctra}^\qs)\subset
 \Ac^\sictr_{(R,I)}(S\Modl_{J\ctra}^\qs)
 \subset\Ac(S\Modl_{J\ctra}^\qs).
$$
 Thus the Positselski semicontraderived category is an intermediate
triangulated quotient category between the derived and the contraderived
categories of $S\Modl_{J\ctra}^\qs$, i.~e., the Positselski
semicontraderived category is an example of a pseudo-con\-tra\-de\-rived
category of quotseparated $J$\+contramodule $S$\+modules in the sense
of Section~\ref{pseudo-derived-secn}.

 The discussion of Becker semiderived categories requires more care.

\begin{lem} \label{forgetful-preserve-becker-co-contra-acyclicity}
 Let $I$ be an ideal in a commutative ring $R$, let $J$ be an ideal in
a commutative ring $S$, and let $f\:R\rarrow S$ be a ring homomorphism
such that $f(I)\subset J$.
 In this setting: \par
\textup{(a)} The functor of restriction of scalars\/
$\Theta\:S\Modl_{J\tors}\rarrow R\Modl_{I\tors}$ takes
Becker-coacyclic complexes in $S\Modl_{J\tors}$ to Becker-coacyclic
complexes in $R\Modl_{I\tors}$. {\hbadness=1800\par}
\textup{(b)} The functor of restriction of scalars\/
$\Theta\:S\Modl_{J\ctra}\rarrow R\Modl_{I\ctra}$ takes
Becker-contraacyclic complexes in $S\Modl_{J\ctra}$ to
Becker-contraacyclic complexes in $R\Modl_{I\ctra}$. \par
\textup{(c)} Assume that the ideals $I\subset R$ and $J\subset S$
are finitely generated.
 Then the functor of restriction of scalars\/
$\Theta\:S\Modl_{J\ctra}^\qs\rarrow R\Modl_{I\ctra}^\qs$ takes
Becker-con\-tra\-acyclic complexes in $S\Modl_{J\ctra}^\qs$ to
Becker-contraacyclic complexes in $R\Modl_{I\ctra}^\qs$.
\end{lem}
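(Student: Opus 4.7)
The plan is to exhibit adjoints to the restriction-of-scalars functors $\Theta$ in each part, and then invoke the classical fact that a right (respectively, left) adjoint to an exact functor preserves injective (respectively, projective) objects. Once the adjoints are in hand, Becker-co/contraacyclicity of $\Theta(X^\bu)$ or $\Theta(Y^\bu)$ reduces, via the adjunction, to the defining vanishing of the original complex against complexes of injectives/projectives in the source category.

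For part~(a), I plan to construct the right adjoint $\Theta^{!}\:R\Modl_{I\tors}\rarrow S\Modl_{J\tors}$ by the formula $N\longmapsto\Gamma_J\Hom_R(S,N)$. Given $M\in S\Modl_{J\tors}$ and $N\in R\Modl_{I\tors}$, the tensor-Hom adjunction between $R\Modl$ and $S\Modl$, combined with the $J$-torsion condition on $M$ and the fact that $\Gamma_J$ is right adjoint to the inclusion $S\Modl_{J\tors}\rarrow S\Modl$, yields
\[
\Hom_{R\Modl_{I\tors}}(\Theta M,N)\simeq\Hom_S(M,\Hom_R(S,N))\simeq\Hom_{S\Modl_{J\tors}}(M,\Gamma_J\Hom_R(S,N)).
\]
Since $\Theta$ is exact, $\Theta^{!}$ preserves injectives, so any complex $H^\bu$ of injective objects in $R\Modl_{I\tors}$ is sent to a complex of injectives in $S\Modl_{J\tors}$. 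The adjunction extends to the homotopy categories, so for any Becker-coacyclic $X^\bu$,
\[
\Hom_{\sK(R\Modl_{I\tors})}(\Theta X^\bu,H^\bu)\simeq\Hom_{\sK(S\Modl_{J\tors})}(X^\bu,\Theta^{!}H^\bu)=0,
\]
and since $H^\bu$ was arbitrary, $\Theta(X^\bu)$ is Becker-coacyclic.

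Parts~(b) and~(c) proceed dually. I will construct the left adjoint ${}^{!}\Theta$ of $\Theta$ as $M\longmapsto\Delta_J(S\ot_R M)$ in part~(b) and as $M\longmapsto\boL_0\Lambda_J(S\ot_R M)$ in part~(c), via the tensor-Hom adjunction $\Hom_R({-},N)\simeq\Hom_S(S\ot_R{-},N)$ composed with the adjunctions for $\Delta_J$ and $\boL_0\Lambda_J$ as left adjoints to the inclusions $S\Modl_{J\ctra}\rarrow S\Modl$ and $S\Modl_{J\ctra}^\qs\rarrow S\Modl$ recalled in Section~\ref{prelims-on-wpr-secn} (finite generatedness of $J$ in part~(c) ensures the existence of $\boL_0\Lambda_J$). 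As the left adjoint of an exact functor, ${}^{!}\Theta$ preserves projectives; hence for any complex $P^\bu$ of projectives and any Becker-contraacyclic $Y^\bu$ in $S\Modl_{J\ctra}$ (respectively $S\Modl_{J\ctra}^\qs$),
\[
\Hom_{\sK(R\Modl_{I\ctra})}(P^\bu,\Theta Y^\bu)\simeq\Hom_{\sK(S\Modl_{J\ctra})}({}^{!}\Theta P^\bu,Y^\bu)=0,
\]
and analogously for the quotseparated variant. The entire argument is completely formal once the adjoints are identified; the only point meriting some care is verifying the adjunction formulas themselves, which amounts to composing the tensor-Hom adjunction between $R\Modl$ and $S\Modl$ with the adjunctions of $\Gamma_J$, $\Delta_J$, and $\boL_0\Lambda_J$ recalled in Section~\ref{prelims-on-wpr-secn}, and I anticipate no substantive obstacle.
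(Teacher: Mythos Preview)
Your proposal is correct and follows essentially the same approach as the paper's proof. The paper likewise constructs the right adjoint $\Gamma_J\Hom_R(S,{-})$ for part~(a) and the left adjoints $\Delta_J(S\ot_R{-})$ and $\boL_0\Lambda_J(S\ot_R{-})$ for parts~(b) and~(c), then invokes the general principle (recorded there as \cite[Lemma~B.7.5]{Pcosh}) that an exact functor with an appropriate adjoint preserves Becker-co/contraacyclicity; you have simply unpacked that principle explicitly via the adjunction isomorphism on homotopy categories and preservation of injectives/projectives.
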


\begin{proof}
 Part~(a): as the functor $\Theta$ is exact and preserves infinite
direct sums (hence also all colimits), it suffices to refer to
the result of~\cite[Lemma~A.5]{Psemten}, which is applicable to
Grothendieck abelian categories; or even directly
to~\cite[Corollary~7.17]{PS5}.
 Alternatively, one can refer to the more general result
of~\cite[Lemma~B.7.5(a)]{Pcosh}, and then one needs to know that
the forgetful functor $\Theta\:S\Modl_{J\tors}\rarrow R\Modl_{I\tors}$
has a right adjoint.
 The point is that the desired right adjoint functor is easy to
construct explicitly: it takes an $I$\+torsion $R$\+module $M$
to the $J$\+torsion $S$\+module $\Gamma_J(\Hom_R(S,M))$.

 Parts~(b\+-c): the argument is based on~\cite[Lemma~B.7.5(b)]{Pcosh}.
 In both the cases~(b) and~(c), the respective functor $\Theta$ is
exact and preserves infinite products (hence all limits).
 So it remains to show that the functor $\Theta$ has a left adjoint.
 One can observe that all the contramodule categories in question
are locally $\lambda$\+presentable, and the functor $\Theta$
preserves $\lambda$\+directed colimits for a suitable regular
cardinal~$\lambda$ ($\lambda=\aleph_1$ in part~(c) and in the case
of finitely generated ideals $I\subset R$ and $J\subset S$ in part~(b)).
 See the discussion in Section~\ref{prelims-on-wpr-secn}.
 Hence a left adjoint functor to $\Theta$ exists by~\cite[Adjoint
Functor Theorem~1.66]{AR}.

 Alternatively, the left adjoint functors can be constructed explicitly.
 In the context of part~(b), the left adjoint functor to
$\Theta\:S\Modl_{J\ctra}\rarrow R\Modl_{I\ctra}$ takes
an $I$\+contramodule $R$\+module $P$ to the $J$\+contramodule
$S$\+module $\Delta_J(S\ot_RP)$.
 In the context of part~(c), the left adjoint functor to
$\Theta\:S\Modl_{J\ctra}^\qs\rarrow R\Modl_{I\ctra}^\qs$ takes
a quotseparated $I$\+contramodule $R$\+module $P$ to the quotseparated
$J$\+contramodule $S$\+module $\boL_0\Lambda_J(S\ot_RP)$.
 See Section~\ref{prelims-on-wpr-secn} for the notation.
\end{proof}

 Let $I$ be an ideal in a commutative ring $R$, let $J$ be an ideal in
a commutative ring $S$, and let $f\:R\rarrow S$ be a ring homomorphism
such that $f(I)\subset J$.
 A complex of $J$\+torsion $S$\+modules $X^\bu$ is said to be
\emph{Becker-semicoacyclic} (\emph{relative to $(R,I)$}) if it is
Becker-coacyclic as a complex of $I$\+torsion $R$\+modules, i.~e.,
if $\Theta(X^\bu)\in\Ac^\bco(R\Modl_{I\tors})$.
 We denote the thick subcategory of Becker-semicoacyclic complexes by
$\Ac^\bsico_{(R,I)}(S\Modl_{J\tors})\subset\sK(S\Modl_{J\tors})$.
 The \emph{Becker semicoderived category of $J$\+torsion $S$\+modules}
(\emph{relative to $(R,I)$}) is defined as the triangulated Verdier
quotient category
$$
 \sD^\bsico_{(R,I)}(S\Modl_{J\tors})=
 \sK(S\Modl_{J\tors})/\Ac^\bsico_{(R,I)}(S\Modl_{J\tors}).
$$

 The forgetful functor $\Theta\:S\Modl_{J\tors}\rarrow R\Modl_{I\tors}$
takes Becker-coacyclic complexes to Becker-coacyclic complexes by
Lemma~\ref{forgetful-preserve-becker-co-contra-acyclicity}(a).
 Hence any Becker-coacyclic complex in $S\Modl_{J\tors}$ is
Becker-semicoacyclic.
 A complex $X^\bu$ in $S\Modl_{J\tors}$ is acyclic if and only if
the complex $\Theta(X^\bu)$ is acyclic in $R\Modl_{I\tors}$,
as explained in the first half of this section.
 The abelian category $R\Modl_{I\tors}$ has enough injective objects,
so all Becker-coacyclic complexes in $R\Modl_{I\tors}$ are acyclic
by Lemma~\ref{becker-co-contra-acyclic-are-acyclic}(a).
 It follows that any Becker-semicoacyclic complex in
$S\Modl_{J\tors}$ is acyclic.
 Thus we have
$$
 \Ac^\bco(S\Modl_{J\tors})\subset\Ac^\bsico_{(R,I)}(S\Modl_{J\tors})
 \subset\Ac(S\Modl_{J\tors}).
$$ 

 Dual-analogously, a complex of $J$\+contramodule $S$\+modules $Y^\bu$
is said to be \emph{Becker-semicontraacyclic} (\emph{relative to
$(R,I)$}) if it is Becker-contraacyclic as a complex of
$I$\+contramodule $R$\+modules, i.~e., if
$\Theta(Y^\bu)\in\Ac^\bctr(R\Modl_{I\ctra})$.
 We denote the thick subcategory of Becker-semicontraacyclic
complexes by $\Ac^\bsictr_{(R,I)}(S\Modl_{J\ctra})\subset
\sK(S\Modl_{J\ctra})$.
 The \emph{Becker semicontraderived category of $J$\+contramodule
$S$\+modules} (\emph{relative to $(R,I)$}) is defined as
the triangulated Verdier quotient category
$$
 \sD^\bsictr_{(R,I)}(S\Modl_{J\ctra})=
 \sK(S\Modl_{J\ctra})/\Ac^\bsictr_{(R,I)}(S\Modl_{J\ctra}).
$$

 The forgetful functor $\Theta\:S\Modl_{J\ctra}\rarrow R\Modl_{I\ctra}$
takes Becker-contraacyclic complexes to Becker-contraacyclic complexes
by Lemma~\ref{forgetful-preserve-becker-co-contra-acyclicity}(b).
 Hence any Becker-contraacyclic complex in $S\Modl_{J\ctra}$ is
Becker-semicontraacyclic.
 A complex $Y^\bu$ in $S\Modl_{J\ctra}$ is acyclic if and only if
the complex $\Theta(Y^\bu)$ is acyclic in $R\Modl_{I\ctra}$,
as explained in the first half of this section.
 The abelian category $R\Modl_{I\ctra}$ has enough projective objects
(see Section~\ref{prelims-on-wpr-secn}), so all Becker-contraacyclic
complexes in $R\Modl_{I\ctra}$ are acyclic by
Lemma~\ref{becker-co-contra-acyclic-are-acyclic}(b).
 It follows that any Becker-semicontraacyclic complex in
$S\Modl_{J\ctra}$ is acyclic.
 Thus we have
$$
 \Ac^\bctr(S\Modl_{J\ctra})\subset\Ac^\bsictr_{(R,I)}(S\Modl_{J\ctra})
 \subset\Ac(S\Modl_{J\ctra}).
$$

 Assume that the ideals $I\subset R$ and $J\subset S$ are finitely
generated.
 Then a complex of quotseparated $J$\+contramodule $S$\+modules
$Y^\bu$ is said to be \emph{Becker-semicontraacyclic}
(\emph{relative to $(R,I)$}) if it is Becker-contraacyclic as
a complex of quotseparated $I$\+contramodule $R$\+modules, i.~e., if
$\Theta(Y^\bu)\in\Ac^\bctr(R\Modl_{I\ctra}^\qs)$.
 We denote the thick subcategory of Becker-semicontraacyclic
complexes by $\Ac^\bsictr_{(R,I)}(S\Modl_{J\ctra}^\qs)\subset
\sK(S\Modl_{J\ctra}^\qs)$.
 The \emph{Becker semicontraderived category of quotseparated
$J$\+con\-tra\-mod\-ule $S$\+modules} (\emph{relative to $(R,I)$})
is defined as the triangulated Verdier quotient category
{\hbadness=2650
$$
 \sD^\bsictr_{(R,I)}(S\Modl_{J\ctra}^\qs)=
 \sK(S\Modl_{J\ctra}^\qs)/\Ac^\bsictr_{(R,I)}(S\Modl_{J\ctra}^\qs).
$$

 Similarly} to the arguments above, one proves the inclusions of
thick subcategories in the homotopy category
$$
 \Ac^\bctr(S\Modl_{J\ctra}^\qs)\subset
 \Ac^\bsictr_{(R,I)}(S\Modl_{J\ctra}^\qs)
 \subset\Ac(S\Modl_{J\ctra}^\qs).
$$
 Lemma~\ref{forgetful-preserve-becker-co-contra-acyclicity}(c) is
relevant here.

\Section{Relative Dualizing Complexes
and~Semico-Semicontra~Correspondence}  \label{semico-semicontra-secn}

 Let $I$ be a weakly proregular finitely generated ideal in
a commutative ring $R$ such that the ring $R$ is $I$\+adically coherent.
 Let $D^\bu$ be a dualizing complex of $I$\+torsion $R$\+modules,
as defined in Section~\ref{dualizing-complexes-secn}
(cf.\ Theorem~\ref{definitions-of-dualizing-complex-comparison}).

 We will need to make the assumption that the injective dimensions of
fp\+injective $I$\+torsion $R$\+modules (as objects of the abelian
category $R\Modl_{I\tors}$) are finite.
 For some results, we will also need the assumption that the projective
dimensions of contraflat $I$\+contramodule $R$\+modules (as objects
of the abelian category $R\Modl_{I\ctra}$) are finite.
 See the discussion in Section~\ref{dualizing-complexes-secn}.

 Let $J$ be a weakly proregular finitely generated ideal in
a commutative ring $S$, and let $f\:R\rarrow S$ be a ring homomorphism
such that $f(I)\subset J$.
 Assume that $S$ is a flat $R$\+module.

 As in Section~\ref{base-change-secn}, we consider the dual Koszul
complex $K^\bu_\infty(S,\bs)$ for some finite sequence of
generators~$\bs$ of the ideal $J\subset S$.
 Let $U^\bu$ be a finite complex of $J$\+torsion $S$\+modules isomorphic
to $K^\bu_\infty(S,\bs)\ot_RD^\bu$ in $\sD^\bb(S\Modl)$.
 We will say that $U^\bu$ is a \emph{relative dualizing complex}
for the morphism of ring-ideal pairs $f\:(R,I)\rarrow(S,J)$.

 The following theorem is the second main result of this paper.
 
\begin{thm} \label{becker-semico-semicontra-correspondence}
 Let $U^\bu$ be a relative dualizing complex for a morphism of
ring-ideal pairs $f\:(R,I)\rarrow(S,J)$ corresponding to a dualizing
complex of $I$\+torsion $R$\+modules $D^\bu$, as defined above.
 The assumptions above are enforced; so the ideals $I\subset R$
and $J\subset S$ are finitely generated and weakly proregular,
the ring $R$ is $I$\+adically coherent, and $S$ is a flat $R$\+module.
 Assume further that the morphism of pairs $(R,I)\rarrow(S,J)$ is
quotflat in the sense of Section~\ref{quotflat-secn}, and that
the injective dimensions of fp\+injective $I$\+torsion $R$\+modules
(as objects of $R\Modl_{I\tors}$) are finite.
 Then there is a triangulated equivalence between the Becker
semicoderived and semicontraderived categories (defined in
Section~\ref{semiderived-secn})
$$
 \sD^\bsico_{(R,I)}(S\Modl_{J\tors})\simeq
 \sD^\bsictr_{(R,I)}(S\Modl_{J\ctra})
$$
provided by (appropriately defined) mutually inverse derived functors\/
$\boR\Hom_S(U^\bu,{-})$ and $U^\bu\ot_S^\boL{-}$.
\end{thm}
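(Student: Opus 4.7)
The plan is to deduce the theorem from Corollary~\ref{base-change-abstract-classes-derived-equivalence} applied to the pair of classes $\sE = R\Modl_{I\tors}^\fpinj$ and $\sF = R\Modl_{I\ctra}^\ctrfl$ on the base $(R,I)$, and then to identify the resulting conventional derived categories $\sD(\sG_\sE)$ and $\sD(\sH_\sF)$ with the Becker semicoderived and semicontraderived categories on the $(S,J)$ side.

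First, Theorem~\ref{base-change-is-pseudo-dualizing} guarantees that $U^\bu$ is a pseudo-dualizing complex of $J$-torsion $S$-modules. By Proposition~\ref{dualizing-complex-minimal-corresponding-classes}, the classes $\sE=R\Modl_{I\tors}^\fpinj$ and $\sF=R\Modl_{I\ctra}^\ctrfl$ satisfy conditions (I--IV) for the pseudo-dualizing complex $D^\bu$, with the parameter $l_1$ chosen to dominate the global dimension of fp-injective $I$-torsion $R$-modules. The class $\sE$ is closed under countable direct sums by Lemma~\ref{fp-injective-torsion-modules-lemma}(c) and contains all fp-injective $I$-torsion $R$-modules; dually, $\sF$ is closed under countable products by Lemma~\ref{products-of-contraflat-contramodules-lemma} and contains all contraflat $I$-contramodule $R$-modules. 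Hence Proposition~\ref{base-change-abstract-classes} applies---with the quotflatness hypothesis feeding in precisely through Propositions~\ref{restriction-of-scalars-fp-injective} and~\ref{restriction-of-scalars-contraflat} invoked in its proof---and gives that $\sG_\sE \subset S\Modl_{J\tors}$ (the $J$-torsion $S$-modules whose underlying $I$-torsion $R$-module is fp-injective) and $\sH_\sF \subset S\Modl_{J\ctra}$ (the analogous contramodule class) satisfy (I--IV) for $U^\bu$. Corollary~\ref{base-change-abstract-classes-derived-equivalence} then produces a triangulated equivalence $\sD(\sG_\sE) \simeq \sD(\sH_\sF)$ provided by $\boR\Hom_S(U^\bu,{-})$ and $U^\bu\ot_S^\boL{-}$.

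It remains to identify these conventional derived categories with the Becker semiderived ones. Any complex $X^\bu$ in $\sG_\sE$ becomes, under the forgetful functor $\Theta\:S\Modl_{J\tors}\rarrow R\Modl_{I\tors}$, a complex in $R\Modl_{I\tors}^\fpinj$; by Proposition~\ref{torsion-modules-coderived-categories-prop}(b) (available thanks to the finite injective dimensions of fp-injective $I$-torsion $R$-modules) such a complex is Becker-coacyclic iff acyclic. Since $\Theta$ is exact and faithful, this precisely matches acyclicity in $\sG_\sE$ with Becker-semicoacyclicity in $S\Modl_{J\tors}$, so the inclusion $\sG_\sE\rarrow S\Modl_{J\tors}$ induces a well-defined conservative triangulated functor $\sD(\sG_\sE)\rarrow\sD^\bsico_{(R,I)}(S\Modl_{J\tors})$; moreover, by Proposition~\ref{restriction-of-scalars-fp-injective} every injective $J$-torsion $S$-module lies in $\sG_\sE$, so $\sG_\sE$ is coresolving in $S\Modl_{J\tors}$. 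The dual-analogous statements on the contramodule side, based on Proposition~\ref{restriction-of-scalars-contraflat} and Proposition~\ref{contramodules-contraderived-categories-prop}(b), show that $\sD(\sH_\sF)\rarrow\sD^\bsictr_{(R,I)}(S\Modl_{J\ctra})$ is likewise well-defined and conservative.

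The main obstacle is essential surjectivity of these two comparison functors. To handle this I would exploit the right adjoint $K\longmapsto\Gamma_J(\Hom_R(S,K))$ to $\Theta\:S\Modl_{J\tors}\rarrow R\Modl_{I\tors}$ constructed in the proof of Lemma~\ref{forgetful-preserve-becker-co-contra-acyclicity}(a): applied to injective $I$-torsion $R$-modules (and more generally to fp-injective ones, using that $S$ is flat over $R$) it produces objects of $\sG_\sE$. One then builds, for any complex $X^\bu$ in $S\Modl_{J\tors}$, a termwise monomorphism $X^\bu\rarrow Y^\bu$ with $Y^\bu$ in $\sG_\sE$ and Becker-semicoacyclic cone, by mimicking the proof of Proposition~\ref{torsion-modules-coderived-categories-prop}(a) relative to the morphism of pairs---equivalently, by lifting a Becker-injective resolution of $\Theta(X^\bu)$ in $R\Modl_{I\tors}$ back to $S\Modl_{J\tors}$ via the adjunction. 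The dual argument for contramodules uses the left adjoint $P\longmapsto\Delta_J(S\ot_RP)$ from Lemma~\ref{forgetful-preserve-becker-co-contra-acyclicity}(b) together with Proposition~\ref{contramodules-contraderived-categories-prop}(a--b). Composing the resulting chain of triangulated equivalences
$$
\sD^\bsico_{(R,I)}(S\Modl_{J\tors})\simeq\sD(\sG_\sE)\simeq\sD(\sH_\sF)
\simeq\sD^\bsictr_{(R,I)}(S\Modl_{J\ctra})
$$
yields the claimed equivalence, with the two derived functors being appropriately defined as in the proof of Theorem~\ref{abstract-classes-derived-equivalence}.
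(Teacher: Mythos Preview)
Your overall strategy is exactly the paper's: the middle equivalence $\sD(\sG_\sE)\simeq\sD(\sH_\sF)$ via Corollary~\ref{base-change-abstract-classes-derived-equivalence} with $\sE=R\Modl_{I\tors}^\fpinj$ and $\sF=R\Modl_{I\ctra}^\ctrfl$, bracketed by identifications of $\sD(\sG_\sE)$ and $\sD(\sH_\sF)$ with the Becker semiderived categories. Your verification that $\sE$, $\sF$ satisfy the hypotheses of Proposition~\ref{base-change-abstract-classes} is fine, and your argument that acyclicity in $\sG_\sE$ coincides with Becker-semicoacyclicity is correct (though Proposition~\ref{torsion-modules-coderived-categories-prop}(b) needs only local coherence, not the finite-injective-dimension hypothesis).

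The genuine gap is in your essential surjectivity step. Lifting a Becker-injective resolution of $\Theta(X^\bu)$ back through the right adjoint $\Gamma_J(\Hom_R(S,{-}))$ is problematic: this functor is only left exact, so it is unclear that it carries the Becker-coacyclic cone of $\Theta(X^\bu)\to K^\bu$ to anything controllable, and the unit $X^\bu\to\Gamma_J(\Hom_R(S,\Theta(X^\bu)))$ adds a second cone you have not analyzed. The paper's route (Propositions~\ref{fp-injective-over-base-semicoderived-prop} and~\ref{contraflat-over-base-semicontraderived-prop}) avoids this entirely by resolving \emph{in $S\Modl_{J\tors}$ itself}: Theorem~\ref{becker-co-contra-derived-of-loc-pres-abelian}(a) gives, for any complex $M^\bu$ in $S\Modl_{J\tors}$, a morphism $M^\bu\to H^\bu$ with $H^\bu$ a complex of injectives in $S\Modl_{J\tors}$ and Becker-coacyclic cone in $S\Modl_{J\tors}$. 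Injectives of $S\Modl_{J\tors}$ already lie in $\sG_\sE$ by Proposition~\ref{restriction-of-scalars-fp-injective}, and the cone is Becker-semicoacyclic because $\Theta$ preserves Becker-coacyclicity (Lemma~\ref{forgetful-preserve-becker-co-contra-acyclicity}(a)). Then~\cite[Lemma~A.3.3]{Pcosh} finishes. The contramodule side is dual-analogous, using Theorem~\ref{becker-co-contra-derived-of-loc-pres-abelian}(b) on $S\Modl_{J\ctra}$. No lifting across the adjunction is needed.
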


 The proof of Theorem~\ref{becker-semico-semicontra-correspondence}
is based on two propositions.
 Let us start with introducing notation.
 Denote by $S\Modl_{J\tors}^{(R,I)\dfpinj}\subset S\Modl_{J\tors}$
the full subcategory of $J$\+torsion $S$\+modules that are
\emph{fp\+injective as $I$\+torsion $R$\+modules}.
 Similarly, denote by $S\Modl_{J\ctra}^{(R,I)\dctrfl}\subset
S\Modl_{J\ctra}$ the full subcategory of $J$\+contramodule $S$\+modules
that are \emph{contraflat as $I$\+contramodule $R$\+modules}.

\begin{prop} \label{fp-injective-over-base-semicoderived-prop}
 Let $I$ be a finitely generated ideal in a commutative ring $R$
such that the ring $R$ is $I$\+adically coherent, let $J$ be a finitely
generated ideal in a commutative ring $S$, and let $f\:R\rarrow S$
be a ring homomorphism such that $f(I)\subset J$.
 Assume that the morphism of pairs $f\:(R,I)\rarrow(S,J)$ is quotflat.
 Then the inclusion of exact/abelian categories
$S\Modl_{J\tors}^{(R,I)\dfpinj}\rarrow S\Modl_{J\tors}$ induces
a triangulated equivalence between the conventional derived
category and the Becker semicoderived category
$$
 \sD(S\Modl_{J\tors}^{(R,I)\dfpinj})\simeq
 \sD^\bsico_{(R,I)}(S\Modl_{J\tors}).
$$
\end{prop}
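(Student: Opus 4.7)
The plan is to identify the Becker semicoderived category with a pseudo-coderived category (in the sense of Section~\ref{pseudo-derived-secn}) associated to the coresolving subcategory $\sC = S\Modl_{J\tors}^{(R,I)\dfpinj}$, and then invoke the formula~\eqref{pseudo-coderived-category-from-coresolving} to recognize it as the conventional derived category $\sD(\sC)$. The main ingredients are Lemma~\ref{relative-pseudo-derived-conservative}(a), Proposition~\ref{torsion-modules-coderived-categories-prop}, and Proposition~\ref{restriction-of-scalars-fp-injective} (the last being the point where quotflatness enters).

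First I would verify that $\sC$ is a coresolving subcategory of $S\Modl_{J\tors}$ closed under infinite direct sums. Closure under extensions, cokernels of admissible monomorphisms, and direct sums is inherited from the analogous properties of $R\Modl_{I\tors}^\fpinj$ (Lemma~\ref{fp-injective-torsion-modules-lemma}(c)) via the exact, faithful, direct-sum-preserving forgetful functor $\Theta\:S\Modl_{J\tors}\rarrow R\Modl_{I\tors}$. The crucial ``enough'' condition---that every object of $S\Modl_{J\tors}$ embeds into an object of $\sC$---follows by embedding into an injective $J$-torsion $S$-module $H$: such $H$ is fp-injective in $S\Modl_{J\tors}$, and by the quotflatness hypothesis combined with Proposition~\ref{restriction-of-scalars-fp-injective}, it is also fp-injective as an $I$-torsion $R$-module, so $H\in\sC$.

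Next I would set $\sE = R\Modl_{I\tors}^\fpinj$ and apply Lemma~\ref{relative-pseudo-derived-conservative}(a) with $\sG_\sE = \sC$: this identifies the $\sC$-pseudo-coacyclic complexes in $\sK(S\Modl_{J\tors})$ with those $X^\bu$ for which $\Theta(X^\bu)$ is $\sE$-pseudo-coacyclic in $\sK(R\Modl_{I\tors})$. I would then identify $\sE$-pseudo-coacyclic with Becker-coacyclic: Proposition~\ref{torsion-modules-coderived-categories-prop}(b) gives $\sD(\sE) = \sD^\bco(\sE)$, and Proposition~\ref{torsion-modules-coderived-categories-prop}(a) for $\st=\bco$ gives $\sD^\bco(\sE)\simeq\sD^\bco(R\Modl_{I\tors})$. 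Combining, the composition $\sK(R\Modl_{I\tors})\twoheadrightarrow\sD^\co(R\Modl_{I\tors})\twoheadrightarrow\sD(\sE)$ defining $\sE$-pseudo-coacyclicity coincides with the canonical Verdier quotient $\sK(R\Modl_{I\tors})\twoheadrightarrow\sD^\bco(R\Modl_{I\tors})$ by uniqueness of the factorization; hence $\Ac^{\sE\psco}(R\Modl_{I\tors}) = \Ac^\bco(R\Modl_{I\tors})$, so $\sC$-pseudo-coacyclicity for complexes in $S\Modl_{J\tors}$ is exactly Becker-semicoacyclicity.

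Combining this with the formula~\eqref{pseudo-coderived-category-from-coresolving} applied to the coresolving subcategory~$\sC$, we obtain
\begin{equation*}
 \sD(S\Modl_{J\tors}^{(R,I)\dfpinj}) = \sD(\sC) \simeq
 \sK(S\Modl_{J\tors})/\Ac^\bsico_{(R,I)}(S\Modl_{J\tors}) =
 \sD^\bsico_{(R,I)}(S\Modl_{J\tors}),
\end{equation*}
and the resulting triangulated equivalence is induced by the inclusion $\sC\rarrow S\Modl_{J\tors}$ as required. The only real obstacle is the verification of the coresolving property of~$\sC$, namely the ``enough objects'' condition, where the quotflatness hypothesis is essential via Proposition~\ref{restriction-of-scalars-fp-injective}; every other step is a formal consequence of cited results.
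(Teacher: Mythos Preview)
Your argument is correct but takes a different route from the paper's proof. The paper argues directly: it uses Theorem~\ref{becker-co-contra-derived-of-loc-pres-abelian}(a) to approximate an arbitrary complex by a complex of injective $J$\+torsion $S$\+modules (hence a complex in $\sC$) up to a Becker-coacyclic cone, notes that this cone is Becker-semicoacyclic by Lemma~\ref{forgetful-preserve-becker-co-contra-acyclicity}(a), and then invokes the standard result~\cite[Lemma~A.3.3(b)]{Pcosh}; the remaining identification of acyclicity in $\sC$ with Becker-semicoacyclicity is read off from Proposition~\ref{torsion-modules-coderived-categories-prop}(b). Your approach instead routes through the Positselski pseudo-coderived machinery of Section~\ref{pseudo-derived-secn}, applying Lemma~\ref{relative-pseudo-derived-conservative}(a) and formula~\eqref{pseudo-coderived-category-from-coresolving}, and then identifying the Positselski-defined class $\Ac^{\sE\psco}(R\Modl_{I\tors})$ with $\Ac^\bco(R\Modl_{I\tors})$ via Proposition~\ref{torsion-modules-coderived-categories-prop}(a\+-b). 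This is essentially the ``alternative proof'' strategy the paper uses for Theorem~\ref{positselski-semico-semicontra-correspondence}, adapted to the Becker setting. The paper's direct argument is shorter and avoids the detour through Positselski coderived categories; your argument has the virtue of making explicit why the Becker semicoderived category really is a pseudo-coderived category in the sense of Section~\ref{pseudo-derived-secn}. One small point: your phrase ``by uniqueness of the factorization'' is doing real work and deserves a sentence of justification---the point is that the composite $\sK(\sA)\to\sD^\co(\sA)\simeq\sD^\co(\sE)\to\sD(\sE)$ factors through $\sD^\bco(\sE)=\sD(\sE)$ (since $\Ac^\co(\sE)\subset\Ac(\sE)=\Ac^\bco(\sE)$), and the induced functor $\sD^\co(\sA)\to\sD^\bco(\sE)$ agrees with $\sD^\co(\sA)\to\sD^\bco(\sA)\simeq\sD^\bco(\sE)$ because both are induced by the inclusion $\sE\to\sA$.
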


\begin{proof}
 First we observe that, for any complex of $J$\+torsion $S$\+modules
$M^\bu$, there exists a complex $H^\bu$ in
$S\Modl_{J\tors}^{(R,I)\dfpinj}$ together with a morphism of
complexes of $S$\+modules $M^\bu\rarrow H^\bu$ with a cone belonging
to $\Ac^\bsico_{(R,I)}(S\Modl_{J\tors})$.
 Indeed, the result of
Theorem~\ref{becker-co-contra-derived-of-loc-pres-abelian}(a) for
the abelian category $\sA=S\Modl_{J\tors}$ essentially says that
for any complex $M^\bu$ in $S\Modl_{J\tors}$ there exists a complex
of injective objects $H^\bu$ in $S\Modl_{J\tors}$ together with
a morphism of complexes of $S$\+modules $M^\bu\rarrow H^\bu$ whose
cone is Becker-coacyclic in $S\Modl_{J\tors}$.
 It remains to point out that all injective $J$\+torsion $S$\+modules
are fp\+injective as $I$\+torsion $R$\+modules by
Proposition~\ref{restriction-of-scalars-fp-injective}, and all
Becker-coacyclic complex of $J$\+torsion $S$\+modules are
Becker-coacyclic as complexes of $I$\+torsion $R$\+modules
by Lemma~\ref{forgetful-preserve-becker-co-contra-acyclicity}(a).

 Now the well-known result of~\cite[Proposition~10.2.7(i)]{KS}
or~\cite[Lemma~A.3.3(b)]{Pcosh} is applicable, and it remains to show
that a complex in $S\Modl_{J\tors}^{(R,I)\dfpinj}$ is acyclic
in $S\Modl_{J\tors}^{(R,I)\dfpinj}$ if and only if it belongs to
$\Ac^\bsico_{(R,I)}(S\Modl_{J\tors})$.
 Notice that a complex in $S\Modl_{J\tors}^{(R,I)\dfpinj}$ is
acyclic in $S\Modl_{J\tors}^{(R,I)\dfpinj}$ if and only if its
underlying complex of $R$\+modules is acyclic in
$R\Modl_{I\tors}^\fpinj$ (see the first paragraph of the proof
of Lemma~\ref{relative-pseudo-derived-conservative}(a)).
 So it remains to point that a complex in $R\Modl_{I\tors}^\fpinj$
is acyclic in $R\Modl_{I\tors}^\fpinj$ if and only if it is
Becker-coacyclic in $R\Modl_{I\tors}$, by
Proposition~\ref{torsion-modules-coderived-categories-prop}(b)
and its proof.
\end{proof}

\begin{prop} \label{contraflat-over-base-semicontraderived-prop}
 Let $I$ be a weakly proregular finitely generated ideal in
a commutative ring $R$ such that the ring $R$ is $I$\+adically coherent,
let $J$ be a weakly proregular finitely generated ideal in a commutative
ring $S$, and let $f\:R\rarrow S$ be a ring homomorphism such that
$f(I)\subset J$.
 Assume that the morphism of pairs $f\:(R,I)\rarrow(S,J)$ is quotflat.
 Then the inclusion of exact/abelian categories
$S\Modl_{J\ctra}^{(R,I)\dctrfl}\rarrow S\Modl_{J\ctra}$ induces
a triangulated equivalence between the conventional derived
category and the Becker semicontraderived category
$$
 \sD(S\Modl_{J\ctra}^{(R,I)\dctrfl})\simeq
 \sD^\bsictr_{(R,I)}(S\Modl_{J\ctra}).
$$
\end{prop}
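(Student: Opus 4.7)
The plan is to dualize the proof of Proposition~\ref{fp-injective-over-base-semicoderived-prop} step by step. First I would show that for every complex $M^\bu$ in $S\Modl_{J\ctra}$, there exists a complex $P^\bu$ in $S\Modl_{J\ctra}^{(R,I)\dctrfl}$ together with a morphism of complexes $P^\bu\rarrow M^\bu$ whose cone belongs to $\Ac^\bsictr_{(R,I)}(S\Modl_{J\ctra})$. For this I would invoke Theorem~\ref{becker-co-contra-derived-of-loc-pres-abelian}(b) applied to the locally $\aleph_1$\+presentable abelian category $\sB=S\Modl_{J\ctra}$, which has enough projective objects (see Section~\ref{prelims-on-wpr-secn}): this yields a complex $P^\bu$ of projective objects in $S\Modl_{J\ctra}$ with a morphism $P^\bu\rarrow M^\bu$ whose cone is Becker-contraacyclic in $S\Modl_{J\ctra}$.

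The next task is to verify that such a $P^\bu$ actually lies in $S\Modl_{J\ctra}^{(R,I)\dctrfl}$. As noted in Section~\ref{cors-of-derived-fullyf-thms-secn}, every projective object of $S\Modl_{J\ctra}$ is a contraflat $J$\+contramodule $S$\+module (in particular quotseparated by weak proregularity of $J$). Then Proposition~\ref{restriction-of-scalars-contraflat}, whose hypotheses are exactly the weak proregularity of~$I$, the $I$\+adic coherence of~$R$, and the quotflatness of $f\:(R,I)\rarrow(S,J)$, tells us that the underlying $I$\+contramodule $R$\+module is contraflat. Simultaneously, Lemma~\ref{forgetful-preserve-becker-co-contra-acyclicity}(b) guarantees that the cone, being Becker-contraacyclic in $S\Modl_{J\ctra}$, is Becker-contraacyclic in $R\Modl_{I\ctra}$, hence semicontraacyclic by definition.

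With this ``left approximation'' in hand, the well-known piece of general nonsense (the dual of~\cite[Proposition~10.2.7(i)]{KS}, equivalently \cite[Lemma~A.3.3(b)]{Pcosh}) reduces the claim to the identification of acyclic complexes in $S\Modl_{J\ctra}^{(R,I)\dctrfl}$ with the complexes in $\Ac^\bsictr_{(R,I)}(S\Modl_{J\ctra})\cap\sK(S\Modl_{J\ctra}^{(R,I)\dctrfl})$. Since the forgetful functor $\Theta\:S\Modl_{J\ctra}\rarrow R\Modl_{I\ctra}$ is exact and faithful, a complex in $S\Modl_{J\ctra}^{(R,I)\dctrfl}$ is acyclic in this exact category if and only if its underlying complex in $R\Modl_{I\ctra}^\ctrfl$ is acyclic there. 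The final step is to recall that a complex in $R\Modl_{I\ctra}^\ctrfl$ is acyclic in the exact category $R\Modl_{I\ctra}^\ctrfl$ precisely when it is Becker-contraacyclic in $R\Modl_{I\ctra}$; this is the content of Proposition~\ref{contramodules-contraderived-categories-prop}(b) (together with the observation in its proof that $R\Modl_{I\ctra}^\ctrfl$ is a resolving subcategory closed under direct summands, so that the Becker-contraacyclic complexes in the two categories coincide). By definition of $\Ac^\bsictr_{(R,I)}$, this is the desired semicontraacyclicity.

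The main obstacle is the second step: one must know that restriction of scalars preserves contraflatness of contramodules, and this is precisely where the quotflatness hypothesis enters (via Proposition~\ref{restriction-of-scalars-contraflat}); without it, the projective resolutions produced in $S\Modl_{J\ctra}$ would not land in $S\Modl_{J\ctra}^{(R,I)\dctrfl}$ and the whole argument would break down. The rest is essentially formal dualization, though one also needs the weak proregularity of~$I$ (to know $R\Modl_{I\ctra}=R\Modl_{I\ctra}^\qs$, so that Proposition~\ref{contramodules-contraderived-categories-prop}(b) applies directly to $R\Modl_{I\ctra}^\ctrfl$) and of~$J$ (for the contraflatness of projectives in $S\Modl_{J\ctra}$ and for the use of Theorem~\ref{becker-co-contra-derived-of-loc-pres-abelian}(b) via enough projectives).
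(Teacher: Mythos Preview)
Your proof is correct and follows essentially the same approach as the paper's own proof: approximate by projectives via Theorem~\ref{becker-co-contra-derived-of-loc-pres-abelian}(b), use Proposition~\ref{restriction-of-scalars-contraflat} and Lemma~\ref{forgetful-preserve-becker-co-contra-acyclicity}(b) to land in the right subcategory with a semicontraacyclic cone, then reduce via the standard lemma to identifying acyclicity in $S\Modl_{J\ctra}^{(R,I)\dctrfl}$ with Becker-contraacyclicity over $(R,I)$ using Proposition~\ref{contramodules-contraderived-categories-prop}(b). One minor correction: the reference for the ``general nonsense'' step on the contramodule side should be~\cite[Proposition~10.2.7(ii)]{KS} or~\cite[Lemma~A.3.3(a)]{Pcosh} (the dual versions of the ones cited in Proposition~\ref{fp-injective-over-base-semicoderived-prop}), not A.3.3(b).
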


\begin{proof}
 This proposition is dual-analogous to the previous one, and the proof
is also dual-analogous.
 First we observe that, for any complex of $J$\+contramodule
$S$\+modules $P^\bu$, there exists a complex $F^\bu$ in
$S\Modl_{J\ctra}^{(R,I)\dctrfl}$ together with a morphism of complexes
of $S$\+modules $F^\bu\rarrow P^\bu$ with a cone belonging
to $\Ac^\bsictr_{(R,I)}(S\Modl_{J\ctra})$.
 Indeed, the result of
Theorem~\ref{becker-co-contra-derived-of-loc-pres-abelian}(b) for
the abelian category $\sB=S\Modl_{J\ctra}$ essentially says that
for any complex $P^\bu$ in $S\Modl_{J\ctra}$ there exists a complex
of projective objects $F^\bu$ in $S\Modl_{J\ctra}$ together with
a morphism of complexes of $S$\+modules $F^\bu\rarrow P^\bu$ whose
cone is Becker-contraacyclic in $S\Modl_{J\ctra}$.
 It remains to point out that all projective $J$\+contramodule
$S$\+modules are contraflat as $I$\+contramodule $R$\+modules by
Proposition~\ref{restriction-of-scalars-contraflat}, and all
Becker-contraacyclic complex of $J$\+contramodule $S$\+modules are
Becker-contraacyclic as complexes of $I$\+contramodule $R$\+modules by
Lemma~\ref{forgetful-preserve-becker-co-contra-acyclicity}(b) or~(c).

 Now the well-known result of~\cite[Proposition~10.2.7(ii)]{KS}
or~\cite[Lemma~A.3.3(a)]{Pcosh} is applicable, and it remains to show
that a complex in $S\Modl_{J\ctra}^{(R,I)\dctrfl}$ is acyclic
in $S\Modl_{J\ctra}^{(R,I)\dctrfl}$ if and only if it belongs to
$\Ac^\bsictr_{(R,I)}(S\Modl_{J\ctra})$.
 Notice that a complex in $S\Modl_{J\ctra}^{(R,I)\dctrfl}$ is
acyclic in $S\Modl_{J\ctra}^{(R,I)\dctrfl}$ if and only if its
underlying complex of $R$\+modules is acyclic in
$R\Modl_{I\ctra}^\ctrfl$ (cf.\ the first paragraph of the proof
of Lemma~\ref{relative-pseudo-derived-conservative}(a)).
 So it remains to point that a complex in $R\Modl_{I\ctra}^\ctrfl$
is acyclic in $R\Modl_{I\ctra}^\ctrfl$ if and only if it is
Becker-contraacyclic in $R\Modl_{I\ctra}$, by
Proposition~\ref{contramodules-contraderived-categories-prop}(b)
and its proof.

 Let us mention that the assumption of weak proregularity of the ideal
$I\subset R$ is needed in the argument above because it is used in
the proof of Proposition~\ref{restriction-of-scalars-contraflat}.
 Besides, Proposition~\ref{restriction-of-scalars-contraflat} is
only applicable to quotseparated $J$\+contramodule $S$\+modules.
 The assumption of weak proregularity of the ideal $J\subset S$
is only used in the argument above in order to claim that all
$J$\+contramodule $S$\+modules are quotseparated,
$S\Modl_{J\ctra}^\qs=S\Modl_{J\ctra}$.
 Without the weak proregularity assumption on the finitely generated
ideal $J\subset S$, the assertion of the present proposition holds in
the context of quotseparated $J$\+contramodule $S$\+modules, i.~e.,
for the abelian category $S\Modl_{J\ctra}^\qs$.
\end{proof}

\begin{proof}[Proof of
Theorem~\ref{becker-semico-semicontra-correspondence}]
 The desired triangulated equivalence is constructed as the composition
of triangulated equivalences
\begin{multline*}
 \sD^\bsico_{(R,I)}(S\Modl_{J\tors})\simeq
 \sD(S\Modl_{J\tors}^{(R,I)\dfpinj}) \\
 \simeq\sD(S\Modl_{J\ctra}^{(R,I)\dctrfl})
 \simeq\sD^\bsictr_{(R,I)}(S\Modl_{J\ctra}).
\end{multline*}
 Here the first and the third triangulated equivalences are provided by
Propositions~\ref{fp-injective-over-base-semicoderived-prop}
and~\ref{contraflat-over-base-semicontraderived-prop}, respectively.
 The middle triangulated equivalence is obtained as a particular case
of Corollary~\ref{base-change-abstract-classes-derived-equivalence}.
 Let us spell out the details.

 In the context of Section~\ref{base-change-secn}, we put $L^\bu=D^\bu$,
and consider the full subcategories $\sE=R\Modl_{I\tors}^\fpinj
\subset R\Modl_{I\tors}$ and $\sF=R\Modl_{I\ctra}^\ctrfl\subset
R\Modl_{I\ctra}$.
 By Proposition~\ref{dualizing-complex-minimal-corresponding-classes},
the pair of classes $\sE$ and $\sF$ satisfies
conditions (I\+-IV) from Section~\ref{abstract-classes-secn}
for the dualizing complex of $R$\+torsion $I$\+modules $L^\bu=D^\bu$
(that is where the assumption of finite injective dimension of
fp\+injective $I$\+torsion $R$\+modules is used).
 Then, in the notation of Section~\ref{base-change-secn}, we have
$\sG_\sE=S\Modl_{J\tors}^{(R,I)\dfpinj}$ and
$\sH_\sF=S\Modl_{J\ctra}^{(R,I)\dctrfl}$, and
Corollary~\ref{base-change-abstract-classes-derived-equivalence}
(for $\st=\varnothing$) is applicable.
\end{proof}

 Our final theorem is the version of
Theorem~\ref{becker-semico-semicontra-correspondence} for
the Positselski semiderived categories instead of the Becker ones.

\begin{thm} \label{positselski-semico-semicontra-correspondence}
 Let $U^\bu$ be a relative dualizing complex for a morphism of
ring-ideal pairs $f\:(R,I)\rarrow(S,J)$ corresponding to a dualizing
complex of $I$\+torsion $R$\+modules~$D^\bu$.
 The assumptions from the beginning of this section are enforced; so
the ideals $I\subset R$ and $J\subset S$ are finitely generated and
weakly proregular, the ring $R$ is $I$\+adically coherent, and $S$
is a flat $R$\+module.
 Assume that the morphism of pairs $(R,I)\rarrow(S,J)$ is
quotflat in the sense of Section~\ref{quotflat-secn}.
 Assume further that the injective dimensions of fp\+injective
$I$\+torsion $R$\+modules (as objects of $R\Modl_{I\tors}$) are finite,
and the projective dimensions of contraflat $I$\+contramodule
$R$\+modules (as objects of $R\Modl_{I\ctra}$) are finite.
 Then there is a triangulated equivalence between the Positselski
semicoderived and semicontraderived categories (defined in
Section~\ref{semiderived-secn})
$$
 \sD^\sico_{(R,I)}(S\Modl_{J\tors})\simeq
 \sD^\sictr_{(R,I)}(S\Modl_{J\ctra})
$$
provided by (appropriately defined) mutually inverse derived functors\/
$\boR\Hom_S(U^\bu,{-})$ and $U^\bu\ot_S^\boL{-}$.
\end{thm}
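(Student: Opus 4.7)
My plan is to deduce Theorem~\ref{positselski-semico-semicontra-correspondence} directly from Theorem~\ref{becker-semico-semicontra-correspondence} by showing that, under the present finite homological dimension assumptions, the Positselski and Becker versions of the semicoderived and semicontraderived categories coincide. The quotflatness hypothesis and the finite injective dimension assumption are already sufficient to apply the Becker version, and the extra assumption on projective dimensions of contraflat $I$\+contramodule $R$\+modules will precisely collapse the Positselski/Becker distinction on both sides.

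The first step is to invoke Proposition~\ref{torsion-modules-coderived-categories-prop}(c) for the ring $R$ with ideal~$I$. Under the assumption that all fp\+injective $I$\+torsion $R$\+modules have finite injective dimensions, we obtain the equality $\Ac^\co(R\Modl_{I\tors})=\Ac^\bco(R\Modl_{I\tors})$ of thick subcategories of $\sK(R\Modl_{I\tors})$. By the very definitions given in Section~\ref{semiderived-secn}, a complex $X^\bu$ in $S\Modl_{J\tors}$ is Positselski-semicoacyclic (relative to $(R,I)$) if and only if $\Theta(X^\bu)\in\Ac^\co(R\Modl_{I\tors})$, and Becker-semicoacyclic (relative to $(R,I)$) if and only if $\Theta(X^\bu)\in\Ac^\bco(R\Modl_{I\tors})$. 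Therefore the two classes of semicoacyclic complexes agree, $\Ac^\sico_{(R,I)}(S\Modl_{J\tors})=\Ac^\bsico_{(R,I)}(S\Modl_{J\tors})$, and passing to Verdier quotients by these thick subcategories yields an equality of triangulated categories
\[
 \sD^\sico_{(R,I)}(S\Modl_{J\tors})=\sD^\bsico_{(R,I)}(S\Modl_{J\tors}).
\]

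Dually, I would invoke Proposition~\ref{contramodules-contraderived-categories-prop}(c) for $R$ and~$I$. Under the assumption that all contraflat $I$\+contramodule $R$\+modules have finite projective dimensions, this gives $\Ac^\ctr(R\Modl_{I\ctra})=\Ac^\bctr(R\Modl_{I\ctra})$. As above, $\Theta\:S\Modl_{J\ctra}\rarrow R\Modl_{I\ctra}$ is the restriction-of-scalars functor, and the definitions of Positselski- and Becker-semicontraacyclicity are stated by applying $\Theta$ and landing in the respective contraacyclic class over $(R,I)$. Consequently the two classes $\Ac^\sictr_{(R,I)}(S\Modl_{J\ctra})$ and $\Ac^\bsictr_{(R,I)}(S\Modl_{J\ctra})$ coincide in $\sK(S\Modl_{J\ctra})$, so
\[
 \sD^\sictr_{(R,I)}(S\Modl_{J\ctra})=\sD^\bsictr_{(R,I)}(S\Modl_{J\ctra}).
\]

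Having identified the Positselski semiderived categories with the Becker ones on both sides, the desired triangulated equivalence is simply an instance of Theorem~\ref{becker-semico-semicontra-correspondence}: the hypotheses needed there (weak proregularity of both ideals, $I$\+adic coherence of~$R$, flatness of $S$ over~$R$, quotflatness of $f\:(R,I)\rarrow(S,J)$, and finite injective dimensions of fp\+injective $I$\+torsion $R$\+modules) are a subset of the hypotheses of the present theorem. The mutually inverse equivalences are given by the derived functors $\boR\Hom_S(U^\bu,{-})$ and $U^\bu\ot_S^\boL{-}$, and the identifications above show that exactly the same functors provide the Positselski version. I expect no genuine obstacles here: the entire argument is a formal reduction, with the substantive content (the existence and construction of the equivalence) already absorbed into the Becker theorem and the construction of~$U^\bu$, while the finite homological dimension assumptions play the standard role of forcing the two flavors of co/contraderived categories to agree on the base.
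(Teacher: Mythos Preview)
Your proposal is correct and matches the paper's primary argument essentially verbatim: the paper also invokes Propositions~\ref{torsion-modules-coderived-categories-prop}(c) and~\ref{contramodules-contraderived-categories-prop}(c) to identify the Positselski and Becker (semi)co/contraacyclic classes over $(R,I)$, and then deduces the result as a particular case of Theorem~\ref{becker-semico-semicontra-correspondence}. The paper additionally sketches an alternative direct proof via the pseudo-derived category machinery of Section~\ref{pseudo-derived-secn} (Lemmas~\ref{pseudo-co-contra-acyclic-for-finite-homol-dim} and~\ref{relative-pseudo-derived-conservative}), but this is offered as a second route, not a correction to the reduction you gave.
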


\begin{proof}
 Under the finite injective/projective dimension assumptions of
the present theorem, the Positselski-coacyclicity property of complexes
in $R\Modl_{I\tors}$ agrees with the Becker-coacyclicity by
Proposition~\ref{torsion-modules-coderived-categories-prop}(c), and
the Positselski-contraacyclicity property of complexes in
$R\Modl_{I\ctra}$ agrees with the Becker-contraacyclicity by
Proposition~\ref{contramodules-contraderived-categories-prop}(c).
 Hence we have $\sD^\sico_{(R,I)}(S\Modl_{J\tors})=
\sD^\bsico_{(R,I)}(S\Modl_{J\tors})$ and
$\sD^\sictr_{(R,I)}(S\Modl_{J\ctra})=
\sD^\bsictr_{(R,I)}(S\Modl_{J\ctra})$, and the present theorem is
a particular case of
Theorem~\ref{becker-semico-semicontra-correspondence}.

 Alternatively, here is a direct proof based on the results
of Section~\ref{pseudo-derived-secn}.
 In the notation of the proof of
Theorem~\ref{becker-semico-semicontra-correspondence}, the exact
categories $\sE=R\Modl_{I\tors}^\fpinj\subset R\Modl_{I\tors}$
and $\sF=R\Modl_{I\ctra}^\ctrfl\subset R\Modl_{I\ctra}$ have finite
homological dimensions in our present assumptions.
 In the context of Section~\ref{pseudo-derived-secn}, put
$\sA=R\Modl_{I\tors}$ and $\sB=R\Modl_{I\ctra}$.
 Then we have $\Ac^{\sE\psco}(\sA)=\Ac^\co(\sA)$ and
$\Ac^{\sF\psctr}(\sB)=\Ac^\ctr(\sB)$ by
Lemma~\ref{pseudo-co-contra-acyclic-for-finite-homol-dim}.

 Put $\sX=S\Modl_{J\tors}$ and $\sY=S\Modl_{J\ctra}$, and denote
by $\Theta$ both the forgetful functors $\sX\rarrow\sA$ and
$\sY\rarrow\sB$ (as in Section~\ref{semiderived-secn}).
 Then Lemma~\ref{relative-pseudo-derived-conservative} tells us that
$$
 \Ac^{\sG_\sE\psco}(\sX)=\Ac^\sico_{(R,I)}(S\Modl_{J\tors})
 \quad\text{and}\quad
 \Ac^{\sH_\sF\psctr}(\sY)=\Ac^\sictr_{(R,I)}(S\Modl_{J\ctra}).
$$
 Following the discussion in Section~\ref{pseudo-derived-secn},
we have triangulated equivalences
$$
 \sK(\sX)/\Ac^{\sG_\sE\psco}(\sX)\simeq\sD(\sG_\sE)
 \quad\text{and}\quad
 \sK(\sY)/\Ac^{\sH_\sF\psctr}(\sY)\simeq\sD(\sH_\sF).
$$
 It remains to refer to
Corollary~\ref{base-change-abstract-classes-derived-equivalence}
(for $\st=\varnothing$) for the triangulated equivalence
$\sD(\sG_\sE)\simeq\sD(\sH_\sF)$.
\end{proof}

 Finally, let us reiterate the comments from the discussion at
the end of Section~\ref{base-change-secn}.
 Firstly, according to Remark~\ref{alternative-assumptions-remark}(2),
the quotflatness assumption can be dropped in
Theorems~\ref{becker-semico-semicontra-correspondence}
and~\ref{positselski-semico-semicontra-correspondence} if one assumes
the ring $S$ to be Noetherian.
 In Propositions~\ref{fp-injective-over-base-semicoderived-prop}
and~\ref{contraflat-over-base-semicontraderived-prop}, the quotflatness
assumption can be replaced by the assumptions that the $R$\+module $S$
is flat and the ring $S$ is Noetherian.

 Secondly, we reiterate the \emph{warning} that, unlike in the context
of~\cite[diagram~(15)]{Pps}, the square diagram
\begin{equation} \label{becker-semico-semicontra-co-contra-diagram}
\begin{gathered}
 \xymatrix{
  \sD^\bsico_{(R,I)}(S\Modl_{J\tors}) \ar@{=}[r] \ar[d]
  & \sD^\bsictr_{(R,I)}(S\Modl_{J\ctra}) \ar[d] \\
  \sD^\bco(R\Modl_{I\tors}) \ar@{=}[r] & \sD^\bctr(R\Modl_{I\ctra})
 }
\end{gathered}
\end{equation}
formed by the triangulated equivalences from
Theorem~\ref{becker-semico-semicontra-correspondence}
and Corollary~\ref{dualizing-complex-becker-co-contra-derived-equiv}
together with the obvious triangulated forgetful functors
is usually \emph{not commutative}.
 The same applies to the square diagram
\begin{equation} \label{positselski-semico-semicontra-co-contra-diagram}
\begin{gathered}
 \xymatrix{
  \sD^\sico_{(R,I)}(S\Modl_{J\tors}) \ar@{=}[r] \ar[d]
  & \sD^\sictr_{(R,I)}(S\Modl_{J\ctra}) \ar[d] \\
  \sD^\co(R\Modl_{I\tors}) \ar@{=}[r] & \sD^\ctr(R\Modl_{I\ctra})
 }
\end{gathered}
\end{equation}
formed by the triangulated equivalences from
Theorem~\ref{positselski-semico-semicontra-correspondence} and
Corollary~\ref{dualizing-complex-all-derived-equivs} together
with the triangulated forgetful functors, which is usually
\emph{not commutative}, either.
 In fact, the diagrams of triangulated
functors~\eqref{becker-semico-semicontra-co-contra-diagram}
and~\eqref{positselski-semico-semicontra-co-contra-diagram}
are commutative when $J=SI$, but \emph{not} in the general case.
 This is clear from the proof of
Proposition~\ref{base-change-abstract-classes}.

\bigskip

\end{document}